\renewcommand\thesection{\arabic{section}}
\setheadfoot{\onelineskip}{2\onelineskip} 
\newif\ifmainmatter
\appto\mainmatter{\mainmattertrue}
\appto\backmatter{\mainmatterfalse}
\appto\appendix{\mainmatterfalse}
\definecolor{Matterhorn}{RGB}{77,77,77}
\definecolor{RegalBlue}{RGB}{3,69,117}
\definecolor{RedDevil}{RGB}{134,1,17}
\setlist{itemsep = 0pt}
\setlist[1]{labelindent=\parindent}
\setlist[enumerate,1]{label = \arabic*),ref = \arabic*}
\setlist[enumerate,2]{label = \emph{\alph*}),ref = \theenumi.\emph{\alph*}}
\setlist[enumerate,3]{label = \roman*),ref = \theenumii.\roman*}
\setlist[description]{%
  font={\normalfont\itshape\bfseries}
}
\Crefname{ex}{Example}{Examples}
\Crefname{defn}{Definition}{Definitions}
\DeclareMathOperator{\Aa}{\mathcal{A}}
\DeclareMathOperator{\Ba}{\mathcal{B}}
\DeclareMathOperator{\Ca}{\mathcal{C}}
\DeclareMathOperator{\Da}{\mathcal{D}}
\DeclareMathOperator{\Ea}{\mathcal{E}}
\DeclareMathOperator{\Fa}{\mathcal{F}}
\DeclareMathOperator{\Ga}{\mathcal{G}}
\DeclareMathOperator{\Ma}{\mathcal{M}}
\DeclareMathOperator{\Sa}{\mathcal{S}}
\DeclareMathOperator{\Xa}{\mathcal{X}}
\DeclareMathOperator{\Ab}{\mathbb{A}}
\DeclareMathOperator{\Bb}{\mathbb{B}}
\DeclareMathOperator{\Cb}{\mathbb{C}}
\DeclareMathOperator{\Hb}{\mathbb{H}}
\DeclareMathOperator{\Kb}{\mathbb{K}}
\DeclareMathOperator{\Mb}{\mathbb{M}}
\DeclareMathOperator{\Rb}{\mathbb{R}}
\DeclareMathOperator{\Xb}{\mathbb{X}}
\DeclareMathOperator{\Yb}{\mathbb{Y}}
\DeclareMathOperator{\Hc}{\mathfrak{H}}
\DeclareMathOperator{\Kc}{\mathfrak{K}}
\DeclareMathOperator{\id}{\mathsf{id}}
\DeclareMathOperator{\Fun}{\mathsf{Fun}}
\DeclareMathOperator{\Type}{\textbf{Type}}
\DeclareMathOperator{\Zero}{\textbf{0}}
\DeclareMathOperator{\One}{\textbf{1}}
\DeclareMathOperator{\fst}{\term{fst}}
\DeclareMathOperator{\snd}{\term{snd}}
\DeclareMathOperator{\inl}{\term{inl}}
\DeclareMathOperator{\inr}{\term{inr}}
\DeclareMathOperator{\Bool}{\textbf{Bool}}
\newcommand{\letin}[3]{\mathsf{let}\ {#1}:={#2}\ \mathsf{in}\ {#3}}
\DeclareMathOperator{\BoolIf}{\mathrm{If}}
\DeclareMathOperator{\Unit}{\One}
\DeclareMathOperator{\True}{\mathrm{true}}
\DeclareMathOperator{\False}{\mathrm{false}}
\newcommand{\dom}{\partial_0}
\newcommand{\cod}{\partial_1}
\newcommand{\biglens}[2]{
     \begin{pmatrix}{\vphantom{f_f^f}#1} \\ {\vphantom{f_f^f}#2} \end{pmatrix}
}
\newcommand{\littlelens}[2]{
     \begin{psmallmatrix}{\vphantom{f}#1} \\ {\vphantom{f}#2} \end{psmallmatrix}
}
\newcommand{\lens}[2]{
  \relax\if@display
     \biglens{#1}{#2}
  \else
     \littlelens{#1}{#2}
  \fi
}
\newcommand{\dsum}[1]{({#1}) \times }       
\newcommand{\term}[1]{\mathsf{{#1}}}
\newcommand{\xto}[1]{\xrightarrow{#1}}
\newcommand{\pto}{\,\cdot\kern-.1em{\to}\,}
\newcommand{\pxto}[1]{\,\cdot\kern-.1em{\xto{#1}}\,}
\providecommand*{\xmapstofill@}{%
  \arrowfill@{\mapstochar\relbar}\relbar\rightarrow
}
\providecommand*{\xmapsto}[2][]{%
  \ext@arrow 0395\xmapstofill@{#1}{#2}%
}
\def\slashedarrowfill@#1#2#3#4#5{%
  $\m@th\thickmuskip0mu\medmuskip\thickmuskip\thinmuskip\thickmuskip
   \relax#5#1\mkern-7mu%
   \cleaders\hbox{$#5\mkern-2mu#2\mkern-2mu$}\hfill
   \mathclap{#3}\mathclap{#2}%
   \cleaders\hbox{$#5\mkern-2mu#2\mkern-2mu$}\hfill
   \mkern-7mu#4$%
}
\def\rightslashedarrowfill@{%
  \slashedarrowfill@\relbar\relbar\mapstochar\rightarrow}
\newcommand\xslashedrightarrow[2][]{%
  \ext@arrow 0055{\rightslashedarrowfill@}{#1}{#2}}
\newcommand{\topro}{\xslashedrightarrow{}}
\tikzset{ vert/.style={anchor=south, rotate=90, inner sep=.5mm} }
\newcommand{\internal}[1]{\ulcorner{#1}\urcorner}
\newtheorem{thm}{Theorem}[section]
\theoremstyle{definition}
\newtheorem{defn}[thm]{Definition}
\newtheorem{ex}[thm]{Example}
\newtheorem{rmk}[thm]{Remark}
\newtheorem{notation}[thm]{Notation}
\newtheorem{lem}[thm]{Lemma}
\newtheorem{cor}[thm]{Corollary}
\numberwithin{equation}{subsection}
\newcommand{\makeitwide}{\displayindent0pt\displaywidth\textwidth}
\newenvironment{eqalign}{\begin{equation}\begin{aligned}}{\end{aligned}\end{equation}}
\newenvironment{eqalign*}{\begin{equation*}\begin{aligned}}{\end{aligned}\end{equation*}}
\newcommand{\Overset}[2]{%
  \mathop{#2}\limits^{\vbox to -.1ex{%
  \kern -1.8ex\hbox{$#1$}\vss}}%
}
\newcommand{\Underset}[2]{%
  \mathop{#2}\limits_{\vbox to .1ex{%
  \kern -.6ex\hbox{$#1$}\vss}}%
}
\newcommand{\then}{\mathbin{\fatsemi}}
\mathchardef\dash="2D
\newcommand{\from}{\leftarrow}
\newcommand{\longto}{\longrightarrow}
\newcommand{\into}{\hookrightarrow}
\newcommand{\monoto}{\rightarrowtail}
\newcommand{\monofrom}{\leftarrowtail}
\newcommand{\epito}{\twoheadrightarrow}
\newcommand{\epifrom}{\twoheadleftarrow}
\newcommand{\equalto}{=\mathrel{\mkern-3mu}=}
\newcommand{\nequalto}[1]{\overset{#1}{=\mathrel{\mkern-3mu}=}}
\newcommand{\twoto}{\Rightarrow}
\newcommand{\twofrom}{\Leftarrow}
\newcommand{\threeto}{\Rrightarrow}
\newcommand{\narrow}[2]{\overset{#1}{#2}}
\newcommand{\nto}[1]{\xrightarrow{#1}}
\newcommand{\nlongto}[1]{\xrightarrow{\;#1\;}}
\newcommand{\nepito}[1]{\narrow{#1}{\epito}}
\newcommand{\nfrom}[1]{\xleftarrow{#1}}
\newcommand{\nepifrom}[1]{\narrow{#1}{\epifrom}}
\newcommand{\nisoto}[1]{\xrightarrow[#1]{\sim}}
\newcommand{\isoto}{\nisoto{}}
\newcommand{\isolongto}{\overset{\sim}\longrightarrow}
\newcommand{\looseto}{{\stackMath\mathrel{\stackinset{c}{-0.15ex}{c}{0.15ex}{\shortmid}{\longto}}}}
\newcommand{\nlooseto}[1]{\narrow{#1}{\looseto}}
\newcommand{\N}{\mathbb{N}}
\newcommand{\cat}[1]{\mathbf{#1}}
\newcommand{\dbl}[1]{\mathbb{#1}}
\newcommand{\dblcat}[1]{\cat{\dbl #1}}
\newcommand{\trpl}[1]{\mathfrak{#1}}
\newcommand{\trplcat}[1]{\cat{\trpl #1}}
\newcommand{\supp}{\operatorname{supp}}
\newcommand{\iso}{\cong}
\newcommand{\equi}{\simeq}
\newcommand{\adj}{\dashv}
\newcommand{\Cat}{\dblcat{Cat}}
\newcommand{\Set}{\cat{Set}}
\newcommand{\Mon}{\dblcat{Mon}}
\newcommand{\Gray}{\cat{Gray}}
\newcommand{\MonCat}{{\dblcat{Mon}(\Cat)}}
\newcommand{\BrMonCat}{{\dblcat{Br}\MonCat}}
\newcommand{\SymMonCat}{{\dblcat{Sym}\MonCat}}
\newcommand{\Cart}{\dblcat{Cart}}
\newcommand{\CartCat}{{\Cart(\Cat)}}
\newcommand{\re}{^\mathsf{re}}
\newcommand{\co}{^\mathsf{co}}
\newcommand{\op}{^\mathsf{op}}
\newcommand{\opre}{^\mathsf{reop}}
\newcommand{\core}{^\mathsf{reco}}
\newcommand{\coop}{^\mathsf{coop}}
\newcommand{\rev}{^\mathsf{rev}}
\newcommand{\lop}{^\mathsf{lop}}
\newcommand{\ltop}{^\mathsf{ltop}}
\newcommand{\Fib}{\dblcat{Fib}}
\newcommand{\Span}{\dblcat{Span}}
\newcommand{\Cospan}{\dblcat{Cospan}}
\newcommand{\TriSpan}{\trplcat{Span}}
\newcommand{\FibSpan}{{\cat{f}\TriSpan}}
\newcommand{\OpfibSpan}{{\cat{o}\TriSpan}}
\newcommand{\PsCat}{{\trplcat{Ps}\trplcat{Cat}}}
\newcommand{\Ctx}{\mathfrak{Ctx}}
\newcommand{\Cnt}{\mathfrak{Cnt}}
\newcommand{\DispSpan}{{\cat{d}\TriSpan}}
\newcommand{\Comnd}{\dblcat{Comnd}}
\newcommand{\Para}{\dblcat{Para}}
\newcommand{\Copara}{\dblcat{Copara}}
\newcommand{\para}{\Para}
\newcommand{\copara}{\Copara}
\newcommand{\Kl}{\dblcat{Kl}}
\newcommand{\Alg}{\dblcat{Alg}}
\newcommand{\lax}{l}
\newcommand{\colax}{c}
\newcommand{\pseudo}{p}
\newcommand{\strict}{s}
\newcommand{\DblCat}{\dblcat{Dbl}\Cat}
\newcommand{\acted}{\Ca}
\newcommand{\actor}{\Ma}
\newcommand{\combine}{\mathbin{\otimes}}
\newcommand{\combineunit}{I}
\newcommand{\action}{\mathbin{\odot}}
\newcommand{\fibcolaxaction}[1][]{{\acted#1 \nepifrom{p#1} \actor#1 \nto{\action#1} \acted#1}}
\newcommand{\opfiblaxaction}[1][]{{\acted#1 \nfrom{\action#1} \actor#1 \nepito{q#1} \acted#1}}
\newcommand{\ctxtad}[1][]{{\acted#1 \nto{p#1} \actor#1 \nto{\action#1} \acted#1}}
\newcommand{\lineator}{\ell}
\newcommand{\colineator}{o}
\newcommand{\naturator}{\nu}
\newcommand{\mapunitor}{\eta}
\newcommand{\mapmultiplicator}{\mu}
\newcommand{\counitor}{\varepsilon}
\newcommand{\coassociator}{\delta}
\newcommand{\unitor}{\eta}
\newcommand{\associator}{\mu}
\newcommand{\Comonad}{D}
\newcommand{\Monad}{\Mb}
\newcommand{\monoidal}{\mathbin{\boxtimes}}
\newcommand{\monoidalunit}{1}
\newcommand{\leftunitlaw}{\lambda}
\newcommand{\rightunitlaw}{\rho}
\newcommand{\associativitylaw}{\alpha}
\newcommand{\isofibto}{\to}
\newcommand{\fibto}{\epito}
\newcommand{\spancomp}{\mathbin{\rotatebox[origin=c]{90}{$\ltimes$}}}
\newcommand{\consdownarrows}{\begin{smallmatrix}\downarrow\\[-0.2ex]\downarrow\end{smallmatrix}}
\newcommand{\KL}{\mathsf{KL}}
\newcommand{\EM}{\mathsf{EM}}
\newcommand{\wreath}{\wr}
\newcommand{\fun}{{\mathsf{fun}}}
\newcommand{\opfun}{{\mathsf{opfun}}}
\newcommand{\unit}{\eta}
\newcommand{\counit}{\epsilon}
\newcommand{\monad}{s}
\newcommand{\unitmnd}{\upsilon}
\newcommand{\multmnd}{\mu}
\newcommand{\pull}{\term{pull}}
\newcommand{\push}{\term{push}}
\newcommand{\lift}{\term{lift}}
\newcommand{\counitpull}{\lambda}
\newcommand{\unitpull}{\eta}
\newcommand{\unitarrow}{\term{\iota}}
\newcommand{\multarrow}{\term{\gamma}}
\newcommand{\intertwiner}{w}
\newcommand{\Cosmos}{\Kb}
\newcommand{\Display}{\Da}
\newcommand{\Paradise}{(\Cosmos,\Display)}
\newcommand{\DualParadise}{(\Cosmos\co,\Display)}
\newcommand{\comma}{\downarrow}
\newcommand{\generic}{\chi}
\newcommand{\Bicat}{\trplcat{Bicat}}
\newcommand{\TwoCat}{\Bicat^\strict_\strict}
\newcommand{\InclusionTrifun}{(-,\, (-)^\downarrow)}
\newcommand{\colaxity}[1]{\overline{#1}}
\newcommand{\simple}{s}
\newcommand{\Simple}[1]{#1 \ltimes #1}
\newcommand{\Fam}{\mathbf{Fam}}
\newcommand{\Cxd}{\trplcat{Cxd}}
\newcommand{\Cnd}{\trplcat{Cnd}}
\newcommand{\extch}{\mathbin{\&}}
\newcommand{\lcomp}{\then}
\newcommand{\lid}{1}
\newcommand{\AdTrpl}{{\trplcat{Ad}\trplcat{Trpl}}}
\newcommand{\forwto}{\rightarrowtail}
\newcommand{\backto}{\twoheadrightarrow}
\newcommand{\forwdown}{{\rotatebox[origin=c]{-90}{$\rightarrowtail$}}}
\newcommand{\backdown}{{\rotatebox[origin=c]{-90}{$\twoheadrightarrow$}}}
\newcommand{\FunWreaths}{\trplcat{Wrt}_\fun}
\newcommand{\Ctxad}{\trplcat{Ctd}}
\newcommand{\Doc}{\trplcat{Doc}}
\newcommand{\Par}{\dblcat{Par}}
\newcommand{\tow}[1]{#1 \cdot}
\newcommand{\cartesianator}{\kappa}
\newcommand{\combineunitcart}{\cartesianator^\combineunit}
\newcommand{\combinecart}{\cartesianator^{\combine}}
\newcommand{\barcombineunitcart}{\bar{\cartesianator}^\combineunit}
\newcommand{\barcombinecart}{\bar{\cartesianator}^{\combine}}
\newcommand{\State}{\mathsf{State}}
\newcommand{\FinProb}{\cat{FinProb}}
\newcommand{\law}{\term{law}}
\title{Contextads as Wreaths;\\Kleisli, Para, and Span Constructions as Wreath Products\\\vspace*{2ex}}
\author{
	{\large Matteo Capucci}\\
	{\small Independent Researcher}\\
	{\small Modena (IT)}\\
	{\small \texttt{\href{mailto:matteo.capucci@gmail.com}{matteo.capucci@gmail.com}}}
	\and
	{\large David Jaz Myers}\\
	{\small Topos Research UK}\\
	{\small Oxford (UK)}\\
	{\small \texttt{\href{mailto:davidjazmyers@gmail.com}{davidjaz@topos.institute}}}
}
\date{}
\begin{document}

\maketitle

\begin{abstract}
	We introduce contextads and the $\Ctx$ construction, unifying various structures and constructions in category theory dealing with context and contextful arrows---comonads and their Kleisli construction, actegories and their Para construction, adequate triples and their Span construction.

	Contextads are defined in terms of Lack--Street wreaths, suitably categorified for pseudomonads in a tricategory of spans in a 2-category with display maps.
	The associated wreath product provides the $\Ctx$ construction, and by its universal property we conclude trifunctoriality.
	This abstract approach lets us work \emph{up to structure}, and thus swiftly prove that, under very mild assumptions, a contextad equipped colaxly with a 2-algebraic structure produces a similarly structured double category of contextful arrows.

	We also explore the role contextads might play \emph{qua} dependently graded comonads in organizing contextful computation in functional programming.
	We show that many side-effects monads can be dually captured by dependently graded comonads, and gesture towards a general result on the `transposability' of parametric right adjoint monads to dependently graded comonads.
\end{abstract}

\tableofcontents*

\section{Introduction}
\label{sec:intro}

There are a number of situations across mathematics where we want to consider a map ${f : A \to B}$ as depending not only on its domain $A$ but also on some extra, perhaps implicit context.
The most basic and common case of this phenomenon is \emph{parameterization}, where we consider a function ${f : A \times P \to B}$ of two variables as ``really'' a function ${A \to B}$ which nevertheless depends on some external \emph{parameters} in $P$.
Likewise, mathematicians working on the semantics of functional programming languages have long observed that context-sensitive computations of type $A \to B$ are well represented by (co)Kleisli morphisms ${f : \Comonad A \to B}$, where $\Comonad$ is a comonad abstracting appropriate elements of the context.

In this paper, we will see all of these examples as part of a single universal construction in higher category theory: the wreath product of pseudomonads in tricategories of spans.
While one can form parameterized maps (the $\para$ construction) given an action of a monoidal category (of parameter spaces on a category of interface spaces), and while one can form the Kleisli construction given a comonad, we will see our construction as taking a \emph{contextad} $\action$ on a category (which might also have been called a \emph{colax fibred action} or \emph{dependently graded comonad} on that category) and producing a double category $\Ctx(\action)$ of contexful arrows.
We call this \emph{the $\Ctx$ construction}.
This level of abstraction has a number of benefits.

\begin{enumerate}
	\item Simply, it is precisely the right level of generality, which we got to by following the math: contextads on a category $\acted$ are exactly wreaths around the double category of commuting squares in $\acted$, considered as a pseudomonad in spans, and the $\Ctx$ construction is the wreath product.
	\item There are new examples of contextads---a.k.a. colax fibred actions or dependently graded comonads---which are not just actegories or graded comonads.
	The $\Ctx$ constructions of these examples include double categories of partial maps, spans, and even Kleisli categories of polynomial monads.
	We encourage readers interested in dependently graded comonads to skip forward to the largely self-contained \cref{sec:ctx.as.dep.graded.comonad}.
	\item Since our construction is entirely abstract, it works just as well in 2-categories other than $\Cat$.
	We will exploit this fact to show that when the contextad is suitably structured, its $\Ctx$ construction inherits this structure.
	This recovers facts such as: the Kleisli double category of an oplax monoidal comonad is itself monoidal, and the double category of spans in a category with pullbacks is a cartesian double category.
\end{enumerate}

To explain our generalization and how the wreath product appears, we will begin at both starting points: parameterized maps in \cref{sec:para.intro}, and Kleisli maps for a comonad in \cref{sec:kleisli.intro}.

\subsection{Parameterized maps and the Para construction} \label{sec:para.intro}
In a parameterized function $f : A \times P \to B$, we think of the parameters $P$ as playing a qualitatively different role to that of the input domain $A$; for example, their values may be set by a different method.
At a very high level, one may understand modern machine learning as a process for finding a parameter $p \in P$ so $f(-, p) : A \to B$ closely matches a target function; here the parameters are set by, say, gradient descent, while the input $a \in A$ would be set by a user of the model.
This difference in roles between parameter and input expresses itself in the way that parameterized maps are composed: given $f : A \times P \to B$ and $g : B \times Q \to C$, we may define a composite $g \hat{\circ} f : A \times (P \times Q) \to C$ by $(g \hat{\circ} f)(a, (p, q)) := g(f(a, p), q)$---that is, we feed forward the input, but we preserve both parameters.

This kind of composition of parameterized maps was, to our knowledge, first described categorically by Fong, Spivak, and Tuyeras for the purpose of giving a functorial description of backpropagation \cite{fong-spivak-tuyeras-backprop-as-functor}.
In \emph{ibid.}, they define two categories and give a functor between them.
The first they call $\para$; this category had the natural numbers as objects, and a morphism $(k, f) : n \to m$ is a differentiable function with $k$ free parameters from $\Rb^n$ to $\Rb^m$---that is, $f : \Rb^n \times \Rb^k \to \Rb^m$, composing as described above.
The second category, $\cat{Learn}$, is a bit more complicated; objects are sets $A$, while a morphism $A \to B$ is a \emph{learner} \cite[Definition~2.1]{fong-spivak-tuyeras-backprop-as-functor}, a tuple $(P, I, U, r)$ where $P$ is a parameter set, and
\begin{eqalign}
	I &: A \times P \to B, \\
	U &: A \times B \times P \to P, \\
	r &: A \times B \times P \to A
\end{eqalign}
with a somewhat involved composition law.
The data involved in a learner may be conceptually simplified by observing that it too is a parameterized map of a given sort \cite{cruttwell_categorical_2022}.
Specifically, a learner $(P, I, U, r) : A \to B$ is a \emph{parameterized lens}\footnote{
	By ``lens'' $\lens{f^-}{f^+} : \lens{A^-}{A^+} \leftrightarrows \lens{B^-}{B^+}$, we mean a pair of maps $f^+ : A^+ \to B^+$ and $f^- : A^+ \times B^- \to A^-$.
	Lenses compose by $\lens{g^-}{g^+} \circ \lens{f^-}{f^+} = \lens{(a^+, c^-) \mapsto g^-(f^+(a^+), f^-(a^+, c^-))}{g^+ \circ f^+}$.
	Lenses become a symmetric monoidal category when equipped with the monoidal product $\lens{A^-}{A^+} \otimes \lens{B^-}{B^+} := \lens{A^- \times B^-}{A^+ \times B^+}$.
	For an introduction to lenses and the role they play in categorical systems theory, see e.g. \cite[\S~1.2]{myers_categorical_2022}.
	These sorts of lenses are equivalent to the familiar van Laarhoven and profunctor lenses used in functional programming, see e.g. \cite{clarke_profunctor_2024}.
}.
\begin{equation}
	\lens{(U, r)}{I} : \lens{A}{A} \otimes \lens{P}{P} \leftrightarrows \lens{B}{B}.
\end{equation}
Furthermore, composition of learners \cite[Proposition~2.4]{fong-spivak-tuyeras-backprop-as-functor} is precisely analogous to composition of parameterized maps, but taken in the symmetric monoidal category of lenses:
\begin{equation}
	\lens{A}{A} \otimes \left(\lens{Q}{Q} \otimes \lens{P}{P}\right) \overset{\sim}{\leftrightarrows} \left(\lens{A}{A} \otimes \lens{P}{P}\right) \otimes \lens{Q}{Q} \overset{\littlelens{(U, r)}{I}}{\leftrightarrows} \lens{B}{B} \otimes \lens{Q}{Q} \overset{\littlelens{(U', r')}{I'}}{\leftrightarrows} \lens{C}{C}.
\end{equation}
This use of parameterized lenses in understanding gradient-descent based learning is continued by Gavranovi{\'c} \emph{et al} in \cite{gavranovic_compositional_2019, cruttwell_categorical_2022}.
Parameterized lenses do not only appear as learners, but also as \emph{controlled systems} $\lens{A^-}{A^+} \otimes  \lens{TS}{S}\leftrightarrows \lens{B^-}{B^+}$ in the setting of categorical cybernetics \cite{capucci_towards_2022}, where the parameters $\lens{TS}{S}$ play the role of the \emph{controller} whose state controls a \emph{plant} or process $\lens{A^-}{A^+} \leftrightarrows \lens{B^-}{B^+}$.
In this case, we see that the parameters need not be an object of the same category as the inputs (here we take a single state space $S$ as parameter, while the input is an upstream-downstream pair of spaces), so long as we can form the combined input-parameter space $\lens{A^-}{A^+} \otimes \lens{TS}{S}$.

This suggests that rather than taking $\para$ as a single category, we should see it as a \emph{construction} which takes an \emph{actegory}---action of a monoidal category---$\action : \acted \times \actor \to \acted$ of a monoidal category $(\actor, \combine, \combineunit)$ and produces a new category $\para(\action)$ whose objects are the same as $\acted$ but whose morphisms $A \to B$ are parameterized maps $f : A \action P \to B$ with $P \in \actor$.
However, since we compose not only the parameterized functions but also the parameter spaces themselves ($P$ and $Q$ are combined into $P \combine Q$), composition is not strictly unital or associative.
To end up with a category, we might therefore quotient by the relation that $(P, f) \sim (P', f')$ when $\alpha : P \cong P'$ is an isomorphism for which $f' \circ (\id_A \action \alpha) = f : A \action P \to B$; this is the approach taken in \cite[Definition~1]{gavranovic_compositional_2019}.
But the relation $f' \circ (\alpha \otimes \id_A) = f$ shows up quite often, and not only for invertible $\alpha : P \to P'$; this equality expresses that $f'$ is a \emph{reparameterization} of $f$.
It is worth keeping track of the reparameterizations as morphisms between parameterized maps.
We may instead see $\para(\action)$ as a bicategory whose 2-cells are such reparameterizations, following \cite[Definition~2]{capucci_towards_2022}.

In this paper, however, we will see that $\para(\action)$ is most naturally thought of as a \emph{double category} with tight morphisms those of $\acted$ and loose arrows the parameterized maps.

\begin{defn}
	A (right) \emph{actegory} consists of a monoidal category $(\actor, \combine, \combineunit)$ together with an action $\action : \acted \times \actor \to \acted$ with natural isomorphisms $\coassociator : C \action (M \combine N) \xto{\sim} (C \action M) \action N$ and $\counitor : C \action \combineunit \xto{\sim} C$ satisfying coherences reminiscent of those defining a monoidal category (see \cite[Definition~3.1.1]{capucci2022actegories}, though note the change of direction of $\coassociator$ and $\counitor$).

	Equivalently, $\action : \acted \times \actor \to \acted$ defines an actegory when its transpose $\action : \actor \to \Fun(\acted, \acted)$ has the structure of a strong monoidal functor $(\actor, \combine, \combineunit) \to (\Fun(\acted, \acted), \circ, \id)$; the compositor is $\coassociator$ and the unitor is $\counitor$.
\end{defn}

\begin{defn}
\label{defn:para.actegory}
	Given an actegory $\action : \acted \times \actor \to \acted$, we define the \emph{Para construction} $\para(\action)$ of $\action$ to be the double category with:
	\begin{enumerate}
		\item Tight category the category $\acted$.
		\item loose arrows $(M, f) : A \topro B$ pairs $M \in \Ma$ and $f : A \action M \to B$ in $\Ca$.
		      Composition of loose arrows $(M, f) : A \topro B$ and $(N, g) : B \topro C$ is given by $((M \combine N), g (M \action f) \action \coassociator)$:
		      \begin{equation}
			  \label{eqn:para.comp.intro}
			      A \action (M \combine N) \xto{\coassociator} (A \action M) \action N \xto{f \action N} B \action N \xto{g} C.
		      \end{equation}
		      The loose identity is $(\combineunit, \counitor) : A \topro A$.
		\item A square as on the left consists of a $\varphi : M \to M'$ such that the square on the right commutes:
		      \begin{equation}
			      \label{eqn:para.square}
			      \begin{tikzcd}
				      {A} & {B} \\
				      {A'} & {B'}
				      \arrow["{h}"', from=1-1, to=2-1]
				      \arrow["{k}", from=1-2, to=2-2]
				      \arrow[""{name=0, anchor=center, inner sep=0}, "{(M, f)}", "\shortmid"{marking}, from=1-1, to=1-2]
				      \arrow[""{name=1, anchor=center, inner sep=0}, "{(M', f')}"', "\shortmid"{marking}, from=2-1, to=2-2]
				      \arrow["\varphi", shorten <=4pt, shorten >=4pt, Rightarrow, from=0, to=1]
			      \end{tikzcd}
			      \quad\quad\quad
			      \begin{tikzcd}
				      {A \action M} & {B} \\
				      {A' \action M'} & {B'}
				      \arrow["{f}", from=1-1, to=1-2]
				      \arrow["{f'}"', from=2-1, to=2-2]
				      \arrow["{h \action \varphi}"', from=1-1, to=2-1]
				      \arrow["{k}", from=1-2, to=2-2]
			      \end{tikzcd}
		      \end{equation}
		      Tight composition of squares is given by composition in $\actor$, while loose composition of squares is given by $\combine$.
		\item The unitors $\leftunitlaw$ and $\rightunitlaw$ and associator $\associativitylaw$ of the monoidal category $\combine$ give unitors and associators for loose composition, and the interchange law in $\actor$ gives the interchange law in $\para(\action)$.
	\end{enumerate}
\end{defn}

The main difference between \cref{defn:para.actegory} and the $\para$ bicategory in Definition 2 of \cite{capucci_towards_2022} is that reparameterizations in our double category may also include change of input and output.
For us, the primary benefit of considering $\para(\action)$ as a double category is that double categories are equivalently pseudomonads in the tricategory of spans of categories---just as categories are monads in the bicategory of spans of sets.
The central result of this paper is that the Para construction arises as a very natural construction for pseudomonads---the \emph{wreath product}.

But before we get there, let's notice one important feature of \cref{defn:para.actegory}: there was no reason to assume that $\coassociator$ and $\counitor$ were invertible.
The definition works just as well if $\coassociator$ and $\counitor$ give $\action : \acted \times \actor \to \acted$ the structure of a \emph{colax action}, or equivalently if they give the transpose $\action : \actor \to \Fun(\acted, \acted)$ the structure of a colax monoidal functor.
The latter notion---a colax monoidal functor into a functor category---is known as a \emph{graded comonad}, and the Para construction is exactly its \emph{Kleisli (double) category}.

\subsection{Comonads and the Kleisli construction}\label{sec:kleisli.intro}

The use of monads to give semantics for programs with side-effects goes back to Moggi's seminal work on models of computation in Kleisli categories for monads \cite{moggi_notions_1991}.
In the monadic picture, a side effect is produced as a result of a computation; an effectful computation is therefore typed as $f : A \to \Mb B$ for a monad $\Mb$.
But Kieburtz argues \cite{kieburtz_codata_1999} that not all side-effects should be seen in this way.
Some side effects arise from the \emph{context} that a computation is performed in, even if the result of that computation is some data fully captured by the type system.
For this reason, Kieburtz argues that computations whose side-effects depend on their context---in particular I/O---should be typed as $f : \Comonad A \to B$---with $\Comonad$ a comonad, consuming both the input and the context to produce a (contextless) element.
In \cite{uustalu_comonadic_2008}, Uustalu and Vene extend this interpretation to more general contextful computations.

Recall that a comonad $\Comonad : \acted \to \acted$ is an endofunctor equipped with two natural transformations, $\counitor : \Comonad A \to A$ (which we think of as decontextualizing a value) and $\coassociator : \Comonad A \to \Comonad \Comonad A$ (which we think of as duplicating the context), that together endow $\Comonad$ with the structure of a comonoid in $\Fun(\acted, \acted)$.
Thinking of $\Comonad A$ as the type of $A$-values in a context determined by $\Comonad$, a contexful arrow is a Kleisli morphism $f : \Comonad A \to B$.
Composition of Kleisli morphisms involves duplicating the context and passing it forward: the composite of $f : \Comonad A \to B$ with $g : \Comonad B \to C$ is given by
\begin{equation}
\label{eqn:comonad.kleisli.comp}
	\Comonad A \xto{\coassociator} \Comonad \Comonad A \xto{\Comonad f} \Comonad B \xto{g} C.
\end{equation}
The identity for Kleisli composition is the counit $\counitor : \Comonad A \to A$.
The Kleisli category for a comonad is usually considered as just that: a category.
However, the interplay between ``pure'' and ``context-sensitive'' functions is a central theme of the use of comonads in programming; for this reason, it makes sense to consider instead the Kleisli \emph{double category} whose tight morphisms are the ``pure'' morphisms of $\acted$, whose loose arrows are the ``context-sensitive'' Kleisli morphisms, and where there is a unique square as on the left whenenver the square on the right commutes:

\begin{equation}
\label{eqn:kleisli.square}
	\begin{tikzcd}
		{A} & {B} \\
		{A'} & {B'}
		\arrow["{h}"', from=1-1, to=2-1]
		\arrow["{k}", from=1-2, to=2-2]
		\arrow[""{name=0, anchor=center, inner sep=0}, "{f}", "\shortmid"{marking}, from=1-1, to=1-2]
		\arrow[""{name=1, anchor=center, inner sep=0}, "{f'}"', "\shortmid"{marking}, from=2-1, to=2-2]
	\end{tikzcd}
	\quad\quad\quad
	\begin{tikzcd}
		{\Comonad A} & {B} \\
		{\Comonad A'} & {B'}
		\arrow["{f}", from=1-1, to=1-2]
		\arrow["{f'}"', from=2-1, to=2-2]
		\arrow["{\Comonad h}"', from=1-1, to=2-1]
		\arrow["{k}", from=1-2, to=2-2]
	\end{tikzcd}
\end{equation}

Note the similarity of coKleisli composition of \cref{eqn:comonad.kleisli.comp} to the composition of parametric maps of \cref{eqn:para.comp.intro}:
\begin{equation}
	A \action (M \combine N) \xto{\coassociator} (A \action M) \action N \xto{f \action N} B \action N \xto{g} C.
\end{equation}

We can bring these two notions of composition closer together.
Recall that a comonoid in a monoidal category is equivalently a colax monoidal functor from the terminal monoidal category.
A comonad $\Comonad : \acted \to \acted$ is therefore equivalently a colax monoidal functor $\Comonad : \ast \to \Fun(\acted, \acted)$, which in the terminology of \cref{sec:para.intro} is a \emph{colax action} of the terminal monoidal category.
As we remarked above, \cref{defn:para.actegory} works just as well for colax actions as for pseudo-actions of monoidal categories.
In the comonad literature, a general colax action $\action : \actor \to \Fun(\acted, \acted)$ is known as a \emph{graded comonad}.
Graded monads (lax actions) where first introduced in program semantics under the name ``parametric effect monads'' by Katsumata in \cite{katsumata:graded.monads}; graded comonads were used for studying ``coeffects'' in \cite{GKO:effects.grading}.
Indeed, we might instead call the Para construction the Kleisli construction for
\emph{graded comonads}.\footnote{Although note that this is not the same
	construction as Fujii--Katsumata--Melli\'es Kleisli construction (originally from \cite[Definition~6]{FKM:graded.monads} for graded \emph{monads}, and spelled out in \cite[Definition~4.65]{fujii_2-categorical_2019} for graded comonads too).}

Because we have two names for the same construction, we will invent a third: we will refer to both the $\para$ and Kleisli constructions as instances of the more general $\Ctx$ construction which we come to now.

\subsection[Ctx as wreath product]{$\Ctx$ as a wreath product}\label{sec:wreath.product.intro}

In this section, we will see how the $\Ctx$ construction---which subsumes the $\para$ construction of a colax action and the Kleisli construction of a comonad---may be understood as a wreath product of pseudomonads in spans of categories.

A category $\Ca$ may be described as a monad in the bicategory of spans of sets; the underlying span itself is $\Ca_0 \xleftarrow{\partial_0} \Ca_1 \xrightarrow{\partial_1} \Ca_0$ has the set $\Ca_0$ of objects of $\Ca$ as its feet and the set $\Ca_1$ of arrows of $\Ca$ as its apex, with the legs assigning each arrow to its source and target respectively. This story categorifies neatly: a (pseudo-)double category $\Ca$ is a \emph{pseudo}monad $\Ca_0 \xleftarrow{\partial_0} \Ca_1 \xrightarrow{\partial_1} \Ca_0$ in the \emph{tri}category of spans of categories, where now $\Ca_0$ is the category of objects and tight morphisms and $\Ca_1$ is the category of loose arrows and squares of $\Ca$.

Thinking of double categories as pseudomonads in the tricategory of spans of categories, we are led to ask what $\Ctx(\action)$ looks like as a span of categories. If $\action : \acted \times \actor \to \acted$ is a colax action (or graded comonad), then the category of objects and tight morphisms of $\Ctx(\action)$ is $\acted$ again. Looking at the definition of square in $\Ctx(\action)$---\cref{eqn:para.square} (or \cref{eqn:kleisli.square})---we can see that the underlying span of $\Ctx(\action)$ arises as a pullback or composite of spans:

\begin{equation}
	\begin{tikzcd}[sep=small]
		&& {\Ctx(\action)_1} \\
		& {\acted \times \actor} && {\acted^{\downarrow}} \\
		\acted && \acted && \acted
		\arrow[from=1-3, to=2-2]
		\arrow[from=1-3, to=2-4]
		\arrow["\lrcorner"{anchor=center, pos=0.125, rotate=-45}, draw=none, from=1-3, to=3-3]
		\arrow["{\pi_{\acted}}"', from=2-2, to=3-1]
		\arrow["\action", from=2-2, to=3-3]
		\arrow["{\partial_0}"', from=2-4, to=3-3]
		\arrow["{\partial_1}", from=2-4, to=3-5]
	\end{tikzcd}
\end{equation}

On the right, we have the underlying span $\acted \xleftarrow{\partial_0} \acted^{\downarrow} \xrightarrow{\partial_1} \acted$ of the double category of commuting squares in $\acted$, whose apex is the category of arrows in $\acted$ and whose legs take the domain and codomain respectively.
Therefore, the pullback $\Ctx(\action)_1$ has as objects the triples $(A, M, f : B \action M \to B)$ and morphisms the triples of maps $(h, \varphi, k) : (A, M, f : B \action M \to C) \to (A', M', f' : A' \action M' \to B')$ so that the pair $(h \action \varphi, k)$ is a morphism in $\acted^{\downarrow}$; that is, for which the following square commutes:
\begin{equation}
	\begin{tikzcd}
		{A \action M} & {B} \\
		{A' \action M'} & {B'}
		\arrow["{f}", from=1-1, to=1-2]
		\arrow["{f'}"', from=2-1, to=2-2]
		\arrow["{h \action \varphi}"', from=1-1, to=2-1]
		\arrow["{k}", from=1-2, to=2-2]
	\end{tikzcd}
\end{equation}
This is exactly \cref{eqn:para.square}. The legs of the span $\acted \leftarrow \Ctx(\action)_1 \rightarrow \acted$ project out $A$ and $B$;
therefore, this composite span is the span underlying the double category $\Ctx(\action)$ when considered as a pseudomonad.
We just need to put the correct pseudomonad structure on it.

\subsubsection{Seeking a distributive law.}

Now, the span $\acted \xleftarrow{\partial_0} \acted^{\downarrow} \xrightarrow{\partial_1} \acted$ underlies a pseudomonad in $\Span(\Cat)$---the double category of commuting squares in $\acted$.
Suppose for the moment that $\action : \acted \times \actor \to \acted$ were not a colax action but a \emph{strict} action of $\actor$ on $\acted$ with both $\counitor$ and $\coassociator$ being identity morphisms.
In this case, we could endow the span $\acted \xleftarrow{\pi} \acted \times \actor \xto{\action} \acted$ with the structure of a pseudomonad corresponding to the \emph{Cayley} or \emph{action} double category (a categorification of the Cayley graph or action category of a monoid action):

\begin{equation}
\label{eqn:colax.action.spans}
	\begin{tikzcd}[row sep=scriptsize]
		\Ca & \Ca & \Ca \\
		\Ca & {\Ca \times \Ma} & \Ca
		\arrow[from=1-2, to=1-1, equals]
		\arrow[from=1-2, to=1-3, equals]
		\arrow[from=1-1, to=2-1, equals]
		\arrow[""{name=0, anchor=center, inner sep=0}, from=1-3, to=2-3, equals]
		\arrow["\pi_1", from=2-2, to=2-1]
		\arrow["\action"', from=2-2, to=2-3]
		\arrow[""{name=1, anchor=center, inner sep=0}, "{\Ca \times \combineunit}"', from=1-2, to=2-2]
		\arrow["\counitor", shorten <=9pt, shorten >=9pt, Rightarrow, from=1, to=0]
	\end{tikzcd}\quad\quad
	\begin{tikzcd}[ampersand replacement=\&, sep=scriptsize]
		\Ca \& {\Ca \times \Ma} \& \Ca \& {\Ca \times \Ma} \& \Ca \\
		\&\& {\Ca \times \Ma \times \Ma} \\
		\Ca \&\& {\Ca \times \Ma} \&\& \Ca
		\arrow["{{\pi_{12}}}", from=2-3, to=1-2]
		\arrow["{{\action \times \Ma}}"', from=2-3, to=1-4]
		\arrow["\action", from=1-2, to=1-3]
		\arrow["{\pi_1}"', from=1-4, to=1-3]
		\arrow["\lrcorner"{anchor=center, pos=0.125, rotate=135}, draw=none, from=2-3, to=1-3]
		\arrow["{\pi_1}"', from=1-2, to=1-1]
		\arrow["\action", from=1-4, to=1-5]
		\arrow["\action"', from=3-3, to=3-5]
		\arrow["{\pi_1}", from=3-3, to=3-1]
		\arrow[Rightarrow, no head, from=1-1, to=3-1]
		\arrow[""{name=0, anchor=center, inner sep=0}, Rightarrow, no head, from=1-5, to=3-5]
		\arrow[""{name=1, anchor=center, inner sep=0}, "{{\Ca \times \combine}}"', from=2-3, to=3-3]
		\arrow["\coassociator", shorten <=38pt, shorten >=38pt, Rightarrow, from=1, to=0]
	\end{tikzcd}
\end{equation}

Since we are, for now, assuming that $\counitor$ and $\coassociator$ are identities, the above diagrams are commuting diagrams and therefore express morphisms of spans.
Together with the unit and associativity isomorphisms of the monoidal structure of $\actor$, these give a $\acted \xleftarrow{\pi} \acted \times \actor \xto{\action} \acted$ the structure of a pseudomonad.
Therefore, the span $\acted \leftarrow \Ctx(\action)_1 \rightarrow \acted$ underlying $\Ctx(\action)$ arises as a composite of spans underlying pseudomonads;
since we want this span to have a pseudomonad structure itself given by parametric composition, we might search for a distributive law of $\acted \xleftarrow{\pi} \acted \times \actor \xto{\action} \acted$ over $\acted \xleftarrow{\partial_0} \acted^{\downarrow} \xrightarrow{\partial_1} \acted$.

Such a distributive law would be given by the functor
\begin{equation}
\label{eqn:distributive.law}
	\Lambda : 	\acted^{\downarrow}  {{}_{\partial_1}\times_{\pi}} (\acted \times \actor)\to (\acted \times \actor) {{}_{\action}\times_{\partial_0}} \acted^{\downarrow}
\end{equation}

defined by
\begin{equation}
	\Lambda(f : A \to B,\, (B, M)) := ((A, M), f \action \id_{N} : A \action M \to B \action M).
\end{equation}
The usual formula for computing the composite multiplication induced by a distributive law then gives us the map
\begin{eqalign}
\label{eqn:distributive.law.comp}
	((A, M),\ f : A \action M \to B,\ (B, N),\ g : B \action N \to C) & \mapsto ((A, M),\ (A\action M, N),\ f \action N, g) \\
	                                                                                   & \mapsto ((A, M \combine N),\ g (f \action N)).
\end{eqalign}
where $g (f \action N) : A \action (M \combine N) = (A \action M) \action N \to C$ has the correct domain because we assumed that the action was strict.
This distributive law $\Lambda$ is not chosen arbitrarily;
it emerges from the fact that the first projection $\acted \xleftarrow{\pi_1} \acted \times \actor$ is a cartesian fibration.
Explicitly, $\Lambda(f, (B, M)) = (f^*(B, M), \lift_{\pi_1}(f))$ takes
the cartesian lift of the map $f$ provided by the fibration structure on the
first projection $\pi_1$.

\subsubsection{Accounting for colaxity: left-fibrant spans}

So long as the action is strict, the distributive law $\Lambda$ gives us the formula \cref{eqn:distributive.law.comp} for parametric composition.
But in general, actions will not be strict; they will be counital and coassociative only up to coherent isomorphism or, as we have seen above, only up to coherent non-invertible morphisms---hence strong or colax.
If 2-cells in \cref{eqn:colax.action.spans} are not
identities, the span $\acted \xleftarrow{\pi} \acted \times \actor \xto{\action} \acted$ does not even underlie a pseudomonad---at least, not in the
tricategory of spans of categories with strictly commuting morphisms of spans.
To express the algebraic structure of a colax action as a pseudomonad, we will
therefore need to change the tricategory we work in.

Indeed, $\acted \xleftarrow{\pi} \acted \times \actor \xto{\action} \acted$ will have a pseudomonad structure in a tricategory of spans of categories where a morphism of spans commutes strictly on the left but only colaxly on the right to match with the diagrams \cref{eqn:colax.action.spans}.
That is, we we need a tricategory where a 2-cell is a morphism of spans like so:
\begin{equation}
\label{eqn:fibspan.2cell}
	\begin{tikzcd}[row sep=scriptsize]
		\Ca & \Ea & \Da \\
		\\
		\Ca & \Ea' & \Da
		\arrow["p"', from=1-2, to=1-1]
		\arrow["f", from=1-2, to=1-3]
		\arrow["{p'}", from=3-2, to=3-1]
		\arrow["{f'}"', from=3-2, to=3-3]
		\arrow[""{name=0, anchor=center, inner sep=0}, "k"', from=1-2, to=3-2]
		\arrow[from=1-1, to=3-1, equals]
		\arrow[""{name=1, anchor=center, inner sep=0}, from=1-3, to=3-3,equals]
		\arrow["\varphi", shorten <=6pt, shorten >=6pt, Rightarrow, from=0, to=1]
	\end{tikzcd}
\end{equation}

Spans between $\Ca$ and $\Da$, the morphisms as above and the evident transformations between them (see \cref{eqn:transformation.of.right.colax.maps.of.spans}) do form a 2-category $\Span^{\Rightarrow}(\Ca, \Da)$ between two fixed categories $\Ca$ and $\Da$; but composition of spans does not extend into a 2-functor $\Span^{\Rightarrow}(\Ca, \Da) \times \Span^{\Rightarrow}(\Da, \Ba) \to \Span^{\Rightarrow}(\Ca, \Ba)$ because we can no longer apply the universal property of the pullback to the non-commuting squares on the right.

However, when the left legs of the spans involved are cartesian (Grothendieck)
fibrations (and the span maps are cartesian with respect to these), we can then
use the cartesian lift to define a composition.
We take this approach in \cref{lem:fspan.composition} to define a tricategory
$\FibSpan^{\Rightarrow}$ of spans whose left leg is a fibration and with 2-cells
as in \cref{eqn:fibspan.2cell}.
We call these \emph{left-fibrant spans}.

With the composition defined in \cref{lem:fspan.composition}, colax actions become examples of pseudomonads in this tricategory $\FibSpan^{\Rightarrow}$.
But there is really no reason to restrict ourselves to pseudomonads in $\FibSpan^{\Rightarrow}$ whose left leg is a product projection.
As mentioned above, the purported distributive law (\ref{eqn:distributive.law}) we are aiming at only really used the fact that the product projection $\pi_C : \acted \times \actor \to \acted$ was a cartesian fibration.

In \cref{defn:colax.fibred.action}, we define a \emph{contextad} to be a pseudomonad $\fibcolaxaction$ in $\FibSpan^{\Rightarrow}$; we can think of these either as \textbf{\emph{fibred} colax actions} (where the category of objects which can act on $C \in \acted$ depends on $C$) or \textbf{\emph{dependently} graded comonads} (where the category of grades can depend on the object we will apply the graded comonad to).
Explicitly, if $P \in \actor$ is in the fiber of $p$ over $A\in \acted$ (that is, $p(P) = A$), then we can act on $A$ by $P$ to give $A \action P \in \acted$.
We have units $\combineunit_A \in \acted$ in each fiber, and given $P$ over $A$ and $B$ over $A \action P$, we can form $A \combine B$ over $A$.

We give a by-hand definition of the $\Ctx$ construction for contextads $\fibcolaxaction$ in \cref{defn:ctx.dbl.cat}, where now a contextful morphism is $f : A \action P \to B$ where, as before, $A$ and $B$ are in $\acted$, while $P \in \actor$ is in the fiber over $A$.
Composition now requires us to pull back using the (non-trivial!) fibration structure of $p$; if $f : A \action P \to B$ and $g : B \action Q \to C$, then we define their composite to be
\begin{equation}
	A \action (P \combine f^*Q) \xto{\coassociator} (A \action P) \action
	f^*Q \xto{f \action Q} B \action Q \xto{g} C.
\end{equation}

Examples of these more general contextads abound: if $\acted$ is a category with pullbacks, then $\acted \xleftarrow{\cod} \acted^{\downarrow} \xrightarrow{\dom} \acted$ (note that this is flipped around from the double category of commuting squares) is a contextad, and $\Ctx(\dom) = \Span(\acted)$ is the double category of spans in $\acted$.
Thinking type-theoretically, we could see this example as a dependent version of the right action of $\acted$ on itself by a cartesian product $(A, P) \mapsto A \times P$, where we see a map in $\acted$ as a dependent type $x : A \,\,\vdash\,\, P(x)$ and its domain as the dependent sum $\Sigma(x : A) \times P(x)$.
See \cref{sec:ctx.as.colax.fibred.action} for more details and for more examples of contextads which act like colax actions.
See also \cref{sec:ctx.as.dep.graded.comonad} for examples of contextads which act more like dependently graded comonads.

\subsubsection{Losing the distributive law, discovering the wreath.}

For any contextad $\fibcolaxaction$, we can define a map
\begin{equation}
	\Lambda : 	\acted^{\downarrow}  {{}_{\partial_1}\times_{p}} \actor \to \actor {{}_{\action}\times_{\partial_0}} \acted^{\downarrow}
\end{equation}
given by
\begin{equation}
	\left(\begin{tikzcd}[cramped, row sep=scriptsize] A
			\arrow[swap]{d}{\beta}\\[1.75ex] p(P)\end{tikzcd}, P \in \actor\right) \longmapsto
	\left(\beta^*P \in \actor,\begin{tikzcd}[cramped, row sep=scriptsize]
		A \action \beta^* P \arrow{d}{\action(\lift_p(\beta))}\\[1.75ex] p(P)
		\action P\end{tikzcd}\right)
\end{equation}
However, this will not turn out to be a distributive law over the arrow pseudomonad in $\FibSpan^{\Rightarrow}$.
It seems that by incorporating colaxity, we have lost the distributive law.

Here we are saved by gaining a deeper understanding of the tricategory $\FibSpan^{\Rightarrow}$. There is a straightforward isomorphism between the 2-categories of diagrams as on the left below with those on the right below given by the universal property of the arrow category:
\begin{equation}
	\begin{tikzcd}
		\Ba & \Ea & \Ca && \Ba & \Ea & \Ca \\
		\Ba & \Ea' & \Ca && \Ba & {\Ea' {}_{f} \times_{\dom} \Ca^{\downarrow}} & \Ca
		\arrow["p"', from=2-2, to=2-1]
		\arrow["f", from=2-2, to=2-3]
		\arrow["{\cod}", from=2-6, to=2-7]
		\arrow["{p\dom}"', from=2-6, to=2-5]
		\arrow[from=1-2, to=1-1]
		\arrow[from=1-2, to=1-3]
		\arrow[from=1-6, to=1-5]
		\arrow[from=1-6, to=1-7]
		\arrow[""{name=0, anchor=center, inner sep=0}, "{\internal{\varphi}}"', from=1-6, to=2-6]
		\arrow[""{name=1, anchor=center, inner sep=0}, "{a}"', from=1-2, to=2-2]
		\arrow[from=1-1, to=2-1,equals]
		\arrow[""{name=2, anchor=center, inner sep=0}, from=1-3, to=2-3,equals]
		\arrow[from=1-5, to=2-5,equals]
		\arrow[""{name=3, anchor=center, inner sep=0}, from=1-7, to=2-7,equals]
		\arrow["\varphi", shorten <=6pt, shorten >=6pt, Rightarrow, from=1, to=2]
	\end{tikzcd}
\end{equation}

A map $\Ea \to \Ea' \spancomp \Ca^{\downarrow}$ as above right is a map in the Kleisli bicategory of the pseudomonad on $\Span(\Ba, \Ca)$ given by the right action of the arrow pseudomonad $\Ca \xleftarrow{\dom} \Ca^{\downarrow} \xrightarrow{\cod} \Ca$---an observation which we record as \cref{lem:lax.maps.kleisli}.
Furthermore, the condition that the left leg $p$ of the above span be a fibration is equivalent to it being an algebra for the left action of the arrow pseudomonad $\Ba \xleftarrow{\dom} \Ba^{\downarrow} \xrightarrow{\cod} \Ba$ on $\Span(\Ba, \Ca)$ giving us the following equivalence (\cref{thm:main.iso}):
\begin{equation}
	\InclusionTrifun_{\Ba,\Ca}:\FibSpan^{\Rightarrow}(\Ba, \Ca) \isolongto \Alg(\Ba^{\downarrow} \spancomp -,\, \Kl(- \spancomp \Ca^\downarrow,\, \Span(\Ba, \Ca))).
\end{equation}

For those who have read Lack and Street's \emph{Formal Theory of Monads II} \cite{lack_formal_2002}, the expression on the right will be familiar: these are the hom (bi)categories of the free cocompletion of $\Span$ under Kleisli objects for (pseudo)monads.
Of course, Lack and Street actually work with monads and the cocompletion of 2-categories under their Kleisli objects; here we are working with tricategories and need to cocomplete them under Kleisli objects for \emph{pseudo}monads.
Luckily for us, the job of pursuing such a categorification was undertaken by Miranda in his thesis \cite{miranda_topics_2024}.
The free Kleisli cocompletion $\KL(\Span)$ of the tricategory $\Span$ of spans of categories has as objects the pseudomonads in $\Span$ (that is, the double categories) with hom-bicategories given by algebras in Kleisli bicategories as above; see \cref{sec:trikleisli.comp} for our discussion.

In total, we discover a tri-fully faithful inclusion (\cref{thm:fSpan.tricat})
\begin{equation}
	\FibSpan^{\Rightarrow} \hookrightarrow \KL(\Span)
\end{equation}
of the tricategory of left-fibrant spans and colax span maps into the free
Kleisli cocompletion of the tricategory of spans of categories, given by
sending a category $\Ca$ to its arrow pseudomonad in $\KL(\Span)$.

As a corollary, a contextad is therefore equivalently a pseudomonad in $\KL(\Span)$---that is, an object of $\KL(\KL(\Span))$---on an arrow pseudomonad, which is precisely what Lack and Street refer to as a \emph{wreath} around the arrow pseudomonad.
In \cref{sec:ctx.as.wreath.product}, we will see that the $\Ctx$ construction emerges as the \emph{wreath product} $\wreath : \KL(\KL(\Span)) \to \KL(\Span)$ induced by the universal property of the free cocompletion under Kleisli objects.
This gives us not only an abstract look at the $\Ctx$ construction, but also lets us deduce its trifunctoriality in \cref{thm:ctx.const.on.objects} and \cref{defn:ctx.construction}.

\subsection[Structures on Ctx]{Structures on $\Ctx$}\label{sec:structures.intro}

It is a well known theorem that a colax monoidal comonad $\Comonad : \acted \to \acted$ on a monoidal category $(\acted, \monoidalunit, \monoidal)$ has a monoidal Kleisli category.
\footnote{
	It is more common to find this theorem expressed dually: a lax monoidal monad has a monoidal Kleisli category. We do not know where exactly to cite this from.
	Lax monoidal monads are also known as \emph{commutative} monads \cite{kock:commutative.monad}.
}
It is similarly folklore that the $\para$ construction of a braided action of a braided monoidal category on a monoidal category itself carries a monoidal structure.
Likewise, double categories of spans in categories with all finite limits are
\emph{cartesian} double categories.
We can recover these theorems and more as instances of the $\Ctx$ construction
applied in 2-categories other than the 2-category $\Cat$ of categories.

We have seen that the $\Ctx$ construction---which subsumes the $\para$ construction for colax actions and the Kleisli construction for (graded) comonads---arises as the wreath product of wreaths around arrow pseudomonads in the tricategory of spans of categories.
But there is really nothing particular about the role of the 2-category $\Cat$ here: from any 2-category $\Cosmos$ with pullbacks, we can form the tricategory $\Span(\Cosmos)$ of spans in it, and if $\Cosmos$ has arrow objects, then we can run the above story entirely in the 2-category $\Cosmos$.
Our strategy for proving the theorems we just recalled will be to choose the appropriate 2-category $\Cosmos$ to perform the $\Ctx$ construction in.

But (strict) pullbacks are rare creatures in the world of 2-categories.
It is very often the case that only some pullbacks exist.
We therefore define (\cref{defn:paradise}) a \emph{paradise} $\Paradise$ to be a 2-category $\Cosmos$ with arrow objects $\Ca^{\downarrow}$ equipped with a class $\Display$ of \emph{display maps} (\cref{defn:display.map.2.cat}) which are stable under pullback and containing the source map $\dom : \Ca^{\downarrow} \to \Ca$.

We then work with the tricategory $\DispSpan\Paradise$ of spans whose left leg is a display map in $\Paradise$.
By the assumption that $\dom : \Ca^{\downarrow} \to \Ca$ is a display map, the span $\Ca \xleftarrow{\dom} \Ca^{\downarrow} \xrightarrow{\cod} \Ca$ becomes a pseudomonad in $\DispSpan\Paradise$ and we can define the $\Ctx$ construction in $\Paradise$ to be the wreath product of wreaths around the arrow pseudomonad.
The only extra assumption now is that the left leg $p$ of a contextad $\fibcolaxaction$ is not only a fibration but also a display map.

Taking the paradise $(\Cat, \{\mathsf{all}\})$ of $\Cat$ equipped with the class of all maps, we recover the story for categories we described above.
But with this extra freedom, we can now put 2-algebraic structure on our categories and see how it transfers to the $\Ctx$ construction.

Suppose that $\Paradise$ is a paradise (such as $(\Cat, \{\mathsf{all}\})$) and that $T : \Cosmos \to \Cosmos$ is a 2-monad encoding some sort of 2-algebraic structure (such as the free monoidal category 2-monad on $\Cat$).
In \cref{thm:colax.paradise}, we show that the 2-category $\Alg(T)_c$ of (strict) $T$-algebras and \emph{colax} $T$-morphisms is a paradise when equipped with the display map class $\{\mathsf{sdnf}\}$ of \emph{strict displayed normal fibrations}---that is, strict $T$-algebra morphisms whose underlying map in $\Cosmos$ is a display map there, and which are fibrations in $\Alg(T)_c$ for which lifts of identities are identities.
We call a contextad in $\Alg(T)_c$ \emph{colaxly $T$-structured} (\cref{defn:colaxly.T.structured.colax.fibred.action}); for $T$ the free monoidal category $2$-monad on $\Cat$, the colaxly $T$-structured contextads include the braided colax actions of monoidal categories.

In \cref{thm:para.is.colaxly.structured}, we show that $\Ctx(\action)$ of any colaxly $T$-structured contextad $\fibcolaxaction$ is a \emph{colaxly $T$-structured pseudocategory} (\cref{defn:colaxly.T.structured.double.category})---that is, a pseudocategory $\Ca_0 \xleftarrow{s} \Ca_1 \xrightarrow{t} \Ca_0$ in $\Cosmos$ whose objects of objects $\Ca_0$ and morphisms $\Ca_1$ are strict $T$-algebras, whose source and target maps $s$ and $t$ are strict $T$-morphisms,
and whose identity and composition maps are colax $T$-morphisms.
In the case
that $T$ is the free monoidal category 2-monad on $\Cat$, a colaxly
$T$-structured double category is a \emph{lax} monoidal double category---that
is, a pseudomonoid in the 2-category of double categories and \emph{lax}
functors.
The change in variance here is a general feature of 2-categorical
algebra; simply put, the colaxity on the $T$-structure of composition may be
read as a laxity of the $T$-structure as a morphism.

We conclude by deriving the appropriate algebraic structure on all instances of $\Ctx$.

\subsection{Outline}
The rest of the work roughly follows the order we presented its content in this introduction.
In \cref{sec:preliminaries} we introduce some 2-categorical notions, such as the aforementioned paradises and the associated tricategories of spans, which serve as the backdrop for the remaining sections.
In \cref{sec:contextad.intro} left-fibrant spans enter the picture and we give a direct definition of contextads on categories, as well as their $\Ctx$ construction.
We also tour some examples in \cref{sec:ctx.as.colax.fibred.action} and in \cref{sec:ctx.as.dep.graded.comonad} we pause to explore in more depth the idea of using graded comonads to represent context-aware computation in functional programming.
Some of this latter side quest is developed further in \cref{appendix}.
In \cref{sec:fibs.span} we develop the most technical part of our story, showing that $\FibSpan$ embeds in the Kleisli tricompletion of $\DispSpan$.
This is where we show contextads are wreaths and $\Ctx$ is their wreath product, as well as expound 1- to 3-morphisms of contextads and the trifunctoriality of $\Ctx$.
In \cref{sec:structured.para} we develop the theory of colaxly structured contextads and their $\Ctx$ constructions in order to understand how algebraic structure on $\Ctx$ arises.
In \cref{sec:duality} we unravel the dual story and describe the dualities at play between them.
Finally, in \cref{sec:docs.of.wreaths} we move beyond contextads on categories by considering contextads on arbitrary double categories, a generalization which allows to understand general wreaths in $\DispSpan$ as notions of contextuality.

\subsection{Acknowledgements}
We thank Adrian Miranda for his invaluable help in recognizing the need of a proper theory of tricompletions, in providing it, and in assisting us whilst navigating the rich world of tricategory theory.
This material is based upon work supported by Tamkeen under the NYU Abu Dhabi Research Institute grant CG008.

\subsection{Notational convention}
We denote 2-categories with blackboard bold capital letters $\Kb, \Hb, \ldots$ or initials $\Cat$,$\dblcat{Lex},\ldots$, and their objects with calligraphics letters $\Aa,\Ba, \ldots$.
Their 1-cells are usually denoted by lowercase Latin letters $f,g,\ldots$ and 2-cells by Greek letters $\alpha,\beta,\ldots$.
Tricategories and trifunctors are in fraktur letters $\Kc, \Hc, \ldots$ or begin with fraktur letters $\TriSpan, \PsCat, \ldots$.

We write composition in algebraic order, i.e.~$fg$ means `$f$ after $g$'.
We don't distinguish between composition and whiskering, so that $f\alpha g$ means whiskering of $\alpha$ with $f$ on the right and $g$ on the left.
For vertical composition of 2-cells, we write $\alpha \cdot \beta$, again in algebraic order.

Since we deal with tricategories, we emply three different dualities: $(-)\op$, $(-)\co$, $(-)\re$, which reverse, respectively, 1-, 2- and 3-cells.

\section{Laying the ground: display map 2-categories and paradises}
\label{sec:preliminaries}
We aim to formulate the $\Ctx$ construction as the wreath product in a tricategory\footnotemark~of spans of category-like objects, taking a wreath around the arrow pseudomonad $\Ca \leftarrow \Ca^{\downarrow} \to \Ca$ and producing a new pseudomonad of spans on $\Ca$.
\footnotetext{We recall a tricategory is a fully weak tridimensional category, see \cite{gurski_coherence_2013} for a full definition---we will not require that much detail.}
We can therefore perform this construction in any 2-category in which we can construct a tricategory of spans, and which has powers $\Ca^{\downarrow}$ by the walking arrow $\downarrow$.

Another way, followed by Hoffnung \cite{hoffnung_spans_2013} is to define a tricategory of spans for 2-category with isocomma objects (aka `weak pullbacks').%
\footnote{If the reader is unfamiliar with the jungle of higher notions of limit, we recommend \cite{bird_flexible_1989,kelly_elementary_1989} for a recap of the basic definitions, with the former focusing on the difference between limits and bilimits which we care about here.}
However, a pseudomonad in this tricategory of spans, taken in the 2-category of categories, would not be a double category in the usual sense.
Rather, a double category is a pseudomonad in the tricategory of spans of categories composing by \emph{strict (2-)pullback}; while the isocomma version
would allow us to compose a triple
\begin{tikzcd}[ampersand replacement=\&, column sep=scriptsize, row sep=small, cramped]
	\cdot \& \cdot \\
	\& \cdot \& \cdot
	\arrow["\shortmid"{marking}, from=1-1, to=1-2]
	\arrow["\,\wr", from=1-2, to=2-2]
	\arrow["\shortmid"{marking}, from=2-2, to=2-3]
\end{tikzcd}
into a single proarrow.
This is not so terrible, but also not quite what one normally thinks of as composable loose arrows.
We want to get exactly a double category out.
Therefore, we will use strict (2-)pullbacks to compose our spans.

\subsection{Paradise}

While the 2-category of categories has all strict pullbacks, it is quite common for general 2-categories to only admit some strict pullbacks%
\footnote{For instance, the inclusions $1 \nto{0} {\downarrow\!\!\wr} \nfrom{1} 1$ of the endpoints of the walking isomorphism is a cospan in $\MonCat$ that doesn't admit a strict pullback.}%
---pullbacks of a particular class of special arrows.
For this reason, we will take as our general setting a \emph{display map 2-category}, or a 2-category equipped with a class of arrows which are stable under pullback.

\begin{defn}
\label{defn:display.map.2.cat}
	A \textbf{display map 2-category} is a 2-category $\Cosmos$ equipped with a class $\Display$ of arrows (called \textbf{display maps} or \textbf{displays}), so that for each $d : A \to B$ in $\Display$ and $f : X \to B$, there is a strict 2-pullback
	\begin{equation}
		\begin{tikzcd}
			P & A \\
			X & B
			\arrow[from=1-1, to=1-2]
			\arrow["{f^*d}"', from=1-1, to=2-1]
			\arrow["\lrcorner"{anchor=center, pos=0.125}, draw=none, from=1-1, to=2-2]
			\arrow["d", from=1-2, to=2-2]
			\arrow["f"', from=2-1, to=2-2]
		\end{tikzcd}
	\end{equation}
	in $\Cosmos$, and $f^*d \in \Display$.
	In short, display maps are closed under pullback.
\end{defn}

We take a moment to remind the reader of the definition of arrow objects.
\begin{defn}
	Let $\Cosmos$ be a 2-category.
	If $\Ca \in \Cosmos$, an \emph{arrow} object
	$\Ca^{\downarrow}$ is an object equipped with two maps $\dom,\, \cod :
		\Ca^{\downarrow} \to \Ca$ and a 2-cell $\generic_{\Ca} : \dom \Rightarrow \cod$ (called \emph{generic arrow} of $\Ca$) so that whiskering with $\generic_{\Ca}$ induces an isomorphism of categories
	\begin{equation*}
		\Cosmos(\Xa, \Ca^{\downarrow}) \nlongto{\sim} \Cosmos(\Xa, \Ca)^{\downarrow}
	\end{equation*}
	for any $\Xa \in \Cosmos$.
\end{defn}

Of the display map 2-categories, we will focus on those for which display maps contain all isomorphisms and are closed under composition, and that admit arrow objects $\Ca^{\downarrow}$ whose domain map $\dom : \Ca^{\downarrow} \to \Ca$ is a display map.
For lack of a better name, we will call these 2-categories \emph{paradises}.

\begin{defn}
\label{defn:paradise}
	A \textbf{paradise} is a display map 2-category $\Paradise$ where:
	\begin{enumerate}
		\item $\Display$ is closed under composition and contains all isomorphisms,
		\item $\Cosmos$ has all arrow objects $\Ca^{\downarrow}$, and the domain map $\dom : \Ca^{\downarrow} \to \Ca$ is in $\Display$.
	\end{enumerate}
	A \textbf{paradisiacal 2-functor} $F:\Paradise \to (\Cosmos', \Display')$ between paradises is a 2-functor that preserves the chosen display maps as well as the chosen limits.
\end{defn}

\begin{ex}
\label{ex:cat.as.paradise}
	Any 2-category with pullbacks and arrow objects can be made into a paradise by taking every map to be a display map.
	In particular, the category of categories, with all functors considered to be
	display maps, is a paradise $(\Cat, \{\mathsf{all}\})$.
\end{ex}

A large class of examples of paradises is given by choosing display maps to be certain kinds of isofibrations.
These 2-categories often enjoy a richer structure, that of \emph{2-cosmoi}.
In the words of Riehl and Verity in \cite[Chapter 6]{riehl_elements_2022}, morally a cosmos is a (weakly) `complete 2-category' but with the latter completeness interpreted as to fully ``take advantage of the strictness that is available in so many examples to simplify proofs''.

The definition of cosmos hinges on choosing a class of well-behaved representable isofibrations, whose definition we recall:

\begin{defn}
\label{defn:isofib}
	A functor $f : \Aa \to \Ba$ is an \textbf{isofibration of categories} when for any diagram on the left below with natural isomorphism $\varphi : fa \iso b$, there is a lift $\hat{\varphi} : a \iso \hat{b}$ of $\varphi$ through $f$ so that $f\hat{\varphi} = \varphi$ as on the right below:

	\begin{equation*}
		\begin{tikzcd}[ampersand replacement=\&]
			\Xa \&\& \Aa \\
			\&\& \Ba
			\arrow["a", from=1-1, to=1-3]
			\arrow["f", from=1-3, to=2-3]
			\arrow[""{name=0, anchor=center, inner sep=0}, "b"', from=1-1, to=2-3]
			\arrow["\varphi", shift right, shorten <=7pt, shorten >=7pt, Rightarrow, from=0, to=1-3]
		\end{tikzcd}
		\quad =\ %
		\begin{tikzcd}[ampersand replacement=\&]
			\Xa \&\& \Aa \\
			\&\& \Ba
			\arrow["b"', from=1-1, to=2-3]
			\arrow["f", from=1-3, to=2-3]
			\arrow[""{name=0, anchor=center, inner sep=0}, "a", shift left, from=1-1, to=1-3]
			\arrow[""{name=1, anchor=center, inner sep=0}, "{\hat{b}}"'{pos=0.6}, curve={height=15pt}, dashed, from=1-1, to=1-3]
			\arrow["{\hat{\varphi}}"', shorten <=2pt, shorten >=2pt, Rightarrow, dashed, from=1, to=0]
		\end{tikzcd}
	\end{equation*}

	In an arbitrary 2-category $\Cosmos$, a 1-cell $f:\Aa \to \Ba$ is a \textbf{representable isofibration} when composition with $f$ induces an isofibration of categories $\Cosmos(\Xa, \Aa) \isofibto \Cosmos(\Xa, \Ba)$ for every $\Xa \in \Cosmos$.
\end{defn}

Informally, a \emph{cosmos} is a 2-category with suitable limits and a choice of representable isofibrations which interact well with such limits.
Bourke and Lack \cite{bourke_cosmoi_2023} adapt the $\infty$-categorical definition of \cite{riehl_elements_2022} to give a 2-dimensional version.

We further adapt their definition to our purposes, which are less ambitious than those cosmoi were conceived for.
In particular, we won't need infinitary limits, so we make the following definition.

\begin{defn}
\label{defn:fin.2-cosmos}
	A \textbf{finitary $2$-cosmos} is a $2$-category $\Cosmos$ equipped with a class of morphisms called \textbf{isofibrations} so that
	\begin{enumerate}
		\item Every isofibration is a representable isofibration.
		\item All pullbacks of isofibrations exist and are isofibrations, all isomorphisms are isofibrations, and composites of isofibrations are isofibrations.
		\item $\Cosmos$ has finite products, and all product projections and terminal morphisms are isofibrations.
		\item For any $\Aa : \Cosmos$, there is an arrow object $\Aa^{\downarrow}$ with an isofibration $(\dom, \cod) : \Aa^{\downarrow} \isofibto \Aa \times \Aa$ and a 2-cell $\generic_{\Aa} : \dom \Rightarrow \cod$ (called \textbf{generic arrow} of $\Aa$) so that whiskering with $\generic_{\Aa}$ induces an isomorphism of categories
		\begin{equation*}
			\Cosmos(\Xa, \Aa^{\downarrow}) \nlongto{\sim} \Cosmos(\Xa, \Aa)^{\downarrow}
		\end{equation*}
		for any $\Xa \in \Cosmos$.
	\end{enumerate}
	A \textbf{cosmological $2$-functor} $F:\Cosmos \to \Cosmos'$ between finitary 2-cosmoi is a 2-functor that preserves the chosen isofibrations as well as the chosen limits.
\end{defn}

From now on, when we refer to 2-cosmoi we mean the finitary version, unless otherwise specificed.

Evidently, 2-cosmoi are paradises when equipped with their class of isofibrations:

\begin{lem}
	If $\Cosmos$ is a finitary cosmos, then taking $\Display$ to be the class of isofibrations endows $\Cosmos$ with the structure of a paradise.
\end{lem}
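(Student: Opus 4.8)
The plan is to verify the two clauses of \cref{defn:paradise}, together with the underlying closure condition of \cref{defn:display.map.2.cat}, directly from the four axioms of a finitary 2-cosmos (\cref{defn:fin.2-cosmos}), under the identification of $\Display$ with the class of isofibrations. Almost every check is an immediate restatement of a cosmos axiom, so the work is purely a matter of matching definitions; I will flag the single step where one must use slightly more than a verbatim axiom.

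First I would confirm that $\Cosmos$ is a display map 2-category in the sense of \cref{defn:display.map.2.cat}. Given an isofibration $d : A \to B$ and any $f : X \to B$, the second cosmos axiom guarantees that the pullback of $d$ along $f$ exists and that the pulled-back leg $f^*d$ is again an isofibration. One should note here that the limits in a finitary 2-cosmos are \emph{strict} 2-pullbacks (not bilimits), which is exactly the form of pullback demanded in \cref{defn:display.map.2.cat}; thus the isofibrations are closed under strict pullback as required. Clause (1) of \cref{defn:paradise} is then the remaining content of the second cosmos axiom: all isomorphisms are isofibrations, and composites of isofibrations are isofibrations, so $\Display$ contains all isomorphisms and is closed under composition.

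It remains to check clause (2). The fourth cosmos axiom supplies, for every $\Ca \in \Cosmos$, an arrow object $\Ca^{\downarrow}$ equipped with $\dom, \cod : \Ca^{\downarrow} \to \Ca$ and a generic arrow $\generic_{\Ca}$ whose whiskering induces the isomorphism $\Cosmos(\Xa, \Ca^{\downarrow}) \iso \Cosmos(\Xa, \Ca)^{\downarrow}$; this is precisely the universal property a paradise asks of its arrow objects. What remains---and this is the only step that is not a verbatim reuse of an axiom---is to show that the domain leg $\dom$ is itself a display map. Here I would exploit the extra strength of the fourth cosmos axiom, namely that the pairing $(\dom, \cod) : \Ca^{\downarrow} \to \Ca \times \Ca$ is an isofibration. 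Since $\dom$ factors as $(\dom, \cod)$ followed by the first product projection $\pi_1 : \Ca \times \Ca \to \Ca$, and $\pi_1$ is an isofibration by the third cosmos axiom, the map $\dom$ is a composite of two isofibrations, hence an isofibration by the closure under composition noted above. This establishes clause (2) and completes the verification.

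I do not anticipate any genuine obstacle: the statement is essentially a bookkeeping lemma reconciling the (stronger) cosmos axioms with the (weaker) paradise axioms. The one point deserving care is the final step, where one must not merely invoke the \emph{existence} of arrow objects but use that $(\dom,\cod)$ is an isofibration in order to factor $\dom$ through a product projection; the implicit appeal to strictness of the 2-pullbacks should likewise be made explicit, since \cref{defn:display.map.2.cat} requires strict 2-pullbacks rather than bilimits.
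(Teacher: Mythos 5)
Your proof is correct and follows essentially the same route as the paper: both verify the closure conditions directly from the cosmos axioms and establish that $\dom : \Ca^{\downarrow} \to \Ca$ is an isofibration by factoring it as the isofibration $(\dom,\cod)$ followed by a product projection, using closure under composition. Your write-up is simply a more explicit version of the paper's two-sentence argument.
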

\begin{proof}
	By assumption, the class of isofibrations is closed under pullback, contains all isomorphisms, and is closed under composition.
	Futhermore, we have assumed that $\Cosmos$ has arrow objects $\Aa^{\downarrow}$, and since product projections and the map $(\dom, \cod) : \Aa^{\downarrow} \to \Aa \times \Aa$ are assumed to be isofibrations, the map $\dom : \Aa^{\downarrow} \to \Aa$ is an isofibration.
\end{proof}

\begin{rmk}[Cosmoi as \emph{flexibly complete} 2-categories]\label{rmk:cosmoi.flexibly.complete}
	We'd like now to reassure the unfamiliar reader that 2-cosmoi are in fact commonplace, easy to obtain, and more fun to use than finitely 2-complete 2-categories.
	Additionally, having a reference such as \cite{riehl_elements_2022} (and, to a lesser extent, \cite{bourke_cosmoi_2023}), makes it easier to fish for results about them---we do that extensively in this work.%
	\footnote{We do so with the understanding that the facts we invoke about Riehl--Verity or Bourke--Lack cosmoi only make use of the finitary fragment of their axioms.}

	The fact 2-cosmoi are commonplace and natural is a consequence of their close relationship with \emph{flexible limits} \cite{bird_flexible_1989}, i.e.~those constructed from products, inserters, equifiers and splittings of idempotents.
	Intuitively, flexible limits are (weighted) limits whose construction doesn't require equality on objects.
	Indeed e.g.~equifiers (a limit that equalizes two parallel 2-cells) are flexible but equalizers (a limit that equalizes two parallel 1-cells), and thus pullbacks, are not.

	Theorem 4.4 of \cite{bourke_cosmoi_2023} shows that a 2-category admitting all flexible limits may be canonically equipped with the structure of a 2-cosmos by taking the chosen isofibrations to be the \emph{normal isofibrations}.%
	\footnote{
		An isofibration is \emph{normal} when it lifts identities to identities.
		See Section 3.2 of \cite{bourke_cosmoi_2023}.
	}
	In fact every 2-cosmos in which idempotents split and retracts of isofibrations are isofibrations (i.e.~\emph{Cauchy-complete 2-cosmos}) arise in this way.
	This fact substantiates the claim finitary 2-cosmoi are a flexible (pun intended) notion of `2-complete' 2-category.
\end{rmk}

Categories with flexible limits are not only easy to come by, but are stable under various constructions.
Chiefly, if $\Cosmos$ is a $2$-category with flexible limits and $T : \Cosmos \to \Cosmos$ is a \emph{flexible 2-monad} in the sense of \cite{blackwell_two-dimensional_1989}, then the 2-category $\Alg(T, \Cosmos)$ of strict algebras and pseudomorphisms for $T$ also has flexible limits.
As described by Lack in \cite[p.~62]{lack_companion_2010}, any 2-monad which can be presented without using equations between 1-cells will be flexible, including the 2-monads for monoidal categories, symmetric monoidal categories, categories with finite limits and other completions and cocompletions, and many other ``non-evil'' structured category.

Since cosmoi are paradises, we can now describe a number of new examples of paradises:

\begin{ex}
\label{ex:cat}
	Of course, the 2-category of categories, functors and natural transformations, $\Cat$ is a 2-cosmos.
	There the isofibrations are exactly the isofibrations of categories of \cref{defn:isofib}.
	Note that this is a different paradise than \cref{ex:cat.as.paradise}, since here we are only taking the normal isofibrations to be the display maps.
	In $\DispSpan(\cat{Cat},\{\mathsf{normal\ isofibrations}\})$, the pseudomonads are the double categories (in the ordinary sense) whose source map is a normal isofibration.
	This extra condition is rather mild; it is satisfied by any double category which admits companions, for example.%
	\footnote{
		To be a bit more precise, we really need a functorial choice of companion so that the companion associated to an identity is the identity.
		But having companions is already much stronger than having source map be a normal isofibration.
	}
\end{ex}

\begin{ex}
\label{ex:moncat}
	The 2-category $\MonCat$ of monoidal categories, strong monoidal functors,
	and monoidal natural transformations has all flexible limits.
	Thus normal isofibrations equip $\MonCat$ with the structure of a 2-cosmos.
	Similarly for braided and symmetric monoidal categories.
	The arrow object associated to a monoidal category $(\Ca, 1, \otimes)$ is given by the obvious monoidal structure on the category of arrows of the underlying category $\Ca^\downarrow$.
\end{ex}

\begin{ex}
	The 2-category $\Cart(\Cat)$ of cartesian monoidal categories, product-preserving functors, and monoidal natural transformations; and arrow objects are again the same as those in $\Cat$.
	Isofibrations are the normal representable ones.
\end{ex}

\begin{ex}
\label{ex:higher.mons}
	In general, if $\Cosmos$ is a 2-cosmos then $\Mon(\Cosmos)$ and $\Cart(\Cosmos)$ are again 2-cosmoi.
	In this way we can conclude e.g.~that $\BrMonCat = \Mon(\MonCat)$ and $\SymMonCat = \Mon(\BrMonCat)$ are 2-cosmoi.
\end{ex}

\begin{ex}
	The 2-category $\DblCat$ of (weak) double categories, strong double functors and tight natural transformations is the category of algebras for a flexible 2-monad on the 2-category $\Cat^{\downdownarrows}$ of graphs in categories.%
	\footnote{We thank Mike Shulman for this observation.}
	In this 2-category, the arrow category associated to a double category $\dblcat C$ is the double category $\dblcat C^\downarrow$ of tight maps of $\dblcat C$ (objects are tight maps, loose arrows in $\dblcat C^\downarrow$ are squares from $\dblcat C$, tight maps in $\dblcat C^\downarrow$ are commutative squares of tight maps in $\dblcat C$, and squares in $\dblcat C^\downarrow$ are commutative squares of squares in $\dblcat C$).
\end{ex}

\begin{ex}
\label{ex:pscats}
	In general, if $\Cosmos$ is a 2-cosmos, the 2-category $\PsCat(\Cosmos)$ of pseudocategory objects, internal pseudofunctors and internal natural transformations is again a 2-cosmos.
	Like double categories, $\PsCat(\Cosmos)$ are algebras for an straightforwardly defined flexible 2-monad on $\Cosmos^{\downdownarrows}$.
\end{ex}

\begin{ex}
\label{ex:cosmoi.duality}
	The definition of 2-cosmos is self-dual.
	In fact, on the one hand, representable isofibrations are a self-dual concept, that only refers to invertible 2-cells, so that a class of isofibrations in $\Cosmos$ is still valid for $\Cosmos\co$.
	On the other hand, the limits available in a 2-cosmos are also self-dual; specifically, it is evident that $\Aa^\uparrow \equiv \Aa^\downarrow$.
	Therefore $\Cosmos$ is a 2-cosmos if and only if $\Cosmos\co$ is.
\end{ex}

We will meet more examples of paradises in \cref{sec:structured.para}, and dualize the notion in \cref{sec:duality}.

\subsection{Left-displayed spans in a Paradise}

Given a paradise $\Paradise$, we may define a tricategory $\DispSpan\Paradise$---or, more narrowly, a 2-category-enriched bicategory---of spans in $\Cosmos$ whose left leg is in $\Display$.
We call these \emph{left-displayed spans}.
Fixing a paradise $\Paradise$, we will just write $\DispSpan$ for $\DispSpan\Paradise$, for short.

We will, in fact, make use of a few different morphisms between spans.
This leads us to the following definition.
Since we will need plenty of variants, we start by defining a general 2-category of spans between two fixed objects:

\begin{defn}
\label{defn:locally.span}
	Fix a paradise $\Paradise$.
	The 2-category $\DispSpan\Paradise^\twoto(\Ca, \Da)$ of \textbf{left-displayed spans and right-colax maps} between $\Ca,\Da \in \Cosmos$ is defined by the following data:
	\begin{enumerate}
		\item Objects are \textbf{left-displayed spans}, i.e.~spans whose left leg is a display map in $\Cosmos$:
		      \begin{equation}
			      \begin{tikzcd}[row sep=scriptsize]
				      & \Ea \\
				      \Ca && {\Da,}
				      \arrow["p"', from=1-2, to=2-1]
				      \arrow["f", from=1-2, to=2-3]
			      \end{tikzcd}
		      \end{equation}
		\item $1$-cells are \textbf{right-colax} map of spans, i.e.~diagrams
		      \begin{equation}
			      \begin{tikzcd}[row sep=scriptsize]
				      \Ca & \Ea & \Da \\
				      \\
				      \Ca & \Ea' & \Da
				      \arrow["p"', from=1-2, to=1-1]
				      \arrow["f", from=1-2, to=1-3]
				      \arrow["{p'}", from=3-2, to=3-1]
				      \arrow["{f'}"', from=3-2, to=3-3]
				      \arrow[""{name=0, anchor=center, inner sep=0}, "k"', from=1-2, to=3-2]
				      \arrow[from=1-1, to=3-1, equals]
				      \arrow[""{name=1, anchor=center, inner sep=0}, from=1-3, to=3-3,equals]
				      \arrow["\varphi", shorten <=6pt, shorten >=6pt, Rightarrow, from=0, to=1]
			      \end{tikzcd}
		      \end{equation}
		      for which $p' k = p$ and $\varphi : f'k \Rightarrow f$,
		\item $2$-cells are diagrams of the form:
		      \begin{equation}
		\label{eqn:transformation.of.right.colax.maps.of.spans}
			      \begin{tikzcd}[column sep=scriptsize]
				      \Ca && \Ea && \Da \\
				      \\
				      \Ca && \Ea' && \Da
				      \arrow["p"', from=1-3, to=1-1]
				      \arrow["f", from=1-3, to=1-5]
				      \arrow["{{p'}}", from=3-3, to=3-1]
				      \arrow["{{f'}}"', from=3-3, to=3-5]
				      \arrow[Rightarrow, no head, from=1-1, to=3-1]
				      \arrow[""{name=0, anchor=center, inner sep=0}, Rightarrow, no head, from=1-5, to=3-5]
				      \arrow[""{name=1, anchor=center, inner sep=0}, "h"'{pos=0.2}, curve={height=18pt}, from=1-3, to=3-3]
				      \arrow["\psi"{pos=0.7}, curve={height=-12pt}, shorten <=9pt, shorten >=9pt, Rightarrow, from=1, to=0]
				      \arrow[""{name=2, anchor=center, inner sep=0}, "k"{pos=0.2}, curve={height=-18pt}, from=1-3, to=3-3]
				      \arrow["\alpha"', curve={height=6pt}, shorten <=4pt, Rightarrow, from=1, to=2]
				      \arrow["\varphi"', curve={height=6pt}, shorten <=5pt, shorten >=10pt, Rightarrow, from=2, to=0]
			      \end{tikzcd}
		      \end{equation}
		      That is, 2-cells $\alpha : k \Rightarrow h$ so that $p' \alpha = p$ and $\varphi f'\alpha = \psi$.
	\end{enumerate}

	We define the 2-category $\DispSpan\Paradise(\Ca, \Da)$ to be the full sub-2-category spanned by those morphisms whose colaxator $\varphi$ is an \emph{identity}---that is, for which $f'k = f$.
\end{defn}

\begin{thm}
\label{thm:dspan.is.tricat}
	The strict pullback
	\begin{equation}
	\label{eqn:comp.of.dspans}
		\begin{tikzcd}[sep=small]
			&& {\Fa \spancomp \Ea} \\
			& \Fa && \Ea \\
			{\Ba } && \Ca && \Da
			\arrow[from=1-3, to=2-2]
			\arrow[from=1-3, to=2-4]
			\arrow[curve={height=24pt}, dashed, from=1-3, to=3-1]
			\arrow["\lrcorner"{anchor=center, pos=0.125, rotate=-45}, draw=none, from=1-3, to=3-3]
			\arrow[curve={height=-24pt}, dashed, from=1-3, to=3-5]
			\arrow[from=2-2, to=3-1]
			\arrow[from=2-2, to=3-3]
			\arrow[from=2-4, to=3-3]
			\arrow[from=2-4, to=3-5]
		\end{tikzcd}
	\end{equation}
	gives a 2-functor $\spancomp : \DispSpan(\Ba, \Ca) \times \DispSpan(\Ca,\Da) \to \DispSpan(\Ba,\Da)$ which endows $\DispSpan$ with the structure of a 2-category enriched bicategory (in the sense of \cite[\S~3.1]{garner_enriched_2016})---which is, in particular, a slightly strict tricategory.%
	\footnote{A \emph{tricategory} is a fully weak 3-dimensional category, whose definition can be found e.g.~in \cite{gurski_coherence_2013}.}.
\end{thm}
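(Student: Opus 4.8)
The plan is to exhibit $\DispSpan\Paradise$ as nothing more exotic than the bicategory of spans, carried out internally to the 2-category $\Cosmos$ and enriched in the cartesian monoidal 2-category of 2-categories because each hom-object $\DispSpan(\Ca,\Da)$ is itself the 2-category of left-displayed spans, their strict maps, and the span 2-cells of \cref{defn:locally.span}. All of the structure---composition, associators, unitors, and their coherence---will be produced from the one- and two-dimensional universal properties of the strict 2-pullbacks guaranteed by the paradise axioms, exactly as in the classical construction of the bicategory of spans in a finitely complete category.

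First I would check that $\spancomp$ lands in left-displayed spans, which is the one place the paradise hypotheses are indispensable. Given $\Fa$ from $\Ba$ to $\Ca$ and $\Ea$ from $\Ca$ to $\Da$, the apex $\Fa\spancomp\Ea$ is the strict pullback of the right leg $\Fa\to\Ca$ against the left leg $\Ea\to\Ca$; this pullback exists, and its projection $\Fa\spancomp\Ea\to\Fa$ is again a display, precisely because $\Ea\to\Ca$ is a display and displays are stable under pullback (\cref{defn:display.map.2.cat}). The left leg of the composite is then $\Fa\spancomp\Ea\to\Fa\to\Ba$, a composite of two displays, hence a display by the closure axiom of \cref{defn:paradise}. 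The unit span is $\Ca \xleftarrow{\id}\Ca\xrightarrow{\id}\Ca$, whose left leg is an isomorphism and therefore a display as well.

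Next I would promote $\spancomp$ to a 2-functor on each hom-2-category. On a pair of strict span maps---those with identity colaxator, so commuting on both legs---the induced arrow on apexes is the unique map out of the pullback furnished by the one-dimensional universal property, and on a compatible pair of span 2-cells it is the unique 2-cell furnished by the two-dimensional universal property of the strict 2-pullback. Preservation of identities, of vertical and horizontal composition of 2-cells, and of the interchange law then all reduce to uniqueness in these universal properties, so 2-functoriality is essentially forced. I would then assemble the bicategory data: the associativity and unit constraints are the canonical comparison maps between the two bracketings of an iterated pullback and between $\Fa\spancomp(\id)$, $(\id)\spancomp\Fa$, and $\Fa$. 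Each is invertible because strict 2-pullbacks are unique up to unique isomorphism, and each is 2-natural because the comparison cells are themselves induced by universal properties.

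Finally I would verify the coherence axioms of a 2-category--enriched bicategory in the sense of \cite[\S 3.1]{garner_enriched_2016}. The pentagon and triangle identities, together with the higher invertible cells required of such a structure, hold because every cell in sight is the unique comparison map between pullbacks, so any two parallel coherence cells coincide by uniqueness---this is the standard reason the bicategory of spans coheres, and nothing new is needed in the enriched setting beyond observing that the comparison maps are 2-natural. Once $\DispSpan$ is exhibited as a bicategory enriched in the monoidal 2-category of 2-categories, it is a slightly strict tricategory by the general passage from such enrichment to tricategories. The only step with genuine content beyond the classical span construction is the closure check of the second paragraph; conceptually everything else is routine, and I expect the most tedious (though not difficult) part to be the bookkeeping in verifying 2-naturality of the associator against the two-dimensional universal property.
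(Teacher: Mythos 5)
Your proposal is correct and takes essentially the same route as the paper, whose proof is just a one-line sketch observing that with strict pullbacks the construction proceeds exactly as in the classical 1-categorical bicategory-of-spans construction. You have simply fleshed out that sketch—the display-map closure checks, the appeal to the one- and two-dimensional universal properties of strict 2-pullbacks, and the coherence-by-uniqueness argument—all of which is exactly what the paper intends.
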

\begin{proof}[Proof sketch]
	Since we are using strict pullbacks, the construction works exactly as it does in the 1-categorical case, taking a 1-category with pullbacks and producing a bicategory of spans.
\end{proof}

\begin{notation}
\label{not:spans}
	By synecdoche, we often denote a span $\Ca \from \Ea \to \Da$ by its apex $\Ea$ alone.
	We also write composition of spans in diagrammatic order, using the symbol $\spancomp$.
	Thus the composition in \eqref{eqn:comp.of.dspans} would be denoted as $\Fa \spancomp \Ea$.
\end{notation}

\begin{rmk}
	Since $\DispSpan$ is not only a tricategory but a 2-category enriched bicategory, it has an underlying bicategory given by forgetting the 3-cells.
	This is the sense in which $\DispSpan$ is a stricter sort of tricategory---so long as we don't need the 3-cells, we can treat is as the ordinary bicategory of spans given by taking pullback in the underlying $1$-category of $\Kb$.
\end{rmk}

\begin{rmk}
	Note that the 2-categories $\DispSpan^{\Rightarrow}(\Ca, \Da)$ \emph{do not} assemble into a tricategory with pullback of spans as composition.
	In order to get a good composition law for right-colax maps of spans, we will need for the left leg to be a fibration so that we may lift the laxator.
	The upcoming \cref{lem:fspan.composition} gives the precise construction of composition of left-fibrant spans.
\end{rmk}

\begin{rmk}
	Note that the tricategory $\DispSpan(\cat{Cat}, \{\mathsf{all}\})$ constructed from the paradise of categories and all functors (\cref{ex:cat.as.paradise}) is the usual tricategory of spans of categories composing by strict pullback.
	Its underlying bicategory is the usual bicategory of spans of categories which one gets by considering $\cat{Cat}$ as a category with pullbacks.
	Therefore, pseudomonads in $\DispSpan(\cat{Cat}, \{\mathsf{all}\})$ are precisely double categories in the usual sense.
\end{rmk}

\section{Left-fibrant spans and contextads}
\label{sec:contextad.intro}

In this section, we will introduce\footnote{We will not fully define it here. For the full definition including structure equivalences and coherences, we will appeal to \cref{thm:fSpan.tricat}.} the tricategory $\FibSpan\Paradise^\twoto$ of \emph{left-fibrant spans} $\Ca \epifrom \Ea \to \Da$, whose left leg is a \emph{cloven cartesian displayed fibration}. We will describe the composition of left-fibrant spans, and give our first definition of a \emph{contextad}. We will finish with examples of contextads which are not colax actions of monoidal categories.

We will begin by presenting the notion of cloven cartesian displayed fibration in a paradise.

\subsection{Fibrations in a paradise}
In this section, fix a paradise $\Paradise$.
We will work with \emph{cloven cartesian display fibrations} (from now on, just fibrations), that is, display maps $p:\Ea \to \Ba$ explicitly equipped with a cleavage of (strong) cartesian lifts.
First, let's review the notions of comma and slice objects in a 2-category.


\begin{defn}
\label{defn:comma}
	Let $f : \Aa \to \Ca$ and $g : \Ba \to \Ca$ be maps in $\Cosmos$.
	The \textbf{comma category} $f \comma g$ is the universal cone making the diagram below lax commutative:
	\begin{equation}
		\begin{tikzcd}[ampersand replacement=\&]
			{f \comma g} \& \Ba \\
			\Aa \& \Ca
			\arrow[""{name=0, anchor=center, inner sep=0}, "f"', from=2-1, to=2-2]
			\arrow[""{name=1, anchor=center, inner sep=0}, "g", from=1-2, to=2-2]
			\arrow["{\dom}"', from=1-1, to=2-1]
			\arrow["{\cod}", from=1-1, to=1-2]
			\arrow["{\generic_{f,g}}", shift left=2, shorten <=4pt, shorten >=4pt, Rightarrow, from=0, to=1]
		\end{tikzcd}
	\end{equation}
	The universal property of the comma is such that each square like below on the left factors as one like below on the right:
	\begin{equation}
		\begin{tikzcd}[ampersand replacement=\&]
			\Xa \& \Ba \\
			\Aa \& \Ca
			\arrow[""{name=0, anchor=center, inner sep=0}, "f"', from=2-1, to=2-2]
			\arrow[""{name=1, anchor=center, inner sep=0}, "g", from=1-2, to=2-2]
			\arrow["a"', from=1-1, to=2-1]
			\arrow["b", from=1-1, to=1-2]
			\arrow["\varphi", shift left=2, shorten <=4pt, shorten >=4pt, Rightarrow, from=0, to=1]
		\end{tikzcd}
		\quad = \quad
		\begin{tikzcd}[ampersand replacement=\&]
			\Xa \&[-3.5ex]\\[-1.25ex]
			\& {f \comma g} \& \Ba \\
			\& \Aa \& \Ca
			\arrow[""{name=0, anchor=center, inner sep=0}, "f"', from=3-2, to=3-3]
			\arrow[""{name=1, anchor=center, inner sep=0}, "g", from=2-3, to=3-3]
			\arrow["a"', curve={height=6pt}, from=1-1, to=3-2]
			\arrow["b", curve={height=-6pt}, from=1-1, to=2-3]
			\arrow[from=2-2, to=3-2, "\dom"']
			\arrow[from=2-2, to=2-3, "\cod"]
			\arrow["{\internal{\varphi}}"{description}, from=1-1, to=2-2]
			\arrow["{\generic_{f,g}}", shift left=2, shorten <=4pt, shorten >=4pt, Rightarrow, from=0, to=1]
		\end{tikzcd}
	\end{equation}
	The 2-cell $\generic_{f,g} : f\dom \twoto g\cod$ is called the \emph{generic 2-cell} $f \twoto g$.
\end{defn}

\begin{defn}
\label{defn:slicable}
	A map $p : \Ea \to \Ba$ in a 2-category $\Cosmos$ is \textbf{sliceable} if the comma object $\Ba \comma p$ exists in $\Cosmos$.
\end{defn}

\begin{lem}
	Every display map $p : \Ea \to \Ba$ in a paradise $\Paradise$ is sliceable.
\end{lem}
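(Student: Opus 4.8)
The plan is to realize the comma object $\Ba \comma p$ as a strict pullback of the display map $p$ along the codomain projection of the arrow object $\Ba^{\downarrow}$. The crucial observation is that, although in a general paradise neither $\cod : \Ba^{\downarrow} \to \Ba$ nor the pairing $(\dom, \cod)$ need be a display map, the map $p$ \emph{is} a display, so we are entitled to pull $p$ back along \emph{any} map, in particular along $\cod$. This is exactly what the sliceability statement affords us.

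Concretely, first I would form the arrow object $\Ba^{\downarrow}$, which exists because $\Paradise$ is a paradise, together with its legs $\dom, \cod : \Ba^{\downarrow} \to \Ba$ and generic 2-cell $\generic_{\Ba} : \dom \Rightarrow \cod$. Since $p \in \Display$, the display-map axiom of \cref{defn:display.map.2.cat} furnishes a strict 2-pullback $P := \Ba^{\downarrow} \times_{\Ba} \Ea$ of $p$ along $\cod$, with projections $\pi : P \to \Ba^{\downarrow}$ and $q : P \to \Ea$ satisfying $\cod\,\pi = p\,q$. I claim that $P$, equipped with $\dom\,\pi : P \to \Ba$ and $q : P \to \Ea$ and the whiskered 2-cell $\generic_{\Ba}\pi : \dom\,\pi \Rightarrow \cod\,\pi = p\,q$, is the comma object $\Ba \comma p$.

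To verify its universal property, suppose given $a : \Xa \to \Ba$, $b : \Xa \to \Ea$ and $\varphi : a \Rightarrow p\,b$. By the defining isomorphism $\Cosmos(\Xa, \Ba^{\downarrow}) \cong \Cosmos(\Xa, \Ba)^{\downarrow}$ of the arrow object, the 2-cell $\varphi$ corresponds to a unique $\hat{\varphi} : \Xa \to \Ba^{\downarrow}$ with $\dom\,\hat{\varphi} = a$, $\cod\,\hat{\varphi} = p\,b$ and $\generic_{\Ba}\hat{\varphi} = \varphi$. Since $p\,b = \cod\,\hat{\varphi}$, the pair $(\hat{\varphi}, b)$ meets the compatibility condition for the pullback $P$, so it induces a unique $\langle \hat{\varphi}, b\rangle : \Xa \to P$ with $\pi\langle\hat{\varphi},b\rangle = \hat{\varphi}$ and $q\langle\hat{\varphi},b\rangle = b$; one checks immediately that this recovers $a$, $b$ and $\varphi$ via the stated legs and 2-cell. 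Because both the arrow-object property and the display-map pullback are \emph{strict} 2-limits, this correspondence is an isomorphism of hom-categories rather than a mere bijection on 1-cells, so the identical argument applied to 2-cells shows that $P$ represents the comma as a strict 2-limit, as required.

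The only genuine subtlety — and the step I would flag as the main obstacle — is the orientation at the very start: one must resist the reflex of building a comma $f \comma g$ as a pullback of $\Ca^{\downarrow}$ along $(f,g) : \Aa \times \Ba \to \Ca \times \Ca$, since $(\dom, \cod) : \Ca^{\downarrow} \to \Ca \times \Ca$ is \emph{not} assumed to be a display map in an arbitrary paradise, and that pullback need not exist. The fix is to exploit that sliceability only asks for $\Ba \comma p$ with $p$ itself a display map, so we may instead pull the display map $p$ back along the (entirely arbitrary) leg $\cod$, which the display-map axiom always permits.
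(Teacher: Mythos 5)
Your proposal is correct and is essentially the paper's own proof: the paper also realizes $\Ba \comma p$ as the strict pullback of the display map $p$ along $\cod : \Ba^{\downarrow} \to \Ba$, noting that this pullback exists precisely because $p \in \Display$, and then invokes the universal property of the arrow object. Your write-up merely spells out the verification of the comma's (strict, 2-dimensional) universal property that the paper leaves implicit, and your closing remark about the orientation of the pullback matches the paper's one-line justification.
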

\begin{proof}
	We may construct the slice $\Ba \comma p$ as the following pullback by
	appealing to the universal property of the arrow object:
	\begin{equation}
		\begin{tikzcd}
			{\Ba \comma p} & \Ea \\
			{\Ba^{\downarrow}} & \Ba
			\arrow[from=1-1, to=1-2]
			\arrow[from=1-1, to=2-1]
			\arrow["\lrcorner"{anchor=center, pos=0.125}, draw=none, from=1-1, to=2-2]
			\arrow["p", from=1-2, to=2-2]
			\arrow["\cod"', from=2-1, to=2-2]
		\end{tikzcd}
	\end{equation}
	This pullback exists since $p$ is a display map.
\end{proof}

\begin{defn}
\label{defn:fib}
	Suppose that $\Cosmos$ is a 2-category.
	A sliceable map $p$ is a \textbf{fibration}, denoted $p:\Ea \fibto \Ba$, if either of the following equivalent property-like structures is present:
	\begin{enumerate}
		\item The map $\internal{\id_p} : \Ea \to \Ba \comma p$, induced by applying $p$ to identity arrows, has a fibred right adjoint:
		      \begin{equation}
			      \begin{tikzcd}[ampersand replacement=\&]
				      {\Ba \comma p} \arrow[loop left, ""{name=2, anchor=center, inner sep=0}, distance=8ex, start anchor={[yshift=-2ex]west}, end anchor={[yshift=2ex]west}] \&\& \Ea \arrow[loop right, ""{name=3, anchor=center, inner sep=0}, distance=8ex, start anchor={[yshift=2ex]east}, end anchor={[yshift=-2ex]east}] \\
				      \& \Ba
				      \arrow["p", from=1-3, to=2-2]
				      \arrow["{{\dom}}"', from=1-1, to=2-2]
				      \arrow[""{name=0, anchor=center, inner sep=0}, "{\internal{\id_p}}"', curve={height=12pt}, from=1-3, to=1-1]
				      \arrow[""{name=1, anchor=center, inner sep=0}, "{{\pull}}"', curve={height=12pt}, dashed, from=1-1, to=1-3]
				      \arrow["{{\bot}}"{description}, draw=none, from=0, to=1]
				      \arrow[Rightarrow, shift right=0.2, from=2, to=1-1, shorten >= 3pt, shorten <= 5pt, "\counitpull"]
				      \arrow[Rightarrow, shift right=0.2, to=3, from=1-3, shorten >= 3pt, shorten <= 5pt, "\unitpull"]
			      \end{tikzcd}
		      \end{equation}
		      We refer to $\pull$ as the \textbf{pullback} functor of the fibration.
		\item (Supposing $\Ea$ admits an arrow object $\Ea^{\downarrow}$.) The map $\hat{p} : \Ea^{\downarrow} \to \Ba \comma p$, induced by the action of $p$ on arrows, has a right adjoint right inverse:
		      \begin{equation}
			      \begin{tikzcd}[ampersand replacement=\&]
				      {\Ba \comma p} \&\& {\Ea^\downarrow}
				      \arrow[""{name=0, anchor=center, inner sep=0}, "{\hat p}"', curve={height=12pt}, from=1-3, to=1-1]
				      \arrow[""{name=0p, anchor=center, inner sep=0}, phantom, from=1-3, to=1-1, start anchor=center, end anchor=center, curve={height=12pt}]
				      \arrow[""{name=1, anchor=center, inner sep=0}, "\lift"', curve={height=12pt}, dashed, from=1-1, to=1-3]
				      \arrow[""{name=1p, anchor=center, inner sep=0}, phantom, from=1-1, to=1-3, start anchor=center, end anchor=center, curve={height=12pt}]
				      \arrow["\bot"{description}, draw=none, from=0p, to=1p]
			      \end{tikzcd}
		      \end{equation}
		      We refer to $\lift$ as the \textbf{cartesian lift} functor of the fibration.
	\end{enumerate}

	If $\Cosmos$ underlies a paradise $\Paradise$, then we will furthermore ask
	that a fibration $p$ is a display map.
\end{defn}

\begin{rmk}
\label{rmk:fib.adj}
	The fact $\pull$ is a fibred right adjoint means that $p\pull = \dom$ ($\pull$ is a morphism in $\Cosmos/\Ba$) and the unit and counit of the adjunction are vertical, i.e.~$\dom \counitpull = \id_{\dom}$ and $p \unitpull = \id_p$.
\end{rmk}

\begin{rmk}
\label{rmk:fib.struct}
	Let's briefly unpack these two presentation of cartesian fibrations, as well as their equivalence, since it will be useful later.
	If we have $\pull : \Ba \comma p \to \Ea$, given $b \nto{\beta} p(e) : \Ba \comma p$, $\pull(\beta)$ is the domain of the cartesian lift of $\beta$, and thus can also be denoted as $\beta^*e$.
	The unit $\unitpull : \id_{\Ea} \twoto \pull\internal{\id_p}$ expresses the fact that $\id_{p(e)}^*e \iso e$.
	The counit $\counitpull : \internal{\id_p}\pull \twoto \id_{\Ba \comma p}$ encodes lifting, since its component at $\beta$ is given by
	\begin{equation}
		\counitpull_\beta :=
		\begin{tikzcd}[ampersand replacement=\&, sep=scriptsize]
			{p(\pull(\beta))} \&[5ex] b \\
			{p(\pull(\beta))} \& {p(e)}
			\arrow[Rightarrow, no head, from=1-1, to=2-1]
			\arrow["\beta", from=1-2, to=2-2]
			\arrow["\id_b", Rightarrow, no head, from=1-1, to=1-2]
			\arrow["{p(\lift(\beta))}"', from=2-1, to=2-2]
		\end{tikzcd}
		\ = \ (\id_b, \lift(\beta) : \beta^*e \to e).
	\end{equation}
	Notice the fact the above diagram commutes encodes the fact $p(\lift(\beta)) = \beta$, i.e.~that $\lift(\beta)$ is in fact a lift.
	Indeed, as suggested by the notation we can define $\lift : \Ba \comma p \to \Ea^{\downarrow}$ by applying the universal property of $\Ea^{\downarrow}$ to the natural transformation $\cod \counitpull : \pull \Rightarrow \cod$, noting that $\cod \internal{\id_p} = \id_{\Ba \comma p}$.
	On the other hand, $\lift : \Ba \comma p \to \Ea^\downarrow$ directly gives the whole cartesian lift, so that $\lift(\beta) = \beta^*e \to e$ as before and we may recover $\pull$ as $\dom \lift$.
	The unit of $\hat p \adj \lift$ is the comparison map between any other $\gamma:e \to e'$ in $\Ea$ and the lift of $p(\gamma)$, exhibiting the `vertical' part of $\gamma$:
	\begin{equation}
		\begin{tikzcd}[ampersand replacement=\&,row sep=scriptsize]
			{e'} \\[2ex]
			{p(\gamma)^*e} \& e \\
			{p(e')} \& {p(e)}
			\arrow["\gamma", from=1-1, to=2-2]
			\arrow["{p(\gamma)}", from=3-1, to=3-2]
			\arrow["{\lift(p(\gamma))}"', from=2-1, to=2-2]
			\arrow["{\exists!\,\unit_\gamma}"', dashed, from=1-1, to=2-1]
		\end{tikzcd}
	\end{equation}
	The fact the counit of $\hat p \adj \lift$ is an identity reflects that a lift is strictly over the morphism it's lifting, i.e.~$p(\lift(\beta)) = \beta$.
	For $\internal{\id_p} \adj \pull$, this corresponds to verticality of $\counitpull$.
\end{rmk}

\begin{rmk}
	The lifting property we just described can also be visualized as follows:
	\begin{equation}
		\begin{tikzcd}[ampersand replacement=\&]
			\Xa \&\& \Ea \\
			\&\& \Ba
			\arrow["e", from=1-1, to=1-3]
			\arrow["p", two heads, from=1-3, to=2-3]
			\arrow[""{name=0, anchor=center, inner sep=0}, "b"', from=1-1, to=2-3]
			\arrow["\beta", shift right, shorten <=7pt, shorten >=7pt, Rightarrow, from=0, to=1-3]
		\end{tikzcd}
		\quad =\ %
		\begin{tikzcd}[ampersand replacement=\&]
			\Xa \&\& \Ea \\
			\&\& \Ba
			\arrow["b"', from=1-1, to=2-3]
			\arrow["p", two heads, from=1-3, to=2-3]
			\arrow[""{name=0, anchor=center, inner sep=0}, "e", shift left, from=1-1, to=1-3]
			\arrow[""{name=1, anchor=center, inner sep=0}, "{\pull(\beta)}"'{pos=0.6}, curve={height=16pt}, dashed, from=1-1, to=1-3]
			\arrow["{\lift(\beta)}"', shorten <=2pt, shorten >=2pt, Rightarrow, dashed, from=1, to=0]
		\end{tikzcd}
	\end{equation}
\end{rmk}

\begin{defn}
\label{defn:cartesian.functor}
	Let $p_1 :\Ea_1 \fibto \Ba$ and $p_2 : \Ea_2 \fibto \Ba$ be fibrations.
	A \emph{cartesian functor} is a morphism $f : \Ea_1 \to \Ea_2$ so that the following triangle commutes strictly
	\begin{equation}
		\begin{tikzcd}[ampersand replacement=\&]
			{\Ea_1} \&\& {\Ea_2} \\
			\& \Ba
			\arrow["{p_1}"', two heads, from=1-1, to=2-2]
			\arrow["{p_2}", two heads, from=1-3, to=2-2]
			\arrow["f", from=1-1, to=1-3]
		\end{tikzcd}
	\end{equation}
	and the following mate---which we call \textbf{cartesianator}---is an isomorphism:
	\begin{equation}
	\label{eqn:cartesian.fun}
		\begin{tikzcd}[ampersand replacement=\&]
			{\Ba \comma p_1} \& {\Ba \comma p_2} \\
			{\Ea_1} \& \Ea_2
			\arrow["{{\hat f}}", from=1-1, to=1-2]
			\arrow["{\pull_1}"', from=1-1, to=2-1]
			\arrow["{\pull_2}", from=1-2, to=2-2]
			\arrow["{\cartesianator^f}"{pos=0.6}, shift right=.5, shorten <=13pt, shorten >=10pt, Rightarrow, from=2-1, to=1-2]
			\arrow["f"', from=2-1, to=2-2]
		\end{tikzcd}
		\qquad := \quad
		\begin{tikzcd}[ampersand replacement=\&]
			{\Ba \comma p_1} \&[-4ex] {\Ba \comma p_1} \& {\Ba \comma p_2} \\
			\& \Ea_1 \& \Ea_2 \&[-4ex] \Ea_2
			\arrow["f"', from=2-2, to=2-3]
			\arrow["{\hat f}", from=1-2, to=1-3]
			\arrow["{\internal{\id_{p_1}}}"', from=2-2, to=1-2]
			\arrow["{\internal{\id_{p_2}}}", from=2-3, to=1-3]
			\arrow["\counitpull_1"{pos=0.6}, shift left=5, shorten <=9pt, shorten >=2pt, Rightarrow, from=2-2, to=1-2]
			\arrow["\unitpull_2"'{pos=0.4}, shift right=5, shorten <=2pt, shorten >=9pt, Rightarrow, from=2-3, to=1-3]
			\arrow["\pull_1"', curve={height=6pt}, from=1-1, to=2-2]
			\arrow["\pull_2", curve={height=-6pt}, from=1-3, to=2-4]
			\arrow[Rightarrow, no head, from=2-3, to=2-4]
			\arrow[Rightarrow, no head, from=1-1, to=1-2]
		\end{tikzcd}
	\end{equation}
	where $\hat f$ is the functor induced by $f$ between the evident comma squares.
\end{defn}

\begin{lem}
\label{lem:fib.compose}
\label{lem:pb.fibs}
	Let $p : \Ea \epito \Ba$ be a cloven cartesian fibration.
	Then:
	\begin{enumerate}
		\item if $f: \Aa \to \Ba$ is a 1-cell, the pullback $f^*p : f^*\Ea \epito \Aa$ is a cloven cartesian fibration,
		\item if $q : \Ba \epito \Ca$ is a cloven cartesian fibration, $qp:\Ea \epito \Ca$ is a cloven cartesian fibration.
	\end{enumerate}
\end{lem}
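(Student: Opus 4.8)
The plan is to exhibit, in each case, the cartesian-lift functor of \cref{defn:fib}(2): a fibration structure on a display map $r$ is exactly a \emph{right adjoint right inverse} (RARI) $\lift$ to the canonical comparison $\hat r$, that is an adjunction $\hat r\adj\lift$ whose counit is an identity and which is fibred over the codomain. The guiding principle is that such fibred adjunctions-with-identity-counit are stable under base change, and both closure properties will fall out of this once the relevant commas and arrow objects are recognised as iterated strict pullbacks. Note first that $f^*p$ is a display map since displays are closed under pullback (\cref{defn:display.map.2.cat}) and $qp$ is a display map since they are closed under composition (\cref{defn:paradise}); hence both are sliceable, the slices $\Aa\comma f^*p$ and $\Ca\comma qp$ exist, and the final display clause of \cref{defn:fib} is already satisfied.

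For (1), I would record two identifications. Constructing the slices as in the sliceability lemma and pasting pullbacks gives $\Aa\comma f^*p\cong\Aa^{\downarrow}\times_{\Ba^{\downarrow}}(\Ba\comma p)$, while the arrow-object cotensor preserves the pullback defining $f^*\Ea$, so $(f^*\Ea)^{\downarrow}\cong\Aa^{\downarrow}\times_{\Ba^{\downarrow}}\Ea^{\downarrow}$. I then view the given adjunction $\hat p\adj\lift_p$ as living over $\Ba^{\downarrow}$, the structure maps to $\Ba^{\downarrow}$ being $p^{\downarrow}$ on $\Ea^{\downarrow}$ and the projection $\Ba\comma p\to\Ba^{\downarrow}$ (which is a display, being the pullback $\cod^* p$). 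Pulling this fibred adjunction back along $f^{\downarrow}\colon\Aa^{\downarrow}\to\Ba^{\downarrow}$ is possible because the two required pullbacks are precisely the objects just identified, and it produces an adjunction $\widehat{f^*p}\adj\lift_{f^*p}$ over $\Aa^{\downarrow}$ whose counit is again an identity; this $\lift_{f^*p}$ is the desired cleavage.

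For (2), I would proceed similarly but finish with a composite of adjunctions. Pullback pasting gives $\Ca\comma qp\cong(\Ca\comma q)\times_{\Ba}\Ea$ and $\Ba\comma p\cong\Ba^{\downarrow}\times_{\Ba}\Ea$, both fibred over $\Ba$ by the evident codomain projections and obtained by pulling back along $p$. Base-changing the adjunction $\hat q\adj\lift_q$ along $p\colon\Ea\to\Ba$ then yields a RARI $\sigma\adj\Phi$, where $\sigma\colon\Ba\comma p\to\Ca\comma qp$ applies $q$ to the arrow component and $\Phi$ lifts through $q$. One checks the clean factorization $\widehat{qp}=\sigma\hat p$, and composing the two RARIs $\hat p\adj\lift_p$ and $\sigma\adj\Phi$ gives $\widehat{qp}\adj\lift_p\Phi$ whose counit, a whiskered composite of identity counits, is again an identity; thus $\lift_{qp}:=\lift_p\Phi$ is the cleavage of $qp$ --- concretely, lift first through $q$ and then through $p$.

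The two substantive points, where I would spend the care, are the following. First, the bookkeeping realizing the commas and arrow objects as the iterated strict pullbacks above, combining pullback pasting with the fact that $(-)^{\downarrow}$ preserves the pullbacks defining $f^*\Ea$ and $f^*p$; this is what makes base change available without having to assume that, say, $p^{\downarrow}$ is itself a display. Second, and this is the real crux, the claim that base change transports a fibred adjunction and preserves the identity counit witnessing the right-inverse property: this relies on the pullbacks in a paradise being \emph{strict} $2$-pullbacks, so that the unit and counit $2$-cells transport via the universal property and an identity counit stays an identity. Granting these, the cartesianness and display requirements of \cref{defn:fib} hold automatically, since the produced data is by construction a fibred adjunction with identity counit between display maps.
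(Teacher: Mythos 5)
Your proof is correct, but note that the paper does not actually prove this lemma: its entire proof is the citation ``See Lemma 5.2.3 and Proposition 5.2.4 of \cite{riehl_elements_2022}'', so what you have written is the argument the paper outsources to the literature---and it is essentially the same technique used there, namely the right-adjoint-right-inverse characterization of \cref{defn:fib}(2) combined with base change of adjunctions along strict pullbacks and composition of adjunctions. Both points you isolate as the crux do go through. For the first: arrow objects are defined representably, hence commute with the strict 2-pullbacks of a paradise, so $(f^*\Ea)^{\downarrow} \cong \Aa^{\downarrow} \times_{\Ba^{\downarrow}} \Ea^{\downarrow}$ exists as a limit even though $p^{\downarrow}$ need not be display, exactly as you say, and pullback pasting gives $\Aa \comma f^*p \cong \Aa^{\downarrow} \times_{\Ba^{\downarrow}} (\Ba \comma p)$. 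For the second: the 2-dimensional universal property of strict 2-pullbacks transports the unit and counit, and its uniqueness clause gives the triangle identities, with an identity counit staying an identity. One detail you leave implicit but which the gluing needs: the unit of $\hat p \adj \lift_p$ must be vertical over $\Ba^{\downarrow}$ (resp.\ over $\Ba$ in part (2)) in order to pair with an identity 2-cell through the pullback; happily this is automatic, since the counit being an identity turns the triangle identity $\counit \hat p \cdot \hat p \unit = \id_{\hat p}$ into $\hat p \unit = \id$, whence the unit projects to an identity on the base. Similarly, your factorization $\widehat{qp} = \sigma \hat p$ follows from the uniqueness clause of the universal property of $\Ca \comma qp \cong \Ca^{\downarrow} \times_{\Ca} \Ea$ by comparing the two components. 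With these glosses your argument is a complete, self-contained proof of what the paper merely cites.
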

\begin{proof}
	See Lemma 5.2.3 and Proposition 5.2.4 of \cite{riehl_elements_2022}.
\end{proof}

\subsection[The tricategory of left-fibrant spans]{The tricategory of left-fibrant spans}

We are now ready to present enough of the tricategory $\FibSpan^{\twoto}$ of left-fibrant spans in order to define a contextad. In this section, we will only present the composition law for $\FibSpan^{\twoto}$; for the full definition including coherences, we will wait for \cref{thm:fSpan.tricat} in the upcoming \cref{sec:fibs.span} where we will transport the coherences from a more abstractly defined tricategory. The definition that we present here will be sufficient to give the definition of a contextad in \cref{defn:colax.fibred.action}.

\begin{defn}
	Fix a paradise $\Paradise$.
	For objects $\Ca$ and $\Da$ in $\Cosmos$, the 2-category $\FibSpan\Paradise^\twoto(\Ca, \Da)$ of left-fibrant spans and right-colax maps between $\Ca$ and $\Da$ is the full sub-2-category of $\DispSpan\Paradise^\twoto(\Ca, \Da)$ spanned by the left-displayed spans $\Ca \xleftarrow{p} \Ea \xrightarrow{f} \Da$ whose left map $p$ is a fibration in $\Cosmos$.
\end{defn}

Now show that composition of spans via pullback extends to a 2-functor on the hom 2-categories $\FibSpan\Paradise^\twoto(\Ca, \Da)$.

\begin{lem}
\label{lem:fspan.composition}
	For every $\Ba, \Ca, \Da \in \Kb$, there is a 2-functor
	\begin{equation}
		\spancomp : \FibSpan^\twoto(\Ba, \Ca) \times \FibSpan^\twoto(\Ca, \Da) \longto \FibSpan^\twoto(\Ba, \Da)
	\end{equation}
	given by composition of spans and acting on morphisms by
	\begin{equation}
		(k, \varphi), (h,\psi) \mapsto ((k p_1', h \pull_{q'}(\varphi p_1')), \psi \lift_{q'}(\varphi p_1'))
	\end{equation}
	where notation refers to the diagram \eqref{eqn:fibspan-comp} below.
\end{lem}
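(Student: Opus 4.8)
The plan is to verify three things in turn: that composition by strict pullback sends a pair of left-fibrant spans to a left-fibrant span; that the displayed formula genuinely produces a right-colax map of the composite spans; and that these assignments are \emph{strictly} $2$-functorial. Because the composite is formed by the strict pullback of \cref{thm:dspan.is.tricat}, all of the span-level coherence (associators, unitors, and the surrounding bicategorical structure) is inherited verbatim from there; the only genuinely new content is the action on right-colax $1$-cells and on $2$-cells, so I concentrate there. As flagged in the remarks preceding this lemma, the one obstruction to composing right-colax maps by naive pullback is that the two evident components into the target pullback disagree in $\Ca$ by a colaxator; the entire point of left-fibrancy is that the cleavage repairs exactly this disagreement.

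\emph{Objects.} Given left-fibrant spans $\Ba \fibto \Ea_1 \to \Ca$ and $\Ca \fibto \Ea_2 \to \Da$, the composite $\Ea_1 \spancomp \Ea_2$ is the strict pullback of the right leg of $\Ea_1$ against the left leg of $\Ea_2$. Its left leg to $\Ba$ factors as the pullback of the fibration $\Ea_2 \fibto \Ca$ along the right leg of $\Ea_1$, followed by the fibration $\Ea_1 \fibto \Ba$; by \cref{lem:fib.compose}(1) the first factor is a fibration and by \cref{lem:fib.compose}(2) the composite is, so $\Ea_1 \spancomp \Ea_2$ is again left-fibrant.

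\emph{One-cells.} For a pair of right-colax maps $(k,\varphi)$, $(h,\psi)$ as in \eqref{eqn:fibspan-comp}, write $p_1'$ for the projection onto the first factor. The two candidate components into the target pullback, namely $kp_1'$ on the first factor and $h$ on the second, fail to agree on their images in $\Ca$ precisely by the whiskered colaxator $\varphi p_1'$ of the first map. Regarding $\varphi p_1'$ together with the second-factor projection as a map into the comma object $\Ca \comma q'$, I apply the pullback functor $\pull_{q'}$ of \cref{defn:fib} to replace that projection by its cartesian-lifted version, carried into the target by $h$, giving the second component $h\,\pull_{q'}(\varphi p_1')$; by construction this now lies strictly over the same map to $\Ca$ as $kp_1'$. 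The universal property of the strict pullback then produces the apex map $(kp_1',\,h\,\pull_{q'}(\varphi p_1'))$. Strict commutation on the left is immediate from $p_1'k = p_1$ and $q'h = q$, while the colaxator of the composite is obtained by pasting the second map's colaxator $\psi$ onto the cartesian lift $\lift_{q'}(\varphi p_1')$ supplied by \cref{rmk:fib.struct}, yielding the $2$-cell $\psi\,\lift_{q'}(\varphi p_1')$ of the statement. Thus the formula defines a bona fide right-colax map of the composite spans.

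\emph{Two-cells and functoriality.} A compatible pair of transformations as in \eqref{eqn:transformation.of.right.colax.maps.of.spans} is sent to the transformation induced on the target pullback by its two components, using that $\pull_{q'}$ and $\lift_{q'}$ are honest \emph{functors} (a fibred right adjoint and its associated lift, by \cref{defn:fib}) and so act on the $2$-cells of $\Ca \comma q'$; preservation of identities and of vertical composition of $2$-cells is then immediate from their functoriality together with the universal property of the strict pullback. The substance of the proof, and the step I expect to be the main obstacle, is strict preservation of \emph{composition of $1$-cells}: composing two right-colax maps first forces a single lift of the pasted colaxator, whereas applying $\spancomp$ factorwise lifts the two colaxators separately, and these must coincide on the nose. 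This reduces to two coherence properties of the chosen cleavage, both read off from the verticality of $\unitpull$ and $\counitpull$ in \cref{rmk:fib.struct}: that the cartesian lift of an identity $2$-cell is again an identity (normality of the cleavage, so identity $1$-cells are preserved), and that lifting is compatible with vertical composition and whiskering (so that $\pull_{q'}$ and $\lift_{q'}$ carry the pasted colaxator to the composite of the separate lifts). Granting these—each an application of the functoriality of $\pull_{q'}$ and of the verticality conditions—the remaining $2$-functor axioms are a routine diagram chase against the universal property of the strict pullback, with no further appeal to weakness since every pullback in sight is strict.
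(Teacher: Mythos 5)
Your construction itself coincides with the paper's: objects are handled exactly as you do (via \cref{lem:pb.fibs}, pullbacks of fibrations are fibrations and fibrations compose), the composite $1$-cell is built by lifting the whiskered colaxator $\varphi p_1'$ along $q'$ and invoking the universal property of the strict pullback, the composite colaxator is the pasting $\psi\,\lift_{q'}(\varphi p_1')$, and your (vaguer) description of $2$-cells matches the paper's explicit formula $(\alpha p_1',\ \beta h'\pull_{q'} f(\alpha p_1'))$. The divergence, and the genuine gap, is in your final paragraph on functoriality.

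You reduce strict preservation of identity $1$-cells to ``normality of the cleavage'' (lifts of identities are identities) and strict preservation of composition to strict compatibility of lifting with vertical composition (i.e.\ splitness), and you assert that both are ``read off from the verticality of $\unitpull$ and $\counitpull$''. Neither claim is correct. Verticality (\cref{rmk:fib.adj}) says only that $p\unitpull$ and $\dom\counitpull$ are identities; it does not make $\unitpull$ itself an identity. The fibrations entering $\FibSpan^\twoto$ (\cref{defn:fib}) are merely \emph{cloven}: $\pull\,\internal{\id_p}$ is isomorphic to the identity via the invertible unit $\unitpull$, but not equal to it, and the pullback of a vertical composite of $2$-cells agrees with the iterated pullbacks only up to canonical isomorphism. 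Normality is an honestly extra condition in this paper --- it first appears in \cref{sec:structured.para} as part of the display class $\{\mathsf{sdnf}\}$ --- and splitness never appears at all. So the two coherence properties your functoriality argument rests on are simply not available in the setting of the lemma, and the identity/composition preservation you correctly identify as ``the main obstacle'' cannot be discharged this way.

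This is also exactly the point where the paper's strategy differs: its proof of this lemma only establishes \emph{well-definedness} of the assignment (that the formulas output genuine $1$- and $2$-cells of $\FibSpan^\twoto(\Ba,\Da)$), and all coherence of the composition is deliberately deferred. Concretely, \cref{thm:main.iso} identifies $\FibSpan^\twoto(\Ba,\Ca)$ with $\Alg(\Ba^{\downarrow}\spancomp -,\ \Kl(-\spancomp\Ca^{\downarrow},\ \DispSpan(\Ba,\Ca)))$, \cref{lem:iota.composition} checks that under this identification the formula of the present lemma agrees with composition in the Kleisli cocompletion $\KL(\DispSpan^=)$, and \cref{thm:fSpan.tricat} then transports the tricategorical structure back. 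There the coherence comes from the monad laws of the arrow monads and Miranda's construction, not from any split or normal property of the cleavage. To repair your proof you would either have to restrict to normal split cleavages (which changes the statement) or weaken your direct verification to coherence up to the invertible cells generated by $\unitpull$ and then still invoke a transport-of-structure argument --- which is precisely what the paper does.
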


\begin{notation}
	We denote the composite of $(k, \varphi)$ and $(h,\psi)$ as $(k \spancomp_\varphi h, \varphi \spancomp_\varphi \psi)$.
\end{notation}

\begin{proof}
	On objects, the 2-functor is defined by strict pullback.
	We need to verify this definition extends to the 1- and 2-cells.
	This is non-trivial since the 1-cells are cartesian for the fibrations and right-lax.
	The first aspect can be liquidated quickly since, by \cref{lem:pb.fibs}, pullbacks preserve cartesian fibrations and cartesian functors, and furthermore cartesian fibrations compose.
	Regarding the second aspect, we need to explain how the laxator appearing on the right of the first composee is lifted to become a laxator for the composite.

	Thus consider the solid diagram below:
	\begin{equation}
	\label{eqn:fibspan-comp}
		\begin{tikzcd}[ampersand replacement=\&]
			\&\&\&\& {\Ea' \spancomp \Fa'} \\
			\\
			\&\& {\Ea'} \&\& {\Ea \spancomp \Fa} \&\& {\Fa'} \\
			\&\& \Ea \&\&\&\& \Fa \\
			\Ba \&\&\&\& \Ca \&\&\&\& \Da
			\arrow["p",two heads, from=4-3, to=5-1]
			\arrow["f"', from=4-3, to=5-5]
			\arrow["q", two heads, from=4-7, to=5-5]
			\arrow["g"',from=4-7, to=5-9]
			\arrow["{p_1}"', two heads, from=3-5, to=4-3]
			\arrow["{p_2}", from=3-5, to=4-7]
			\arrow["\lrcorner"{anchor=center, pos=0.125, rotate=-45}, draw=none, from=3-5, to=5-5]
			\arrow["p'"',two heads, from=3-3, to=5-1]
			\arrow[""{name=0, anchor=center, inner sep=0}, "{f'}", from=3-3, to=5-5]
			\arrow[""{name=0p, anchor=center, inner sep=0}, phantom, from=3-3, to=5-5, start anchor=center, end anchor=center]
			\arrow["k"', from=3-3, to=4-3]
			\arrow["g'", from=3-7, to=5-9]
			\arrow[""{name=1p, anchor=center, inner sep=0}, phantom, from=3-7, to=5-9, start anchor=center, end anchor=center]
			\arrow["{h}", from=3-7, to=4-7]
			\arrow["{q'}"', two heads, from=3-7, to=5-5]
			\arrow["{p_1'}"', two heads, from=1-5, to=3-3]
			\arrow[""{name=2, anchor=center, inner sep=0}, "{p_2'}", from=1-5, to=3-7]
			\arrow[""{name=3, anchor=center, inner sep=0}, dashed,"{\pull_{q'}(\varphi p_1')}"{description, pos=0.7}, shift right=2, curve={height=26pt}, shorten <=8pt, dashed, from=1-5, to=3-7]
			\arrow["{\exists!\,k \spancomp_\varphi h}"'{pos=0.7}, shorten <=6pt, dashed, from=1-5, to=3-5]
			\arrow["\lrcorner"{anchor=center, pos=0.012, rotate=-45}, draw=none, from=1-5, to=5-5]
			\arrow["\varphi", shorten >=6pt, Rightarrow, from=4-3, to=0p, shift right=0.25]
			\arrow["{\lift_{q'}(\varphi p_1')}"{rotate=-35, pos=0.25}, dashed, shorten <=3pt, shorten >=3pt, Rightarrow, from=3, to=2]
			\arrow["\psi", shorten >=5pt, Rightarrow, from=4-7, to=1p]
		\end{tikzcd}
	\end{equation}
	Observe that $\varphi p_1' : fkp_1' \twoto f'p_1' = q'p_2'$.
	This means the components of $\varphi p_1'$ can be lifted using the fibrational structure of $q'$.
	Concretely, that gives us the arrow $\pull_{q'}(\varphi p_1') :\Ea' \spancomp \Fa' \to \Fa'$ as well as $\lift_{q'}(\varphi p_1') : \pull_{q'}(\varphi p_1') \twoto p_2'$:
	\begin{equation}
		\begin{tikzcd}[ampersand replacement=\&,sep=scriptsize]
			{\Ea' \spancomp \Fa'} \&\& {\Fa'} \\[2ex]
			\\
			\&\& \Ca
			\arrow["{p_2'}", shift left, from=1-1, to=1-3]
			\arrow["{q'}", two heads, from=1-3, to=3-3]
			\arrow[""{name=0, anchor=center, inner sep=0}, "{fkp_1'}"', curve={height=10pt}, from=1-1, to=3-3]
			\arrow["{\varphi p_1'}", shorten <=6pt, shorten >=6pt, Rightarrow, from=0, to=1-3]
		\end{tikzcd}
		\quad = \ %
		\begin{tikzcd}[ampersand replacement=\&,sep=scriptsize]
			{\Ea' \spancomp \Fa'} \&[3.5ex]\&[3.5ex] {\Fa'} \\[2ex]
			\\
			\&\& \Ca
			\arrow[""{name=0, anchor=center, inner sep=0}, "{p_2'}", shift left=1.5, from=1-1, to=1-3]
			\arrow[""{name=0p, anchor=center, inner sep=0}, phantom, from=1-1, to=1-3, start anchor=center, end anchor=center, shift left]
			\arrow["{q'}", two heads, from=1-3, to=3-3]
			\arrow["{fkp_1'}"', curve={height=10pt}, from=1-1, to=3-3]
			\arrow[""{name=1, anchor=center, inner sep=0}, "{\pull_{q'}(\varphi p_1')}"'{pos=0.55}, curve={height=18pt}, from=1-1, to=1-3, dashed]
			\arrow[""{name=1p, anchor=center, inner sep=0}, phantom, from=1-1, to=1-3, start anchor=center, end anchor=center, curve={height=12pt}]
			\arrow["{\lift_{q'}(\varphi p_1')}", shift right=3, Rightarrow, from=1p, to=0p]
		\end{tikzcd}
	\end{equation}
	The square $fkp_1' = q'\pull_{q'}(\varphi p_1') = qk'\pull_{q'}(\varphi p_1')$ witnesses a cone insisting on the cospan $\Ea \nto{f} \Ca \nepifrom{q} \Fa$, and thus induces a unique comparison map $k \spancomp_\varphi h := (k p_1', h \pull_{q'}(\varphi p_1'))$ such that $p_2 (k \spancomp_\varphi h) = h \pull_{q'}(\varphi p_1')$.
	All in all, this gives the desired filling 2-cell for the right hand side of the composite map of spans, defined as $\psi \lift_{q'}(\varphi p_1')$.

	Reasoning in the same way shows how to compose 2-cells of left-fibrant spans.
	If $\alpha : (k, \varphi) \twoto (k', \varphi)'$ and $\beta : (h, \psi) \twoto (h', \psi')$, we would have the horizontal composite $\alpha \spancomp_\alpha \beta$ being the 2-cell between $k \spancomp_\varphi h$ and $k' \spancomp_{\varphi'} h'$ given by $(\alpha p_1',\, \beta h'\pull_{q'} f(\alpha p_1'))$.
	We thereby conclude that $\then_{\Ba,\Ca,\Da}$ is well-defined.
\end{proof}

\begin{rmk}
	Using elements, given $(E',F') \in \Ea' \spancomp \Fa'$, the composite 1-cell defined above is given on apexes as
	\begin{equation}
		(k \spancomp_\varphi h)(E', F') = \left(k(E'),\ h(\varphi_{E'}^* F')\right)
	\end{equation}
	and on the filler by
	\begin{equation}
		(\varphi \spancomp_\varphi \psi)_{(E',F')} := g(h(\varphi^*_{E'}F')) \xto{g(\lift_{q'}\varphi_{E'})} g(h(F')) \xto{\psi_{F'}} g'(F')
	\end{equation}
	where the star denotes pull according to $q'$.
	Likewise, if $\alpha$ and $\beta$ are 2-cells as above, we get
	\begin{equation}
		(\alpha \spancomp_\alpha \beta)(E', F') := \begin{pmatrix}
			k(E') \xto{\alpha_{E'}} k'(E')\\
			h(\varphi_{E'}^* F') \xto{\beta_{\varphi_{E'}^*F'}} h'(\varphi_{E'}^* F') \xto{h'(\lift_{q'}f(\alpha_{E'}))} h'({\varphi'}_{E'}^* F')
		\end{pmatrix}
	\end{equation}
\end{rmk}

\begin{rmk}
	We could verify directly that this composition is suitably unital and
	associative, but we will instead appeal to \cref{thm:fSpan.tricat} to
	transport the tricategory structure of the free Kleisli cocompletion of
	$\DispSpan\Paradise$. This argument is carried out in \cref{sec:fibs.span},
	specifically \cref{thm:fSpan.tricat}.
\end{rmk}

\subsection[Contextads as pseudomonads in left-fibrant spans]{Contextads as pseudomonads in $\FibSpan^\twoto$}

Now that we have a tricategory $\FibSpan^\twoto$ constructed from a paradise, we may give our definition of a \emph{contextad} on an object of a paradise.

\begin{defn}
\label{defn:colax.fibred.action}
	Let $\Paradise$ be a paradise.
	A \textbf{contextad} on an object $\acted$ in $\Cosmos$ is a pseudomonad in the tricategory $\FibSpan\Paradise^\twoto$ on $\acted$.
	This consists of a span $\fibcolaxaction$ where $p$ is a displayed fibration (that is, a fibration in $\Display$), together with the following data:
	\begin{enumerate}
		\item Unit and multiplication 2-cells
		\begin{eqalign}
			\begin{tikzcd}[ampersand replacement=\&]
				\acted \& \acted \& \acted \\
				\acted \& \actor \& \acted
				\arrow[Rightarrow, no head, from=1-1, to=2-1]
				\arrow[Rightarrow, no head, from=1-2, to=1-1]
				\arrow[""{name=0, anchor=center, inner sep=0}, Rightarrow, no head, from=1-2, to=1-3]
				\arrow[""{name=0p, anchor=center, inner sep=0}, phantom, from=1-2, to=1-3, start anchor=center, end anchor=center]
				\arrow["\combineunit"', from=1-2, to=2-2]
				\arrow[Rightarrow, no head, from=1-3, to=2-3]
				\arrow["p", from=2-2, to=2-1]
				\arrow[""{name=1, anchor=center, inner sep=0}, "\action"', from=2-2, to=2-3]
				\arrow[""{name=1p, anchor=center, inner sep=0}, phantom, from=2-2, to=2-3, start anchor=center, end anchor=center]
				\arrow["\counitor"', shorten <=4pt, shorten >=4pt, Rightarrow, from=1p, to=0p]
			\end{tikzcd}
			&\qquad\qquad
			\begin{gathered}
				\combineunit_A \in \actor_A\\
				\counitor : A \action \combineunit \to A
			\end{gathered}
			\\
			\begin{tikzcd}[ampersand replacement=\&,column sep=scriptsize,row sep=small]
				\acted \& \actor \& \acted \& \actor \& \acted \\
				\&\& {\actor \spancomp \actor} \\[2ex] {\acted} \&\& \actor \& {} \& \acted
				\arrow[Rightarrow, no head, from=1-1, to=3-1]
				\arrow["p"', from=1-2, to=1-1]
				\arrow["\action", from=1-2, to=1-3]
				\arrow["p"', from=1-4, to=1-3]
				\arrow["\action", from=1-4, to=1-5]
				\arrow[Rightarrow, no head, from=1-5, to=3-5]
				\arrow[from=2-3, to=1-2]
				\arrow["\lrcorner"{anchor=center, pos=0.125, rotate=135}, draw=none, from=2-3, to=1-3]
				\arrow[from=2-3, to=1-4]
				\arrow["\combine"', from=2-3, to=3-3]
				\arrow["p", from=3-3, to=3-1]
				\arrow["\action"', from=3-3, to=3-5]
				\arrow["\coassociator"', shorten <=6pt, shorten >=6pt, Rightarrow, from=3-4, to=1-4]
			\end{tikzcd}
			&\qquad\qquad
			\begin{gathered}
				\lens{P}{A} \combine \lens{Q}{A \action P} \in \actor_A\\
				\coassociator : A \action (P \combine Q) \to (A \action P) \action Q
			\end{gathered}
		\end{eqalign}
		where $\combineunit$ and $\combine$ are both cartesian with respect to $p$, i.e.~there are natural $p$-vertical isomorphisms
		\begin{equation}
		\label{eq:combine-unit-cart}
			\combineunitcart : \combineunit \iso f^* \combineunit, \qquad \combinecart : f^* P \combine (f \action P)^*Q \iso f^* (P \combine Q).
		\end{equation}
		\item invertible 3-cells:
		      \begin{eqalign}
			      \label{eqn:fib.colax.action.unitors.associator}
			      \begin{tikzcd}
				      {\acted \spancomp \actor} &[1ex] {\actor \spancomp \actor} &[1ex] {\actor \spancomp \acted} \\[2.5ex]
				      & \actor
				      \arrow["{\combineunit \spancomp_\counitor \actor}", from=1-1, to=1-2]
				      \arrow[""{name=0, anchor=center, inner sep=0}, from=1-1, to=2-2]
				      \arrow["\combine"{description}, from=1-2, to=2-2]
				      \arrow["{\actor \spancomp \combineunit}"', from=1-3, to=1-2]
				      \arrow[""{name=1, anchor=center, inner sep=0}, from=1-3, to=2-2]
				      \arrow["\leftunitlaw"', shorten <=5pt, Rightarrow, from=0, to=1-2]
				      \arrow["\rightunitlaw"', shorten >=5pt, Rightarrow, from=1-2, to=1]
			      \end{tikzcd}
				  &\qquad\qquad
				  \begin{gathered}
					\leftunitlaw : \lens{P}{A} \isoto \lens{\combineunit}{A} \combine \lens{\counitor^* P}{A \action \combineunit}\\
					\rightunitlaw : \lens{P}{A} \combine \lens{\combineunit}{A \action P} \isoto \lens{P}{A}
				  \end{gathered}
			      \\
			      \begin{tikzcd}[sep=scriptsize]
				      & {\actor \spancomp \actor \spancomp \actor} \\
				      {\actor \spancomp (\actor \spancomp \actor)} && {(\actor \spancomp \actor) \spancomp \actor} \\[2ex]
				      {\actor \spancomp \actor} && {\actor \spancomp \actor} \\
				      & \actor
				      \arrow[from=1-2, to=2-1]
				      \arrow[from=1-2, to=2-3]
				      \arrow[""{name=0, anchor=center, inner sep=0}, "{\actor \,\spancomp\, \combine}"', from=2-1, to=3-1]
				      \arrow[""{name=1, anchor=center, inner sep=0}, "{\combine \,\spancomp_\coassociator\, \actor}", from=2-3, to=3-3]
				      \arrow["\combine"', from=3-1, to=4-2]
				      \arrow["\combine", from=3-3, to=4-2]
				      \arrow["\associativitylaw", shorten <=50pt, shorten >=50pt, Rightarrow, from=0, to=1]
			      \end{tikzcd}
				  &\qquad\qquad
				  \begin{gathered}
					\associativitylaw : P \combine (Q \combine R) \isoto (P \combine Q) \combine \coassociator^* R
				  \end{gathered}
		      \end{eqalign}
		      As 3-cells, these must satisfy the following equations relating them to $\counitor$ and $\coassociator$:
		      \begin{align}
				\begin{tikzcd}[ampersand replacement=\&,sep=scriptsize]
					{\acted \spancomp\actor} \&[3ex]\&[3ex] \\
					\\
					{\actor \spancomp\actor} \&\& \acted \\
					\\
					\actor
					\arrow["{\combineunit \spancomp_\counitor \actor}"{description}, from=1-1, to=3-1]
					\arrow[""{name=0, anchor=center, inner sep=0}, "{\action p_{\actor}}", from=1-1, to=3-3]
					\arrow[""{name=0p, anchor=center, inner sep=0}, phantom, from=1-1, to=3-3, start anchor=center, end anchor=center]
					\arrow[""{name=1, anchor=center, inner sep=0}, "{p_{\actor}}"', shift right=5, curve={height=30pt}, from=1-1, to=5-1]
					\arrow[""{name=1p, anchor=center, inner sep=0}, phantom, from=1-1, to=5-1, start anchor=center, end anchor=center, shift right=5, curve={height=30pt}]
					\arrow[""{name=2, anchor=center, inner sep=0}, "{\action\action}"{description}, from=3-1, to=3-3]
					\arrow[""{name=2p, anchor=center, inner sep=0}, phantom, from=3-1, to=3-3, start anchor=center, end anchor=center]
					\arrow[""{name=2p, anchor=center, inner sep=0}, phantom, from=3-1, to=3-3, start anchor=center, end anchor=center]
					\arrow["\combine"{description}, from=3-1, to=5-1]
					\arrow[""{name=3, anchor=center, inner sep=0}, "\action"', from=5-1, to=3-3]
					\arrow[""{name=3p, anchor=center, inner sep=0}, phantom, from=5-1, to=3-3, start anchor=center, end anchor=center]
					\arrow["\leftunitlaw", shorten <=0pt, Rightarrow, from=1p, to=3-1]
					\arrow["{\action \lift_p(\counitor p_{\acted})}", shorten <=4pt, shorten >=4pt, Rightarrow, from=2p, to=0p]
					\arrow["\coassociator", shorten <=4pt, shorten >=4pt, Rightarrow, from=3p, to=2p]
				\end{tikzcd}
				\quad &= \quad
				\begin{tikzcd}[ampersand replacement=\&, sep=scriptsize]
					{\acted\spancomp\actor} \&[3ex]\&[3ex] \\
					\\
					\&\& \acted \\
					\\
					\actor
					\arrow["{{\action p_{\actor}}}", from=1-1, to=3-3]
					\arrow["{{p_{\actor}}}"', from=1-1, to=5-1]
					\arrow["\action"', from=5-1, to=3-3]
				\end{tikzcd}
				\\
				\begin{tikzcd}[ampersand replacement=\&,sep=scriptsize]
					{\actor \spancomp\acted} \&[3ex]\&[3ex] \\
					\\
					{\actor \spancomp\actor} \&\& \acted \\
					\\
					\actor
					\arrow["{{\actor \spancomp \combineunit}}"{description}, from=1-1, to=3-1]
					\arrow[""{name=0, anchor=center, inner sep=0}, "{{\action p_{\actor}}}", from=1-1, to=3-3]
					\arrow[""{name=0p, anchor=center, inner sep=0}, phantom, from=1-1, to=3-3, start anchor=center, end anchor=center]
					\arrow[""{name=1, anchor=center, inner sep=0}, "{\action\action}"{description}, from=3-1, to=3-3]
					\arrow[""{name=1p, anchor=center, inner sep=0}, phantom, from=3-1, to=3-3, start anchor=center, end anchor=center]
					\arrow[""{name=1p, anchor=center, inner sep=0}, phantom, from=3-1, to=3-3, start anchor=center, end anchor=center]
					\arrow["\combine"{description}, from=3-1, to=5-1]
					\arrow[""{name=2, anchor=center, inner sep=0}, "\action"', from=5-1, to=3-3]
					\arrow[""{name=2p, anchor=center, inner sep=0}, phantom, from=5-1, to=3-3, start anchor=center, end anchor=center]
					\arrow["{{\counitor p_{\actor}}}", shorten <=4pt, shorten >=4pt, Rightarrow, from=1p, to=0p]
					\arrow["\coassociator", shorten <=4pt, shorten >=4pt, Rightarrow, from=2p, to=1p]
				\end{tikzcd}
				\quad &= \quad
				\begin{tikzcd}[ampersand replacement=\&, sep=scriptsize]
					{\acted\spancomp\actor} \&[3ex]\&[3ex] \\
					\\
					{\actor\spancomp\actor} \&\& \acted \\
					\\
					\actor
					\arrow["{{\actor \spancomp \combineunit}}"{description}, from=1-1, to=3-1]
					\arrow[""{name=0, anchor=center, inner sep=0}, "{{\action p_{\actor}}}", from=1-1, to=3-3]
					\arrow[""{name=0p, anchor=center, inner sep=0}, phantom, from=1-1, to=3-3, start anchor=center, end anchor=center]
					\arrow["\combine"{description}, from=3-1, to=5-1]
					\arrow[""{name=1, anchor=center, inner sep=0}, "\action"', from=5-1, to=3-3]
					\arrow[""{name=1p, anchor=center, inner sep=0}, phantom, from=5-1, to=3-3, start anchor=center, end anchor=center]
					\arrow["\action\rightunitlaw", shorten <=9pt, shorten >=9pt, Rightarrow, from=1p, to=0p]
				\end{tikzcd}
				\\
				\begin{tikzcd}[ampersand replacement=\&, sep=scriptsize]
					\& {\actor \spancomp \actor \spancomp \actor} \&[3ex]\&[3ex] \\
					\\
					{\actor \spancomp \actor} \& {\actor\spancomp\actor} \&\& \acted \\
					\\
					\& \actor
					\arrow["{{\actor \spancomp \combine}}"', from=1-2, to=3-1]
					\arrow["{{\combine \spancomp_\coassociator \actor}}"{description}, from=1-2, to=3-2]
					\arrow[""{name=0, anchor=center, inner sep=0}, "{\action\action\action}", from=1-2, to=3-4]
					\arrow[""{name=0p, anchor=center, inner sep=0}, phantom, from=1-2, to=3-4, start anchor=center, end anchor=center]
					\arrow["\associativitylaw", shorten <=8pt, shorten >=8pt, Rightarrow, from=3-1, to=3-2]
					\arrow["\combine"', from=3-1, to=5-2]
					\arrow[""{name=1, anchor=center, inner sep=0}, "{\action\action}"{description}, from=3-2, to=3-4]
					\arrow[""{name=1p, anchor=center, inner sep=0}, phantom, from=3-2, to=3-4, start anchor=center, end anchor=center]
					\arrow[""{name=1p, anchor=center, inner sep=0}, phantom, from=3-2, to=3-4, start anchor=center, end anchor=center]
					\arrow["\combine"{description}, from=3-2, to=5-2]
					\arrow[""{name=2, anchor=center, inner sep=0}, "{\action}"', from=5-2, to=3-4]
					\arrow[""{name=2p, anchor=center, inner sep=0}, phantom, from=5-2, to=3-4, start anchor=center, end anchor=center]
					\arrow["{\action\lift_p(\coassociator p_{\hat{3}})}", shorten <=4pt, shorten >=4pt, Rightarrow, from=1p, to=0p]
					\arrow["\coassociator", shorten <=4pt, shorten >=4pt, Rightarrow, from=2p, to=1p]
				\end{tikzcd}
				\quad &= \quad
				\begin{tikzcd}[ampersand replacement=\&,sep=scriptsize]
					{\actor \spancomp \actor \spancomp \actor} \&[3ex]\&[3ex]\\
					\\
					{\actor \spancomp \actor} \&\& \acted \\
					\\
					\actor
					\arrow["{{\actor \spancomp \combine}}"{description}, from=1-1, to=3-1]
					\arrow[""{name=0, anchor=center, inner sep=0}, "{\action\action\action}", from=1-1, to=3-3]
					\arrow[""{name=0p, anchor=center, inner sep=0}, phantom, from=1-1, to=3-3, start anchor=center, end anchor=center]
					\arrow[""{name=1, anchor=center, inner sep=0}, "{\action\action}"{description}, from=3-1, to=3-3]
					\arrow[""{name=1p, anchor=center, inner sep=0}, phantom, from=3-1, to=3-3, start anchor=center, end anchor=center]
					\arrow[""{name=1p, anchor=center, inner sep=0}, phantom, from=3-1, to=3-3, start anchor=center, end anchor=center]
					\arrow["\combine"{description}, from=3-1, to=5-1]
					\arrow[""{name=2, anchor=center, inner sep=0}, "\action"', from=5-1, to=3-3]
					\arrow[""{name=2p, anchor=center, inner sep=0}, phantom, from=5-1, to=3-3, start anchor=center, end anchor=center]
					\arrow["{{\coassociator p_{\hat{1}}}}", shorten <=4pt, shorten >=4pt, Rightarrow, from=1p, to=0p]
					\arrow["\coassociator", shorten <=4pt, shorten >=4pt, Rightarrow, from=2p, to=1p]
				\end{tikzcd}
		      \end{align}
			  which on elements correspond to:
			  \begin{equation}
				\begin{tikzcd}[ampersand replacement=\&]
					{A \action (\combineunit \combine \counitor^* P)} \&\& {(A \action \combineunit) \action \counitor^* P} \\
					\& {A \action P}
					\arrow["\coassociator", from=1-1, to=1-3]
					\arrow["{A \action \leftunitlaw^{-1}}"', from=1-1, to=2-2]
					\arrow["{\counitor \action P}", from=1-3, to=2-2]
				\end{tikzcd}
			  \end{equation}
			  \begin{equation}
				\label{eqn:ctx.rightunitlaw.3cell.cond}
				\begin{tikzcd}[ampersand replacement=\&]
					{A \action (P \combine \combineunit)} \&\& {(A \action P) \action \combineunit} \\
					\& {A \action P}
					\arrow["\coassociator", from=1-1, to=1-3]
					\arrow["{A \action \rightunitlaw}"', from=1-1, to=2-2]
					\arrow["\counitor", from=1-3, to=2-2]
				\end{tikzcd}
			  \end{equation}
			  \begin{equation}
				\label{eqn:ctx.associativitylaw.3cell.cond}
				\begin{tikzcd}[ampersand replacement=\&]
					{A \action (P \combine (Q \combine R))} \&[-6.5ex]\&[-6.5ex] {A \action ((P \combine Q) \combine \coassociator^* R)} \\
					{(A \action P) \action (Q \combine R)} \&\& {(A \action (P \combine Q)) \action \coassociator^* R} \\
					\& {((A \action P) \action Q) \action R}
					\arrow["{A \action \associativitylaw}", from=1-1, to=1-3]
					\arrow["\coassociator"', from=1-1, to=2-1]
					\arrow["\coassociator", from=1-3, to=2-3]
					\arrow["\coassociator"', from=2-1, to=3-2]
					\arrow["{\coassociator \action R}", from=2-3, to=3-2]
				\end{tikzcd}
			  \end{equation}
		\item Furthermore, the 3-cells have to satisfy coherences reminiscent of those of a monoidal category, namely unitality and associativity as per \cite{lack_coherent_2000} (though note the associator there goes the opposite way):
		\begin{equation}
			\begin{tikzcd}[ampersand replacement=\&,sep=scriptsize]
				\&[2ex]\&[-3ex] {\actor \spancomp \actor} \&[-2ex]\\
				{\actor \spancomp \actor} \& {\actor^{\spancomp 3}} \&\& \actor \\
				\&\& {\actor \spancomp \actor}
				\arrow["\combine", from=1-3, to=2-4]
				\arrow["\associativitylaw"', shorten <=6pt, shorten >=6pt, Rightarrow, from=3-3, to=1-3]
				\arrow["{\actor \spancomp \combineunit \spancomp_\counitor \actor}", from=2-1, to=2-2]
				\arrow["{\combine \spancomp_\coassociator \actor}", from=2-2, to=1-3]
				\arrow["{\actor \spancomp \combine}"', from=2-2, to=3-3]
				\arrow["\combine"', from=3-3, to=2-4]
			\end{tikzcd}
			\quad = \quad
			\begin{tikzcd}[ampersand replacement=\&,sep=scriptsize]
				\&[-2ex] {\actor^{\spancomp 3}} \&[-2ex] \&[-2ex]\\
				{\actor \spancomp \actor} \&\& {\actor \spancomp \actor} \& \actor \\
				\& {\actor^{\spancomp 3}}
				\arrow["{\combine \spancomp_\coassociator \actor}", from=1-2, to=2-3]
				\arrow["{\actor \spancomp \combineunit \spancomp_\counitor \actor}", from=2-1, to=1-2]
				\arrow[""{name=0, anchor=center, inner sep=0}, Rightarrow, no head, from=2-1, to=2-3]
				\arrow[""{name=0p, anchor=center, inner sep=0}, phantom, from=2-1, to=2-3, start anchor=center, end anchor=center]
				\arrow[""{name=0p, anchor=center, inner sep=0}, phantom, from=2-1, to=2-3, start anchor=center, end anchor=center]
				\arrow["{\actor \spancomp \combineunit \spancomp_\counitor \actor}"', from=2-1, to=3-2]
				\arrow["\combine", from=2-3, to=2-4]
				\arrow["{\actor \spancomp \combine}"', from=3-2, to=2-3]
				\arrow["{\rightunitlaw \spancomp_\rightunitlaw \actor}"', shorten >=3pt, Rightarrow, from=1-2, to=0p]
				\arrow["{\actor \spancomp \leftunitlaw}"', shorten <=3pt, Rightarrow, from=0p, to=3-2]
			\end{tikzcd}
		\end{equation}
		\begin{equation}
			\begin{tikzcd}[ampersand replacement=\&]
				{\actor^{\spancomp 4}} \&[2ex] {\actor^{\spancomp 3}} \&[2ex] \\
				\& {\actor^{\spancomp 3}} \& {\actor \spancomp \actor} \\
				{\actor^{\spancomp 3}} \\
				\& {\actor \spancomp \actor} \& \actor
				\arrow[""{name=0, anchor=center, inner sep=0}, "{\combine \spancomp_\coassociator \actor \spancomp \actor}", from=1-1, to=1-2]
				\arrow[""{name=1, anchor=center, inner sep=0}, "{\actor \spancomp \combine \spancomp_\coassociator \actor}"{description}, from=1-1, to=2-2]
				\arrow[""{name=1p, anchor=center, inner sep=0}, phantom, from=1-1, to=2-2, start anchor=center, end anchor=center]
				\arrow["{\actor \spancomp \actor \spancomp \combine}"{description}, from=1-1, to=3-1]
				\arrow["{\combine \spancomp_\coassociator \actor}"{description}, from=1-2, to=2-3]
				\arrow[""{name=2, anchor=center, inner sep=0}, "{\combine \spancomp_\coassociator \actor}"{description}, from=2-2, to=2-3]
				\arrow[""{name=2p, anchor=center, inner sep=0}, phantom, from=2-2, to=2-3, start anchor=center, end anchor=center]
				\arrow["{\actor \spancomp \combine}"{description}, from=2-2, to=4-2]
				\arrow["\combine"{description}, from=2-3, to=4-3]
				\arrow[""{name=3, anchor=center, inner sep=0}, "{\actor \spancomp \combine}"{description}, from=3-1, to=4-2]
				\arrow[""{name=3p, anchor=center, inner sep=0}, phantom, from=3-1, to=4-2, start anchor=center, end anchor=center]
				\arrow[""{name=4, anchor=center, inner sep=0}, "\combine"{description}, from=4-2, to=4-3]
				\arrow[""{name=4p, anchor=center, inner sep=0}, phantom, from=4-2, to=4-3, start anchor=center, end anchor=center]
				\arrow["{\associativitylaw \spancomp_\associativitylaw \actor}"{description}, shorten <=13pt, shorten >=13pt, Rightarrow, from=2, to=0]
				\arrow["{\actor \spancomp \associativitylaw}"{description}, shorten <=9pt, shorten >=9pt, Rightarrow, from=3p, to=1p]
				\arrow["\associativitylaw"{description}, shorten <=9pt, shorten >=9pt, Rightarrow, from=4p, to=2p]
			\end{tikzcd}
			\quad = \quad
			\begin{tikzcd}[ampersand replacement=\&]
				{\actor^{\spancomp 4}} \& {\actor^{\spancomp 3}} \\
				\&\& {\actor \spancomp \actor} \\
				{\actor^{\spancomp 3}} \& {\actor \spancomp \actor} \\
				\& {\actor \spancomp \actor} \& \actor
				\arrow[""{name=0, anchor=center, inner sep=0}, "{\combine \spancomp_\coassociator \actor \spancomp \actor}", from=1-1, to=1-2]
				\arrow[""{name=0p, anchor=center, inner sep=0}, phantom, from=1-1, to=1-2, start anchor=center, end anchor=center]
				\arrow["{\actor \spancomp \actor \spancomp \combine}"{description}, from=1-1, to=3-1]
				\arrow[""{name=1, anchor=center, inner sep=0}, "{\combine \spancomp_\coassociator \actor}"{description}, from=1-2, to=2-3]
				\arrow[""{name=1p, anchor=center, inner sep=0}, phantom, from=1-2, to=2-3, start anchor=center, end anchor=center]
				\arrow["{\actor \spancomp \combine}"{description}, from=1-2, to=3-2]
				\arrow["\combine"{description}, from=2-3, to=4-3]
				\arrow[""{name=2, anchor=center, inner sep=0}, "{\combine \spancomp_\coassociator \actor}"{description}, from=3-1, to=3-2]
				\arrow[""{name=2p, anchor=center, inner sep=0}, phantom, from=3-1, to=3-2, start anchor=center, end anchor=center]
				\arrow[""{name=2p, anchor=center, inner sep=0}, phantom, from=3-1, to=3-2, start anchor=center, end anchor=center]
				\arrow["{\actor \spancomp \combine}"{description}, from=3-1, to=4-2]
				\arrow[""{name=3, anchor=center, inner sep=0}, "\combine"{description}, from=3-2, to=4-3]
				\arrow[""{name=3p, anchor=center, inner sep=0}, phantom, from=3-2, to=4-3, start anchor=center, end anchor=center]
				\arrow[""{name=4, anchor=center, inner sep=0}, "\combine"{description}, from=4-2, to=4-3]
				\arrow[""{name=4p, anchor=center, inner sep=0}, phantom, from=4-2, to=4-3, start anchor=center, end anchor=center]
				\arrow["{\combine \spancomp \combinecart}"{description}, shorten <=9pt, shorten >=9pt, Rightarrow, from=2p, to=0p]
				\arrow["\associativitylaw"{description}, shorten <=9pt, shorten >=9pt, Rightarrow, from=3p, to=1p]
				\arrow["\associativitylaw"{description}, shorten <=14pt, shorten >=14pt, Rightarrow, from=4p, to=2p]
			\end{tikzcd}
		\end{equation}
		which on elements correspond to
		\begin{equation}
			\begin{tikzcd}[ampersand replacement=\&]
				{(P \combine \combineunit) \combine (A \action \rightunitlaw)^* Q} \& {P \combine Q} \\
				{(P \combine \combineunit) \combine \coassociator^* \counitor^* Q} \& {P \combine (\combineunit \combine \counitor^* Q)}
				\arrow["{\rightunitlaw \,\combine\, \lift_p (A \action \rightunitlaw)}", from=1-1, to=1-2]
				\arrow["{\id \combine \leftunitlaw}", from=1-2, to=2-2]
				\arrow["{\eqref{eqn:ctx.rightunitlaw.3cell.cond}}", Rightarrow, no head, from=2-1, to=1-1]
				\arrow["\associativitylaw", from=2-2, to=2-1]
			\end{tikzcd}
		\end{equation}
		\begin{equation}
			\begin{tikzcd}[ampersand replacement=\&]
				{P \combine (Q \combine (R \combine S))} \& {P \combine ((Q \combine R) \combine \coassociator^*S)} \\
				{(P \combine Q) \combine \coassociator^*(RS)} \& {(P \combine (Q \combine R)) \combine \coassociator^* \coassociator^* S} \\
				\& {(P \combine (Q \combine R)) \combine ((A \action \associativitylaw)^* \coassociator^* (\coassociator \action R)^* S)} \\
				{(P \combine Q) \combine ((\coassociator^*R) \combine ((\coassociator \action R)^*S))} \& {((P \combine Q) \combine (\coassociator^* R)) \combine (\coassociator^*(\coassociator \action R)^*S)}
				\arrow["{P \combine \associativitylaw}", from=1-1, to=1-2]
				\arrow["\associativitylaw"', from=1-1, to=2-1]
				\arrow["\associativitylaw", from=1-2, to=2-2]
				\arrow["{(P \combine Q) \combine \combinecart}"', from=2-1, to=4-1]
				\arrow["{\eqref{eqn:ctx.associativitylaw.3cell.cond}}", Rightarrow, no head, from=2-2, to=3-2]
				\arrow["{\associativitylaw \,\combine\, \lift_p (A \action \associativitylaw)}", from=3-2, to=4-2]
				\arrow["\associativitylaw"', from=4-1, to=4-2]
			\end{tikzcd}
		\end{equation}
	\end{enumerate}
\end{defn}

\begin{notation}
	Oftentimes we abbreviate the whole data of a contextad $\fibcolaxaction$ as either $\action$ or $\actor$ depending on what is more recognizable.
\end{notation}

With this definition in hand, we can define the double category $\Ctx(\action)$ of contexful arrows associated to a contextad.

\begin{defn}
\label{defn:ctx.dbl.cat}
	Given a contextad $\fibcolaxaction$ on a category $\acted$, its associated \textbf{double category of contexful arrows} $\Ctx(\action)$ is given as follows.
	\begin{enumerate}
		\item Its objects and tight 1-cells are those of $\acted$,
		\item Its loose 1-cells $A \looseto B$ are \textbf{contexful arrows}, thus pairs of a context $P \in \actor_A$ and a 1-cell $f:A \action P \to B$ in $\acted$.
		      Identities in this direction are given by the pairs $(\combineunit, \counitor)$, and composition by (see also \eqref{eqn:para.comp}):
		      \begin{eqalign}
		      	\left(\lens{P}{A},\ A \action P \nto{f} B\right) \ \lcomp\ \left(\lens{Q}{B},\ B \action Q \nto{g} C\right)
		      	= \left(\lens{P \combine f^*Q}{A},\ A \action (P \combine f^*Q) \nto{\delta} (A \action P) \action f^*Q \nto{f \action Q} B \action Q \nto{g} C\right).
		      \end{eqalign}
		      This operation is associative and unital up to the invertible 2-cells defined below.
		\item Its squares are arrangements
		      \begin{equation}
			      \begin{tikzcd}[ampersand replacement=\&]
				      A \& B \\
				      {A'} \& {B'}
				      \arrow[""{name=0, anchor=center, inner sep=0}, "{(P,f)}", "\shortmid"{marking}, from=1-1, to=1-2]
				      \arrow[""{name=0p, anchor=center, inner sep=0}, phantom, from=1-1, to=1-2, start anchor=center, end anchor=center]
				      \arrow["h"', from=1-1, to=2-1]
				      \arrow["k", from=1-2, to=2-2]
				      \arrow[""{name=1, anchor=center, inner sep=0}, "{(P',f')}"', "\shortmid"{marking}, from=2-1, to=2-2]
				      \arrow[""{name=1p, anchor=center, inner sep=0}, phantom, from=2-1, to=2-2, start anchor=center, end anchor=center]
				      \arrow["{\varphi}"', shorten <=4pt, shorten >=4pt, Rightarrow, from=0p, to=1p]
			      \end{tikzcd}
		      \end{equation}
		      where $\varphi : P \to P'$ is a map in $\actor$ over $h$ making the following diagram commute:
		      \begin{equation}
			      \begin{tikzcd}[ampersand replacement=\&]
				      {A \action P} \& B \\
				      {A' \action P'} \& {B'}
				      \arrow[""{name=0, anchor=center, inner sep=0}, "f", from=1-1, to=1-2]
				      \arrow[""{name=0p, anchor=center, inner sep=0}, phantom, from=1-1, to=1-2, start anchor=center, end anchor=center]
				      \arrow["{h \action \varphi}"', from=1-1, to=2-1]
				      \arrow["k", from=1-2, to=2-2]
				      \arrow[""{name=1, anchor=center, inner sep=0}, "{f'}"', from=2-1, to=2-2]
				      \arrow[""{name=1p, anchor=center, inner sep=0}, phantom, from=2-1, to=2-2, start anchor=center, end anchor=center]
			      \end{tikzcd}
		      \end{equation}
		\item Tight composition of squares is given by pasting vertically:
		      \begin{equation}
			      \begin{tikzcd}[ampersand replacement=\&]
				      A \& B \\
				      {A'} \& {B'} \\
				      {A''} \& {B''}
				      \arrow[""{name=0, anchor=center, inner sep=0}, "{(P,f)}", "\shortmid"{marking}, from=1-1, to=1-2]
				      \arrow[""{name=0p, anchor=center, inner sep=0}, phantom, from=1-1, to=1-2, start anchor=center, end anchor=center]
				      \arrow["h"', from=1-1, to=2-1]
				      \arrow["k", from=1-2, to=2-2]
				      \arrow[""{name=1, anchor=center, inner sep=0}, "\shortmid"{marking}, from=2-1, to=2-2]
				      \arrow[""{name=1p, anchor=center, inner sep=0}, phantom, from=2-1, to=2-2, start anchor=center, end anchor=center]
				      \arrow[""{name=1p, anchor=center, inner sep=0}, phantom, from=2-1, to=2-2, start anchor=center, end anchor=center]
				      \arrow["{h'}"', from=2-1, to=3-1]
				      \arrow["{k'}", from=2-2, to=3-2]
				      \arrow[""{name=2, anchor=center, inner sep=0}, "{(P'',f'')}"', "\shortmid"{marking}, from=3-1, to=3-2]
				      \arrow[""{name=2p, anchor=center, inner sep=0}, phantom, from=3-1, to=3-2, start anchor=center, end anchor=center]
				      \arrow["{\varphi}"', shorten <=4pt, shorten >=4pt, Rightarrow, from=0p, to=1p]
				      \arrow["{\chi}"', shorten <=4pt, shorten >=4pt, Rightarrow, from=1p, to=2p]
			      \end{tikzcd}
			      \quad = \quad
			      \begin{tikzcd}[ampersand replacement=\&]
				      A \&\& B \\
				      {A''} \&\& {B''}
				      \arrow[""{name=0, anchor=center, inner sep=0}, "{(P,f)}", "\shortmid"{marking}, from=1-1, to=1-3]
				      \arrow[""{name=0p, anchor=center, inner sep=0}, phantom, from=1-1, to=1-3, start anchor=center, end anchor=center]
				      \arrow["{h'h}"', from=1-1, to=2-1]
				      \arrow["{k'k}", from=1-3, to=2-3]
				      \arrow[""{name=1, anchor=center, inner sep=0}, "{(P'',f'')}"', "\shortmid"{marking}, from=2-1, to=2-3]
				      \arrow[""{name=1p, anchor=center, inner sep=0}, phantom, from=2-1, to=2-3, start anchor=center, end anchor=center]
				      \arrow["{\chi\varphi}"', shorten <=4pt, shorten >=4pt, Rightarrow, from=0p, to=1p]
			      \end{tikzcd}
		      \end{equation}
		\item Loose composition of squares is given by parametric composition. Specifically:
		      \begin{equation}
			      \begin{tikzcd}[ampersand replacement=\&]
				      A \& B \& C \& A \&\& C \\
				      {A'} \& {B'} \& {C'} \& {A'} \&\& {C'}
				      \arrow[""{name=0, anchor=center, inner sep=0}, "{(P,f)}", "\shortmid"{marking}, from=1-1, to=1-2]
				      \arrow[""{name=0p, anchor=center, inner sep=0}, phantom, from=1-1, to=1-2, start anchor=center, end anchor=center]
				      \arrow["h"', from=1-1, to=2-1]
				      \arrow[""{name=1, anchor=center, inner sep=0}, "{(Q,g)}", "\shortmid"{marking}, from=1-2, to=1-3]
				      \arrow[""{name=1p, anchor=center, inner sep=0}, phantom, from=1-2, to=1-3, start anchor=center, end anchor=center]
				      \arrow["k"{description}, from=1-2, to=2-2]
				      \arrow[""{name=2, anchor=center, inner sep=0}, "\ell", from=1-3, to=2-3]
				      \arrow[""{name=3, anchor=center, inner sep=0}, "{(P,f) \lcomp (Q,g)}", "\shortmid"{marking}, from=1-4, to=1-6]
				      \arrow[""{name=3p, anchor=center, inner sep=0}, phantom, from=1-4, to=1-6, start anchor=center, end anchor=center]
				      \arrow[""{name=4, anchor=center, inner sep=0}, "h"', from=1-4, to=2-4]
				      \arrow["\ell", from=1-6, to=2-6]
				      \arrow[""{name=5, anchor=center, inner sep=0}, "{(P',f')}"', "\shortmid"{marking}, from=2-1, to=2-2]
				      \arrow[""{name=5p, anchor=center, inner sep=0}, phantom, from=2-1, to=2-2, start anchor=center, end anchor=center]
				      \arrow[""{name=6, anchor=center, inner sep=0}, "{(Q',g')}"', "\shortmid"{marking}, from=2-2, to=2-3]
				      \arrow[""{name=6p, anchor=center, inner sep=0}, phantom, from=2-2, to=2-3, start anchor=center, end anchor=center]
				      \arrow[""{name=7, anchor=center, inner sep=0}, "{(P',f') \lcomp (Q',g')}"', "\shortmid"{marking}, from=2-4, to=2-6]
				      \arrow[""{name=7p, anchor=center, inner sep=0}, phantom, from=2-4, to=2-6, start anchor=center, end anchor=center]
				      \arrow["{\varphi}"', shorten <=4pt, shorten >=4pt, Rightarrow, from=0p, to=5p]
				      \arrow["{\psi}"', shorten <=4pt, shorten >=4pt, Rightarrow, from=1p, to=6p]
				      \arrow["{=}"{marking, allow upside down}, draw=none, from=2, to=4]
				      \arrow["{\varphi \combine (f,f')^*\psi}"', shorten <=4pt, shorten >=4pt, Rightarrow, from=3p, to=7p]
			      \end{tikzcd}
		      \end{equation}
		      where the right hand side square denotes the following pasting:
		      \begin{equation}
			      \begin{tikzcd}[ampersand replacement=\&]
				      {A \action (P \combine f^* Q)} \& {(A \action P) \action f^* Q} \& {B \action Q} \& C \\
				      {A' \action (P' \combine {f'}^* Q')} \& {(A' \action P') \action {f'}^* Q'} \& {B' \action Q'} \& {C'}
				      \arrow["\coassociator", from=1-1, to=1-2]
				      \arrow["{h \action (\varphi \combine (f,f')^*\psi)}"', from=1-1, to=2-1]
				      \arrow["{g \action Q}", from=1-2, to=1-3]
				      \arrow["{(h \action \varphi) \action (f,f')^* \psi}"{description}, from=1-2, to=2-2]
				      \arrow["g", from=1-3, to=1-4]
				      \arrow["{k \action \psi}"{description}, from=1-3, to=2-3]
				      \arrow["\ell", from=1-4, to=2-4]
				      \arrow["\coassociator"', from=2-1, to=2-2]
				      \arrow["{g' \action Q'}"', from=2-2, to=2-3]
				      \arrow["{g'}"', from=2-3, to=2-4]
			      \end{tikzcd}
		      \end{equation}
		      Identity squares correspond to naturality squares for $\counitor$:
		      \begin{equation}
			      \begin{tikzcd}[ampersand replacement=\&]
				      A \& A \\
				      {A'} \& {A'}
				      \arrow[""{name=0, anchor=center, inner sep=0}, "{(\combineunit, \counitor)}", "\shortmid"{marking}, Rightarrow, no head, from=1-1, to=1-2]
				      \arrow[""{name=0p, anchor=center, inner sep=0}, phantom, from=1-1, to=1-2, start anchor=center, end anchor=center]
				      \arrow["h"', from=1-1, to=2-1]
				      \arrow["h", from=1-2, to=2-2]
				      \arrow[""{name=1, anchor=center, inner sep=0}, "{(\combineunit, \counitor)}"', "\shortmid"{marking}, Rightarrow, no head, from=2-1, to=2-2]
				      \arrow[""{name=1p, anchor=center, inner sep=0}, phantom, from=2-1, to=2-2, start anchor=center, end anchor=center]
				      \arrow["{\combineunit}"', shorten <=4pt, shorten >=4pt, Rightarrow, from=0p, to=1p]
			      \end{tikzcd}
		      \end{equation}
		\item Unitors for the composition of contexful arrows are directly inherited by those of the pseudomonad structure on $\fibcolaxaction$:
		      \begin{equation}
			      \begin{tikzcd}[ampersand replacement=\&]
				      A \& A \& B \\
				      A \&\& B
				      \arrow["{(\combineunit, \counitor)}", "\shortmid"{marking}, Rightarrow, no head, from=1-1, to=1-2]
				      \arrow[Rightarrow, no head, from=1-1, to=2-1]
				      \arrow["{(P,f)}", "\shortmid"{marking}, from=1-2, to=1-3]
				      \arrow[Rightarrow, no head, from=1-3, to=2-3]
				      \arrow[""{name=0, anchor=center, inner sep=0}, "{(P,f)}"', "\shortmid"{marking}, from=2-1, to=2-3]
				      \arrow[""{name=0p, anchor=center, inner sep=0}, phantom, from=2-1, to=2-3, start anchor=center, end anchor=center]
				      \arrow["{\leftunitlaw}"', "\wr", shorten >=3pt, Rightarrow, from=1-2, to=0p]
			      \end{tikzcd}
			      \qquad\qquad
			      \begin{tikzcd}[ampersand replacement=\&]
				      A \& B \& B \\
				      A \&\& B
				      \arrow["{(P,f)}", "\shortmid"{marking}, from=1-1, to=1-2]
				      \arrow[Rightarrow, no head, from=1-1, to=2-1]
				      \arrow["{(\combineunit, \counitor)}", "\shortmid"{marking}, Rightarrow, no head, from=1-2, to=1-3]
				      \arrow[Rightarrow, no head, from=1-3, to=2-3]
				      \arrow[""{name=0, anchor=center, inner sep=0}, "{(P,f)}"', "\shortmid"{marking}, from=2-1, to=2-3]
				      \arrow[""{name=0p, anchor=center, inner sep=0}, phantom, from=2-1, to=2-3, start anchor=center, end anchor=center]
				      \arrow["{\rightunitlaw\combineunitcart}"', "\wr", shorten >=3pt, Rightarrow, from=1-2, to=0p]
			      \end{tikzcd}
		      \end{equation}
		      The associator, being given by 2-cells such as
		      \begin{equation}
			      \begin{tikzcd}[ampersand replacement=\&]
				      A \&\& C \& D \\
				      A \& B \& {} \& D
				      \arrow["{(P,f) \then (Q,g)}", "\shortmid"{marking}, from=1-1, to=1-3]
				      \arrow[Rightarrow, no head, from=1-1, to=2-1]
				      \arrow["{(R,h)}", "\shortmid"{marking}, from=1-3, to=1-4]
				      \arrow["{\associativitylaw}"', "\wr", shorten <=1pt, shorten >=2pt, shift right=5, Rightarrow, from=1-3, to=2-3]
				      \arrow[Rightarrow, no head, from=1-4, to=2-4]
				      \arrow["{(P,f)}"', "\shortmid"{marking}, from=2-1, to=2-2]
				      \arrow["{(Q,g) \then (R,h)}"', "\shortmid"{marking}, from=2-2, to=2-4]
			      \end{tikzcd}
		      \end{equation}
		      is defined by the following pasting:
			  \begin{equation}
				\begin{sideways}
					\begin{tikzcd}[ampersand replacement=\&, row sep=10ex]
						{A \action ((P \combine f^* Q) \combine (g(f \action Q)\coassociator)^* R)} \& {(A \action (P \combine f^* Q)) \action (g(f \action Q)\coassociator)^* R} \&[3ex]\&[-1ex] {C \action R} \&[-2ex] D \\
						\& {((A \action P) \action f^* Q) \action (g(f \action Q))^* R} \& {(B \action Q) \action g^* R} \& {C \action R} \& D \\
						{A \action (P \combine (f^* Q \combine (g(f \action Q))^* R))} \& {(A \action P) \action (f^* Q \combine (g(f \action Q))^* R)} \\
						{A \action (P \combine f^*(Q \combine g^*R))} \& {(A \action P) \action f^*(Q \combine g^*R)} \& {B \action (Q \combine g^*R)} \&\& D
						\arrow["\coassociator", from=1-1, to=1-2]
						\arrow["{g(f \action Q)\coassociator \action R}", from=1-2, to=1-4]
						\arrow["h", from=1-4, to=1-5]
						\arrow[""{name=0, anchor=center, inner sep=0}, "{A \action \associativitylaw}"', from=1-1, to=3-1]
						\arrow[""{name=1, anchor=center, inner sep=0}, "{A \action (P \combine \combinecart)}"', from=3-1, to=4-1]
						\arrow["\coassociator"', from=4-1, to=4-2]
						\arrow["{f \action (Q \combine g^*R)}"', from=4-2, to=4-3]
						\arrow[""{name=2, anchor=center, inner sep=0}, "{(A \action P) \action \combinecart}", from=3-2, to=4-2]
						\arrow["\coassociator"{description}, from=3-1, to=3-2]
						\arrow["{\coassociator}"', from=3-2, to=2-2]
						\arrow["{\coassociator \action (g(f \action Q))^* R}", from=1-2, to=2-2]
						\arrow["{(f \action Q) \action R}", from=2-2, to=2-3]
						\arrow["{g \action R}", from=2-3, to=2-4]
						\arrow[Rightarrow, no head, from=1-4, to=2-4]
						\arrow[Rightarrow, no head, from=2-5, to=4-5]
						\arrow[Rightarrow, no head, from=1-5, to=2-5]
						\arrow["h", from=2-4, to=2-5]
						\arrow[""{name=3, anchor=center, inner sep=0}, "\coassociator"{description}, from=4-3, to=2-3]
						\arrow["{h(g \action R)\coassociator}"', from=4-3, to=4-5]
						\arrow["{\text{pentagon for $\associativitylaw$}}"{marking, allow upside down}, draw=none, from=0, to=2-2]
						\arrow["{\text{naturality of $\coassociator$}}"{marking, allow upside down}, draw=none, from=1, to=2]
						\arrow["{\begin{matrix}\text{coherence}\\\text{for $\combinecart$}\end{matrix}}"{marking, pos=0.45, allow upside down}, draw=none, from=3-2, to=3]
					\end{tikzcd}
				\end{sideways}
			\end{equation}
	\end{enumerate}
\end{defn}

\begin{rmk}
	The fact composition of squares is well-defined, interchange is satisfied, and that $\associativitylaw$, $\rightunitlaw$ and $\leftunitlaw$ are coherent is a consequence of \cref{thm:ctx.const.on.objects}, which we prove in the next section which is devoted to the high-level construction of such double categories as wreath products.
	Still, one can check directly that composition of squares is well-defined by naturality of $\coassociator$ and functoriality of $\action$, strict interchange holds by functoriality of $\combine$ while coherence of horizontal composition is a consequence of coherence of the pseudomonad structure defining the contextad $\fibcolaxaction$.
\end{rmk}

\subsection{Contextads as colax fibred actions}
\label{sec:ctx.as.colax.fibred.action}

Our notion of contextad extends the notion of a (colax) action of a monoidal category on a category.

\begin{ex}
\label{ex:actegory}
	Every right action of a monoidal category $\actor$ on a category $\acted$ is a contextad $\acted \nepifrom{\pi_{\acted}} \acted \times \actor \nto{\action} \acted$ whose source fibration is a trivial fibration.
	Both the monoidal structure of $\actor$ and the action itself participate in defining the contextad structure;
	the former as $\combineunit$ and $\combine$ and the latter as the contextad action $\action$ as in \cref{eqn:colax.action.spans}.
	Specifically, the structure morphisms of the pseudomonad are inherited from the analogous morphisms in $\actor$, whereas the counitor $\counitor$ and coassociator $\coassociator$ are straight up taken from $\action$.
	Indeed, colax actions of monoidal categories are precisely contextads whose left leg is a trivial fibration (product projection) and whose structure morphisms $\combineunit : \acted \to \acted \times \actor$ and $(\acted \times \actor) \spancomp (\acted \times \actor) \to \acted \times \actor$ act only in the second component.

	When $\action$ comes from the right action of a monoidal category, the associated double category $\Ctx(\action)$ is precisely the Para construction we recalled in \cref{defn:para.actegory}.
	As anticipated there, compared to the classical Para construction appearing in \cite{capucci_towards_2022} or \cite{hermida_monoidal_2012}, $\Ctx(\action)$ is a double category instead of a bicategory.
	Furthermore, even focusing on the loose bicategory of $\Ctx(\action)$ we adopt 2-cells oriented in the opposite direction compared to \cite{capucci_towards_2022} but in agreement with \cite{hermida_monoidal_2012}.
\end{ex}

In this section, we will see examples of contextads which generalize the monoidal actions by allowing the category of objects which can act on a given object $C \in \acted$ to vary with the object $C$. We could call these \emph{colax fibred actions} to emphasize that we are thinking of them as actions.
The following examples are all, in some sense, a generalization of the action $(C, M) \mapsto C \times M$ of a category with finite products on itself by cartesian product.

\begin{ex}
\label{ex:comp.cat}
	Let $p:\Ea \epito \acted$ be a fibration.
	It is said $p$ is a \emph{comprehension category with unit} when it admits two further right adjoints $p \adj \top \adj -.-$.
	These can be used to give semantics to dependent type theories, as in \cite{jacobs_comprehension_1993}: $p$ displays dependent types (objects of $\Ea$) over their contexts of definition (objects of $\acted$), $\top$ picks out the unit type over a given context, and $-.-$ extends a context with a new variable of the type it is applied to.
	Comprehension of $\lens{X}{\Gamma}$ is a new context $\Gamma . X$ together with a \emph{display map} $d_X : \Gamma . X \to \Gamma$ which forgets the newly introduced variable.

	Furthermore, assume $p$ has \emph{strong dependent sums} \cite[Definition~9.3.5]{jacobs_categorical_1999} meaning (1) every such $d_X$ admits a cocartesian lift $\sum_X$, so that one can pushforward a dependent type $Y$ over $\Gamma . X$ to a dependent type $\sum_X Y$ over $\Gamma$ alone, (2) cartesian and cocartesian lifts along these maps satisfy Beck--Chevalley: given a diagram as below left (which is necessarily a pullback), the 2-cell below right (obtained by mating) is invertible, and (3) composition of display maps is again display.
	\begin{equation}
	\label{eqn:beck.chevalley}
		\begin{tikzcd}[ampersand replacement=\&,row sep=scriptsize]
			{\Gamma.f^*Y} \& {\Delta.Y} \\
			\Gamma \& \Delta
			\arrow["{{f.\lift f}}", from=1-1, to=1-2]
			\arrow["{{d_{f^*Y}}}"', from=1-1, to=2-1]
			\arrow["\lrcorner"{anchor=center, pos=0.125}, draw=none, from=1-1, to=2-2]
			\arrow["{{d_Y}}", from=1-2, to=2-2]
			\arrow["f"', from=2-1, to=2-2]
		\end{tikzcd}
		\qquad\qquad
		\begin{tikzcd}[ampersand replacement=\&,sep=scriptsize]
			{\Ea_{\Gamma.f^*Y}} \&\& {\Ea_{\Delta.Y}} \\
			{\Ea_\Gamma} \&\& {\Ea_\Delta}
			\arrow["{\sum_{f^*Y}}"', from=1-1, to=2-1]
			\arrow[shorten <=9pt, shorten >=9pt, Rightarrow, from=1-1, to=2-3]
			\arrow["{(f.\lift f)^*}"', from=1-3, to=1-1]
			\arrow["{\sum_Y}", from=1-3, to=2-3]
			\arrow["{f^*}", from=2-3, to=2-1]
		\end{tikzcd}
	\end{equation}

	One can build a fibred action $\acted \nepifrom{p} \Ea \nto{-.-} \acted$, where $\combineunit$ is given by $\top$ and $\combine$ is given by sums:
	\begin{equation}
	\label{eqn:comb.comp}
		\lens{X}{\Gamma} \combine \lens{Y}{\Gamma . X} := \lens{\sum_X Y}{\Gamma}.
	\end{equation}
	The fact this defines a cartesian functor $\Ea \spancomp \Ea \to \Ea$ corresponds precisely to the Beck--Chevalley condition spelled out above.
	Then $\counitor$ is the natural isomorphism $\Gamma . \top \isoto \Gamma$, while $\coassociator$ is the canonical isomorphism $\Gamma . X. Y \isoto \Gamma . \sum_X Y$ we get by assuming composition of display maps is display.
	Similarly, we get $\associativitylaw:\sum_X \sum_Y Z \isoto \sum_{\sum_X Y} \coassociator^* Z$ and $\leftunitlaw : \sum_\top \counitor^* Y \isoto Y$, $\rightunitlaw : \sum_X \top \isoto X$.

	A concrete class of examples of comprehension category are fibrations of subobjects, such as $\Set^\subseteq \to \Set$ \cite[p.43]{jacobs_categorical_1999}.
	Specifically, units are given by the isomorphism class of the identity map, and comprehension of a subobject $P \subseteq A$ is defined by sending every representative mono to its image (which is determined functorially), which we denote as $\{P\}$, and its inclusion $\{P\} \monoto A$.
	Cocartesian lift is then given by composing representative monos with such an inclusion.
	Thus the fibred action associated to a subobject fibration has, as its combination operation
	$\lens{P}{A} \combine \lens{Q}{\{P\}} = \lens{P \cap Q}{A}$,
	where we are denoting by $Q$ also the composite $Q \subseteq \{P\} \subseteq A$.
	Notice that since $Q \subseteq \{P\}$, $P\cap Q=Q$, which corresponds with a direct translation of \eqref{eqn:comb.comp}.
\end{ex}

\begin{ex}
\label{ex:display.map.cat}
	An important special instance of the above example is when $p$ is the fibration of display maps of a \emph{strong display map category}.
	Recall that such a category $\acted$ is equipped with a replete\footnote{i.e.~all isomorphisms are in $\Da$.} subcategory $\Da$ of `display maps' which is a pullback ideal, meaning maps in $\Da$ admit pullback along arbitrary maps in $\acted$ and are stable under this operation.
	This property guarantees that the subcategory $\acted^{\downarrow_{\Da}}$ of the arrow category of $\acted$ spanned by those arrows which belong to $\Da$ is fibred over $\acted$ by codomain projection.
	The cartesian maps in $\cod : \acted^{\downarrow_{\Display}} \to \acted$ are the pullback squares:
	\begin{equation}
		\begin{tikzcd}[ampersand replacement=\&,sep=scriptsize]
			\& {\acted^{\downarrow_{\Display}}} \\
			\acted \&\& \acted
			\arrow["\cod"', from=1-2, to=2-1]
			\arrow["\dom", from=1-2, to=2-3]
		\end{tikzcd}
	\end{equation}

	Such a fibration is a full-blown comprehension category with unit and strong sums.
	The unit $\top$ sends an object to its identity map, while comprehension is given by projecting the domain of display maps.
	In fact, every display map $d:E \to B$ in $\acted$ `displays itself' as a natural arrow from its comprehension $E$ to its projection $B$.
	Cocartesian lifts along such maps are given by composition, and Beck--Chevalley coincides with the pasting property of pullback squares.

	Not all comprehension categories arise from strong display maps categories, for instance the subobject fibration $\Ca^\subseteq \to \Ca$ is merely equivalent to the fibration of monomorphisms, which is a fibration of strong display maps.
	This situation, however, is fully general: since the maps $d_X : \Gamma .X \to \Gamma$ of a strong comprehension category are closed under composition and pullback, they form (after closing with isomorphisms) a strong display map category structure on the base category whose associated fibration is equivalent to the starting one.

	Note that the class of (maps isomorphic to) product projections forms a strong display map class on any category $\acted$ with finite products; whose associated fibration is/is isomorphic to the \emph{simple fibration} $\simple : \Simple{\acted} \to \acted$ \cite[Definition~1.3.1]{jacobs_categorical_1999}.
	The resulting contextad $\acted \nepifrom{\simple} \Simple{\acted} \nto{\times} \acted$ is, on objects, just like the contextad associated to the action of $\acted$ on itself by cartesian product (i.e.~a trivial colax fibred action as per \cref{ex:actegory}), but since maps in $\Simple{\acted}$ are not just pairs of maps, it differs on those:
	\begin{eqalign}
	\label{eq:simp.fib}
		\begin{tikzcd}[ampersand replacement=\&,column sep=scriptsize,row sep=small]
			{\Simple{\acted}} \& \acted \\[-2ex]
			\begin{array}{c} \begin{matrix} 				h^\flat : A \times A' \to B'\\ 				h : A \to B 			\end{matrix} \end{array} \& {(h\pi_A, h^\flat) : A \times A' \to B \times B'}
			\arrow["\times", from=1-1, to=1-2]
			\arrow[shorten <=0pt, shorten >=6pt, maps to, from=2-1, to=2-2]
		\end{tikzcd}
	\end{eqalign}
\end{ex}

\begin{ex}
\label{ex:comp.cat.para}
	Applying $\Ctx$ to \cref{ex:comp.cat} we obtain double category of contexful arrows $(X,f):\Gamma \looseto \Delta$ where the context extension $X$, or `type over $\Gamma$', lives over the domain $\Gamma$, and where $f:\Gamma.X \to \Delta$.

	In the particular instance of subobject fibrations $\acted^\subseteq \to \acted$, we can identity such contexful arrows with \emph{partial maps} in $\acted$ since an arrow $A \looseto B$ is exactly given by the data of a domain of definition $P \subseteq A$ and a map $f:\{P\} \to B$.
	Composition $(P,f) \lcomp (Q,g)$ is given by pulling back $Q$ along $f$ to get a common domain of definiton $P \cap f^*Q = f^*Q$ on which the composite of $f$ and $g$ can be defined.
	This law is precisely what composition of contexful arrows does:
	\begin{equation}
		\{P \cap f^*Q\} \nequalto{\delta} \{f^*Q\} \nto{\{\lift f\}} \{Q\} \nto{g} C.
	\end{equation}

	When $p:\actor \fibto \acted$ is a fibration of display maps (\cref{ex:display.map.cat}), the associated notion of contexful arrows is actually left-displayed spans:
	\begin{equation}
		\begin{tikzcd}[ampersand replacement=\&,sep=small]
			\& {\Gamma.X} \\
			\Gamma \&\& \Delta
			\arrow["{d_X}"', from=1-2, to=2-1]
			\arrow["f", from=1-2, to=2-3]
		\end{tikzcd}
	\end{equation}
	Their composition is given by pull-push, and it does indeed coincide with the composition law of contexful arrows since $f^*\Delta.Y \iso \Gamma .X . f^*Y$ as maps over $\Gamma$ (since squares like \eqref{eqn:beck.chevalley} right are always pullbacks):
	\begin{equation}
		\begin{tikzcd}[ampersand replacement=\&,sep=small]
			\&\& {f^*\Delta.Y} \\
			\& {\Gamma.X} \&\& {\Delta.Y} \\
			\Gamma \&\& \Delta \&\& \Theta
			\arrow["{\pi_X}"', from=1-3, to=2-2]
			\arrow["{\pi_Y}", from=1-3, to=2-4]
			\arrow["\lrcorner"{anchor=center, pos=0.125, rotate=-45}, draw=none, from=1-3, to=3-3]
			\arrow["{d_X}"', from=2-2, to=3-1]
			\arrow["f", from=2-2, to=3-3]
			\arrow["{d_Y}"', from=2-4, to=3-3]
			\arrow["g", from=2-4, to=3-5]
		\end{tikzcd}
		\quad
		\Gamma . {\textstyle\sum_X} f^*Y \nto{\coassociator} \Gamma . X. f^*Y \nto{f . \lift f} \Delta . Y \nto{g} \Theta.
	\end{equation}
	Thus $\Ctx(\acted \nepifrom{\cod} \acted^{\downarrow_{\Da}} \nto{\dom} \acted) \iso {\Span(\acted, \Da)}$, where the latter is the double category of left-displayed spans (similar to \cref{defn:locally.span} but one dimension down):
	\begin{equation}
		\begin{tikzcd}[ampersand replacement=\&, sep=scriptsize]
			\Gamma \& {\Gamma.X} \& \Delta \\[2ex]
			\Theta \& {\Theta.Y} \& \Xi
			\arrow["i"', from=1-1, to=2-1]
			\arrow["{d_X}"', from=1-2, to=1-1]
			\arrow["f", from=1-2, to=1-3]
			\arrow["{i.\alpha}", from=1-2, to=2-2]
			\arrow["j", from=1-3, to=2-3]
			\arrow["{d_Y}", from=2-2, to=2-1]
			\arrow["g"', from=2-2, to=2-3]
		\end{tikzcd}
		\quad\leftrightsquigarrow\quad
		\begin{tikzcd}[ampersand replacement=\&, sep=scriptsize]
			\Gamma \&[2ex] \Delta \\[2ex]
			\Theta \& \Xi
			\arrow[""{name=0, anchor=center, inner sep=0}, "{(X,f)}", "\shortmid"{marking}, from=1-1, to=1-2]
			\arrow["i"', from=1-1, to=2-1]
			\arrow["j", from=1-2, to=2-2]
			\arrow[""{name=1, anchor=center, inner sep=0}, "{(Y,g)}"', "\shortmid"{marking}, from=2-1, to=2-2]
			\arrow["\alpha", shorten <=4pt, shorten >=4pt, Rightarrow, from=0, to=1]
		\end{tikzcd}
	\end{equation}
\end{ex}

\begin{ex}
\label{ex:fib.moncat}
	Consider a fibration $p:\actor \fibto \acted$.
	If $\acted \nepifrom{p} \actor \nto{p} \acted$ is a strong fibred action, then $p$ has split fibred monoidal products (in the terminology of \cite[Definition~1.8.1]{jacobs_categorical_1999}).
	In fact $\combineunit$ and $\combine$ for such a fibred action correspond to fiberwise monoidal operations, and since they are cartesian functors it means that reindexing preserves strictly the monoidal structure.
	Examples of these structures are ubiquitous.
	Amongst them: the fibrations obtained by indexed monoids of various kinds, the codomain fibration of a finitely complete category (which is fiberwise cartesian), the fibration of families $\Fam(\Ca) \to \Set$ for a monoidal category $\Ca$ (with $\lens{(X_i)_{i \in I}}{I}\combine_I \lens{(Y_i)_{i \in I}}{I} = \lens{(X_i \combine Y_i)_{i \in I}}{I}$).

	Then let $\acted \nepifrom{p} \actor \nepito{p} \acted$ be the fibred action arising from a fiberwise monoidal fibration as above.
	Contextful arrows for such an action are maps $f:A \to B$ \emph{decorated by} an object $P \in p^{-1}(A)$.
	Composition combines decorations, and identities are decorated by the identity decoration:
	\begin{equation}
		1_A \equiv (\combineunit \in p^{-1}(A)), \qquad (P,f) \lcomp (Q,g) \equiv (P \combine f^*Q \in p^{-1}(A),\ gf).
	\end{equation}
	Hence this construction is useful to keep track of data attached to (domains) of maps.

	For instance, here's a proof-of-concept using this construction to decorate computations, seen as morphisms in a category, with their running time.
	For $\Set$, consider the discrete fibration $\Fam(\N) \to \Set$, where $\N = (\N, \leq, 0, +)$ is considered a monoidal poset with addition.
	The double category $\Ctx(\Fam(\N))$ has as loose arrows functions $f:I \to J$ decorated by a function $s:I \to \N$, which we can use to encode a notion of `running time' for each input $i \in I$.
	Then if $(t,g):J \to K$ is another such map, $(s,f) \lcomp (t,g)$ is decorated by $\lambda (i \in I).s(i) + g(f(i))$, which is indeed the function describing the running time of $g$ after $f$.
\end{ex}

\subsection{Contextads as dependently graded comonads}
\label{sec:ctx.as.dep.graded.comonad}

Since the seminal work of Moggi \cite{moggi_notions_1991} and Kieburtz \cite{kieburtz_codata_1999}, it has become a sort of truism that Kleisli morphisms for monads can be used to type programs with side-effects, and Kleisli morphisms for comonads can be used to type programs which depend on
their evaluation context.
This leads to a sense that ``contexts'' and ``effects'' are dual categorical notions.%
\footnote{
	This point of view appears to begin with Uustalu and Vene \cite{uustalu_comonadic_2008} who give a deep investigation into the use of comonads for handling contextful computations and dataflow programming.
	However, they frame contextful computations as dual to effectful computations (which they treat monadically).
	In this section, we posit, following Kieburtz, that most effectful programming can be seen as a subset of contextful programming---namely, as those contextful programs which change their context.
}
But these slogans do not accurately reflect the claims of Moggi and Kieburtz in their respective seminal works, and the notions of context and effect are not really dual at all.
Side-effects are effects that a program has on its evaluation context (the ``side'' in ``side-effect'').

Moggi uses a monad $T$ as a ``notion of computation'', and gives side effects as one example of a notion of computation (specifically the state monad $X \mapsto (X \times S)^S$ in Example~1 of \cite{moggi_notions_1991}).
This way of thinking about monadic values $TA$ as computations which suffice to produce an $A$ underlies the way we model input-output with an $\mathrm{IO}$ monad: a Kleisli map $f : A \to \mathrm{IO}\, B$ is interpreted as program which produces, on input $a : A$, an IO-computation $f(a)$ which suffices to produce an element of $B$.
In other words, Moggi's monadic view sees effectful programs as programs which produce computations that, when evaluated, would have side-effects in addition to producing their return value.
This point of view is not quite the same as the common way of thinking which styles functions $f : A \to \mathrm{IO}\, B$ as a function which produces an element of $B$ from an element of $A$ \emph{together with some side-effect} that we aren't allowed to simply get rid of.

In \cite{kieburtz_codata_1999}, Kieburtz points out that side-effects in programming often depend on the context in which the program evaluation occurs.
Kieburtz suggests typing IO operations instead using an IO \emph{comonad} $\mathrm{OI}$ as a Kleisli map $\mathrm{OI}\, A \to B$.
This has the natural interpretation of a program which takes in a value of $A$ occuring within an input-output context and produces a pure value of $B$.
In the course of its being used to produce the value of $B$, the context may be affected, and these are the side effects of the program.\footnote{In \cite{GKO:effects.grading}, the authors make the distinction between producer effects which change the execution context and should be handled monadically and consumer effects which make demands on the context and should be handled comonadically.}

Kieburtz does not argue that all notions of side effect may be captured by comonads; indeed, except for the reader monad $X \mapsto X^R$ whose Kleisli category is isomorphic to the Kleisli category of the comonad $X \mapsto X \times R$, most of Moggi's monads from \cite[Example~1]{moggi_notions_1991} do not have equivalent presentations using comonads.
But the Kleisli categories of these monads \emph{are} isomorphic to the categories $\Ctx(\action)$ of contextful morphisms produced from a contextad.

In this section, we will interpret contextads as \emph{dependently graded comonads} and their $\Ctx$ construction as their Kleisli category. We will see that all of Moggi's example monads (excepting the continuation monad $X \mapsto R^{(R^X)}$) have Kleisli categories which can equivalently be described as $\Ctx(\action)$ for judiciously chosen contextads $\action$.

We will begin by unfolding \cref{defn:colax.fibred.action} in terms of indexed categories to more closely resemble the usual notion of graded comonad (see, e.g. \cite{GKO:effects.grading}).
We will then show that the writer monad $X \mapsto X \times W$ for a monoid $W$, the state monad $X \mapsto (X \times S)^S$ and the Maybe monad $X \mapsto X + \Unit$
have Kleisli categories isomorphic to (the category of loose arrows of) $\Ctx(\action)$ for suitably defined contextads $\action$.

In general, we can show that any monad $T$ whose underlying functor is polynomial (i.e. a \emph{container} \cite{AAG:containers}) has a Kleisli category equivalent to the category of contexful arrows associated to a contextad defined in terms of the functor $T$.
Simply put, we may transpose a Kleisli morphism for a polynomial monad as on the left below into a pair of a context map $c$ and a ``coKleisli'' map $f$ contextualized by $c$ as on the right below:
\begin{equation}
	\bigg(X \to \dsum{s : S} Y^{P(s)} \bigg) \quad\simeq\quad \begin{cases*}
		c : X \to S \\
		f : \dsum{x : X} P(c\, x) \to Y
	\end{cases*}
\end{equation}
This includes all of the above examples and any monad whose underlying action on types is given by a \emph{strictly positive type} in one variable \cite[Definition~2.8]{AAG:containers}, which includes many algebraic effect handlers (presented using free monads on strictly positive functors as in \cite{Wu-Schrijvers:algebraic.effect.handlers}, which are polynomial by \cite[Theorem~4.5]{gambino.kock:polynomial.monads}).
We expound this idea in \cref{sec:poly.monad.as.dep.graded.comonad} but do not prove a theorem yet: we leave that for future work.

\paragraph{Dependently graded comonads.}
It is worth noting first that every (graded) comonad is already a contextad.

\begin{ex}
\label{ex:comonad}
\label{ex:graded.comonad}
	Every comonad $\Comonad$ on a category $\acted$ corresponds to a contextad $\acted \equalto \acted \nto{\Comonad} \acted$, with $\counitor$ and $\coassociator$ given by the counit and comultiplication of $\Comonad$.
	More generally, every \emph{$\actor$-graded comonad} on a category $\acted$ corresponds to a trivially fibred contextad (that is, a colax action) $\acted \nepifrom{\pi_{\acted}} \acted \times \actor \nto{\Comonad_{(-)}} \acted$.

	In fact, even \emph{indexed comonads} \cite[Definition~2.12]{fujii_2-categorical_2019} are contextads: it suffices to note that an indexed comonad $\Comonad : \acted \to \Comnd(\actor, \actor)$ is equivalently a comonad on the trivial fibration $\pi_1:\acted \times \actor \to \acted$, thus captured by the contextad corresponding to $\Comonad:\acted \times \actor \to \acted \times \actor$ or, more faithfully, by hosting the contextad in $\Fib(\acted)$.
	Indeed, it shows that by replacing $\pi_1$ with a non-trivial fibration we can also talk about \emph{fibred comonads} in which, as $C \in \acted$ varies, so does the category over which $\Comonad_C$ is defined.

	When $\action$ is a comonad $\Comonad$ on a category $\acted$, then $\Ctx(\action)$ is the strict double category with tight maps given by morphisms of $\acted$, loose 1-cells by $\Comonad$-Kleisli maps and squares are just commutative squares in $\Kl(\Comonad)$.
	Similarly, when the contextad is a graded comonad $\Comonad_{(-)}$ on a category $\acted$, $\Ctx(\Comonad_{(-)})$ is a double category with tight maps from $\acted$, loose arrows $(P,f):A \looseto B$ given by maps $f:\Comonad_P A \to B$ in $\acted$, and squares filled by morphisms between grades.
	Overall this double category is completely analogous to the ones of \cref{ex:actegory} since the only difference between a graded comonad and an action of a monoidal category is in the invertibility of $\counitor$ and $\coassociator$, which doesn't change how $\Ctx(\Comonad_{(-)})$ looks.

	Finally, fibred comonads on $q:\Ea \to \acted$ give a $\Ctx$ construction whose objects are those of $\Ea$--thus equivalently pairs $\lens{E \in \Ea_C}{C \in \acted}$, maps are pairs $\lens{h^\flat}{h}:\lens{E}{C} \to \lens{E'}{C'}$ with $h:C \to C'$ in $\acted$ and $h^\flat: \Comonad_C(E) \to E'$.
	If we perform this construction in $\Fib(\acted)$, moreover, $\Ctx(\Comonad)$ is automatically fibred over $\acted$ again, by projecting the bottom part.
	For instance, if $\acted$ is cartesian monoidal, consider the indexed comonad $\acted \to \Comnd(\acted,\acted)$ given by $P \mapsto - \times P$; then its $\Ctx$ construction recovers the simple fibration \cref{eq:simp.fib} as different kind of $\Ctx$ construction.
\end{ex}

\begin{rmk}
\label{rmk:fujii.kleisli}
	We note our $\Ctx(\Comonad_{(-)})$ differs from the Fujii--Katsumata--Melli\'es Kleisli construction (originally from \cite[Definition~6]{FKM:graded.monads} for graded \emph{monads}, and spelled out in \cite[Definition~4.65]{fujii_2-categorical_2019} for graded comonads too).
	First, ours is a double category while theirs is a category; but even just considering the (bi)category of loose arrows, there are more concrete differences.
	Our $\Ctx(\Comonad_{(-)})$ has the same objects as $\acted$ and morphisms more closely resembling the usual Kleisli morphisms of a comonad;
	Fujii \emph{et al.}'s construction has objects decorated by a grade, while morphisms are certain ``optics'' defined from the colax action.
	Nonetheless, both are Kleisli construction in the formal sense (ours by \cref{thm:ctx.const.on.objects} and Fujii's by \cite[Theorem~4.67]{fujii_2-categorical_2019}): the difference is explained by the difference in ambient.

	It's interesting to note that, as stated in \cite[§4]{fujii_2-categorical_2019}, Kleisli constructions for \emph{indexed} comonads are not available in the Fujii--Katsumata--Melli\'es sense, whereas as contextads, their Kleisli construction is easily given by $\Ctx$.
\end{rmk}

Let's now restate \cref{defn:colax.fibred.action} as a \emph{dependently graded comonad}, with an indexed category $\actor : \acted\op \to \Cat$ of grades taking the place of the domain fibration of $\fibcolaxaction$.

\begin{defn}
\label{defn:dep.graded.comonad}
	A \emph{dependently graded comonad} on a category $\acted$ consists of:
	\begin{enumerate}
		\item An indexed category $\actor_{(-)} : \acted\op \to \Cat$ sending each object $C \in \acted$ to a category $\actor_C$ of \emph{grades} which may act on $C$. Denote the compositor of this indexed category by $\gamma : g^*f^* \iso (gf)^*$.
		\item For each $C \in \acted$ and $M \in \actor_{C}$, an object $C \action M \in \acted$. This assignment must be functorial in both $C$ and $M$ in the sense that it should give a functor $\action : \actor := \int_{C \in \acted} \actor_C \to \acted$ on the Grothendieck construction $\actor$ of $\actor_{(-)}$.
		Explicitly, for $f : C \to C'$ and $g : M \to f^*M'$, we should have $f \action g : M \odot C \to M' \odot C'$.
		\item For each $C \in \acted$, a unit $\combineunit_C \in \actor_C$.
		This should be functorial in $C$ as a section of the projection $\actor \to \acted$ from the Grothendieck construction of $\actor_{(-)}$, and should moreover be a cartesian functor; explicitly, for $f : C \to C'$ we should have an isomorphism $\combineunitcart : \combineunit_C \iso f^*\combineunit_{C'}$.
		\item For every $N \in \actor_{C \action M}$, a combination $M \combine N \in \actor_C$, functorial in all three variables in the following sense: for $f : C \to C'$, $g : M \to f^*M'$, and $h : N \to (f \action g)^* N'$, we should have $g \combine_f h : M \combine N \to f^*(M' \combine N')$. Furthermore, $\combine$ should be a cartesian functor: if $g$ and $h$ are isomorphisms, then $g \combine_f h$ should be an isomorphism. In particular, by taking $g$ and $h$ to be identities, we must have an isomorphism $\combinecart : f^*M' \combine (f \action f^* M')^*N' \iso f^*(M' \combine N')$.
		\item A colax unitor $\counitor_C : \combineunit_C \action C \to C$, natural in $C$.
		\item For every $N \in \actor_{C \action M}$, a colax associator $\coassociator_{C,M,N} : C \action (M \combine N) \to (C \action M) \action N$, natural in all three variables.
		\item For every $M \in \actor_C$, natural unit isomorphisms $\leftunitlaw : \combineunit_C \combine \counitor^* M \iso M$ and $\rightunitlaw : M \combine \combineunit_{C \action M} \iso M$.
		\item For every $M \in \actor_C$, $N \in \actor_{C \action M}$ and $p \in \actor_{(C \action M) \action N}$, an associativity isomorphism $\associativitylaw : M \combine (N \combine p) \iso (M \combine N) \combine \coassociator^* p$.
		\item For every $M \in \actor_C$, the following diagrams commute:
		      \begin{equation}
		\label{eqn:dep.graded.left.counit.law}
			      \begin{tikzcd}
				      {C \action M} & {C \action (\combineunit_C \combine \counitor^*M)} \\
				      & {(C \action \combineunit_C) \action \counitor^* M} \\
				      & {C \action M}
				      \arrow["{C \action \leftunitlaw}", from=1-1, to=1-2]
				      \arrow[curve={height=24pt}, Rightarrow, no head, from=1-1, to=3-2]
				      \arrow["\coassociator", from=1-2, to=2-2]
				      \arrow["{\counitor \action M}", from=2-2, to=3-2]
			      \end{tikzcd}
		      \end{equation}
		      \begin{equation}
		\label{eqn:dep.graded.right.counit.law}
			      \begin{tikzcd}
				      {C \action (M \combine \combineunit_{C \action M})} \\
				      {(C \action M) \action \combineunit_{C \action M}} & {C \action M}
				      \arrow["\coassociator"', from=1-1, to=2-1]
				      \arrow["{C \action \rightunitlaw}", from=1-1, to=2-2]
				      \arrow["\counitor"', from=2-1, to=2-2]
			      \end{tikzcd}
		      \end{equation}

		\item For every $M \in \actor_C$, $N \in \actor_{C \action M}$ and $L \in \actor_{(C \action M) \action L}$, the following diagram commutes:
		      \begin{equation}
		\label{eqn:dep.graded.coassociativity.law}
			      \begin{tikzcd}
				      {C \action (M \combine (N \combine L))} & {(C \action M) \action (N \combine L)} \\
				      {C \action ((M \combine N) \combine \coassociator_{M,N}^*L)} \\
				      {(C \action (M \combine N) \action \coassociator_{M,N}^*L)} & {((C \action M) \action N) \action L}
				      \arrow["{\coassociator_{M,(N \combine L)}}", from=1-1, to=1-2]
				      \arrow["{C \action \associativitylaw}"', from=1-1, to=2-1]
				      \arrow["{\coassociator_{N,L}}", from=1-2, to=3-2]
				      \arrow["{\coassociator_{(M \combine N),\coassociator^*L}}"', from=2-1, to=3-1]
				      \arrow["{\coassociator_{M,N} \action L}"', from=3-1, to=3-2]
			      \end{tikzcd}
		      \end{equation}
	\end{enumerate}
\end{defn}

\begin{rmk}
	We could express a dependently graded comonad in a way that more resembles the ``extract-extend'' or Kleisli lift formulation. The counit remains $\counitor : C \action \combineunit_C \to C$, but we can repackage $\coassociator$ into a Kleisli lift operation taking a parameterized map $f : C \action M \to C'$ and a grade $N \in \actor_{C'}$ and yielding the extended map $(f \action N) \circ \coassociator_{M, f^*N} : C \action (M \combine f^*N) \to C' \action N$. This is evidently used in the $\Ctx$ construction. We will not work out what the laws look like in this form here, though we expect it to be a fairly straightforward translation.
\end{rmk}

The grades of a dependently graded monad will very often form a \emph{set} or more generally a \emph{gaunt} category---a category in which every isomorphism is an identity. This is the case when the grades form a set, a partially ordered set, or the single object category delooping a free monoid, to give a few examples. This will be the case for all the dependently graded monads arising as the transposes of polynomial monads. In \cref{thm:gaunt.ctx.strict.dbl}, we will prove that when the source fibration of a contextad is gaunt, the resulting $\Ctx$ construction is a \emph{strict} double category. In particular, it has a \emph{category} (and not just a bicategory) of loose arrows.

\begin{defn}
	A map $f : \Ca \to \Da$ in a 2-category is \textbf{gaunt} if for all \emph{invertible} 2-cells $\alpha : a \twoto b$ (for $a, b : \Xa \to \Ca$) such that $f\alpha$ is an identity, $\alpha$ is an identity.
	A map $f : \Ca \to \Da$ in a 2-category is \textbf{discrete} if for all 2-cells $\alpha : a \twoto b$ for $a, b: \Xa \to \Ca$, if $f \alpha$ is an identity then $\alpha$ is an identity.
\end{defn}

\begin{rmk}
	A category $\Ca$ is gaunt---every isomorphism in $\Ca$ is an identity---if and only if the terminal functor $! : \Ca \to \One$ is gaunt. A functor $f : \Ca \to \Da$ is gaunt precisely when its strict fibers $f^{-1}(d)$ (considing of all objects which $f$ maps to $d$ and all morphisms which $f$ maps to the identity of $d$) are all gaunt categories.
	Similarly, category $\Ca$ is discrete---all morphisms are identities---if and only the terminal functor is discrete and a functor is discrete when its strict fibers are discrete.
\end{rmk}

\begin{rmk}
	Gaunt categories include all discrete categories (sets), partially ordered sets, and deloopings of free monoids.
\end{rmk}

\begin{lem}
	Gaunt and discrete morphisms in a 2-category are closed under composition and preserved under (strict) pullback.
\end{lem}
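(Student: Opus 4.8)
The plan is to treat both properties uniformly: each says that the map reflects identities under post-whiskering, the only difference being that the gaunt case restricts attention to \emph{invertible} 2-cells. I would handle closure under composition by a direct diagram chase, and preservation under pullback by invoking the two-dimensional universal property of the strict pullback. Throughout, the one structural fact I would keep in mind is that whiskering a 1-cell onto an invertible 2-cell again yields an invertible 2-cell, since this is what lets the gaunt hypotheses propagate.

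For composition, suppose $f : \Ca \to \Da$ and $g : \Da \to \Ea$ are both gaunt (resp.\ discrete). Given $a, b : \Xa \to \Ca$ and a 2-cell $\alpha : a \twoto b$ (invertible, in the gaunt case) with $(gf)\alpha$ an identity, I would use functoriality of whiskering to rewrite $(gf)\alpha = g(f\alpha)$. First apply the hypothesis on $g$: since $g(f\alpha)$ is an identity---and, as whiskering preserves invertibility, $f\alpha$ is invertible in the gaunt case---we conclude $f\alpha$ is an identity. Then apply the hypothesis on $f$ to $\alpha$ to conclude $\alpha$ is an identity.

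For preservation under pullback, let $d : \Aa \to \Ba$ be gaunt (resp.\ discrete) and $f : \Xa \to \Ba$ any map, with strict pullback $\Pa$, projections $f^*d : \Pa \to \Xa$ and $\bar f : \Pa \to \Aa$, and $f \circ (f^*d) = d \circ \bar f$. The key fact I would use is that a strict 2-pullback is detected representably: for each $\Ya$, whiskering exhibits $\Cosmos(\Ya, \Pa)$ as the strict pullback in $\Cat$ of $\Cosmos(\Ya, \Aa) \to \Cosmos(\Ya, \Ba) \leftarrow \Cosmos(\Ya, \Xa)$. Consequently a 2-cell $\alpha : a \twoto b$ between $a, b : \Ya \to \Pa$ corresponds to the pair of whiskerings $(\bar f\alpha,\, (f^*d)\alpha)$ subject to the matching condition $d(\bar f\alpha) = f((f^*d)\alpha)$, and---crucially---$\alpha$ is an identity if and only if both projections are. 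Now suppose $(f^*d)\alpha$ is an identity (with $\alpha$ invertible in the gaunt case). The matching condition then forces $d(\bar f\alpha) = f((f^*d)\alpha)$ to be an identity. Since $\bar f\alpha$ is invertible whenever $\alpha$ is, the gauntness (resp.\ discreteness) of $d$ yields that $\bar f\alpha$ is an identity. With both projections identities, the detection property gives that $\alpha$ itself is an identity, as required.

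The only genuinely delicate point is the pullback case, and within it the appeal to the strict two-dimensional universal property: I must use that \emph{strict} (as opposed to iso- or bi-) pullbacks are precisely those whose representable images are strict pullbacks of hom-categories, so that a 2-cell into $\Pa$ is an identity exactly when both of its leg-whiskerings are. Everything else is bookkeeping driven by the matching condition and the invertibility-under-whiskering observation.
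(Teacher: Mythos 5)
Your proof is correct. Note that the paper states this lemma without any proof at all (it is left as an exercise immediately before \cref{thm:gaunt.ctx.strict.dbl}), so there is no argument of the authors' to compare against; yours supplies exactly the missing details, and does so by the natural route. Both halves check out: for composition, the rewriting $(gf)\alpha = g(f\alpha)$ together with the fact that whiskering preserves invertibility lets the gaunt (resp.\ discrete) hypotheses cascade from $g$ to $f$; for pullbacks, your appeal to the representable characterization of strict 2-pullbacks --- that $\Cosmos(\Ya, \Pa)$ is literally the pullback category $\Cosmos(\Ya,\Aa) \times_{\Cosmos(\Ya,\Ba)} \Cosmos(\Ya,\Xa)$, in which a morphism is an identity precisely when both of its components are --- is the correct and essential use of \emph{strictness}, and you rightly flag that an iso- or bi-pullback would not support this step. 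The only cosmetic remark is that the matching condition $d(\bar f\alpha) = f((f^*d)\alpha)$ holds automatically by the interchange of whiskering with the commuting square, which you use implicitly and correctly.
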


\begin{thm}
\label{thm:gaunt.ctx.strict.dbl}
	Let $\fibcolaxaction$ be a contextad with $p$ a gaunt morphism in a paradise $\Cosmos$. Then $\Ctx(\action)$ is a \emph{strict} category object in $\Cosmos$.
\end{thm}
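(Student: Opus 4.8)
The plan is to work directly with the explicit description in \cref{defn:ctx.dbl.cat} and show that gauntness of $p$ forces every invertible structural $2$-cell occurring there to be an identity. Recall that $\Ctx(\action)$ is a pseudocategory object by \cref{thm:ctx.const.on.objects}, and that such a pseudocategory is a \emph{strict} category object exactly when its unitors and associator are identities; so it suffices to trivialize these coherence cells.

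First I would list the invertible cells that govern the weakness of $\Ctx(\action)$. Reading off \cref{defn:ctx.dbl.cat}, the left unitor is $\leftunitlaw$, the right unitor is $\rightunitlaw\combineunitcart$, and the associator is the large pasting assembled from $\associativitylaw$, $\combinecart$, the naturality of $\coassociator$, and the $\combinecart$-coherence; moreover loose composition is defined through the cleavage of $p$, so the comparison isomorphisms $\gamma : g^{*}f^{*}\iso (fg)^{*}$ of that cleavage also enter. The crucial observation is that each of $\leftunitlaw$, $\rightunitlaw$, $\associativitylaw$, $\combineunitcart$, $\combinecart$ and $\gamma$ is a $p$-\emph{vertical} invertible $2$-cell: in every case the source and target lie over a common object of $\acted$ and the cell maps to an identity under $p$. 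Element-freely this is precisely the constraint $p'\alpha=p$ built into the $2$-cells of $\FibSpan^{\twoto}(\acted,\acted)$ via \cref{defn:locally.span}. The cells $\leftunitlaw$, $\rightunitlaw$, $\associativitylaw$, $\combinecart$ and $\gamma$ all have codomain the apex $\actor$, so the leg against which they are vertical is $p$ itself, while for a cell living over a composite power such as $\actor^{\spancomp n}$ I would appeal to the fact that gaunt maps are closed under composition and stable under pullback, so that the relevant leg is again gaunt.

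Now I would invoke the hypothesis. Since $p$ is gaunt, every $p$-vertical invertible $2$-cell is an identity; equivalently, cartesian lifts are unique up to $p$-vertical isomorphism and hence strictly unique, so the cleavage of $p$ is canonically split and $\gamma=\id$. Likewise $\combineunitcart=\combinecart=\id$ (so reindexing strictly preserves $\combineunit$ and $\combine$) and $\leftunitlaw=\rightunitlaw=\associativitylaw=\id$. Consequently the left unitor $\leftunitlaw$ and the right unitor $\rightunitlaw\combineunitcart$ are both identities.

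The remaining and, I expect, most delicate point is the associator, because its defining pasting still contains the genuinely nontrivial coassociator $\coassociator$. Here the argument is that, once $\associativitylaw$, $\combinecart$ and $\gamma$ are identities, the two triple composites of loose arrows coincide on the nose: on grades, $(P\combine f^{*}Q)\combine(g(f\action Q)\coassociator)^{*}R$ and $P\combine f^{*}(Q\combine g^{*}R)$ become literally equal by splitness, $\combinecart=\id$ and $\associativitylaw=\id$, and on the underlying maps $A\action(-)\to D$ equality follows from the strict functoriality of $\action$ together with these same identities. Each labelled region of the associator pasting then collapses: the pentagon region because $\associativitylaw=\id$, the $\combinecart$-coherence region because $\combinecart=\id$, and the naturality-of-$\coassociator$ region because it is naturality against the now-identity maps $A\action\combinecart$ and $(A\action P)\action\combinecart$. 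Hence the associator is the identity $2$-cell, loose composition is strictly associative and unital, and $\Ctx(\action)$ is a strict category object in $\Cosmos$.
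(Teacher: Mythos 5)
Your proposal is correct and takes essentially the same approach as the paper: the paper's proof likewise observes that $\leftunitlaw$, $\rightunitlaw$, $\associativitylaw$, $\combineunitcart$ and $\combinecart$ are invertible cells that become identities when whiskered with $p$ (by the left half of the definition of $2$-cells in $\DispSpan^{\Rightarrow}$), hence are identities when $p$ is gaunt, and then traces through \cref{defn:colax.fibred.action} to conclude strictness. Your additional details---splitting the cleavage ($\gamma=\id$) and explicitly collapsing the associator pasting---are just an expanded version of the paper's concluding ``tracing through'' step.
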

\begin{proof}
	The 3-cells $\leftunitlaw$, $\rightunitlaw$, $\associativitylaw$, and the
	witnesses $\combineunitcart$ and $\combinecart$ to cartesianness of $\combineunit$ and $\combine$ in
	$\FibSpan^{\Rightarrow}$ that appear in the structure of the contextad $\fibcolaxaction$ are all isomorphisms which, when whiskered with $p$, become identities (as ensured by the left half of \cref{eqn:transformation.of.right.colax.maps.of.spans}). If $p$ is gaunt, they must then be identities. Tracing through \cref{defn:colax.fibred.action}, this implies that $\Ctx(\action)$ is a strict double category.
\end{proof}

\begin{defn}
	A monad $T : \acted \to \acted$ is \textbf{transposable} if there is a dependently and discretely graded comonad $\action_T$ on $\acted$ together with an identity on objects equivalence between $\Kl(T)$ and the category of loose arrows of $\Ctx(\action_T)$.
\end{defn}

\subsubsection{The writer monad}

The simplest monad to transpose is the \emph{writer monad} $X \mapsto X \times W$ for a fixed monoid $(W, +, 0)$. The unit of this monad is $x \mapsto (x, 0) : X \to X \times W$, and the multiplication is $((x, w_1), w_2) \mapsto (x, w_1 + w_2) : (X \times W) \times W \to X \times W$. A Kleisli morphism for the writer monad is a function $f : X \to Y \times W$. This evidently equivalent to a pair of functions $(w, f_1)$ with $w : X \to W$ and $f_1 : X \to Y$ by defining $w = \snd \circ f$ and $f_1 = \fst \circ f$. We will see $w : X \to W$ as a grade (dependent on $X$) for a dependently graded comonad transposing the writer monad, so that the pair $(w, f_1)$ becomes a contexful arrow.

\begin{defn}
\label{defn:dep.graded.writer.comonad}
	Fix a monoid $(W, +, 0)$. We define the \textbf{dependently graded writer comonad} on the category of sets as follows:
	\begin{enumerate}
		\item For each set $X$, the (discrete) category of grades over $X$ is the set of functions $X \to W$, which is evidently contravariantly functorial in $X$.
		\item For each set $X$ and grade $w : X \to W$, we define the action $X \action w := X$ to be $X$ again.
		\item We define $\combineunit_X : X \to W$ to be the constant map valued at $0 \in W$.
		\item Given $w_1 : X \to W$ and $w_2 : X \action w_1 \to W$, we define $w_1 \combine w_2 (x) = w_1(x) + w_2(x)$.
		\item We define the counitor $\counitor_X : X \action \combineunit_X \to X$ to be the identity.
		\item We define the coassociator $\coassociator_{w_1,w_2} : X \action (w_1 \combine w_2) \to (X \action w_1) \action w_2$ to be the identity.
	\end{enumerate}
	Since the categories of grades are discrete, the structure isomorphisms become equations which hold immediately by the axioms governing the monoid structure on $W$.
\end{defn}

\begin{thm}
	The writer monad is transposable: the Kleisli category of the writer monad is equivalent to the category of contexful arrows of the dependently graded writer comonad (\cref{defn:dep.graded.writer.comonad}).
\end{thm}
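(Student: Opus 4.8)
The plan is to exhibit an identity-on-objects isomorphism of categories $\Phi$ from $\Kl(T)$ to the category of loose arrows of the double category $\Ctx(\action_T)$, where $\action_T$ is the dependently graded writer comonad of \cref{defn:dep.graded.writer.comonad}. Since the categories of grades are discrete, \cref{thm:gaunt.ctx.strict.dbl} guarantees that $\Ctx(\action_T)$ is a \emph{strict} double category, so its loose arrows genuinely form a category and there are no nontrivial squares above a parallel pair of loose arrows to complicate matters; in particular $\action_T$ is discretely graded as the notion of transposability requires. On objects both categories are $\Set$ and $\Phi$ is the identity. On morphisms I would send a Kleisli map $F : X \to Y \times W$ to the contexful arrow $(\snd \circ F,\ \fst \circ F) : X \looseto Y$. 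Because $X \action w = X$, a contexful arrow $X \looseto Y$ is exactly a pair $(w, f)$ of a grade $w : X \to W$ and a map $f : X \to Y$, so the universal property of the product $Y \times W$ makes $\Phi$ a bijection on each hom-set, with inverse $(w, f) \mapsto \langle f, w\rangle$.

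It then remains to check that $\Phi$ preserves identities and composition. For identities, the Kleisli identity is the unit $\eta_X : x \mapsto (x, 0)$, whose image is $(\combineunit_X, \id_X)$, where $\combineunit_X : X \to W$ is the constant map at $0$; this is precisely the loose identity $(\combineunit_X, \counitor_X)$ of \cref{defn:ctx.dbl.cat}, since $\counitor_X = \id_X$ by \cref{defn:dep.graded.writer.comonad}. For composition, I would unwind both sides. Given $F = \langle f, w\rangle : X \to Y \times W$ and $G = \langle g, v\rangle : Y \to Z \times W$, the Kleisli composite $\mu_Z \circ TG \circ F$ sends $x \mapsto (g(f(x)),\ v(f(x)) + w(x))$. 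On the $\Ctx$ side, the loose composite $(w, f) \lcomp (v, g)$ has grade $w \combine f^* v$ and underlying map $X \action(w \combine f^* v) \xto{\coassociator} (X \action w) \action f^* v \xto{f \action v} Y \action v \xto{g} Z$. Here reindexing along $f$ is precomposition, so $f^* v = v \circ f$ and $(w \combine f^* v)(x) = w(x) + v(f(x))$; and because the action is trivial and $\coassociator = \id$, this underlying map collapses to $g \circ f$. Thus the loose composite is $(w + v\circ f,\ g\circ f)$, whose image under the inverse of $\Phi$ is the Kleisli composite above.

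With identities and composition preserved and $\Phi$ bijective on objects and on every hom-set, $\Phi$ is an isomorphism of categories, hence in particular an identity-on-objects equivalence $\Kl(T) \equi \Ctx(\action_T)$ (loose arrows), which is exactly transposability of the writer monad.

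The main obstacle is bookkeeping rather than conceptual: correctly identifying the reindexing $f^*$ of grades with precomposition, and verifying that the trivial structure cells ($\counitor = \id$, $\coassociator = \id$, discrete grades) collapse the $\Ctx$ composition formula of \cref{defn:ctx.dbl.cat} to the bare map $g \circ f$ together with the pointwise-summed grade. The one point demanding genuine care is the order of the monoid operation: the convention fixed for $\mu$ of the writer monad must be read off to agree with the left-to-right order of $w \combine f^* v$, which is the only place where non-commutativity of $W$ would be visible.
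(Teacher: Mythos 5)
Your proof is correct and takes essentially the same route as the paper's: transpose each Kleisli map to the pair $(\snd\circ F,\ \fst\circ F)$, then check that this bijection sends the Kleisli unit to $(\combineunit_X,\counitor_X)$ and Kleisli composition to the contexful composite with grade $w \combine f^*v$ and underlying map $g\circ f$. Your closing caveat about the monoid order is exactly right and in fact sharper than the paper's own computation: with the multiplication as stated, $((x,w_1),w_2)\mapsto(x,w_1+w_2)$, the Kleisli composite accumulates as $v(f(x))+w(x)$ (as you compute), while the $\Ctx$ composite gives $w(x)+v(f(x))$, so for noncommutative $W$ the convention for $\mu$ must be taken as $((x,w_1),w_2)\mapsto(x,w_2+w_1)$ (or the grades combined in the opposite order) for the two formulas to agree on the nose---a point the paper's proof glosses over.
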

\begin{proof}
	All that we need to show is that the evident bijection between Kleisli maps $f : X \to Y \times W$ and contexful arrows $(w : X \to W, f_1 : X \to Y)$ is functorial. The unit $x \mapsto (x, 0)$ transposes to the contexful arrow $(x \mapsto 0, x \mapsto x) = (\combineunit_X, \counitor_X)$, which is the identity of $\Ctx(\action)$.

	If $f : X \to Y \times W$ and $g : Y \to Z \times W$ are Kleisli morphisms, their composite is $x \mapsto (g_1(f_1(x)), f_2(x) + g_2(f_1(x)))$. Meanwhile, $f$ transposes to $(f_2, f_1)$ and $g$ transposes to $(g_2, g_1)$, and these compose to
	\begin{equation}
		(f_2 \otimes f_1^*g_2,\ g_1 \circ (f_1 \action g_2) \circ \coassociator) = x \mapsto (f_2(x) + g_2(f_1 (x)),\ g_1(f_1(x))).
	\end{equation}
\end{proof}

\subsubsection{The state monad}

Fixing a set $S$ of states, the \emph{state monad} is the monad with underlying functor $X \mapsto (X \times S)^S$ induced by the tensor-hom adjunction $(-) \times S \dashv (-)^S$. Explicitly, the state monad has unit $\eta : X \to (X \times S)^S$ defined by $\eta(x)(s) = (x, s)$ and Kleisli lift taking $k : X \to (Y \times S)^S$ to the map $(X \times S)^S \to (Y \times S)^S$ given by $\varphi \mapsto (s \mapsto k(\fst \varphi(s))(\snd \varphi(s)))$. A Kleisli morphism for the state monad has the form $k : X \to (Y \times S)^S$, which is equivalent by transposing to the data of two functions
\begin{equation}
	\begin{cases}
		f : X \times S \to S & \mbox{and} \\
		k' : X \times S \to Y
	\end{cases}.
\end{equation}
We will therefore be able to see the Kleisli category of the state monad as the category of contextful arrows for a contextad---or dependently graded comonad---where $f : X \times S \to S$ is the grade and $k' : X \times S \to Y$ is the contextualized map.

\begin{defn}
\label{defn:state.dep.graded.comonad}
	Fix a set $S$ of \emph{states}. We define the \emph{state} dependently graded comonad $\State_S$ on the category of sets as follows:
	\begin{enumerate}
		\item For each set $X$, the (discrete) category of grades $\State_S(X) := \{X \times S \to S\}$ is the set of state machines with state set $S$ and input alphabet $X$. This is evidently contravariantly functorial in $X$.
		\item For each set $X$ and grade $f : X \times S \to S$, we define $X \action f := X \times S$.
		\item We define $\combineunit_X : X \times S \to S$ to be $\snd := (x, s) \mapsto s$ to be the second projection.
		\item We define the product $f \combine g : X \times S \to S$ of $f : X \times S \to S$ and $g : (X \times S) \times S \to S$ to be
		\begin{equation}
			  (f \combine g)(x,s) := g((x,s), f(x,s)).
		\end{equation}
		\item  We define the counitor $\counitor_X : X \times S \to X$ to be $\fst := (x, s) \mapsto x$.
		\item We define the coassociator $\coassociator_{f, g} : X \times S \to (X \times S) \times S$ to be
		\begin{equation}
			  \coassociator_{f, g}(x, s) = ((x, s), f(x, s))
		\end{equation}
	\end{enumerate}
	Since the categories of grades are discrete, the structure isomorphisms become equations which we check in \cref{appendix}.
\end{defn}

\begin{thm}
	The category of contextful arrows for the state dependently graded comonad is isomorphic to the Kleisli category of the state monad.
\end{thm}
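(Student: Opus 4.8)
The plan is to produce an explicit functor that is the identity on objects (sets) and that sends a Kleisli morphism to its transpose under the tensor--hom adjunction $(-)\times S \dashv (-)^S$, and then to check it is bijective on morphisms and respects units and composition. First I would record the chain of bijections on morphisms: a Kleisli map $k : X \to (Y\times S)^S$ transposes to a map $\tilde k : X \times S \to Y \times S$, which is the pairing of $P := \snd\,\tilde k : X\times S \to S$ and $k_1 := \fst\,\tilde k : X \times S \to Y$. Since $X \action P = X \times S$ for the state dependently graded comonad, the pair $(P, k_1)$ is exactly a contexful arrow $X \looseto Y$ in $\Ctx(\State_S)$. This assignment is manifestly invertible and identity-on-objects, so the entire content is functoriality; moreover, because the grade categories are discrete, $p$ is gaunt and \cref{thm:gaunt.ctx.strict.dbl} guarantees that the loose arrows of $\Ctx(\State_S)$ form an honest category, so the target of the comparison makes sense.

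For the unit I would compute that the monad unit $\eta_X(x)(s) = (x,s)$ transposes to $\tilde\eta_X = \id_{X\times S}$, hence to the pair $(\snd, \fst) = (\combineunit_X, \counitor_X)$, which is precisely the loose identity of $\Ctx(\State_S)$ (recall $\combineunit_X = \snd$ and $\counitor_X = \fst$). For composition, take $k : X \to (Y\times S)^S$ and $\ell : Y \to (Z\times S)^S$ transposing to $(P, k_1)$ and $(Q, \ell_1)$. Unfolding the Kleisli lift recalled before the definition, the transpose of the Kleisli composite is
\begin{equation*}
	(x,s) \longmapsto \bigl(\ell_1(k_1(x,s),\, P(x,s)),\ Q(k_1(x,s),\, P(x,s))\bigr).
\end{equation*}
On the other side I would unfold the $\Ctx$ composite $(P, k_1) \lcomp (Q, \ell_1)$ from \cref{defn:ctx.dbl.cat}: its grade is $P \combine k_1^{*}Q$ and its underlying map is the composite
\begin{equation*}
	X\times S \xto{\coassociator_{P,\, k_1^{*}Q}} (X\times S)\times S \xto{k_1 \action \lift(k_1)} Y \times S \xto{\ell_1} Z.
\end{equation*}
Reindexing gives $k_1^{*}Q((x,s), s') = Q(k_1(x,s), s')$, so the combination formula of \cref{defn:state.dep.graded.comonad} yields $(P \combine k_1^{*}Q)(x,s) = Q(k_1(x,s), P(x,s))$, matching the grade above. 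For the map, $\coassociator_{P,\,k_1^{*}Q}(x,s) = ((x,s), P(x,s))$, the action of $k_1$ on the cartesian lift sends $((x,s), s') \mapsto (k_1(x,s), s')$, and then $\ell_1$ is applied, so the composite is $(x,s)\mapsto \ell_1(k_1(x,s), P(x,s))$, matching the map part above.

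The subtle step — and the only place requiring care — is pinning down how $\action$ acts on morphisms, since \cref{defn:state.dep.graded.comonad} specifies it only on objects. Once one observes that $\action$ sends a morphism $(u,v)$ of the Grothendieck construction $\int\State_S$ to $u \times \id_S$, and that the cartesian lift over $u = k_1$ is witnessed by the reindexing condition $k_1^{*}Q = Q\circ(k_1\times\id_S)$, the evaluation of $k_1 \action \lift(k_1)$ and of $\coassociator$ is forced, and both composites collapse to the same formula. Having matched units and composition, the bijection on hom-sets assembles into an identity-on-objects functor with a two-sided inverse, hence an isomorphism of categories between the loose category of $\Ctx(\State_S)$ and the Kleisli category of the state monad. (The structure equations for $\leftunitlaw$, $\rightunitlaw$, $\associativitylaw$ and the cartesian witnesses degenerate to identities because the grades are discrete, as deferred to \cref{appendix}; these are not needed for the present isomorphism beyond the unit and composition laws just verified.)
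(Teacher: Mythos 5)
Your proof is correct and takes essentially the same route as the paper's: transpose Kleisli morphisms under the tensor--hom adjunction $(-)\times S \dashv (-)^S$, note the resulting identity-on-objects bijection on hom-sets, and verify by direct elementwise computation that the unit transposes to $(\combineunit_X, \counitor_X) = (\snd, \fst)$ and that the transpose of a Kleisli composite agrees with the $\Ctx$ composite. Your explicit treatment of the action on morphisms (as $u \times \id_S$) and of the cartesian lift is a welcome clarification of details the paper leaves implicit, but it does not change the argument.
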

\begin{proof}
	As we remarked above, the data of a Kleisli morphism $k : X \to (Y \times S)^S$ is equivalent to the data of a contexful arrow $(f : X \times S \to S, k' : X \action f \to Y)$.
	It remains to show that this correspondence is functorial.
	For the purpose of this proof, write $\hat{f} : X \times S \to Y$ for the transpose of a map $f : X \to Y^S$.

	The identity Kleisli morphism is the unit $\eta : X \to (X \times S)^S$.
	This transposes to the pair $(\snd : X \times S \to S, \fst : X \times S \to X)$, which is the identity $(\combineunit, \counitor)$ of $\Ctx(\action)$ for the state dependently graded comonad.

	If $k : X \to (Y \times S)^S$ and $j : Y \to (Z \times S)^S$ are Kleisli morphisms, their composite is given by $x \mapsto (s \mapsto j(\snd k(x)(s))(\fst k(x)(s)))$.
	The contextful composite of the transposes $(\fst \hat{k}, \snd \hat{k})$ and $(\fst \hat{j}, \snd \hat{j})$ is the contexful arrow $(\fst \hat{k} \combine (\snd \hat{k})^*(\fst \hat{j}), \snd \hat{j} \circ (\snd \hat{k} \action \fst \hat{j}) \circ \coassociator_{\fst\hat{k},\fst\hat{j}})$.
	We may compute its components as follows:
	\begin{align*}
		(\fst \hat{k} \combine (\snd \hat{k})^*(\fst \hat{j}))(x, s)                                            & = (\snd \hat{k})^*(\fst \hat{j})((x, s), \fst\hat{k}(x, s))                       \\
		                                                                                                             & = \fst j(\snd k(x)(s))(\fst k(x, s))                                                   \\
		(\snd \hat{j} \circ (\snd \hat{k} \action \fst \hat{j}) \circ \coassociator_{\fst\hat{k},\fst\hat{j}})(x, s) & = (\snd \hat{j} \circ (\snd \hat{k} \action \fst \hat{j}))((x, s), \fst \hat{k}(x, s)) \\
		                                                                                                             & = \snd \hat{j}(\snd \hat{k}(x, s), \fst \hat{k}(x, s))                                 \\
		                                                                                                             & = \snd j(\snd k(x)(s), \fst k(x)(s)).
	\end{align*}
	and these are the two components of the transpose of the Kleisli composite
	\begin{equation}
		x \mapsto (s \mapsto j(\snd k(x)(s))(\fst k(x)(s))).
	\end{equation}
\end{proof}

\subsubsection{The Maybe monad}
\label{sec:maybe.monad.dep.graded.comonad}
As a further example, let's look at  the Maybe monad $X \mapsto X + 1$.
We will show that with dependent types we can transpose it into a dependently graded comonad.
Our construction here will work more generally for any dominance taking the place of $\Bool$ or, more generally, any $\Sigma$-closed subuniverse taking the place of decidable propositions.

For this example, we'll work in Martin-L\"of type theory. Define the type family
$\BoolIf : \Bool \to \Type$ by $\BoolIf(\True) := \Unit$ and $\BoolIf(\False) := \Zero$, and define the map $\supp : Y + \Unit \to \Bool$ as $\supp(\inl(y)) := \True$ and $\supp(\inr(\top)) := \False$.
We also need a `lemma': a map $e : \dsum{p : Y + \Unit} \BoolIf(\supp p) \to Y$ which we may construct as $e(\inl(y), \top) := y$ (no other patterns are reachable).

Let $f : X \to Y + \Unit$ be a Kleisli map.
If $\supp f(x) = \True$, then $f$ is defined at $x$ and we can extract the value of $Y$ it maps to.
Explicitly, we have $\hat{f} : \dsum{x : X} \BoolIf(\supp f (x)) \to Y$ given by $(x, p) \mapsto e(f(x), p)$. This gives a bijection
\begin{equation}
\label{eqn:maybe.monad.transposition}
	\{f : X \to Y + \Unit\} \iso \{ (s : X \to \Bool,\ \hat{f} : \dsum{x : X} \BoolIf(s\, x) \to Y)\}.
\end{equation}
We will interpret these pairs as contexful arrows arising from a dependently graded comonad where the type of grades associated to $X$ is $X \to \Bool$ and $X \action s := \dsum{x : X} \BoolIf(s\, x)$.

To define $\combineunit$ and $\combine$, we will need the dominance structure on $\Bool$.
Specifically, we have $\True : \Bool$ and we need
\begin{equation}
	\mathrm{and} : (b : \Bool) \to (\BoolIf(b) \to \Bool) \to \Bool
\end{equation}
which can be defined as $\mathrm{and}(\True, h) := h(\top)$ and $\mathrm{and}(\False, h) := \False$.
We will also need the map
\begin{equation}
	\mathrm{split} : (b : \Bool) \to (s : \BoolIf(b) \to \Bool) \to \BoolIf(\mathrm{and}(b, s)) \to \dsum{p : \BoolIf(b)} \BoolIf(s\, p)
\end{equation}
Defined by $\mathrm{split}(\True)(s)(p) := (\top,\, p)$.

We define $\combineunit_X : X \to \Bool$ to be $x \mapsto \True$ and for $s : X \to \Bool$ and $t : X \action s \to \Bool$, we define $s \combine t : X \to \Bool$ by
\begin{equation}
	(s \combine t)(x) := \mathrm{and}(s(x), p \mapsto t(x, p)).
\end{equation}
We may then define $\counitor : X \action \combineunit \to X$ to be $\fst$, and $\coassociator : X \action (s \combine t) \to (X \action s) \action t$ to be
\begin{equation}
	(x, p) \mapsto \letin{(b_1, b_2)}{\mathrm{split}(p)}{((x, b_1), b_2)}.
\end{equation}

Checking the laws is tedious but straightforward.

The following, extending \cref{eqn:maybe.monad.transposition}, is now evident:

\begin{thm}
	The category of contextful arrows for the maybe dependently graded comonad is isomorphic to the Kleisli category of the Maybe monad.
\end{thm}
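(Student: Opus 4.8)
The plan is to mirror the proofs already given for the writer and state monads: both constructions have the same objects (all types), and the bijection on morphisms is exactly the transposition \eqref{eqn:maybe.monad.transposition}, so the only thing left is to check that this bijection is functorial---that it carries the Kleisli identity to the loose identity $(\combineunit, \counitor)$ and Kleisli composition to loose composition in $\Ctx(\action)$.

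First I would dispatch the identity. The Kleisli identity is the monadic unit $\eta : X \to X + \Unit$, $x \mapsto \inl(x)$. Since $\supp(\inl(x)) = \True$, its grade component is the constant map $x \mapsto \True$, which is precisely $\combineunit_X$; and its transpose $\hat\eta : \dsum{x : X} \BoolIf(\True) \to X$ is $(x, \top) \mapsto e(\inl(x), \top) = x$, i.e.\ $\fst = \counitor_X$. Hence $\eta$ transposes to the loose identity $(\combineunit_X, \counitor_X)$.

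The substance is in composition. Let $f : X \to Y + \Unit$ and $g : Y \to Z + \Unit$ be Kleisli maps with transposes $(s_f, \hat f)$ and $(s_g, \hat g)$, where $s_f = \supp \circ f$, $\hat f(x, p) = e(f(x), p)$, and likewise for $g$. Their loose composite in $\Ctx(\action)$ is $\bigl(s_f \combine \hat f^* s_g,\ \hat g \circ (\hat f \action s_g) \circ \coassociator\bigr)$. For the grade component, unfolding $\combine$ and the reindexing $\hat f^* s_g = s_g \circ \hat f$ gives $(s_f \combine \hat f^* s_g)(x) = \mathrm{and}\bigl(s_f(x),\ p \mapsto s_g(\hat f(x, p))\bigr)$, which evaluates to $\supp(g(y))$ when $f(x) = \inl(y)$ and to $\False$ when $f(x) = \inr(\top)$---precisely $\supp$ of the Kleisli composite. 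For the map component I would unwind $\coassociator$ through $\mathrm{split}$: on the only reachable branch, where $s_f(x) = \True$ so that $f(x) = \inl(y)$, we have $\mathrm{split}(\top)(\cdots)(p) = (\top, p)$, whence $\coassociator(x, p) = ((x, \top), p)$; then $(\hat f \action s_g)$ sends this to $(\hat f(x,\top), p) = (y, p)$, and $\hat g(y, p) = e(g(y), p)$ extracts the value of $Z$ held by $g(y)$. This agrees with the transpose of the Kleisli composite $h$, since $h(x) = g(y)$ in this case.

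The hard part will not be any single calculation but keeping the dependent pattern matches honest: one must confirm that the unreachable $\False$-branches never contribute and that the types line up after applying $\mathrm{split}$, which are the tedious-but-routine laws alluded to above. No coherence isomorphisms intervene: since every category of grades $X \to \Bool$ is discrete, the 3-cells $\leftunitlaw$, $\rightunitlaw$, $\associativitylaw$ and the cartesianness witnesses $\combineunitcart$, $\combinecart$ are all identities, so by \cref{thm:gaunt.ctx.strict.dbl} $\Ctx(\action)$ is a strict double category and the functorial bijection above is a genuine isomorphism of categories, not merely an equivalence.
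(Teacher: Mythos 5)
Your proposal is correct and takes essentially the same approach the paper intends: the paper declares the theorem ``evident'' from the transposition bijection \eqref{eqn:maybe.monad.transposition} once the contextad structure ($\combineunit$, $\combine$ via $\mathrm{and}$, $\coassociator$ via $\mathrm{split}$) is in place, and your identity and composition checks are exactly the routine functoriality verification left implicit there, carried out in the same style as the paper's explicit proofs for the writer and state monads (including the observation that discreteness of the grades makes $\Ctx(\action)$ strict).
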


\begin{rmk}
	The Kleisli category for the Maybe monad is equivalent to the category of partial maps (with decidable support).
	The class of decideable subobjects in a lextensive category (having finite limits and disjoint coproducts) is a strong display map class.
	By \cref{ex:display.map.cat}, we therefore get a contextad of decideable subobjects whose double category of contexts is the double category of spans whose left leg is a decideable subobject.
	This is equivalent to the double category whose tight maps are total maps and whose loose arrows are partial maps with decideable support.
\end{rmk}

\section{Left-fibrant spans and the Kleisli cocompletion of left-displayed spans}
\label{sec:fibs.span}

In this section, we will work out the central technical result of this work, namely that left-fibrant spans $\Ca \epifrom \Ea \to \Da$ are algebras of the arrow monad of $\Ca$ in the Kleisli arrow of the arrow monad of $\Da$.
This definition will allow, in the next section, to recognize $\FibSpan^\twoto$
as a subtricategory of the Kleisli completion of $\DispSpan$, paving the way to
our main result: \cref{thm:ctx.const.on.objects}.

\subsection{Fibrations as pseudoalgebras}
Cloven cartesian fibrations are property-like structure because they are defined in terms of adjoints (in suitable 2-categories), and adjoints to a given functor are unique up to unique isomorphism.
We may organize this observation into a theorem: cloven cartesian fibrations are algebras for a colax idempotent 2-monad \cite{kock_monads_1995}, an observation due to Street \cite{street1980fibrations}, where he defines fibrations in a bicategory as algebras of such a 2-monad.

We recall here such a definition, but specialized in the slightly stricter setting we work in.

\subsubsection{Pseudomonads}
Monads in a tricategory are naturally weakened to pseudomonads.
The definition below is expanded in more details in \cite{marmolejo_distributive_1999,lack_coherent_2000, gambino_formal_2021}.

\begin{rmk}
	We come clean immediately and warn the reader the aforementioned references define pseudomonads in $\Gray$-categories.
	Authors interested in pseudomonad prefer to work in $\Gray$-categories to save time and headache medications.
	Similarly, we work with fully weak tricategories since we never actually manipulate them directly and thus don't need to worry about their strictification.
	Since the two are triequivalent notions---that's the point!---we can use results proven with $\Gray$-categories for our tricategories.
\end{rmk}

\begin{defn}
\label{defn:psmnd}
	A \textbf{pseudomonad} on $\Ab$ in a tricategory $\Kc$ is a pseudomonoid in $\Kc(\Ab, \Ab)$.
	In particular, it's given by the data of an endomorphism $\monad:\Ab \to \Ab$, a unit $\unitmnd : \id_{\Ab} \twoto \monad$, a multiplication $\multmnd : \monad\monad \twoto \monad$ and 3-cells witnessing unitality and associativity, satisfying the usual triangular and pentagonal coherence equations.
	When the latter 3-cells are identities, we call $(\monad, \unitmnd, \multmnd)$ a \textbf{2-monad}.
\end{defn}

As for monads, it is praxis to abuse notation and refer to a pseudomonad by the name of the carrier endomorphism, or at most its triple of 2-dimensional data.

\begin{rmk}
	We will be mainly preoccupied with colax idempotent 2-monads, or `co-KZ' doctrines.
	Most of the relevant literature deals with \emph{lax idempotent} 2-monads, but it dualizes without pain since a colax idempotent 2-monad in $\Kc$ is a lax idempotent 2-monad in $\Kc\co$.
\end{rmk}

\begin{defn}
\label{defn:colax.idempotent.monad}
	A 2-monad $(\monad, \unitmnd, \multmnd)$ is \textbf{colax idempotent} iff $\unitmnd\monad \adj \multmnd$ and this adjunction has an invertible unit.
\end{defn}

\subsection{Algebras of colax idempotent 2-monads}
The purpose of presenting this material is to have it fresh in the next section, where we need to work with fibrations \emph{qua} algebraic structure in spans.
Therefore, we make some extremely specific choices in presenting the following material.
First, we only need readily available the definition of pseudoalgebras and their morphisms for colax idempotent 2-monads, which have a satisfyingly simply description---in fact, as proven in \cite{marmolejo_doctrines_1997}, admitting this description for their own pseudoalgebras characterizes colax idempotent 2-monads among 2-monads.
Second, we only consider right pseudoalgebras because this is what fibrations (and left-fibrant spans in particular) are going to be.
Third, we focus on `strict' tricategories whose hom-bicategories are in fact 2-categories: this is in fact the case for $\DispSpan$ and related tricategories.

For a broader `formal theory of pseudomonads', the reader may consult \cite{marmolejo_distributive_1999} and \cite{gambino_formal_2021}.

\bigskip

Let $\Kc$ be a tricategory and $(\monad, \unitmnd, \multmnd)$ a colax idempotent 2-monad on $\Ab \in \Kc$.
The following are \cite[Definition~2.1 and 2.4]{kock_monads_1995}, suitably dualized:

\begin{defn}
\label{defn:psalg}
	A \textbf{right $\monad$-pseudoalgebra} consists of a 1-cell $a:\Ab \to \Xb$ such that $a\unitmnd : a \twoto a\monad$ has a right adjoint (in $\Kc(\Xb, \Ab)$) $\alpha: a\monad \twoto a$ with invertible unit:
	\begin{equation}
		\begin{tikzcd}[ampersand replacement=\&]
			\& a\monad \\
			a \& a
			\arrow[""{name=0, anchor=center, inner sep=0}, "a\unitmnd", Rightarrow, from=2-1, to=1-2]
			\arrow[""{name=0p, anchor=center, inner sep=0}, phantom, from=2-1, to=1-2, start anchor=center, end anchor=center]
			\arrow["\alpha", Rightarrow, dashed, from=1-2, to=2-2]
			\arrow[""{name=1, anchor=center, inner sep=0}, Rightarrow, no head, from=2-1, to=2-2]
			\arrow[""{name=1p, anchor=center, inner sep=0}, phantom, from=2-1, to=2-2, start anchor=center, end anchor=center]
			\arrow["\unit"'{pos=0.55}, shift right=3, shorten <=4pt, shorten >=1pt, Rightarrow, "\sim" vert, from=1p, to=0p]
		\end{tikzcd}
	\end{equation}
	Similarly, a \textbf{left $\monad$-pseudoalgebra} is a right pseudoalgebra for $\monad$ in $\Kc\op$, thus carried by a 2-cell $\alpha : \monad a \twoto a$ which is likewise right adjoint right inverse to $\unitmnd a$.
\end{defn}


\begin{defn}
\label{defn:psmors}
	A \textbf{pseudomorphism of right $\monad$-pseudoalgebras} from $a:\Ab \to \Xb$ to $b:\Ab \to \Xb$ is a 2-cell $\varphi : a \twoto b$ such that the following mate is invertible:
	\begin{equation}
		\begin{tikzcd}[ampersand replacement=\&]
			{a\monad} \& {a\monad} \& {b\monad} \\
			\& a \& b \& b
			\arrow["\varphi"', Rightarrow, from=2-2, to=2-3]
			\arrow["{\varphi\monad}", Rightarrow, from=1-2, to=1-3]
			\arrow["{a\unitmnd}"', Rightarrow, from=2-2, to=1-2]
			\arrow["{b\unitmnd}", Rightarrow, from=2-3, to=1-3]
			\arrow["\counit_{a\unitmnd \adj \alpha}"{pos=0.6}, shift left=4.5, shorten <=9pt, shorten >=2pt, Rightarrow, from=2-2, to=1-2]
			\arrow["\unit_{b\unitmnd \adj \beta}"'{pos=0.3}, shift right=4.5, shorten <=2pt, shorten >=9pt, Rightarrow, from=2-3, to=1-3]
			\arrow["\alpha"', curve={height=6pt}, Rightarrow, from=1-1, to=2-2]
			\arrow["\beta", curve={height=-6pt}, Rightarrow, from=1-3, to=2-4]
			\arrow[Rightarrow, no head, from=2-3, to=2-4]
			\arrow[Rightarrow, no head, from=1-1, to=1-2]
		\end{tikzcd}
	\end{equation}
	Here we are mating the naturality square\footnotemark~of $\unitmnd$ by the adjunctions defining the algebra structure on $a$ and $b$.
	\footnotetext{More precisely: the naturality square of whiskering by $\unitmnd$.}
\end{defn}

Left and right pseudoalgebras often organize in, respectively, Eilenberg--Moore and Kleisli objects (see \cite{gambino_formal_2021} for their definition), but not every tricategory admits their construction.
Instead, one can construct them in the co/presheaves categories $[\Kc, \TwoCat]$ and $[\Kc\op, \TwoCat]$ where $\TwoCat$ is the tricategory of 2-categories, 2-functors, pseudonatural transformations and their modifications (recall we assumed $\Kc$ to be `enriched in 2-categories').

Specifically, the colax idempotent 2-monad $\monad$ induces colax idempotent 2-monads in $\TwoCat$:
\begin{eqalign}
	\Kc(\monad, \Xb) &: \Kc(\Ab, \Xb) \longto \Kc(\Ab, \Xb),\\
	\Kc(\Yb, \monad) &: \Kc(\Yb, \Ab) \longto \Kc(\Yb, \Ab),
\end{eqalign}
naturally in $\Xb, \Yb \in \Kc$.
Moreover, a \emph{right} $\monad$-pseudoalgebra $a:\Ab \to \Xb$ will correspond to a \emph{left} $\Kc(\monad,\Xb)$-pseudoalgebra but still a \emph{right} $\Kc(\Yb,\monad)$-pseudoalgebra.

\medskip

The Eilenberg--Moore and Kleisli object for such pseudomonads are so defined:

\begin{defn}
\label{defn:Kleisli.2.cat}
	The \textbf{Kleisli 2-category of $\Kc(\Yb, \monad)$}, denoted as $\Kl(\Kc(\Yb, \monad),\Kc(\Yb,\Ab))$, is so defined:
	\begin{enumerate}
		\item its objects are those of $\Kc(\Yb,\Ab)$, hence 1-cells (in $\Kc$) $a:\Yb \to \Ab$,
		\item its 1-cells $\varphi:a \to b$ are 2-cells (in $\Kc$) $\varphi:a \twoto \monad b$,
		\item its 2-cells and their vertical identities and composition are those of $\Kc(\Yb,\Ab)$,
		\item horizontal identities are given by the components of the unit $\Kc(\Yb,\unitmnd):\id_{\Kc(\Yb,\Ab)} \twoto \Kc(\Yb, \monad)$ and horizontal composition is defined as
		      \begin{equation}
			      \underbrace{\begin{tikzcd}[ampersand replacement=\&]
					      a \& b \& c
					      \arrow[""{name=0, anchor=center, inner sep=0}, curve={height=-12pt}, from=1-1, to=1-2]
					      \arrow[""{name=1, anchor=center, inner sep=0}, curve={height=-12pt}, from=1-2, to=1-3]
					      \arrow[""{name=2, anchor=center, inner sep=0}, curve={height=12pt}, from=1-1, to=1-2]
					      \arrow[""{name=3, anchor=center, inner sep=0}, curve={height=12pt}, from=1-2, to=1-3]
					      \arrow["\varphi", shorten <=3pt, shorten >=3pt, Rightarrow, from=0, to=2]
					      \arrow["\psi", shorten <=3pt, shorten >=3pt, Rightarrow, from=1, to=3]
				      \end{tikzcd}}_{\in \Kl(\Kc(\Yb, \monad), \Kc(\Yb,\Ab))}
			      \ :=\ %
			      \underbrace{\begin{tikzcd}[ampersand replacement=\&]
					      a \& {\monad b} \& {\monad\monad c} \& {\monad c}
					      \arrow[""{name=0, anchor=center, inner sep=0}, curve={height=-12pt}, from=1-1, to=1-2]
					      \arrow[""{name=1, anchor=center, inner sep=0}, curve={height=-12pt}, from=1-2, to=1-3]
					      \arrow["\multmnd c", from=1-3, to=1-4]
					      \arrow[""{name=2, anchor=center, inner sep=0}, curve={height=12pt}, from=1-1, to=1-2]
					      \arrow[""{name=3, anchor=center, inner sep=0}, curve={height=12pt}, from=1-2, to=1-3]
					      \arrow["\varphi", shorten <=3pt, shorten >=3pt, Rightarrow, from=0, to=2]
					      \arrow["\monad\psi", shorten <=3pt, shorten >=3pt, Rightarrow, from=1, to=3]
				      \end{tikzcd}}_{\in \Kc(\Yb,\Ab)}
		      \end{equation}
	\end{enumerate}
\end{defn}

\begin{defn}
\label{defn:Alg.2.cat}
	The \textbf{Eilenberg--Moore 2-category of $\Kc(\monad, \Xb)$}, denoted as $\Alg(\Kc(\monad, \Xb), \Kc(\Ab,\Xb))$ is so defined:
	\begin{enumerate}
		\item its objects are right $\monad$-pseudoalgebras $\alpha : a\monad \twoto a$ for $a : \Ab \to \Xb$,
		\item its 1-cells are pseudomorphisms of right $\monad$-pseudoalgebras, and their (strict) identity and horizontal composition are those of $\Kc(\Ab, \Xb)$,
		\item its 2-cells and their vertical identities and composition are those of $\Kc(\Ab, \Xb)$.
	\end{enumerate}
\end{defn}

For brevity, and unless we want to stress a difference wiht other notions of algebras, we call `pseudoalgebras of a pseudomonad' simply `algebras for a monad'.
\subsection{The arrow monad}
The colax idempotent 2-monads we are interested in are the ones defined in $\DispSpan$ by the arrow objects and their domain and codomain projections:
\begin{equation}
	\begin{tikzcd}[ampersand replacement=\&,sep=scriptsize]
		\& {\Ba^\downarrow} \\
		\Ba \&\& \Ba
		\arrow["{\dom}"', from=1-2, to=2-1]
		\arrow["{\cod}", from=1-2, to=2-3]
	\end{tikzcd}
\end{equation}
Its unit and multiplication are given by composing with the map of spans respectively below left and right:
\begin{equation}
	\begin{tikzcd}[ampersand replacement=\&,sep=scriptsize]
		\Ba \& \Ba \& \Ba \& \Ba \& {\Ba^\downarrow} \& \Ba \& {\Ba^\downarrow} \& \Ba \\
		\&\&\&\&\& {\Ba^\downarrow \spancomp \Ba^\downarrow} \\
		\Ba \& {\Ba^\downarrow} \& \Ba \& \Ba \&\& {\Ba^\downarrow} \&\& \Ba
		\arrow["{{\dom}}"', from=1-5, to=1-4]
		\arrow["{{\cod}}", from=1-5, to=1-6]
		\arrow["{{\dom}}", from=3-6, to=3-4]
		\arrow["{{\dom}}"', from=1-7, to=1-6]
		\arrow["{{\cod}}"', from=3-6, to=3-8]
		\arrow["{{\cod}}", from=1-7, to=1-8]
		\arrow[Rightarrow, no head, from=3-4, to=1-4]
		\arrow[Rightarrow, no head, from=3-8, to=1-8]
		\arrow["\multarrow_{\Ba}", from=2-6, to=3-6]
		\arrow[from=2-6, to=1-5]
		\arrow[from=2-6, to=1-7]
		\arrow["\lrcorner"{anchor=center, pos=0.0025, rotate=135}, draw=none, from=2-6, to=1-6]
		\arrow[Rightarrow, no head, from=1-1, to=3-1]
		\arrow[Rightarrow, no head, from=1-3, to=3-3]
		\arrow[Rightarrow, no head, from=1-2, to=1-1]
		\arrow[Rightarrow, no head, from=1-2, to=1-3]
		\arrow["{\dom}", from=3-2, to=3-1]
		\arrow["{\cod}"', from=3-2, to=3-3]
		\arrow["\unitarrow_{\Ba}", from=1-2, to=3-2]
	\end{tikzcd}
\end{equation}
The maps $\unitarrow_{\Ba}$ and $\multarrow_{\Ba}$ are obtained by taking powers of $\Ba$ with the objects and maps of $\Delta_+$, the \emph{algebraist's simplicial category}, defined as the full subcategory of $\Cat$ spanned by the finite ordinals and equipped with ordinal sum $\oplus$ as the product.
The walking object (terminal category), the walking arrow $\downarrow$ and the walking composable pairs of arrows $\consdownarrows$ all live in $\Delta^+$ as $[0]$, $[1]$ and $[2]$ respectively.
Then $\unitarrow_{\Ba}$ is power by the terminal map $\sigma^1 : [1] \to [0]$ and $\multarrow_{\Ba}$ by inclusion of the walking composite $\delta^2_1 : [1] \to [2]$.
These maps define a comonoid structure on $[1]$ such that $\delta^2_1 \adj \sigma^1 \oplus [1]$.
Then $\Ba^{(-)}$ defines a monoidal functor from $\Delta^+$ to $\DispSpan$ that shows $\Ba^\downarrow$ is a 2-monad satisfying the condition of colax idempotency.
A more detailed construction can be found in \cite{street1980fibrations}.

We abuse notation and refer to this 2-monad as $\Ba^\downarrow$ as call it the \textbf{arrow monad on $\Ba$}.

\paragraph{Algebras of the arrow monad.}
The (right) algebras of the arrow monad on $\Ba$ are spans $\Ba \nfrom{p} \Ea \nto{f} \Ca$ where $p$ is a fibration and $f$ is such that $f\cod\counitpull = \id_{f\cod}$, i.e.~$f$ kills cartesian maps.
Indeed, below is the diagram showing the adjunction defining the algebra structure:
\begin{equation}
\label{eqn:alg.arrow}
	\begin{tikzcd}[ampersand replacement=\&]
		\Ba \&\& \Ea \&\& \Ba \\
		\&\& {\Ba \comma p} \\[-4ex]
		\Ba \& \Ba \& \Ba \& \Ea \& \Ba
		\arrow["{\cod}", from=3-2, to=3-3]
		\arrow["p"', from=3-4, to=3-3]
		\arrow["f", from=3-4, to=3-5]
		\arrow["\dom"', from=2-3, to=3-2]
		\arrow["\cod", from=2-3, to=3-4]
		\arrow["\lrcorner"{anchor=center, pos=0.125, rotate=-45}, draw=none, from=2-3, to=3-3]
		\arrow["{\dom}"', from=3-2, to=3-1]
		\arrow["p"', from=1-3, to=1-1]
		\arrow["f", from=1-3, to=1-5]
		\arrow[""{name=0, anchor=center, inner sep=0}, "\pull"', curve={height=10pt}, dashed, from=2-3, to=1-3]
		\arrow[""{name=1, anchor=center, inner sep=0}, "{\internal{\id_p}}"', curve={height=10pt}, from=1-3, to=2-3]
		\arrow[Rightarrow, no head, from=1-1, to=3-1]
		\arrow[Rightarrow, no head, from=1-5, to=3-5]
		\arrow["\dashv"{anchor=center}, draw=none, from=1, to=0]
	\end{tikzcd}
\end{equation}
If we focus on the left hand side of the diagram, we see that this corresponds to the triangle appearing in \cref{defn:fib}(1), hence a fibration structure on $p$.
On the other hand, the commutativity of right hand side of the diagram implies that the counit $\counitpull$---which corresponds to the cartesian lift operation on $p$, as remarked in \cref{rmk:fib.struct}---is such that:
\begin{equation}
\label{eqn:vert.of.cart.lift}
	\begin{tikzcd}[ampersand replacement=\&, sep=scriptsize]
		{\Ba \comma p} \\
		\& \Ea \&\& \Ba \\
		{\Ba \comma p}
		\arrow["\pull"', dashed, from=3-1, to=2-2]
		\arrow["{\internal{\id_p}}"', from=2-2, to=1-1]
		\arrow["{f \cod}", curve={height=-12pt}, from=1-1, to=2-4]
		\arrow["{f \cod}"', curve={height=12pt}, from=3-1, to=2-4]
		\arrow[""{name=0, anchor=center, inner sep=0}, Rightarrow, no head, from=1-1, to=3-1]
		\arrow["\counitpull"{description}, shorten <=5pt, Rightarrow, from=0, to=2-2]
	\end{tikzcd}
\end{equation}
meaning $f\cod\counitpull = \id_{f\cod}$, as claimed.

\paragraph{Kleisli morphisms for the arrow monad.}
Kleisli morphisms with respect to the 2-monad $- \spancomp \Ca^\downarrow := \DispSpan(\Ba, \Ca^\downarrow)$ induced by acting by $\Ca^\downarrow$ on the right (following \cref{defn:Kleisli.2.cat}) turn out to be right-lax maps of spans:

\begin{lem}
\label{lem:lax.strict.span.adjunction}
	The inclusion $\DispSpan^{=}(\Ba, \Ca) \to \DispSpan^{\Rightarrow}(\Ba, \Ca)$ has a right adjoint given by post-composing with
	\begin{equation}
		\begin{tikzcd}[sep=scriptsize]
			& {\Ca^\downarrow} \\
			\Ca && \Ca
			\arrow["{\dom}"', from=1-2, to=2-1]
			\arrow["{\cod}", from=1-2, to=2-3]
		\end{tikzcd}
	\end{equation}
	For this reason, these right adjoints are natural with respect to composition of spans on the left.
\end{lem}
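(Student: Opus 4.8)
The plan is to present the adjunction through an isomorphism of hom-categories and then read off the unit and counit. Write $\iota$ for the inclusion and $R := -\spancomp\Ca^{\downarrow}$ for the proposed right adjoint. Since $\iota$ is the identity on objects (both $\DispSpan^{=}(\Ba,\Ca)$ and $\DispSpan^{\Rightarrow}(\Ba,\Ca)$ have the same left-displayed spans as objects), the first thing I would check is that $R$ is well defined on objects: for a left-displayed span $\Ba\xleftarrow{p'}\Ea'\xrightarrow{f'}\Ca$, the composite $\Ea'\spancomp\Ca^{\downarrow}$ has left leg $p'\circ\pi_{\Ea'}$, where $\pi_{\Ea'}=f'^{*}\dom$ is the pullback of $\dom$ along $f'$. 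As $\dom\in\Display$ by the paradise axioms (\cref{defn:paradise}), its pullback is a display map, and display maps are closed under composition; hence $R\Ea'$ is again left-displayed.

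The core step is to establish a strictly natural isomorphism of hom-categories
\[
\DispSpan^{\Rightarrow}(\Ba,\Ca)(\Ea,\Ea') \;\iso\; \DispSpan^{=}(\Ba,\Ca)(\Ea,\,\Ea'\spancomp\Ca^{\downarrow}).
\]
To build it I would unwind a strict map $\Ea\to\Ea'\spancomp\Ca^{\downarrow}$ using the two universal properties in play. By the universal property of the strict pullback defining $\spancomp$, such a map is a pair $(k:\Ea\to\Ea',\ \ell:\Ea\to\Ca^{\downarrow})$ with $f'k=\dom\,\ell$, subject to the leg conditions $p'k=p$ and $\cod\,\ell=f$. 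By the universal property of the arrow object, $\ell$ is precisely a $2$-cell $\varphi:f'k\twoto\cod\,\ell=f$. Thus $(k,\varphi)$ is exactly the data of a right-colax map of spans in the sense of \cref{defn:locally.span}. I would then match $2$-cells: a $2$-cell of strict maps into $\Ea'\spancomp\Ca^{\downarrow}$ is a pair $(\alpha,\mu)$ with $\alpha$ vertical over $p'$ and $\mu:=\pi_{\Ca^{\downarrow}}$-component vertical over $\cod$; running $\mu$ through $\Cosmos(\Ea,\Ca^{\downarrow})\iso\Cosmos(\Ea,\Ca)^{\downarrow}$ presents it as a morphism of arrows whose top edge is forced by the pullback to be $f'\alpha$ and whose bottom edge is $\id_f$, so the condition $\cod\,\mu=\id$ becomes exactly the compatibility between $\varphi$, $\psi$ and $f'\alpha$ demanded of a $2$-cell in \cref{defn:locally.span}. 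Since all limits involved are strict $2$-limits, this assignment is an isomorphism of categories and is strictly natural in both variables, so it presents a strict $2$-adjunction $\iota\dashv R$.

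Equivalently—and I would record this to make the later arrow-monad structure transparent—the unit $\eta_{\Ea}:\Ea\to\Ea\spancomp\Ca^{\downarrow}$ is insertion of identities, the strict map classified by $\internal{\id_f}:\Ea\to\Ca^{\downarrow}$ (on points $e\mapsto(e,\id_{f(e)})$), while the counit $\epsilon_{\Ea'}:\Ea'\spancomp\Ca^{\downarrow}\to\Ea'$ is the right-colax map whose underlying arrow is the projection $\pi_{\Ea'}$ and whose colaxator is the generic arrow $\generic_{\Ca}$ whiskered along the projection to $\Ca^{\downarrow}$. The two triangle identities then collapse to the comonoid (simplicial) identities relating $\internal{\id}$ and $\generic_{\Ca}$ that already make $\Ca^{\downarrow}$ into the arrow monad: one triangle is ``the generic arrow of an identity is an identity,'' and the other is the unit law coming from $\delta^{2}_{1}\adj\sigma^{1}\oplus[1]$.

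Finally, naturality with respect to left composition is formal: $R$ post-composes with $\Ca^{\downarrow}$ on the right, whereas composing with a span $\Fa$ acts on the left, so the associator of $\spancomp$ (which is strict here by \cref{thm:dspan.is.tricat}) yields $\Fa\spancomp(\Ea'\spancomp\Ca^{\downarrow})\iso(\Fa\spancomp\Ea')\spancomp\Ca^{\downarrow}$, showing that $R$ commutes with left composition and that the family of adjunctions is natural in $\Ba$. The main obstacle I anticipate is purely bookkeeping around variance: pinning down the direction of the colaxator $\varphi:f'k\twoto f$ together with the two verticality conditions, so that the arrow-object correspondence reproduces the $2$-cell compatibility of \cref{defn:locally.span} on the nose rather than up to a flip. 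Once those directions are fixed, every verification is a direct instance of a universal property.
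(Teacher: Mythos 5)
Your proof is correct and takes essentially the same approach as the paper: the paper identifies $\Ea'\spancomp\Ca^{\downarrow}$ with the comma object $f'\comma\Ca$ and invokes its universal property, which is precisely your combination of the pullback's universal property (yielding the pair $(k,\ell)$) with the arrow object's universal property (turning $\ell$ into the colaxator $\varphi$, and $\cod$-verticality of $\mu$ into the $2$-cell compatibility). Your explicit unit/counit, triangle identities, and associator argument for left-naturality are consistent extra detail beyond the paper's terse appeal to the slice's universal property.
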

\begin{proof}
	First, we note that for a span as below left, the composite with $\Ca^{\downarrow}$ is the comma object as below right:
	\begin{equation}
		\begin{tikzcd}[sep=scriptsize]
			& \Ea &&&& {f \comma \Ca} \\
			\Ba && \Ca && \Ba && \Ca
			\arrow["p"', from=1-2, to=2-1]
			\arrow["f", from=1-2, to=2-3]
			\arrow["{\cod}", from=1-6, to=2-7]
			\arrow["{p\dom}"', from=1-6, to=2-5]
		\end{tikzcd}
	\end{equation}
	We then need to give an isomorphism between the categories of diagrams on the left and diagrams on the right:
	\begin{equation}
	\label{diag:slice.adjunction}
		\begin{tikzcd}
			\Ba & \Ea & \Ca && \Ba & \Ea & \Ca \\
			\Ba & \Ea' & \Ca && \Ba & {f \comma \Ca} & \Ca
			\arrow["p"', from=2-2, to=2-1]
			\arrow["f", from=2-2, to=2-3]
			\arrow["{\cod}", from=2-6, to=2-7]
			\arrow["{p\dom}"', from=2-6, to=2-5]
			\arrow[from=1-2, to=1-1]
			\arrow[from=1-2, to=1-3]
			\arrow[from=1-6, to=1-5]
			\arrow[from=1-6, to=1-7]
			\arrow[""{name=0, anchor=center, inner sep=0}, "{\internal{\varphi}}"', from=1-6, to=2-6]
			\arrow[""{name=1, anchor=center, inner sep=0}, "{a}"', from=1-2, to=2-2]
			\arrow[from=1-1, to=2-1,equals]
			\arrow[""{name=2, anchor=center, inner sep=0}, from=1-3, to=2-3,equals]
			\arrow[from=1-5, to=2-5,equals]
			\arrow[""{name=3, anchor=center, inner sep=0}, from=1-7, to=2-7,equals]
			\arrow["\varphi", shorten <=6pt, shorten >=6pt, Rightarrow, from=1, to=2]
		\end{tikzcd}
	\end{equation}

	This follows directly from the universal property of the slice object.
\end{proof}

We will show that this adjunction exhibits $\DispSpan^{\Rightarrow}(\Ba, \Ca)$ as the Kleisli $2$-category for the $2$-monad $- \spancomp \Ca^\downarrow$ on $\DispSpan^{=}(\Ba, \Ca)$.

\begin{lem}
\label{lem:lax.maps.kleisli}
	There is an isomorphism
	\begin{equation}
		\DispSpan^{\Rightarrow}(\Ba, \Ca) \iso \Kl(- \spancomp \Ca^\downarrow,\, \DispSpan^{=}(\Ba, \Ca)).
	\end{equation}
\end{lem}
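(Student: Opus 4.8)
The plan is to realize the stated isomorphism as the canonical Kleisli comparison 2-functor coming from the 2-adjunction of \cref{lem:lax.strict.span.adjunction}, and to verify that it is an isomorphism levelwise. Write $i : \DispSpan^{=}(\Ba, \Ca) \hookrightarrow \DispSpan^{\Rightarrow}(\Ba, \Ca)$ for the inclusion and $R = - \spancomp \Ca^\downarrow$ for its right adjoint, so that $R\Ea = \Ea \spancomp \Ca^\downarrow = f \comma \Ca$.

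First I would match objects. Both 2-categories have the left-displayed spans $\Ba \from \Ea \to \Ca$ as objects: for the Kleisli side this is because, following \cref{defn:Kleisli.2.cat}, its objects are those of $\DispSpan^{=}(\Ba, \Ca)$, and $i$ is the identity on objects. Next I would identify the hom-categories. By \cref{defn:Kleisli.2.cat}, a $1$-cell $\Ea \to \Fa$ in $\Kl(- \spancomp \Ca^\downarrow,\, \DispSpan^{=}(\Ba, \Ca))$ is a $1$-cell $\Ea \to \Fa \spancomp \Ca^\downarrow$ in $\DispSpan^{=}(\Ba, \Ca)$, and its $2$-cells are the $2$-cells between such; that is, the Kleisli hom-category from $\Ea$ to $\Fa$ is exactly $\DispSpan^{=}(\Ba, \Ca)(\Ea, \Fa \spancomp \Ca^\downarrow)$. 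The 2-adjunction $i \dashv (- \spancomp \Ca^\downarrow)$ supplies, since $i\Ea = \Ea$, an isomorphism of categories
\[
	\DispSpan^{\Rightarrow}(\Ba, \Ca)(\Ea, \Fa) \;\iso\; \DispSpan^{=}(\Ba, \Ca)(\Ea, \Fa \spancomp \Ca^\downarrow),
\]
natural in both variables, which is precisely an isomorphism of the two hom-categories in question. Concretely it transposes a right-colax map $(k, \varphi) : \Ea \to \Fa$ to the strict map $\Ea \to f \comma \Ca$ produced from $k$ and $\varphi$ by the universal property of the comma, and does the analogous thing on $2$-cells.

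The only remaining, and the main, point is compatibility with horizontal composition and identities, i.e. that this hom-wise isomorphism assembles into an isomorphism of $2$-categories. For this I would first check that the arrow monad $- \spancomp \Ca^\downarrow$, with its unit $\unitarrow$ and multiplication $\multarrow$, is exactly the $2$-monad \emph{induced} by the adjunction $i \dashv R$. The induced unit at $\Ea$ is the transpose of the identity right-colax map, which inserts identity arrows and so is the power by $\sigma^1 : [1] \to [0]$, namely $\unitarrow_\Ea$; the induced multiplication $R\epsilon i : \Ea \spancomp \Ca^\downarrow \spancomp \Ca^\downarrow \to \Ea \spancomp \Ca^\downarrow$ (with $\epsilon$ the counit) composes the two commas, which is the power by $\delta^2_1 : [1] \to [2]$, namely $\Ea \spancomp \multarrow$. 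Granting this identification, the transpose above is the Kleisli comparison $2$-functor $K : \Kl(- \spancomp \Ca^\downarrow,\, \DispSpan^{=}(\Ba, \Ca)) \to \DispSpan^{\Rightarrow}(\Ba, \Ca)$, and the naturality of the adjunction under composing spans on the left (recorded in \cref{lem:lax.strict.span.adjunction}) is what forces $K$ to carry the Kleisli composite
\[
	\Ea \xto{\varphi} \Fa \spancomp \Ca^\downarrow \xto{\psi \spancomp \Ca^\downarrow} \Ga \spancomp \Ca^\downarrow \spancomp \Ca^\downarrow \xto{\Ga \spancomp \multarrow} \Ga \spancomp \Ca^\downarrow
\]
to the vertical pasting of the colaxators of $(k,\varphi)$ and $(k',\psi)$. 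Being bijective on objects and an isomorphism on each hom-category, $K$ is then an isomorphism of $2$-categories.

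I expect the bookkeeping in the last paragraph to be the real obstacle: pinning down that the abstractly induced multiplication agrees on the nose with $\multarrow$, and then tracing the chain $(\Ga \spancomp \multarrow)\circ(\psi \spancomp \Ca^\downarrow)\circ\varphi$ through the comma transpose to recognize it as the composite of right-colax maps. Everything else is the formal packaging of an identity-on-objects, hom-wise-iso comparison functor as an isomorphism, which follows once the monad structures are matched.
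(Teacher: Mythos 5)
Your proposal is correct, and it rests on the same foundation as the paper's own proof: the 2-adjunction of \cref{lem:lax.strict.span.adjunction}, which already supplies the bijection on objects and the isomorphisms of hom-categories. Where you genuinely diverge is in how compatibility with composition is established. The paper checks it directly: under the hom-isomorphisms, the transpose of a composite of right-colax maps equals the Kleisli composite of the transposes, i.e.\ $\internal{(\varphi a)\psi} = \multarrow_{\Ca}(\internal{\varphi} \comma \Ca)\,\internal{\psi}$, and this single equation is verified by the uniqueness clause of the universal property of comma objects. You instead prove that the 2-monad induced by the adjunction (unit the transpose of the identity, multiplication $R\epsilon i$) coincides on the nose with the arrow monad $(\Ea \spancomp \unitarrow_{\Ca},\ \Ea \spancomp \multarrow_{\Ca})$, and then let the standard theory of the Kleisli comparison 2-functor deliver functoriality for free; identity-on-objects plus hom-wise isomorphism then gives the isomorphism of 2-categories. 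The two verifications are of the same nature --- both are comma-object computations --- so neither route is shorter; yours trades the paper's direct composition check for the monad identification plus an appeal to general adjunction--monad theory, which makes the logical structure of the argument somewhat more transparent, at the price of a verification the paper never needs to state.

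One correction to your final paragraph: the functoriality of the comparison $K$ is not what the naturality ``with respect to composition of spans on the left'' recorded in \cref{lem:lax.strict.span.adjunction} provides. That naturality concerns pre-composition with spans $\Aa \to \Ba$, and the paper uses it later, to lift the monad $\Ba^{\downarrow} \spancomp -$ to the Kleisli 2-category in \cref{thm:main.iso} (cf.\ \cref{rmk:hidden.lift}); it plays no role inside a fixed hom-2-category. What forces $K$ to respect Kleisli composition is only the ordinary naturality of the counit of $i \dashv (- \spancomp \Ca^{\downarrow})$ together with your identification of the induced multiplication with $\multarrow_{\Ca}$. This is a citation slip rather than a gap, since the ingredients you actually need are all contained in the adjunction itself.
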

\begin{proof}
	Keeping in mind the adjunction of \cref{lem:lax.strict.span.adjunction}, it only remains to show that the composition laws correspond; that is, that the composite on the left corresponds to the composite on the right under the adjunction:
	\begin{equation}
		\begin{tikzcd}[ampersand replacement=\&]
			\Ba \& \Ea \& \Ca \&\& \Ba \& \Ea \& \Ca \\
			\Ba \& \Fa \& \Ca \&\& \Ba \& {f \comma \Ca} \& \Ca \\
			\Ba \& \Ga \& \Ca \&\& \Ba \& {g \comma \Ca \comma \Ca} \& \Ca \\
			\&\&\&\& \Ba \& {g \comma \Ca} \& \Ca
			\arrow["p"', from=3-2, to=3-1]
			\arrow["g", from=3-2, to=3-3]
			\arrow[from=3-6, to=3-7]
			\arrow[from=3-6, to=3-5]
			\arrow[from=2-2, to=2-1]
			\arrow["f", from=2-2, to=2-3]
			\arrow[from=2-6, to=2-5]
			\arrow[from=2-6, to=2-7]
			\arrow["{{\internal{\varphi} \comma \Ca}}"', from=2-6, to=3-6]
			\arrow[""{name=0, anchor=center, inner sep=0}, "b"', from=2-2, to=3-2]
			\arrow[Rightarrow, no head, from=2-1, to=3-1]
			\arrow[""{name=1, anchor=center, inner sep=0}, Rightarrow, no head, from=2-3, to=3-3]
			\arrow[Rightarrow, no head, from=2-5, to=3-5]
			\arrow[Rightarrow, no head, from=2-7, to=3-7]
			\arrow[from=1-2, to=1-1]
			\arrow[from=1-2, to=1-3]
			\arrow[from=1-6, to=1-5]
			\arrow[from=1-6, to=1-7]
			\arrow[""{name=2, anchor=center, inner sep=0}, "a"', from=1-2, to=2-2]
			\arrow["{{\internal{\psi}}}"', from=1-6, to=2-6]
			\arrow[from=4-6, to=4-5]
			\arrow[from=4-6, to=4-7]
			\arrow[Rightarrow, no head, from=1-1, to=2-1]
			\arrow[""{name=3, anchor=center, inner sep=0}, Rightarrow, no head, from=1-3, to=2-3]
			\arrow[Rightarrow, no head, from=1-5, to=2-5]
			\arrow[Rightarrow, no head, from=1-7, to=2-7]
			\arrow[Rightarrow, no head, from=3-5, to=4-5]
			\arrow[Rightarrow, no head, from=3-7, to=4-7]
			\arrow["{{\multarrow_{\Ca}}}", from=3-6, to=4-6]
			\arrow["\varphi", shorten <=6pt, shorten >=6pt, Rightarrow, from=0, to=1]
			\arrow["\psi", shorten <=6pt, shorten >=6pt, Rightarrow, from=2, to=3]
		\end{tikzcd}
	\end{equation}
	That is, we observe that $\internal{(\varphi a)\psi} = \multarrow_{\Ca}(\internal{\varphi} \comma \Ca) \internal{\psi}$.
	This follows by appeal to the uniqueness part of the universal property of commas.
\end{proof}

We would like to say contextads on categories are algebras of the arrow monad, but their right leg hardly kills cartesian maps.
Working with right-lax maps allows us to accomodate this fact.
In fact we have the following:

\begin{lem}
\label{lem:fspans.are.algebras}
	There is an isomorphism of 2-categories
	\begin{equation}
		\FibSpan^{\Rightarrow}(\Ba, \Ca) \iso \Alg(\Ba^{\downarrow} \spancomp -,\, \DispSpan^{\Rightarrow}(\Ba, \Ca)).
	\end{equation}
	which is the identity on the underlying spans and maps thereof.
\end{lem}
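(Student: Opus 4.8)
The plan is to read both sides off the formal theory of colax idempotent $2$-monads developed above, applied to the left action $L := \Ba^\downarrow \spancomp -$. First I would record that $L$ is a colax idempotent $2$-monad on the $2$-category $\DispSpan^{\Rightarrow}(\Ba,\Ca)$: it is induced by the colax idempotent arrow monad $\Ba^\downarrow$ on $\Ba$ (\cref{defn:colax.idempotent.monad}) by composing spans on the left, and since whiskering on the left leaves the right-hand colaxator untouched, $L$ is a genuine $2$-endofunctor of $\DispSpan^{\Rightarrow}(\Ba,\Ca)$ (so the failure of composition of right-colax maps noted earlier is irrelevant here). Concretely, for a left-displayed span $a = (\Ba \xleftarrow{p} \Ea \xrightarrow{f} \Ca)$ the composite $L(a) = \Ba^\downarrow \spancomp a$ has apex the pullback of $\cod : \Ba^\downarrow \to \Ba$ along $p$, which is exactly the comma object $\Ba \comma p$; its left leg is $\dom$ and its right leg is $f$ precomposed with the projection $\Ba \comma p \to \Ea$. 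Under this identification the unit $\unitarrow_\Ba \spancomp a : a \to L(a)$ is the comparison map $\internal{\id_p} : \Ea \to \Ba \comma p$ of \cref{defn:fib}.

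\emph{Objects.} By the colax idempotent characterization of algebras (\cref{defn:psalg}), an $L$-algebra structure on $a$ amounts to a right adjoint $\alpha : L(a) \to a$ to the unit $\internal{\id_p}$ in $\DispSpan^{\Rightarrow}(\Ba,\Ca)$ whose unit is invertible. Unwinding what such a $1$-cell is: a right-colax map $L(a) \to a$ commutes strictly over $\Ba$, so $\alpha$ is a map $\pull : \Ba \comma p \to \Ea$ with $p\,\pull = \dom$, i.e. a fibred map, and invertibility of the unit is exactly the condition of \cref{defn:fib}(1). Hence an $L$-algebra structure on $a$ is precisely a cloven cartesian fibration structure on $p$; since both are property-like (unique up to unique isomorphism) the forgetful assignment is a bijection on objects, identifying the objects of $\Alg(\Ba^\downarrow \spancomp -,\, \DispSpan^{\Rightarrow}(\Ba,\Ca))$ with the left-fibrant spans. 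The essential point is that $\alpha = \pull$ is a \emph{right-colax}, not strict, map: its colaxator on the right is $f$ applied to the counit $\cod\,\counitpull$ of \cref{rmk:fib.struct}, which records how $f$ acts on cartesian lifts and need not be invertible. This is exactly why passing to right-colax maps drops the spurious ``$f$ kills cartesian maps'' constraint that appeared for the strict algebras of the arrow monad.

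\emph{$1$- and $2$-cells.} By \cref{defn:psmors} a right-colax map $(k, \varphi) : a \to b$ between $L$-algebras is a pseudomorphism exactly when the mate of the naturality square of $\unitarrow_\Ba$ under the two fibration adjunctions is invertible. Computing this mate through the identifications above, it is precisely the cartesianator $\cartesianator^k$ of \cref{defn:cartesian.functor}; so pseudomorphisms coincide with the cartesian right-colax maps, which are the $1$-cells of $\FibSpan^{\Rightarrow}(\Ba,\Ca)$ and the ones whose composition was described in \cref{lem:fspan.composition}. Finally, the $2$-cells of $\Alg(\Ba^\downarrow \spancomp -,\, \DispSpan^{\Rightarrow}(\Ba,\Ca))$ are by \cref{defn:Alg.2.cat} simply the $2$-cells of $\DispSpan^{\Rightarrow}(\Ba,\Ca)$, with identities and both compositions inherited, and the same holds of $\FibSpan^{\Rightarrow}(\Ba,\Ca)$. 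Thus the assignment that is the identity on underlying spans, cartesian maps, and $2$-cells is a strict $2$-functor with a strict inverse, yielding the claimed isomorphism.

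The main obstacle is the computation in the last paragraph: one must check that the mate prescribed by \cref{defn:psmors} literally reproduces the cartesianator of \cref{defn:cartesian.functor}, and correspondingly that the algebra adjunction reproduces \cref{defn:fib}(1) including the verticality of its unit and counit. Both are re-readings of the mate calculus already set up for fibrations, but care is needed to keep the left-strict/right-colax bookkeeping straight, since it is precisely the right-hand colaxator that carries the otherwise-obstructing action of $f$ on cartesian morphisms.
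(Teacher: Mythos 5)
Your proposal is correct and follows essentially the same route as the paper's proof: both identify algebra structures for $\Ba^{\downarrow} \spancomp -$ with cloven fibration structures on $p$ (with the colaxator of the structure map forced to be $f\cod\counitpull$), identify the pseudomorphism mate of \cref{defn:psmors} with the cartesianator of \cref{defn:cartesian.functor} so that algebra 1-cells are exactly the cartesian right-colax maps, and observe that the 2-cells agree on the nose. The verifications you flag as ``the main obstacle'' (the verticality and zig-zag checks for the adjunction in $\DispSpan^{\Rightarrow}(\Ba,\Ca)$, the compatibility of the cartesianator with the colaxators, and the round-trip equation identifying the algebra colaxator with $f\cod\counitpull$) are precisely the explicit diagram checks the paper carries out.
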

\begin{proof}
	Consider a left fibrant span $\Ba \nepifrom{p} \Ea \nto{f} \Ca$.
	The idea of the proof is to accomodate the obstructions to $f$-verticality of the $p$-cartesian maps by adding a filler to \eqref{eqn:vert.of.cart.lift}, and we achieve this by working in $\DispSpan^{\Rightarrow}(\Ba, \Ca)$.

	First, let's see how to get a 2-functor $\Alg(\Ba^{\downarrow} \spancomp -,\, \DispSpan^{\Rightarrow}(\Ba, \Ca)) \to \FibSpan^{\Rightarrow}(\Ba, \Ca)$.
	This direction is substantially trivial.
	As we have seen in \eqref{eqn:alg.arrow} for $\DispSpan$, for a span $\Ba \overset{p}{\leftarrow} \Ea \xto{f} \Ca$ to be an algebra of $\Ba^{\downarrow} \spancomp -$ in $\DispSpan^{\Rightarrow}(\Ba, \Ca)$ is for the unit $\internal{\id_p} : \Ea \to \Ba \comma p$ to have a right adjoint with invertible unit.
	In particular, this implies that $\internal{\id_p}$ has a right adjoint with invertible unit over $\Ba$, thus providing a fibration structure on $p$ (following \cref{defn:fib}).
	Similarly, the characterization of morphisms for algebras of colax idempotent 2-monads shows that any such morphism is, in particular, a cartesian functor between the left legs (see \cref{defn:psmors} and \cref{defn:cartesian.functor}).
	Finally, 2-cells between algebras are simply 2-cells between cartesian functors, so we get the desired functor.

	Now let's construct an inverse.
	Suppose $\Ba \nepifrom{p} \Ea \nto{f} \Ca$ is left-fibrant, where $\pull : \Ba \comma p \to \Ea$ is the fibration structure on $p$ with counit $\counitpull$ and unit $\unitpull$.
	We want to prove the same span and the same structure is a valid algebra structure for $\Ba^\downarrow \spancomp -$.
	This means constructing an adjoint to $\internal{\id_p}$ in $\DispSpan^{\Rightarrow}(\Ba, \Ca)$, which boils down to constructing the dashed 2-cell in the following diagram:
	\begin{equation}
		\begin{tikzcd}[ampersand replacement=\&]
			\Ba \& {\Ba \comma p} \& \Ca \\
			\Ba \& \Ea \& \Ca
			\arrow["{\dom}"', two heads, from=1-2, to=1-1]
			\arrow["{f\cod}", from=1-2, to=1-3]
			\arrow["p", two heads, from=2-2, to=2-1]
			\arrow["f"', from=2-2, to=2-3]
			\arrow[""{name=0, anchor=center, inner sep=0}, "{\pull}"', from=1-2, to=2-2]
			\arrow[""{name=0p, anchor=center, inner sep=0}, phantom, from=1-2, to=2-2, start anchor=center, end anchor=center]
			\arrow[Rightarrow, no head, from=1-1, to=2-1]
			\arrow[""{name=1, anchor=center, inner sep=0}, Rightarrow, no head, from=1-3, to=2-3]
			\arrow[""{name=1p, anchor=center, inner sep=0}, phantom, from=1-3, to=2-3, start anchor=center, end anchor=center]
			\arrow[shorten <=7pt, shorten >=7pt, Rightarrow, dashed, from=0p, to=1p]
		\end{tikzcd}
	\end{equation}
	Such a 2-cell can be defined to be $f\cod \counitpull : f\pull \Rightarrow f \cod$, which has this type since $\cod(\internal{\id_p}) = \id_{\Ea}$ and thus $f\cod(\internal{\id_p})\pull \equiv f\pull$.

	We then need to show that $\counitpull$ and $\unitpull$ give unit and counit $2$-cells in $\DispSpan^{\Rightarrow}(\Ba, \Ca)$ that exhibit $(\pull, f\cod\counitpull)$ as a right adjoint to $(\internal{\id_p}, {=})$.
	Their well-definedness corresponds to having the two diagrams below commute:
	\begin{eqalign}
	\label{eqn:unit.counit.of.arrow.algebra}
		\begin{tikzcd}[ampersand replacement=\&]
			{\Ba \comma p} \& \Ca \\
			\Ea \& \Ca \\
			{\Ba \comma p} \& \Ca
			\arrow[Rightarrow, no head, from=2-2, to=3-2]
			\arrow[Rightarrow, no head, from=1-2, to=2-2]
			\arrow["{{f\cod}}", from=1-1, to=1-2]
			\arrow["{{f\cod}}", from=3-1, to=3-2]
			\arrow["{\internal{\id_p}}"', from=2-1, to=3-1]
			\arrow["{{\pull}}"', from=1-1, to=2-1]
			\arrow["{{f\cod\counitpull}}", shorten <=5pt, shorten >=5pt, Rightarrow, from=2-1, to=1-2]
			\arrow["f", from=2-1, to=2-2]
		\end{tikzcd}
		= &
		\begin{tikzcd}[ampersand replacement=\&]
			\&[-4ex] {\Ba \comma p} \& \Ca \\
			{} \& \Ea \\
			\& {\Ba \comma p} \& \Ca
			\arrow[Rightarrow, no head, from=1-3, to=3-3]
			\arrow["{{f\cod}}", from=1-2, to=1-3]
			\arrow["{{f\cod}}", from=3-2, to=3-3]
			\arrow[curve={height=36pt}, shift right=2, Rightarrow, no head, from=1-2, to=3-2]
			\arrow["{{\pull}}"', from=1-2, to=2-2]
			\arrow["{\internal{\id_p}}"', from=2-2, to=3-2]
			\arrow["\counitpull"'{pos=0.4}, shorten <=2pt, shorten >=2pt, Rightarrow, from=2-2, to=2-1]
		\end{tikzcd}
		\quad &&&& \text{ i.e.}\quad
		\begin{tikzcd}[ampersand replacement=\&,sep=small]
			f\pull \&\& f\cod \\
			\& f\cod
			\arrow["{{f\cod\counitpull}}", Rightarrow, from=1-1, to=1-3]
			\arrow["{{f\cod\counitpull}}"', Rightarrow, from=1-1, to=2-2]
			\arrow[Rightarrow, no head, from=1-3, to=2-2]
		\end{tikzcd}\\
		\begin{tikzcd}[ampersand replacement=\&]
			\&[-4ex] \Ea \& \Ca \\
			{} \& {\Ba \comma p} \& \Ca \\
			\& \Ea \& \Ca
			\arrow[Rightarrow, no head, from=2-3, to=3-3]
			\arrow[Rightarrow, no head, from=1-3, to=2-3]
			\arrow["f", from=1-2, to=1-3]
			\arrow["f", from=3-2, to=3-3]
			\arrow["{{\pull}}"', from=2-2, to=3-2]
			\arrow["{\internal{\id_p}}"'{pos=0.55}, from=1-2, to=2-2]
			\arrow["{{f\cod}}", from=2-2, to=2-3]
			\arrow[shift right=2, curve={height=40pt}, Rightarrow, no head, from=1-2, to=3-2]
			\arrow["{{f\cod\counitpull}}", shorten <=5pt, shorten >=5pt, Rightarrow, from=3-2, to=2-3]
			\arrow["\unitpull"{pos=0.4}, Rightarrow, from=2-1, to=2-2]
		\end{tikzcd}
		= &
		\begin{tikzcd}[ampersand replacement=\&]
			\Ea \& \Ca \\[2ex]
			\\[2ex]
			\Ea \& \Ca
			\arrow[Rightarrow, no head, from=1-1, to=3-1]
			\arrow[Rightarrow, no head, from=1-2, to=3-2]
			\arrow["f", from=1-1, to=1-2]
			\arrow["f", from=3-1, to=3-2]
		\end{tikzcd}
		\quad &&&& \text{ i.e.}\quad
		\begin{tikzcd}[sep=small]
			f && f \\
			& {f\pull \internal{\id_p}}
			\arrow[Rightarrow, no head, from=1-1, to=1-3]
			\arrow["f\unitpull"', Rightarrow, from=1-1, to=2-2]
			\arrow["{f\cod \counitpull \internal{\id_p}}"', Rightarrow, from=2-2, to=1-3]
		\end{tikzcd}
	\end{eqalign}
	The first evidently does; the second does because $(\cod \counitpull \internal{\id_p})\unitpull = \id$ as $\unitpull$ is the family of cartesian lifts of identities (\cref{rmk:fib.adj}).
	Indeed, the zig-zag identities for $\counitpull$ and $\unitpull$ (as co/unit for the adjunction of maps left-fibrant spans) directly give the zig-zag identities they need to be co/unit for the adjunction in $\DispSpan^\twoto(\Ba, \Ca)$.

	Now on to 1-cells.
	Let the diagrm below be a 1-cell in $\FibSpan^\twoto(\Ba, \Ca)$.
	\begin{equation}
		\begin{tikzcd}
			\Ba & \Ea & \Ca \\
			\Ba & \Ea' & \Ca
			\arrow["p"', from=1-2, to=1-1]
			\arrow["f", from=1-2, to=1-3]
			\arrow["{p'}", from=2-2, to=2-1]
			\arrow["f'"', from=2-2, to=2-3]
			\arrow[""{name=0, anchor=center, inner sep=0}, "k"', from=1-2, to=2-2]
			\arrow[from=1-1, to=2-1, Rightarrow, no head]
			\arrow[""{name=1, anchor=center, inner sep=0}, Rightarrow, no head, from=1-3, to=2-3]
			\arrow["\varphi", shorten <=6pt, shorten >=6pt, Rightarrow, from=0, to=1]
		\end{tikzcd}
	\end{equation}
	The functor we are describing sends this 1-cell to the same one in $\Alg(\Ba^{\downarrow} \spancomp -,\, \DispSpan^{\Rightarrow}(\Ba, \Ca))$, and we must verify this definition typechecks.
	In particular, we have to show the cartesianator $\cartesianator$ (i.e.~\eqref{eqn:cartesian.fun}) below is invertible, but that is already true since $k$ is a cartesian functor by assumption.
	\begin{equation}
		\begin{tikzcd}[ampersand replacement=\&,sep=scriptsize]
			{\Ba \comma p} \&\& {\Ba \comma p'} \\
			\\
			\Ea \&\& {\Ea'}
			\arrow[""{name=0, anchor=center, inner sep=0}, "k"', from=3-1, to=3-3]
			\arrow["{\Ba \comma k}", from=1-1, to=1-3]
			\arrow["{\pull}"', from=1-1, to=3-1]
			\arrow[""{name=1, anchor=center, inner sep=0}, "{\pull'}", from=1-3, to=3-3]
			\arrow["{\cartesianator}"'{pos=0.7}, shift right=7, shorten <=16pt, shorten >=2pt, Rightarrow, from=1, to=0]
		\end{tikzcd}
	\end{equation}
	It remains to show the following prism commutes:
	\begin{equation}
		\begin{tikzcd}[ampersand replacement=\&]
			{\Ba \comma p} \&\& {\Ba \comma p'} \\
			\\
			\Ea \&\& {\Ea'} \&\& \Ca \\
			\\
			\&\&\&\& \Ca
			\arrow[""{name=0, anchor=center, inner sep=0}, "k"', from=3-1, to=3-3]
			\arrow[""{name=1, anchor=center, inner sep=0}, "{\Ba \comma k}", from=1-1, to=1-3]
			\arrow[""{name=2, anchor=center, inner sep=0}, "{\pull}"', from=1-1, to=3-1]
			\arrow[""{name=3, anchor=center, inner sep=0}, "{\pull'}", from=1-3, to=3-3]
			\arrow["{\cartesianator}"{description}, shift right=5, shorten <=8pt, shorten >=8pt, Rightarrow, from=3, to=0]
			\arrow[""{name=4, anchor=center, inner sep=0}, Rightarrow, no head, from=3-5, to=5-5]
			\arrow["{f'\cod}"{description, pos=0.3}, curve={height=-6pt}, from=1-3, to=3-5]
			\arrow["f"{pos=0.7}, curve={height=-6pt}, from=3-3, to=5-5]
			\arrow["{f'\cod\counitpull'}"{description,pos=0.6}, shift right=3, curve={height=-12pt}, shorten <=15pt, shorten >=15pt, Rightarrow, from=3, to=4]
			\arrow["{f\cod}"{description, pos=0.2}, curve={height=12pt}, from=1-1, to=3-5]
			\arrow["{f'}"{description}, curve={height=12pt}, from=3-1, to=5-5]
			\arrow["{\varphi\cod}"{description,pos=0.7}, shorten <=19pt, Rightarrow, from=1, to=3-5]
			\arrow["\varphi"{description}, shorten <=19pt, Rightarrow, from=0, to=5-5]
			\arrow["{f\cod\counitpull}"{description, pos=0.55}, curve={height=12pt}, shorten <=28pt, shorten >=28pt, Rightarrow, from=2, to=4]
		\end{tikzcd}
	\end{equation}
	Chasing an object $\beta:B \to p(E)$ of $\Ba \comma p$ we are lead to prove the solid diagram below commutes in $\Ca$:
	\begin{equation}
		\begin{tikzcd}[ampersand replacement=\&,sep=scriptsize]
			{f'(\beta^*k(E))} \&\& {f'(k(E))} \\
			{f'(k(\beta^*E))} \&\&\& {f(E)} \\
			\& {f(\beta^*E)}
			\arrow["{f'(\cartesianator_h)}\;"', "\wr", from=1-1, to=2-1]
			\arrow["{\varphi_e}", from=1-3, to=2-4]
			\arrow["{\varphi_{\beta^*E}}"', from=2-1, to=3-2]
			\arrow["{f(\lift(\beta))}"', from=3-2, to=2-4]
			\arrow["{f'(k(\lift(\beta)))}"{description}, curve={height=12pt}, dashed, from=2-1, to=1-3]
			\arrow["{f'(\lift(\beta))}"{description}, from=1-1, to=1-3]
		\end{tikzcd}
	\end{equation}
	Tracing the dashed arrow, we see that the resulting square witnesses naturality of $\varphi$, while the remaining triangle is the image under $f'$ of a triangle witnessing the universal property of $\lift(\beta)$ (recalling notation from \cref{defn:fib}) as a cartesian lift relative to $p$ (or, more directly, that we know to commute by assumption on $k$).
	This concludes the proof that $(k, \varphi)$ is a well-defined map in $\Alg(\Ba^{\downarrow} \spancomp -,\, \DispSpan^{\Rightarrow}(\Ba, \Ca))$.

	The analogous verification is trivial for 2-cells (recall \cref{defn:Alg.2.cat}), as well as verifying the correspondence just described is functorial.

	To see the two functors we defined are inverses to each other we need to show that the resulting round trips are equivalent to the identity.
	The only non-trivial condition to check is that starting with an algebra
	\begin{equation}
		\begin{tikzcd}[ampersand replacement=\&]
			\Ba \& {\Ba \comma p} \& \Ca \\
			\Ba \& \Ea \& \Ca
			\arrow["{\dom}"', from=1-2, to=1-1]
			\arrow["{f\cod}", from=1-2, to=1-3]
			\arrow["p", from=2-2, to=2-1]
			\arrow["f"', from=2-2, to=2-3]
			\arrow[""{name=0, anchor=center, inner sep=0}, "{\pull}"', from=1-2, to=2-2]
			\arrow[""{name=0p, anchor=center, inner sep=0}, phantom, from=1-2, to=2-2, start anchor=center, end anchor=center]
			\arrow[Rightarrow, no head, from=1-1, to=2-1]
			\arrow[""{name=1, anchor=center, inner sep=0}, Rightarrow, no head, from=1-3, to=2-3]
			\arrow[""{name=1p, anchor=center, inner sep=0}, phantom, from=1-3, to=2-3, start anchor=center, end anchor=center]
			\arrow["\rho", shorten <=12pt, shorten >=12pt, Rightarrow, from=0p, to=1p]
		\end{tikzcd}
	\end{equation}
	we have an equation $\rho = f\cod \counitpull$; but this follows immediately from the fact that $\counitpull$ satisfies the first of \eqref{eqn:unit.counit.of.arrow.algebra}.
\end{proof}

Putting together this and \cref{lem:lax.maps.kleisli}, we obtain the main result of this section:

\begin{thm}
\label{thm:main.iso}
	For each $\Ba, \Ca \in \Kb$, there is an isomorphism of 2-categories:
	\begin{equation}
	\label{eqn:main.iso}
		\InclusionTrifun_{\Ba,\Ca}:\FibSpan^{\Rightarrow}(\Ba, \Ca) \isoto \Alg(\Ba^{\downarrow} \spancomp -,\, \Kl(- \spancomp \Ca^\downarrow,\, \DispSpan(\Ba, \Ca))).
	\end{equation}
\end{thm}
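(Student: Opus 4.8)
The plan is to splice together the two lemmas just established, \cref{lem:fspans.are.algebras} and \cref{lem:lax.maps.kleisli}, both of which are isomorphisms that are the identity on underlying spans. Starting from \cref{lem:fspans.are.algebras} we have $\FibSpan^{\Rightarrow}(\Ba,\Ca) \iso \Alg(\Ba^\downarrow \spancomp -,\, \DispSpan^{\Rightarrow}(\Ba, \Ca))$, so it suffices to substitute the Kleisli description $\DispSpan^{\Rightarrow}(\Ba, \Ca) \iso \Kl(- \spancomp \Ca^\downarrow,\, \DispSpan^{=}(\Ba, \Ca))$ of \cref{lem:lax.maps.kleisli} into the second argument of $\Alg$. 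The only nontrivial point is to check that the left arrow monad $\Ba^\downarrow \spancomp -$ transports across this isomorphism in a way that is compatible with forming $\Alg$.

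The key structural observation is that the two arrow monads act on opposite sides of a span: $\Ba^\downarrow \spancomp -$ composes on the left while $- \spancomp \Ca^\downarrow$ composes on the right. Since $\spancomp$ is a (coherently) associative, bifunctorial composition, these two actions are the left and right halves of a single two-sided action and so commute up to canonical coherent isomorphism. This commuting furnishes the data needed to lift $\Ba^\downarrow \spancomp -$ to a $2$-monad on the Kleisli $2$-category $\Kl(- \spancomp \Ca^\downarrow,\, \DispSpan^{=}(\Ba, \Ca))$, and, by the formal theory of such liftings, its algebras coincide with the algebras of $\Ba^\downarrow \spancomp -$ computed before passing through the Kleisli isomorphism. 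Concretely, the naturality of the slice/comma right adjoints of \cref{lem:lax.strict.span.adjunction} with respect to composition of spans on the left — recorded in the statement of that lemma — is exactly what guarantees that the isomorphism of \cref{lem:lax.maps.kleisli} intertwines the left $\Ba^\downarrow$-action on $\DispSpan^{\Rightarrow}(\Ba,\Ca)$ with the lifted action on the Kleisli $2$-category.

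Granting this intertwining, applying $\Alg(\Ba^\downarrow \spancomp -,\, -)$ to both sides of \cref{lem:lax.maps.kleisli} yields
\[
	\Alg(\Ba^\downarrow \spancomp -,\, \DispSpan^{\Rightarrow}(\Ba, \Ca)) \iso \Alg\big(\Ba^\downarrow \spancomp -,\, \Kl(- \spancomp \Ca^\downarrow,\, \DispSpan(\Ba, \Ca))\big),
\]
where on the right we use $\DispSpan = \DispSpan^{=}$ on underlying spans. Composing with the isomorphism of \cref{lem:fspans.are.algebras} produces the claimed $\InclusionTrifun_{\Ba,\Ca}$; and since each constituent isomorphism is the identity on underlying spans and their maps, so is the composite.

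The main obstacle is precisely the intertwining step: one must verify that the comma construction building the Kleisli structure commutes with whiskering by $\Ba^\downarrow$ on the left at the level of the full $2$-monad data — unit, multiplication, and the associated coherences — and not merely on objects. This is where the commuting of the two actions (equivalently, the distributive law coming from associativity of $\spancomp$) must be pinned down carefully. Everything else is bookkeeping, since the two constituent isomorphisms are identities on underlying spans, so that their composite manifestly is as well.
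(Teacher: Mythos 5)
Your proof is correct and follows essentially the same route as the paper: the paper obtains the theorem precisely by composing \cref{lem:fspans.are.algebras} with \cref{lem:lax.maps.kleisli}, with the fact that $\Ba^\downarrow \spancomp -$ in the statement means the \emph{lifted} monad on the Kleisli 2-category flagged in \cref{rmk:hidden.lift}. The intertwining step you isolate as the main obstacle is exactly what the paper disposes of via the naturality clause of \cref{lem:lax.strict.span.adjunction} (naturality of the comma right adjoints with respect to composition of spans on the left), so your elaboration just makes explicit what the paper leaves implicit.
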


\begin{rmk}
\label{rmk:hidden.lift}
	Beware the polymorphic notation in \eqref{eqn:main.iso}: the two $\spancomp$ denote compositions in different 2-categories, and specifically $\Ba^\downarrow \spancomp -$ is denoting the lift of the arrow monad associated to $\Ba$ from $\DispSpan(\Ba, \Ca)$ to $\Kl(- \spancomp \Ca^\downarrow,\, \DispSpan(\Ba, \Ca))$.
\end{rmk}

\subsection[The free Kleisli cocompletion of the tricategory of left-displayed spans]{The free Kleisli cocompletion of $\DispSpan$}
\label{sec:trikleisli.comp}

For anyone acquainted with Lack and Street's \emph{The formal theory of monads II} \cite{lack_formal_2002}, the right hand side of \cref{eqn:main.iso} will certainly look familiar.
That expression closely resembles the characterization of hom-categories of the cocompletion of a 2-category under Kleisli objects.
However, we cannot appeal to \emph{ibid.} \emph{tout court}, since we work one dimension higher---indeed, \cref{eqn:main.iso} concerns hom-2-categories and not categories.
Luckily, Miranda has devoted the second part of his thesis \cite{miranda_topics_2024} to Kleisli completions for tricategories.
Under the assumption of a very reasonable conjecture regarding the form of free tricocompletions, he describes the free Kleisli completion of a tricategory in Section 7.2:

\begin{defn}
\label{defn:trikleisli.cocomp}
	For a tricategory $\Kc$, its \textbf{free Kleisli completion} is a trifunctor $k: \Kc \to \KL(\Kc)$ enjoying the following universal property: for each tricategory $\Hc$ which admits all Kleisli objects of pseudomonads, the trifunctor $k^* : [\KL(\Kc), \Hc] \to [\Kc, \Hc]_{\Kl\text{-cont}}$ is an adjoint triequivalence, where $[\Kc, \Hc]_{\Kl\text{-cont}}$ is the tricategory of trifunctors preserving Kleisli objects.
\end{defn}

Conjecturally,\footnote{Miranda showed that this presentation of $\KL(\Kc)$ makes $k^*$ triessentially surjective, hence the conjectural part only concerns the fact it can be promoted to an adjoint triequivalence.} the tricategory $\KL(\Kc)$ can be presented as the tricategory whose objects are pairs $(\Ab,s)$ where $s:\Ab \to \Ab$ is a pseudomonad on $\Ab \in \Kc$ and hom-bicategories are
\begin{equation}
	\KL(\Kc)((\Ab,s), (\Bb,t)) := \Alg(\,\Kl(\Kc(s,t),\,\Kc(s, \Bb)),\ \Kl(\Kc(\Ab, t),\, \Kc(\Ab, \Bb))),
\end{equation}
where ${\Kc(-, \Ab) : \Kc\op \to \Bicat}$ is a representable trifunctor, algebras and Kleisli objects are computed in $[\Kc\op, \Bicat]$, and the monad $\Kl(\Kc(s,t),\Kc(s, \Bb))$ is the one $\Kc(s,\Bb)$ induces on the Kleisli object of $\Kc(\Ab, t)$.
Indeed, slightly abusing notation (cf. \cref{rmk:hidden.lift} above), we can write the right hand side of the latter definition as
\begin{equation}
\label{eqn:kleisli.tricomp.homs}
	\KL(\Kc)((\Ab,s), (\Bb,t)) := \Alg(- s,\, \Kl(t -,\, \Kc(\Ab, \Bb))).
\end{equation}

In unpacking this definition below, we choose a specific presentation for 2- and 3-cells which Miranda (following Lack and Street) calls \emph{reduced}: they correspond to presenting 1-cells in the Kleisli bicategory of $\Kc(\Ab, s)$ as we did in \cref{defn:Kleisli.2.cat} as opposed to considering them as morphisms of free (pseudo)algebras.

\begin{rmk}
\label{rmk:trikleisli.unpacked}
	Explicitly, a 1-cell $f:(\Ab,s) \to (\Bb, t)$ in $\KL(\Kc)$ amounts to a 1-cell $f:\Ab \to \Bb$ in $\Kc$ together with a 2-cell $w_f: fs \twoto tf$ (called \textbf{intertwiner}) and two invertible 3-cells $\mapunitor$ and $\mapmultiplicator$ witnessing the commutativity of $w_f$ with the unit and multiplication of $s$ and $t$, and this data satisfy coherence equations (details can be found in \cite[§7.2.1~and~§7.2.3]{miranda_topics_2024}).
	Notice how $w_f$ is really a $t$-Kleisli morphism equipping $f$ with an $s$-pseudoalgebra structure.
	The 3-cells $\mapunitor$ and $\mapmultiplicator$ witness exactly the lawfulness of such structure.

	To compose 1-cells $(\Ab, s) \nto{(f,w_f)} (\Bb, t) \nto{(g,w_g)} (\Comonad, u)$, one composes the underlying 1-cells in $\Kc$ and endows it with the intertwiner $gfs \nto{gw_f} gtf \nto{w_g f} ugf$.
	For details on how the data pertaining $\mapunitor$ and $\mapmultiplicator$ composes and a proof that this composition is well-defined and unital and associative, we refer to \cite{gambino_formal_2021}.

	A 2-cell $\alpha :(f',w_{f'}) \twoto (f,w_f) : (\Ab, s) \to (\Bb, t)$ is a 2-cell (in $\Kc$) $\alpha : f' \twoto tf$ plus the data and axioms of pseudomorphism of $s$-algebras in the Kleisli bicategory $\Kl(\Kc(\Ab, t),\Kc(\Ab, \Bb))$.
	These compose (vertically) in a Kleisli fashion, i.e.~$\alpha \mathbin{\underset{\KL}\cdot} \beta := \multmnd \cdot (t\alpha) \cdot \beta$ where $\multmnd$ is the multiplication of $t$.
	Horizontal composition of $\alpha :(f',w_{f'}) \twoto (f,w_f) : (\Ab, s) \to (\Bb, t)$ and $\beta :(g',w_{g'}) \twoto (g,w_g) : (\Bb, t) \to (\Comonad, u)$ is the 2-cell $\beta\alpha : (g'f', w_{g'f'}) \twoto (gf, w_{gf})$ defined as
	\begin{equation}
	\label{eqn:horizontal.composition.in.KL}
		g'f' \nto{g'\alpha} g'tf \nto{w_{g'}f} sg'f \nto{\beta} ssgf \nto{\multmnd gf} sgf
	\end{equation}
	where $\multmnd$ denotes the multiplication of $s$.

	Finally, a 3-cell $\Omega:\alpha' \threeto \alpha$ is a 3-cell in $\Kc$ between the underlying 2-cells of $\alpha'$ and $\alpha$ that moreover satisfies the equations exhibiting $\Omega$ as a 2-cell of pseudoalgebras.
\end{rmk}


Comparing \eqref{eqn:kleisli.tricomp.homs} with \cref{thm:main.iso}, we see that the latter appears to describe a fully faithful inclusion $\InclusionTrifun : \FibSpan^\twoto \into \KL(\DispSpan^=)$ exhibiting left-fibrant spans as a full sub-tricategory of the Kleisli completion of the tricategory of spans and strict maps.

In fact, \cref{thm:main.iso} tells us that left-fibrant spans are spans $\Ba \nfrom{p} \Ea \nto{f} \Ca$ equipped with an algebra structure in a Kleisli category (for, respectively, $\Ba^\downarrow$ and $\Ca^\downarrow$) which are intertwined by a map
\begin{eqalign}
\label{eqn:arrow.monads.intertwiner}
	\intertwiner_{(p,f)} : \Ba^\downarrow \spancomp \Ea \ &\longto\ \Ea \spancomp \Ca^\downarrow\\[1ex]
	\left(\begin{tikzcd}[cramped, row sep=scriptsize] c \arrow[swap]{d}{\beta}\\[1.75ex] p(e)\end{tikzcd}, e \in \Ea\right) &\longmapsto
	\left(\pull_p(\beta) \in \Ea,\begin{tikzcd}[cramped, row sep=scriptsize] f(\pull_p(\beta)) \arrow{d}{f(\lift_p(\beta))}\\[1.75ex] f(e)\end{tikzcd}\right)
\end{eqalign}

\begin{rmk}
	We emphasize here that the intertwiner $\intertwiner_{(p, \action)}$ associated to a contextad $\fibcolaxaction$ appears crucially in the definition of composition in $\Ctx(\action)$. Recall (\cref{defn:ctx.dbl.cat}) that loose composition in $\Ctx(\action)$ of loose arrows $f : A \action P \to B$ and $g : B \action Q \to C$ is defined by
	\begin{equation}
		A \action (P \combine f^*Q) \xto{\coassociator} (A \action P) \action f^*Q \xto{f \action Q} B \action Q \xto{g} C
	\end{equation}
	The map $f \action Q : (A \action P) \action f^*Q \to B \action Q$ here is the second component of $\intertwiner_{(p, \action)}(f, Q)$---specifically, $\action(\lift_p(f))$.
\end{rmk}

Moreover, the fibration structure on $p$ allows us to construct invertible 2-cells witnessing the commutativity of these diagrams:
\begin{equation}
	\begin{tikzcd}[ampersand replacement=\&]
		\& \Ea \\
		{\Ba^\downarrow \spancomp \Ea} \&\& {\Ea \spancomp \Ca^\downarrow}
		\arrow[""{name=0, anchor=center, inner sep=0}, "{\internal{\id_p}}"', from=1-2, to=2-1]
		\arrow[""{name=1, anchor=center, inner sep=0}, "{\Ea \spancomp \unitarrow_{\Ca}}", from=1-2, to=2-3]
		\arrow["\intertwiner_{(p,f)}"', from=2-1, to=2-3]
		\arrow["\mapunitor", shift right=2, shorten <=9pt, shorten >=9pt, Rightarrow, from=0, to=1]
	\end{tikzcd}
	\qquad
	\begin{tikzcd}[ampersand replacement=\&]
		{\Ba^\downarrow \spancomp \Ba^\downarrow \spancomp \Ea} \& {\Ba^\downarrow \spancomp \Ea \spancomp \Ca^\downarrow} \\
		\& {\Ea \spancomp \Ca^\downarrow \spancomp \Ca^\downarrow} \\
		{\Ba^\downarrow \spancomp \Ea} \& {\Ea \spancomp \Ca^\downarrow}
		\arrow["\intertwiner_{(p,f)}"', from=3-1, to=3-2]
		\arrow["{\Ba^\downarrow \spancomp \intertwiner_{(p,f)}}", from=1-1, to=1-2]
		\arrow["{\intertwiner_{(p,f)} \spancomp \Ca^\downarrow}", from=1-2, to=2-2]
		\arrow[""{name=0, anchor=center, inner sep=0}, "{\multarrow_{\Ba} \spancomp \Ea}"', from=1-1, to=3-1]
		\arrow["{\Ea \spancomp \multarrow_{\Ca}}", from=2-2, to=3-2]
		\arrow["\mapmultiplicator", shorten <=11pt, Rightarrow, from=0, to=2-2]
	\end{tikzcd}
\end{equation}

In this section we set out to prove that this correspondence, in fact, exists and extends to a fully faithful trifunctor $\InclusionTrifun : \FibSpan^\twoto \into \KL(\DispSpan^=)$.
In fact, the first thing we do is prove that the domain of this trifunctor is a well-defined tricategory.

Let us mention, from the outset, that since arrow monads are colax idempotent (\cref{defn:colax.idempotent.monad}) and since the maps in $\KL(\DispSpan^=)$ exhibit pseudoalgebras for these (as noted above), we get that being a left-fibrant span is a \emph{property} of a span.
This greatly simplifies calculations.

\begin{lem}
\label{lem:iota.composition}
	For each $\Ba,\Ca,\Da \in \Kb$, there is a strictly commutative diagram:
	\begin{equation}
		\begin{tikzcd}
			{\FibSpan^\twoto(\Ba, \Ca) \times \FibSpan^\twoto(\Ca, \Da)} & {\FibSpan^\twoto(\Ba, \Da)} \\
			{\KL(\DispSpan^=)(\Ba^\downarrow, \Ca^\downarrow) \times \KL(\DispSpan^=)(\Ca^\downarrow, \Da^\downarrow)} & {\KL(\DispSpan^=)(\Ba^\downarrow, \Da^\downarrow)}
			\arrow["\spancomp", from=1-1, to=1-2]
			\arrow["\then", from=2-1, to=2-2]
			\arrow["\InclusionTrifun_{\Ba, \Da}", from=1-2, to=2-2]
			\arrow["\InclusionTrifun_{\Ba,\Ca} \times \InclusionTrifun_{\Ca,\Da}"', from=1-1, to=2-1]
		\end{tikzcd}
	\end{equation}
	where $\InclusionTrifun$ is the isomorphism of 2-categories of \cref{thm:main.iso}, the top map is the composition operation of \cref{lem:fspan.composition}, and the bottom map $\then$ is composition in $\KL$ as described just above in \cref{rmk:trikleisli.unpacked}.
\end{lem}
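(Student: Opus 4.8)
The plan is to verify strict commutativity of the square cell-by-cell in the hom-$2$-categories, exploiting that $\InclusionTrifun$ is the identity on underlying spans and span maps (\cref{thm:main.iso}, via \cref{lem:fspans.are.algebras}). On underlying data both composites — the top $\spancomp$ of \cref{lem:fspan.composition} and the bottom $\then$ of \cref{rmk:trikleisli.unpacked} — are computed by strict pullback in $\DispSpan^=$, so they agree automatically; all the content lies in matching the \emph{structure} carried along the composite, namely the fibration cleavage (equivalently the arrow-monad-algebra structure), the intertwiner $\intertwiner_{(p,f)}$ of \eqref{eqn:arrow.monads.intertwiner}, and the lifted colaxator. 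Since $\InclusionTrifun$ is a bijective-on-cells isomorphism of $2$-categories, it suffices to check equality of the two $2$-functors on objects, $1$-cells, and $2$-cells of the hom-$2$-categories.

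First I would treat objects, i.e.\ left-fibrant spans, which on the $\KL$-side are the $1$-cells $(f,\intertwiner_{(p,f)})$. Given left-fibrant $\Ba \nepifrom{p} \Ea \nto{f} \Ca$ and $\Ca \nepifrom{q} \Fa \nto{g} \Da$, the underlying composite span is $\Ea \spancomp \Fa$ with left leg the composite fibration (a fibration by \cref{lem:pb.fibs}). Because the arrow monads are colax idempotent (\cref{defn:colax.idempotent.monad}), being such an algebra is property-like, so the cleavage of the composite is uniquely determined once chosen as the composite of cleavages, and the two routes cannot disagree on it. It then remains to identify the intertwiner of the composite span, $\intertwiner_{(pq,\,gf)}$, with the $\KL$-composite intertwiner obtained by whiskering and multiplying: $g\intertwiner_{(p,f)}$ followed by $\intertwiner_{(q,g)}f$ followed by $\Ea \spancomp \multarrow_{\Da}$. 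Reading off \eqref{eqn:arrow.monads.intertwiner}, both send a pair consisting of a base arrow $\beta$ and an element $(E',F')$ to the cartesian-lift data over the composite base arrow, so the identification follows from pseudo-functoriality of the cleavage (a lift of a composite equals the composite of lifts) together with the fact that $\multarrow_{\Da}$ is precisely composition of composable arrows.

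The heart of the argument is the level of $1$-cells of the hom-$2$-categories, i.e.\ right-colax maps of spans, which correspond on the $\KL$-side to $2$-cells of $\KL(\DispSpan^=)$. Here I must show that the explicit composite of \cref{lem:fspan.composition}, sending $((k,\varphi),(h,\psi))$ to $((kp_1',\, h\pull_{q'}(\varphi p_1')),\ \psi\lift_{q'}(\varphi p_1'))$ with notation as in \eqref{eqn:fibspan-comp}, coincides with the horizontal composition \eqref{eqn:horizontal.composition.in.KL} of the corresponding $2$-cells in $\KL$. The plan is to expand \eqref{eqn:horizontal.composition.in.KL} for the arrow monads: the whiskering by $\intertwiner$ reproduces the first component $kp_1'$ and feeds the colaxator $\varphi p_1'$ into the fibrational lift supplied by the intertwiner \eqref{eqn:arrow.monads.intertwiner}, while the closing multiplication $\multmnd$ collapses the resulting cartesian lifts into one. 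The main obstacle is exactly this reconciliation: the abstract $\KL$ formula whiskers the intertwiner and then applies the monad multiplication of the target arrow monad, whereas \cref{lem:fspan.composition} performs a \emph{single} cartesian lift $\lift_{q'}(\varphi p_1')$. Matching them rests on (i) the intertwiner being built from cartesian lifts, so that $\pull_{q'}(\varphi p_1')$ is literally its second component, and (ii) pseudo-functoriality of the cleavage, which identifies lift-after-multiplication with the single lift of the whiskered colaxator. Since the arrow monad is colax idempotent, the mediating $3$-cells $\mapunitor$, $\mapmultiplicator$ and the multiplication comparison are canonical and coherent, so this identification is strict rather than merely up to isomorphism.

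Finally, on $2$-cells of the hom-$2$-categories — the transformations \eqref{eqn:transformation.of.right.colax.maps.of.spans} on one side and the modifications on the other — the verification is routine: both horizontal composites are again computed by whiskering with $p_1'$ and lifting along $q'$, matching the reduced Kleisli formula, and there are no further coherence obstructions because colax idempotency forces all comparison cells to become identities after whiskering with the fibrations (the left-hand rigidity $p'\alpha = p$ of \eqref{eqn:transformation.of.right.colax.maps.of.spans}). Assembling the three levels yields strict commutativity of the square, so the composition $\spancomp$ of \cref{lem:fspan.composition} is transported by $\InclusionTrifun$ exactly onto $\KL$-composition, as claimed.
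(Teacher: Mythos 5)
Your proposal is correct and takes essentially the same approach as the paper's proof: strict commutativity is checked levelwise in the hom-2-categories, with the object case settled by colax idempotency (property-likeness of the fibration/algebra structure on the composite span) and the 1-cell case by matching the explicit formula of \cref{lem:fspan.composition} against the reduced Kleisli horizontal composition \eqref{eqn:horizontal.composition.in.KL}, the 2-cell case being analogous. One minor slip worth fixing: the $\KL$-composite of \emph{1-cells} carries only the whiskered intertwiner $(\intertwiner_{(p,f)} \spancomp \Fa) \then (\Ea \spancomp \intertwiner_{(q,g)})$ (cf.\ \cref{rmk:trikleisli.unpacked}); the multiplication $\multarrow_{\Da}$ you append there does not typecheck and enters only in the horizontal composition of 2-cells, where you do use it correctly.
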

\begin{proof}
	This proof amounts to verify that the \emph{ad hoc} composition we defined for left-fibrant spans coincides with that in $\KL(\DispSpan^=)$.

	On objects, the diagram commutes since, as noted above, being a left-fibrant span is a property-like structure and thus composition in $\KL(\DispSpan^=)$ (which, on the underlying spans, is just composition of spans) produces the same algebra structure as composition in $\FibSpan^\twoto$ which correspond to the fibration structure on the left leg given by the composition of two fibrations.

	Consider now a composable pair of 1-cells in $\FibSpan^\twoto$.
	For ease of exposition, we are going to consider the same as in \eqref{eqn:fibspan-comp} above (from which we also borrow notation):
	\begin{equation}
		\begin{tikzcd}[ampersand replacement=\&, row sep=scriptsize]
			\&\& {\Ea'} \&\&\&\& {\Fa'} \\
			\&\& \Ea \&\&\&\& \Fa \\
			\Ba \&\&\&\& \Ca \&\&\&\& \Da
			\arrow["p", two heads, from=2-3, to=3-1]
			\arrow["f"', from=2-3, to=3-5]
			\arrow["q", two heads, from=2-7, to=3-5]
			\arrow["g"', from=2-7, to=3-9]
			\arrow["{p'}"', two heads, from=1-3, to=3-1]
			\arrow[""{name=0, anchor=center, inner sep=0}, "{f'}", from=1-3, to=3-5]
			\arrow[""{name=0p, anchor=center, inner sep=0}, phantom, from=1-3, to=3-5, start anchor=center, end anchor=center]
			\arrow["k"', from=1-3, to=2-3]
			\arrow[""{name=1, anchor=center, inner sep=0}, "{g'}", from=1-7, to=3-9]
			\arrow[""{name=1p, anchor=center, inner sep=0}, phantom, from=1-7, to=3-9, start anchor=center, end anchor=center]
			\arrow["{{h}}", from=1-7, to=2-7]
			\arrow["{q'}"', two heads, from=1-7, to=3-5]
			\arrow["\varphi", shorten >=5pt, Rightarrow, from=2-3, to=0p]
			\arrow["\psi", shorten >=5pt, Rightarrow, from=2-7, to=1p]
		\end{tikzcd}
	\end{equation}
	We know from \cref{lem:lax.maps.kleisli} that these correspond to
	\begin{equation}
		\begin{tikzcd}[ampersand replacement=\&, row sep=scriptsize]
			\&\& {\Ea'} \&\&\&\& {\Fa'} \\
			\&\& {f \comma\Ea} \&\&\&\& {g \comma \Fa} \\
			\Ba \&\&\&\& \Ca \&\&\&\& \Da
			\arrow["p\dom", two heads, from=2-3, to=3-1]
			\arrow["\cod"', from=2-3, to=3-5]
			\arrow["q\dom", two heads, from=2-7, to=3-5]
			\arrow["\cod"', from=2-7, to=3-9]
			\arrow["{p'}"', two heads, from=1-3, to=3-1]
			\arrow["{f'}", from=1-3, to=3-5]
			\arrow["{{\internal{\varphi}}}"', from=1-3, to=2-3]
			\arrow["{g'}", from=1-7, to=3-9]
			\arrow["{\internal{\psi}}"', from=1-7, to=2-7]
			\arrow["{q'}"', two heads, from=1-7, to=3-5]
		\end{tikzcd}
	\end{equation}
	Despite the name, $\internal{\varphi}$ contains also the data of $k$, and likewise for $h$ and $\internal{\psi}$.

	The composite of these two maps in $\KL(\DispSpan^=)$ is a strict map of spans which on apexes is
	\begin{equation}
		\begin{tikzcd}[ampersand replacement=\&]
			{\Ea' \spancomp \Fa'} \& {\Ea \spancomp \Ca^\downarrow \spancomp \Fa'} \&[-2ex] {\Ea \spancomp \Fa' \spancomp \Da^\downarrow} \& {\Ea \spancomp \Fa' \spancomp \Da^\downarrow \spancomp \Da^\downarrow} \& {(\Ea \spancomp \Fa) \spancomp \Da^\downarrow}
			\arrow["{\internal{\varphi} \spancomp \Fa'}"', from=1-1, to=1-2]
			\arrow["{\internal{\psi} \internal{\varphi}}", curve={height=-26pt}, dashed, from=1-1, to=1-5, shift left=1.5]
			\arrow["{\Ea \spancomp \internal{\psi} \spancomp \Da}"', from=1-3, to=1-4]
			\arrow["{\Ea \spancomp \Fa \spancomp \multarrow_{\Da}}"', from=1-4, to=1-5]
			\arrow["{\Ea \spancomp \intertwiner_{(q',g')}}"', from=1-2, to=1-3]
		\end{tikzcd}
	\end{equation}
	which can be verified to amount to
	\begin{equation}
		\left( \underbrace{(kp_1',\ h\pull_{q'}(\varphi p_1'))}_{k \spancomp_\varphi h},\ \underbrace{gh\pull_{q'}(\varphi p_1') \nto{\psi_{\pull_{q'}(\varphi p_1')}} g'\pull_{q'}(\varphi p_1') \nto{g'\lift_{q'}(\varphi p_1')} g'p_2'}_{\varphi \spancomp_\varphi \psi}\right)
	\end{equation}
	which is precisely $\internal{\varphi \spancomp_\varphi \psi}$, proving strict commutativity on 1-cells.

	Analogous considerations prove functoriality on 2-cells too.
\end{proof}

In order to conclude that $\FibSpan^\twoto$ is a particular subtricategory of $\KL(\DispSpan^=)$, we should first complete the definition of a tricategory structure on $\FibSpan^\twoto$, and then show that $\InclusionTrifun$ is a fully faithful trifunctor, hence an inclusion, of $\FibSpan^\twoto$ into $\KL(\DispSpan^=)$.

However, this is unnecessarily convoluted: we would be toiling away to define a whole tricategory structure on $\FibSpan^\twoto$ only to show it is the same as one we already have, namely that coming from $\KL(\DispSpan^=)$.
Instead, we kill two birds with one stone by defining the structure of $\FibSpan^\twoto$ to be the one induced by $\InclusionTrifun$, reassured by \cref{lem:iota.composition} that the composition and identities so defined agree with those of $\FibSpan^\twoto$.

The means to prove the following theorem were suggested to us by Adrian Miranda:

\begin{thm}
\label{thm:fSpan.tricat}
	There is a tricategory $\FibSpan^\twoto$ whose objects are those of $\Kb$, hom-bicategories (which are in fact strict 2-categories) are those defined in \cref{defn:locally.span}, whose composition is that defined in \cref{lem:fspan.composition}, and identities are identity spans.
	With this structure, $\InclusionTrifun: \FibSpan^\twoto \into \KL(\DispSpan^=)$ is a fully faithful trifunctor.
\end{thm}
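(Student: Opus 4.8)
The plan is to avoid building the tricategory structure on $\FibSpan^\twoto$ by hand and instead transport it along the family of local isomorphisms $\InclusionTrifun_{\Ba,\Ca}$ supplied by \cref{thm:main.iso}. First I would observe that the arrow pseudomonads $(\Ca, \Ca^\downarrow)$, as $\Ca$ ranges over $\Kb$, span a full subtricategory of $\KL(\DispSpan^=)$: a full subtricategory on any subclass of objects inherits its composition, units, and all higher coherence data by mere restriction, with no closure conditions to check, precisely because composition is a $2$-functor $\Hc(A,B) \times \Hc(B,C) \to \Hc(A,C)$ whose target already lies in the subclass. By \eqref{eqn:kleisli.tricomp.homs} the hom-bicategory of this subtricategory between $\Ba^\downarrow$ and $\Ca^\downarrow$ is $\Alg(\Ba^\downarrow \spancomp -,\, \Kl(- \spancomp \Ca^\downarrow,\, \DispSpan(\Ba, \Ca)))$, which \cref{thm:main.iso} identifies isomorphically with the $2$-category $\FibSpan^\twoto(\Ba, \Ca)$ of \cref{defn:locally.span}; in particular these hom-bicategories are strict $2$-categories, as the statement asserts.

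Next I would conjugate the entire tricategory structure of $\KL(\DispSpan^=)$, restricted to arrow pseudomonads, by the family $\{\InclusionTrifun_{\Ba,\Ca}\}$. Since each $\InclusionTrifun_{\Ba,\Ca}$ is an isomorphism of $2$-categories, every composition $2$-functor, every unit, every associator, and every coherence modification can be pulled back unambiguously to $\FibSpan^\twoto$, and the tricategory axioms—being equations between pasting composites of exactly this data—hold downstairs because they hold upstairs. This endows $\FibSpan^\twoto$ with a tricategory structure for which $\InclusionTrifun$ is, by construction, a strict trifunctor whose action on each hom-bicategory is an isomorphism of $2$-categories; a trifunctor that is a local isomorphism is in particular fully faithful, and composing the resulting triisomorphism with the inclusion of the full subtricategory exhibits $\InclusionTrifun$ as the desired fully faithful trifunctor.

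It then remains only to confirm that the transported structure coincides with the composition and identities named in the statement. For composition this is exactly \cref{lem:iota.composition}, whose strictly commuting square says that the \emph{ad hoc} composition $\spancomp$ of left-fibrant spans from \cref{lem:fspan.composition} is carried by $\InclusionTrifun$ to the composition $\then$ of $\KL(\DispSpan^=)$. For identities one runs the analogous, easier check at the level of objects: the identity $1$-cell on $(\Ca, \Ca^\downarrow)$ in $\KL(\DispSpan^=)$ is the identity $1$-cell of $\Ca$ in $\DispSpan^=$ carrying the identity intertwiner, and under $\InclusionTrifun$ this corresponds to the identity span $\Ca \xleftarrow{\id} \Ca \xrightarrow{\id} \Ca$ with its trivial cloven cartesian fibration structure—precisely the loose identity of $\FibSpan^\twoto$.

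The single genuine external input, and the main obstacle, is that $\KL(\DispSpan^=)$ is a bona fide tricategory at all, so that the phrases \emph{full subtricategory} and \emph{transport of structure} are licensed; this rests on Miranda's construction of free Kleisli cocompletions and the conjecture flagged in \cref{defn:trikleisli.cocomp}. Granting it, no new coherence computations are needed: the entire coherence of $\FibSpan^\twoto$ is inherited wholesale from $\KL(\DispSpan^=)$ via the local isomorphisms of \cref{thm:main.iso} and the compatibility of \cref{lem:iota.composition}.
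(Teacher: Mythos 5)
Your proposal is correct and takes essentially the same route as the paper: the paper's proof is a one-line application of \emph{transport of structure} (\cite[Theorem~7.22]{gurski_coherence_2013}) along the isomorphisms of 2-categories from \cref{thm:main.iso}, followed by \emph{change of composition} (\cite[Theorem~7.23]{gurski_coherence_2013}) together with \cref{lem:iota.composition} to identify the transported composition with that of \cref{lem:fspan.composition}. Your hand-rolled conjugation of the full subtricategory of arrow pseudomonads, your identity check, and your flagging of Miranda's conjecture are exactly what those cited theorems and the paper's surrounding remarks formalize.
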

\begin{proof}
	Both statements are a direct application of \emph{transport of structure} \cite[Theorem~7.22]{gurski_coherence_2013} along the families of biequivalences (in fact, isomorphisms of 2-categories) defined in \cref{thm:main.iso}.
	Using \cref{lem:iota.composition} and \emph{change of composition} \cite[Theorem~7.23]{gurski_coherence_2013}, we see that we can consider $\FibSpan^\twoto$ to be endowed with the composition law expounded in \cref{lem:fspan.composition}.
\end{proof}

\subsection[The Ctx construction as a wreath product]{The $\Ctx$ construction as a wreath product.}
\label{sec:ctx.as.wreath.product}

Recall that a contextad is, by \cref{defn:colax.fibred.action}, a pseudomonad in $\FibSpan^\twoto\Paradise$.
Therefore, the objects of $\KL(\FibSpan^\twoto\Paradise)$ are contextads (and the morphisms include the contextad morphisms as those maps whose underlying span has left leg an identity).
But as we just saw in \cref{thm:fSpan.tricat}, $\FibSpan^\twoto\Paradise$ is a full sub-tricategory of $\KL(\DispSpan^=\Paradise)$ witnessed by sending $\acted \in \Cosmos$ to the arrow pseudomonad $\acted^{\downarrow}$.
Therefore, we can apply $\KL$ to this inclusion to see every contextad as an object of $\KL(\KL(\DispSpan^=\Paradise))$---that is, as a \emph{wreath}\footnotemark~around an arrow pseudomonad.
\footnotetext{Beware that these are duals to Lack and Street's `wreaths' from \cite{lack_formal_2002}, since their wreaths come from the Eilenberg--Moore completion.}
Our main observation is that the $\Ctx$ construction, and therefore also its special cases, arises as the wreath product of this wreath.

\begin{thm}
\label{thm:ctx.const.on.objects}
	There is a trifunctor
	\begin{equation}
		\begin{tikzcd}
			\KL(\FibSpan^\twoto\Paradise) \arrow[r, hook, "\KL\InclusionTrifun"] &[6ex] \KL(\KL(\DispSpan\Paradise)) \arrow[r, "\wreath"] & \KL(\DispSpan\Paradise)
		\end{tikzcd}
	\end{equation}
	which, on objects, sends contextads to their double category of contexful arrows.
\end{thm}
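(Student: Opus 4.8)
The plan is to exhibit the map as the stated composite of two trifunctors and then compute its effect on objects by unwinding the wreath product. The first factor is unproblematic: \cref{thm:fSpan.tricat} provides a fully faithful trifunctor $\InclusionTrifun : \FibSpan^\twoto\Paradise \into \KL(\DispSpan^=\Paradise)$, and applying the free Kleisli cocompletion $\KL$ to it yields $\KL\InclusionTrifun : \KL(\FibSpan^\twoto\Paradise) \to \KL(\KL(\DispSpan\Paradise))$ (and, since $\KL$ preserves full faithfulness, this is again an inclusion). The wreath product $\wreath$ I would obtain from the universal property of \cref{defn:trikleisli.cocomp}: taking $\Kc = \Hc = \KL(\DispSpan\Paradise)$, which admits Kleisli objects of pseudomonads because it is the free such cocompletion, the Kleisli-object-preserving identity trifunctor corresponds under the adjoint triequivalence to a trifunctor $\wreath : \KL(\KL(\DispSpan\Paradise)) \to \KL(\DispSpan\Paradise)$, characterized up to triequivalence by $\wreath \circ k \simeq \id$ and by the fact that it realizes a pseudomonad as its Kleisli object. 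The composite $\wreath \circ \KL\InclusionTrifun$ is then a trifunctor, being a composite of trifunctors, so trifunctoriality needs no further argument.

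It remains to compute the composite on objects. An object of $\KL(\FibSpan^\twoto\Paradise)$ is a pseudomonad in $\FibSpan^\twoto\Paradise$, i.e.\ a contextad $\fibcolaxaction$ (\cref{defn:colax.fibred.action}). First I would trace it through $\KL\InclusionTrifun$: since trifunctors send pseudomonads to pseudomonads and $\InclusionTrifun$ sends $\acted$ to the arrow pseudomonad $\acted^\downarrow$, the contextad is carried to a pseudomonad in $\KL(\DispSpan\Paradise)$ on $\acted^\downarrow$, that is, a \emph{wreath} around $\acted^\downarrow$. Using the isomorphisms of \cref{thm:main.iso}, its underlying endo-$1$-cell is the span $\acted \from \actor \to \acted$ together with the intertwiner $\intertwiner_{(p,\action)}$ of \eqref{eqn:arrow.monads.intertwiner}, while its unit, multiplication, and coherence $3$-cells are the images under $\InclusionTrifun$ of the contextad data $\combineunit$, $\combine$, $\counitor$, $\coassociator$, $\leftunitlaw$, $\rightunitlaw$, $\associativitylaw$.

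Next I would apply $\wreath$. By the characterization above, $\wreath$ computes the Kleisli object of this wreath, which—following the formal (dual) theory of wreath products of Lack and Street—is the pseudomonad in $\DispSpan\Paradise$ on $\acted$ whose underlying span is the Kleisli composite $\actor \spancomp \acted^\downarrow$. This is exactly the pullback span $\acted \from \Ctx(\action)_1 \to \acted$ of \cref{sec:wreath.product.intro}, whose apex has as objects the contextful arrows $(P, f : A\action P \to B)$. The multiplication of the wreath product combines the wreath's multiplication $\combine$ with the arrow monad's multiplication $\multarrow_\acted$, mediated by the intertwiner through the horizontal composition \eqref{eqn:horizontal.composition.in.KL}; whiskering this out gives, on contexts, the combination $P \combine f^*Q$, and on contextful maps, the pull--push composite $g \circ (f \action Q) \circ \coassociator$. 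The unit reduces to $(\combineunit, \counitor)$, and the associator and unitors are the images of $\associativitylaw, \leftunitlaw, \rightunitlaw$. Matching these data against \cref{defn:ctx.dbl.cat} identifies the output as $\Ctx(\action)$.

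The main obstacle I anticipate is precisely this last explicit unwinding: showing that the abstractly defined wreath-product multiplication—a Kleisli object produced through the universal property—collapses to the concrete pull--push composition of \cref{defn:ctx.dbl.cat}. The calculation turns on tracking the intertwiner $\intertwiner_{(p,\action)}$, whose second component $\action(\lift_p(f))$ supplies the map $f \action Q$, through the composition \eqref{eqn:horizontal.composition.in.KL} in the doubly iterated Kleisli completion. Once the $1$-cell data is matched, the coherence $3$-cells (interchange, associativity, and unitality of $\Ctx(\action)$) follow formally, since $\wreath \circ \KL\InclusionTrifun$ is a trifunctor and hence transports the pseudomonad axioms of the contextad to those of the resulting double category.
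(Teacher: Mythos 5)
Your proposal is correct and takes essentially the same approach as the paper: the trifunctor is obtained by composing $\KL\InclusionTrifun$ (from \cref{thm:fSpan.tricat}) with the wreath product supplied by the universal property of the free Kleisli cocompletion, with the Lack--Street description of the wreath product taken for granted for pseudomonads (the paper records this as an explicit remark). The paper then carries out exactly the elementwise unwinding you flag as the main obstacle: it traces the multiplication $\actor \spancomp \acted^{\downarrow} \spancomp \actor \spancomp \acted^{\downarrow} \to \actor \spancomp \acted^{\downarrow}$ through the intertwiner (whose second component $\action(\lift_p(f))$ supplies $f \action Q$) and matches the resulting unit $(\combineunit, \counitor)$ and pull--push composition against \cref{defn:ctx.dbl.cat}.
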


\begin{rmk}
	We will take for granted that the wreath product of pseudomonads is given by essentially the same construction as the wreath product of monads.
	Indeed, if instead of Kleisli-cocompleting the tricategory $\DispSpan$, we Kleisli completed a 2-category $\Cb$ considered as a tricategory, then we must end up with the Lack and Street's construction because pseudomonads in a 2-category are simply monads in that 2-category, and so on (that is, all the 3-cells are equalities).
\end{rmk}

We can now see that the wreath product of such a wreath will be the pseudomonad in $\DispSpan\Paradise$ whose underlying span is $\acted \leftarrow \actor \spancomp \acted^{\downarrow} \to \acted$ equipped with unit $(\combineunit, \counitor) : \acted \to \actor \spancomp \acted^{\downarrow}$ and the following multiplication:
\begin{equation}
	\actor \spancomp \acted^{\downarrow} \spancomp \actor \spancomp \acted^{\downarrow}
	\xto{\actor \spancomp \intertwiner \spancomp \acted^{\downarrow}}
	\actor \spancomp \actor \spancomp \acted^{\downarrow} \spancomp \acted^{\downarrow}
	\xto{\actor \spancomp \actor \spancomp \multarrow}
	\actor \spancomp \actor \spancomp \acted^{\downarrow}
	\xto{(\combine, \coassociator) \spancomp \acted^{\downarrow} }
	\actor \spancomp \acted^{\downarrow} \spancomp \acted^{\downarrow}
	\xto{\actor \spancomp \multarrow}
	\actor \spancomp \acted^{\downarrow}
\end{equation}
Elementwise, this is:
\begin{eqalign}
\label{eqn:para.comp}
	\left(\lens{P}{A},\ A \action P \xto{f} B,\ \lens{Q}{B},\ B \action Q \xto{g} C\right)
	&\mapsto
	\left(\lens{P}{A},\ \lens{f^*Q}{A},\ \left(A \action P\right) \action f^* Q \xto{f \action Q} B \action Q,\ B \action Q \xto{g} C \right) \\
	&\mapsto
	\left(\lens{P}{A},\ \lens{f^*Q}{A},\ \left(A \action P\right) \action f^* Q \xto{f \action Q} B \action Q \xto{g} C \right) \\
	&\mapsto
	\left(\lens{P \combine f^*Q}{A},\ A \action \left(P \combine f^*Q\right) \xto{\coassociator} \left(A \action P\right) \action f^*Q,\ \left(A \action P\right) \action f^*Q \xto{f \action Q} B \action Q \xto{g} C \right)\\
	&\mapsto \left(\lens{P \combine f^*Q}{A},\ A \action \left(P \combine f^*Q\right) \xto{\coassociator} \left(A \action P\right) \action f^*Q \xto{f \action Q} B \action Q \xto{g} C \right)
\end{eqalign}

This is precisely the way morphisms in Kleisli, Span and Para constructions compose, and indeed the way we defined loose composition in the double category $\Ctx(\action)$ (\cref{defn:ctx.dbl.cat}).

This explains the action on objects of the trifunctor of \cref{thm:ctx.const.on.objects}.
But what about its action on morphisms?
Not all 1-cells in $\KL(\DispSpan)$ correspond to double functors.
This is a well-known fact (see \cite{lack_formal_2002,miranda_topics_2024}) and should be blamed on the lack of distinction between `tight' and `loose' 1-cells in the tricategory of spans we are using.
Indeed, such settings are better modelled by fibrant double categories (in our case, fibrant double 2-categories).
However, to recover the correct notion of functor between double categories it suffices to restrict to those 1-\nobreak{cells}  carried by spans whose left leg is an identity.
We follow \cite{miranda_topics_2024} in calling these spans \emph{functorial}, and we denote the locally full subtricategory they form by $\fun \subseteq \DispSpan$.

\begin{rmk}
	In unpacking definitons below, we refer back to \cref{rmk:trikleisli.unpacked}, but note that there we denote composition in algebraic order while here, since we apply $\KL$ to tricategories of spans, we use diagrammatic order composition instead.
\end{rmk}

\begin{notation}
\label{not:tight.maps}
	We adopt the notation of \cite[§2.4]{lack_formal_2002} and for $\mathsf{tight}$ a locally full subcategory\footnotemark~of $\Kc$, denote by $\KL(\Kc, \mathsf{tight})$ the wide locally full subtricategory of $\KL(\Kc)$ obtained by restricting the latter to those 1-cells $(f,\intertwiner_f)$ such that $f \in \mathsf{tight}$.
	\footnotetext{This means $\mathsf{tight}$ is a choice of 1-cells (containing identities and closed under composition) with all 2-cells and 3-cells between those being chosen too.}
\end{notation}

\subsubsection[Some double categorical properties of Ctx]{Some double categorical properties of $\Ctx(\action)$}
\label{sec:dbl.cat.ctx}

We quickly record two interesting facts about $\Ctx(\action)$ for a contextad $\action$.
The first is a consequence of the following general fact:

\begin{lem}
	Let $(t, \intertwiner_t)$ be an endomorphism of $(\dblcat A, s)$ in $\KL(\trplcat K)$.
	Then $s \wreath t$ is an $s$-$s$-bimodule.
\end{lem}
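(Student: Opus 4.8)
Write $s$ for the pseudomonad on $\dblcat A$, with unit $\unitmnd : \id_{\dblcat A} \twoto s$ and multiplication $\multmnd : ss \twoto s$, and let $(t, \intertwiner_t)$ be the given endomorphism, so that $\intertwiner_t : ts \twoto st$ is the intertwiner equipped with the invertible $3$-cells $\mapunitor$ and $\mapmultiplicator$ of \cref{rmk:trikleisli.unpacked} witnessing its compatibility with $\unitmnd$ and $\multmnd$. By the description of the wreath product given above, the underlying $1$-cell of $s \wreath t$ is the composite of $t$ followed by $s$, namely $st$ in algebraic order. The plan is to exhibit $st$ as an $s$-$s$-bimodule by endowing it with the \emph{free} left action coming from the monad $s$ and a right action built from the intertwiner, and then to verify the three families of laws. (Under the diagrammatic convention used for spans the two roles simply exchange.)

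First I would take the left action $\ell : s(st) \twoto st$ to be the associator of $\trplcat K$ followed by $\multmnd t$; this is the free left $s$-module structure on $st$, so its unit and associativity laws are immediate from the pseudomonad axioms for $s$ and use no properties of $t$. Next I would define the right action $r : (st)s \twoto st$ as the composite
\begin{equation}
	(st)s \iso s(ts) \xto{s\,\intertwiner_t} s(st) \iso (ss)t \xto{\multmnd t} st,
\end{equation}
the unlabelled isomorphisms being instances of the associator. Right unitality then follows by whiskering $\mapunitor$ (which identifies $\intertwiner_t \circ (t\,\unitmnd)$ with $\unitmnd t$) by $s$ and applying the right unit law of $s$, while right associativity follows from $\mapmultiplicator$ (which compares transporting two copies of $s$ across $t$ one at a time with transporting their product) together with associativity of $\multmnd$.

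The remaining condition is compatibility of the two actions on $s(st)s$. Labelling the three copies of $s$ as $s_1 s_2 t s_3$, the left action multiplies $s_1$ with $s_2$, whereas the right action transports $s_3$ across $t$ via $\intertwiner_t$ and multiplies the transported copy with $s_2$; carrying these out in either order multiplies all three (transported) copies of $s$, so the two composites agree by associativity of $\multmnd$ and the interchange law of $\trplcat K$. Assembling this, the structural data of the bimodule are built entirely from the invertible $3$-cells $\mapunitor$, $\mapmultiplicator$ and the coherence cells of the pseudomonad $s$, so $st$ becomes a (pseudo) $s$-$s$-bimodule.

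I expect the main obstacle to lie not in producing the action $2$-cells but in checking that the exhibited invertible $3$-cells satisfy the coherence (pentagon- and triangle-type) conditions required of a bimodule in a fully weak tricategory: one must paste the associators and unitors of $s$ against $\mapunitor$ and $\mapmultiplicator$ and confirm that the resulting $3$-cells cohere. As elsewhere in this work, I would discharge this not by direct computation but by transporting the required coherence from the ambient $\Gray$-category presentation, thereby reducing the check to the coherences of $\KL(\trplcat K)$ that have already been established.
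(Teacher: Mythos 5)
Your proposal is correct and follows essentially the same route as the paper's proof: you define the identical right action $(st)s \iso s(ts) \xto{s\intertwiner_t} s(st) \iso (ss)t \xto{\multmnd t} st$ and free left action $\multmnd t$, and your compatibility check (interchange of whiskering plus associativity of $\multmnd$) is exactly the paper's argument that the top square commutes by interchange and the bottom is filled by the pentagonator of $s$. The only difference is that you spell out the right unit/associativity laws via $\mapunitor$ and $\mapmultiplicator$, which the paper compresses into the remark that the Kleisli algebra structure is equivalent to a plain right $s$-algebra structure on $st$.
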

\begin{proof}
	The fact $(t, \intertwiner_t)$ is a 1-cell in $\KL(\trplcat K)$ means it is equipped with an $s$-Kleisli right $s$-algebra structure $\intertwiner_t:ts \to st$, as per \cref{rmk:trikleisli.unpacked}.
	This is equivalent to a plain right $s$-algebra structure on $st$, given by
	\begin{equation}
		r := (st)s \isoto s(ts) \xto{s\intertwiner_t} s(st) \isoto (ss)t \xto{\multmnd t} st
	\end{equation}
	On the left, we can just use the free left $s$-algebra on $t$:
	\begin{equation}
		l := s(st) \isoto (ss)t \xto{\multmnd t} st
	\end{equation}
	If we work in the strictification of the relevant hom-2-category (that is, ignoring the associators for a moment), we can see why these two structures are compatible:
	\begin{equation}
		\begin{tikzcd}[ampersand replacement=\&, row sep=scriptsize]
			\& ssts \\
			ssst \&\& sts \\
			sst \&\& sst \\
			\& st
			\arrow["{ss\intertwiner_t}"', from=1-2, to=2-1]
			\arrow["{\multmnd ts}", from=1-2, to=2-3]
			\arrow["{s\multmnd t}"', from=2-1, to=3-1]
			\arrow["{\multmnd st}"{description}, dashed, from=2-1, to=3-3]
			\arrow["{s\intertwiner_t}", from=2-3, to=3-3]
			\arrow["\sim", shorten <=35pt, shorten >=35pt, shift right=2, Rightarrow, from=3-1, to=3-3]
			\arrow["{\multmnd t}"', from=3-1, to=4-2]
			\arrow["{\multmnd t}", from=3-3, to=4-2]
		\end{tikzcd}
	\end{equation}
	The top square commutes by interchange, and the bottom square is filled by the pentagonator of $s$.
	We conclude $st$ is an $s$-$s$-bimodule.
\end{proof}

When $\trplcat K = \DispSpan\Paradise$ and $s$ is the arrow monad of $\acted$ and $t$ is a contextad on $\acted$ $\fibcolaxaction$, the bimodule\footnotemark~structure of $\acted \from \Ctx(\action)_1 \to \acted$ is given by:
\footnotetext{Since $\fibcolaxaction$ is not just an endomorphism in $\DispSpan$ but a whole pseudomonad, the bimodule we get inherits such a structure, meaning $\Ctx(\action)$ is a pseudocategory---but we know this already.}
\begin{eqalign}
	\acted^\downarrow \spancomp \Ctx(\action)_1 \spancomp \acted^\downarrow &\longto \Ctx(\action_1)\\
	\left(A \xto{f} B,\ B \action P \xto{g} C,\ C \xto{h} D\right) &\longmapsto A \action f^*P \xto{f \action P} B \action P \xto{g} C \nto{h} D.
\end{eqalign}
Thus the span underlying every pseudocategory obtained as a $\Ctx$ construction is in fact a two-sided fibration \cite{vonglehnPolynomialsModelsType2015}, and that is an abstract characterization of those double categories which have companions:

\begin{thm}
\label{thm:ctx.has.companions}
	Let $\fibcolaxaction$ be a contextad on a category.
	All maps in $\Ctx(\action)$ have companions.
\end{thm}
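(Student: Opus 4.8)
The plan is to read companions directly off the two-sided fibration structure established immediately above, and then exhibit them concretely. Recall from the preceding lemma that the span $\acted \leftarrow \Ctx(\action)_1 \to \acted$ underlying $\Ctx(\action)$ carries an $\acted^\downarrow$-$\acted^\downarrow$-bimodule structure, and that the right $\acted^\downarrow$-action, spelled out in the displayed formula just before the theorem, is post-composition of a contexful arrow by a tight map. By the left–right dual of \cref{lem:fspans.are.algebras} (a right $\acted^\downarrow$-module structure on a span is precisely an opfibration structure on its right leg), this makes the target projection $t : \Ctx(\action)_1 \to \acted$ an opfibration. The standard double-categorical fact — which is exactly the content of the characterization of companions via two-sided fibrations \cite{vonglehnPolynomialsModelsType2015} — is that a tight map has a companion precisely when the loose identity on its domain admits an opcartesian lift along it relative to $t$. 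Since $t$ is an opfibration, all such lifts exist, so all maps have companions.

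Second, I would make this witness explicit to confirm it matches the weak structure of $\Ctx(\action)$. For a tight map $h : A \to B$, its companion is the opcartesian lift of $h$ at the loose identity $(\combineunit_A, \counitor_A)$, namely the contexful arrow
\[
\widehat{h} := \left(\combineunit_A,\ A \action \combineunit_A \xrightarrow{\counitor_A} A \xrightarrow{h} B\right) : A \looseto B.
\]
The two binding squares are then forced: the first has underlying grade-map $\id_{\combineunit_A} : \combineunit_A \to \combineunit_A$ over $\id_A$ and commutes on the nose since $h\counitor_A = h \circ \counitor_A$; the second has underlying grade-map the canonical arrow $\combineunit_A \iso h^*\combineunit_B \xrightarrow{\lift_p(h)} \combineunit_B$ over $h$, built from the cartesianness witness $\combineunitcart$ of the unit together with the cartesian lift of $h$, and commutes by naturality of $\counitor$.

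Third, I would verify the two companion identities. The tight (vertical) composite of the two binding cells is the unit square of $h$ by construction. The loose (horizontal) composite must equal the identity $2$-cell on $\widehat{h}$; here the loose composites $(\combineunit_A,\counitor_A) \then \widehat{h}$ and $\widehat{h} \then (\combineunit_B,\counitor_B)$ are only isomorphic to $\widehat{h}$ through the unitors $\leftunitlaw$ and $\rightunitlaw$, so this identity reduces precisely to the unitor coherence conditions of the contextad, in particular \cref{eqn:ctx.rightunitlaw.3cell.cond} and the compatibility of $\leftunitlaw, \rightunitlaw$ with $\counitor$ recorded in \cref{defn:colax.fibred.action}.

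The main obstacle is bookkeeping the variance and the weak unitors: one must correctly single out the right $\acted^\downarrow$-module — not the left one, which instead yields conjoints — as the source of companions, and then check that the non-strict unit isomorphisms of $\Ctx(\action)$ line up with the companion equations. Both difficulties dissolve on the abstract route, where the relevant coherence is inherited for free from the pseudomonad structure of the contextad through the two-sided fibration characterization; the explicit computation above is then only needed to name the companion $\widehat{h}$ and its binding cells.
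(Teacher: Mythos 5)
Your proposal is correct in its essentials and follows the same route as the paper: the bimodule lemma immediately preceding the theorem, the resulting two-sided fibration structure on $\acted \from \Ctx(\action)_1 \to \acted$ together with the characterization of companions via two-sided fibrations from \cite{vonglehnPolynomialsModelsType2015}, and the identical explicit witness $h_\top = (\combineunit_A,\, h\counitor_A)$ with the same binding cells (the identity $\id_{\combineunit_A}$ and the unit section evaluated at $h$).

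Two points need correcting, one of them a genuine conceptual error. Your closing remark that the left $\acted^\downarrow$-action ``instead yields conjoints'' is false. The left action is restriction, $(f, (P,g)) \mapsto (f^*P,\ g \circ (f \action P))$, which makes the \emph{source} leg a \emph{fibration} with target-vertical cartesian lifts; pulling the loose identity $(\combineunit_B, \counitor_B)$ back along $h : A \to B$ yields $(h^*\combineunit_B,\ \counitor_B \circ (h \action \combineunit_B)) \iso (\combineunit_A,\, h\counitor_A)$ --- the companion again, not a conjoint. A conjoint of $h$ is a loose arrow $B \topro A$, whose existence would require the source leg to be an \emph{op}fibration (equivalently the target leg a fibration), and neither structure is present; indeed the paper notes right after the theorem that conjoints exist only for \emph{invertible} tight maps, so if your remark were true every $\Ctx(\action)$ would be an equipment, which is false. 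Separately, the step ``since $t$ is an opfibration, all such lifts exist, so all maps have companions'' is too quick as stated: a bare opfibration structure on $t$ gives an opcartesian square $\eta : \id_A \Rightarrow m$ with $t(\eta) = h$ but with no control over $s(\eta)$, whereas a companion unit needs $s(\eta) = \id_A$. What makes the argument work is exactly the two-sidedness --- the right action is post-composition, so its opcartesian lifts are $s$-vertical --- and your explicit computation of the binding cells and companion identities then closes the argument. Ironically, the variance bookkeeping you identify as the main obstacle is the one place where the proposal slips.
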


Explicitly, the companion $f_\top$ of a map $f:A \to B$ in $\acted$ is $(\combineunit, f \counitor)$.
The unit and counit of the companionship are given by
\begin{equation}
	\begin{tikzcd}[ampersand replacement=\&]
		A \& B \\
		B \& B
		\arrow[""{name=0, anchor=center, inner sep=0}, "{(\combineunit, f\counitor)}", "\shortmid"{marking}, from=1-1, to=1-2]
		\arrow[""{name=0p, anchor=center, inner sep=0}, phantom, from=1-1, to=1-2, start anchor=center, end anchor=center]
		\arrow["f"', from=1-1, to=2-1]
		\arrow[Rightarrow, no head, from=1-2, to=2-2]
		\arrow[""{name=1, anchor=center, inner sep=0}, "\shortmid"{marking}, Rightarrow, no head, from=2-1, to=2-2]
		\arrow[""{name=1p, anchor=center, inner sep=0}, phantom, from=2-1, to=2-2, start anchor=center, end anchor=center]
		\arrow["{\overline\counit}", shorten <=4pt, shorten >=4pt, Rightarrow, from=0p, to=1p]
	\end{tikzcd}
	\qquad\qquad
	\begin{tikzcd}[ampersand replacement=\&]
		A \& A \\
		A \& B
		\arrow[""{name=0, anchor=center, inner sep=0}, "\shortmid"{marking}, Rightarrow, no head, from=1-1, to=1-2]
		\arrow[Rightarrow, no head, from=1-1, to=2-1]
		\arrow["f", from=1-2, to=2-2]
		\arrow[""{name=1, anchor=center, inner sep=0}, "{(\combineunit, f\counitor)}"', "\shortmid"{marking}, from=2-1, to=2-2]
		\arrow["{\overline\unit}", shorten <=4pt, shorten >=4pt, Rightarrow, from=0, to=1]
	\end{tikzcd}
\end{equation}
where $\overline\counit$ and $\overline\unit$ are the identity morphisms $\combineunit \equalto \combineunit$.

Moreover, if $f:A \to B$ is invertible then $f^{-1}_\top$ is a conjoint, as witnessed by the squares
\begin{equation}
	\begin{tikzcd}[ampersand replacement=\&]
		A \& A \\
		A \& B
		\arrow[""{name=0, anchor=center, inner sep=0}, "{(\combineunit, f^{-1}\counitor)}", "\shortmid"{marking}, from=1-1, to=1-2]
		\arrow[Rightarrow, no head, from=1-1, to=2-1]
		\arrow["f", from=1-2, to=2-2]
		\arrow[""{name=1, anchor=center, inner sep=0}, "\shortmid"{marking}, Rightarrow, no head, from=2-1, to=2-2]
		\arrow["{\overline\counit}"', shorten <=4pt, shorten >=4pt, Rightarrow, from=0, to=1]
	\end{tikzcd}
	\qquad\qquad
	\begin{tikzcd}[ampersand replacement=\&]
		A \& A \\
		A \& B
		\arrow[""{name=0, anchor=center, inner sep=0}, "\shortmid"{marking}, Rightarrow, no head, from=1-1, to=1-2]
		\arrow["f"', from=1-1, to=2-1]
		\arrow[Rightarrow, no head, from=1-2, to=2-2]
		\arrow[""{name=1, anchor=center, inner sep=0}, "{(\combineunit, f^{-1}\counitor)}"', "\shortmid"{marking}, from=2-1, to=2-2]
		\arrow["{\overline\unit}", shorten <=4pt, shorten >=4pt, Rightarrow, from=0, to=1]
	\end{tikzcd}
\end{equation}
which are also filled by identities.

\subsubsection{The tricategory of contextads}
We now give definitions of morphisms, transformations and modifications of contextads by unpacking the definition of 1-, 2- and 3-cell in $\KL(\FibSpan, \fun)\co$ but limited to those 1-cells which are carried by left-trivial spans (i.e.~whose left leg is an identity).
To understand why we are dualizing 2-cells, see \cref{rmk:pesky.co}.
For a comparison of these morphisms with those of comonad and plain actions of monoidal categories, see \cref{ex:maps.comonads,ex:maps.actegories}.

Throughout this section we fix two contextads $\fibcolaxaction$ and $\fibcolaxaction[']$ which we will abbreviate as just $\action$ and $\action'$.

\begin{defn}
\label{defn:contextad.1-cell}
	A \textbf{morphism of contextads} $\underline F:\action\ \longto\ \action$ is given by the following data:
	\begin{enumerate}
		\item A \textbf{map on contexts}
		\begin{equation}
			F:\acted \to \acted'
		\end{equation}
		\item A \textbf{map on extensions} $F^\flat:\actor \to \actor'$, forming with $F$ a cartesian square\footnotemark~with components
		\begin{equation}
			F^\flat_A : \actor_A \to \actor'_{FA},
		\end{equation}
		\footnotetext{As in: a map of fibrations over different bases.}
		\item A \textbf{lineator} $\lineator:\action'F^\flat \twoto F\action$, with components
		\begin{equation}
			\lineator_{{P \choose A}} : FA \action' F^\flat_A P \to F(A \action P).
		\end{equation}
	\end{enumerate}
	These three objects fit in the diagram below:
	\begin{equation}
	\label{eqn:morph.of.fib.colax.actions}
		\begin{tikzcd}[ampersand replacement=\&]
			\acted \&\& \actor \&\& \acted \\
			{\acted'} \&\& {\actor'} \&\& {\acted'}
			\arrow["F"', dashed, from=1-1, to=2-1]
			\arrow["p"', two heads, from=1-3, to=1-1]
			\arrow["\action", from=1-3, to=1-5]
			\arrow["{F^\flat}"', dashed, from=1-3, to=2-3]
			\arrow["F", dashed, from=1-5, to=2-5]
			\arrow["\lineator", shorten <=9pt, shorten >=14pt, Rightarrow, dashed, from=2-3, to=1-5]
			\arrow["{p'}", two heads, from=2-3, to=2-1]
			\arrow["{\action'}"', from=2-3, to=2-5]
		\end{tikzcd}
	\end{equation}
	\begin{enumerate}[resume]
		\item An invertible \textbf{unitor} $\mapunitor_A:F^\flat_A(\combineunit) \isoto \combineunit'_{FA}$ fitting in the diagram below left, which amounts to say $\mapunitor$ is $p'$-vertical and its components make the diagrams below right commute for each $A \in \acted$:
		      \begin{equation}
			      \label{eqn:unit.of.lineator}
			      \begin{tikzcd}[ampersand replacement=\&,sep=scriptsize]
				      \&\&\&\& \acted \\
				      \acted \&\&\& \actor \&\&\& \acted \\
				      \\
				      \&\&\&\& {\acted'} \\
				      \\
				      {\acted'} \&\&\& {\actor'} \&\&\& {\acted'}
				      \arrow["p"{description}, two heads, from=2-4, to=2-1]
				      \arrow["F"{description}, from=2-1, to=6-1]
				      \arrow["F"{description}, from=2-7, to=6-7]
				      \arrow["{p'}"{description}, two heads, from=6-4, to=6-1]
				      \arrow["{\action'}"{description}, from=6-4, to=6-7]
				      \arrow["{F^\flat}"{description}, from=2-4, to=6-4]
				      \arrow[""{name=0, anchor=center, inner sep=0}, "\combineunit"{description}, from=1-5, to=2-4]
				      \arrow[""{name=0p, anchor=center, inner sep=0}, phantom, from=1-5, to=2-4, start anchor=center, end anchor=center]
				      \arrow[""{name=1, anchor=center, inner sep=0}, "{\combineunit'}"{description}, from=4-5, to=6-4]
				      \arrow[""{name=1p, anchor=center, inner sep=0}, phantom, from=4-5, to=6-4, start anchor=center, end anchor=center]
				      \arrow["F"{description, pos=0.7}, from=1-5, to=4-5]
				      \arrow[Rightarrow, no head, from=1-5, to=2-1]
				      \arrow[Rightarrow, no head, from=4-5, to=6-1]
				      \arrow[""{name=2, anchor=center, inner sep=0}, Rightarrow, no head, from=4-5, to=6-7]
				      \arrow[""{name=3, anchor=center, inner sep=0}, Rightarrow, no head, from=1-5, to=2-7]
				      \arrow["\mapunitor"', shorten <=7pt, shorten >=7pt, Rightarrow, dashed, from=0p, to=1p]
				      \arrow["\action"{description}, from=2-4, to=2-7]
				      \arrow["\counitor"{description}, shorten <=13pt, shorten >=13pt, Rightarrow, from=2-4, to=3]
				      \arrow["{\counitor'}"{description}, shorten <=12pt, shorten >=6pt, Rightarrow, from=6-4, to=2]
				      \arrow["\lineator"{pos=0.7}, shift right=4, shorten <=11pt, shorten >=11pt, Rightarrow, from=6-4, to=2-7]
			      \end{tikzcd}
			      \qquad\qquad\qquad
			      \begin{tikzcd}[ampersand replacement=\&]
				      {FA \action' F^\flat \combineunit_A} \&\& {FA\action' \combineunit'_{FA}} \\
				      {F(A \action \combineunit_A)} \&\& FA
				      \arrow["{FA \action' \mapunitor}", from=1-1, to=1-3]
				      \arrow["\lineator"', from=1-1, to=2-1]
				      \arrow["{\counitor'}", from=1-3, to=2-3]
				      \arrow["{F\counitor}"', from=2-1, to=2-3]
			      \end{tikzcd}
		      \end{equation}
		\item An invertible \textbf{multiplicator} $\mapmultiplicator_{{P \choose A}, {Q \choose A \action P}} : F^\flat_A(P \combine Q) \isoto F^\flat_A P \combine' \lineator^* F^\flat_{A \action P} Q$ fitting in the diagram below left, which amounts to say $\mapmultiplicator$ is $p'$-vertical and its components make the diagrams below right commute for each pair $\lens{P}{A}, \lens{Q}{A \action P} \in \actor \spancomp \actor$:
		      \begin{equation}
			      \label{eqn:ass.of.lineator}
			      \begin{tikzcd}[ampersand replacement=\&,sep=scriptsize]
				      \&\&\&\& \actor \\
				      \acted \&\&\& {\actor \spancomp \actor} \&\&\& \acted \\
				      \\
				      \&\&\&\& {\actor'} \\
				      \\
				      {\acted'} \&\&\& {\actor' \spancomp \actor'} \&\&\& {\acted'}
				      \arrow["p"{description}, two heads, from=2-4, to=2-1]
				      \arrow["\action"{description}, from=2-4, to=2-7]
				      \arrow["F"{description}, from=2-1, to=6-1]
				      \arrow["F"{description}, from=2-7, to=6-7]
				      \arrow["{p'p'}"{description}, two heads, from=6-4, to=6-1]
				      \arrow["{\action'\action'}"{description}, from=6-4, to=6-7]
				      \arrow["{\lineator \spancomp_\lineator \lineator}"'{pos=0.7}, shift right=4, shorten <=12pt, shorten >=12pt, Rightarrow, from=6-4, to=2-7]
				      \arrow["{F^\flat \spancomp_\lineator F^\flat}"{description}, from=2-4, to=6-4]
				      \arrow[""{name=0, anchor=center, inner sep=0}, "\combine"{description}, from=2-4, to=1-5]
				      \arrow[""{name=0p, anchor=center, inner sep=0}, phantom, from=2-4, to=1-5, start anchor=center, end anchor=center]
				      \arrow[""{name=1, anchor=center, inner sep=0}, "{\combine'}"{description}, from=6-4, to=4-5]
				      \arrow[""{name=1p, anchor=center, inner sep=0}, phantom, from=6-4, to=4-5, start anchor=center, end anchor=center]
				      \arrow["{F^\flat}"{description, pos=0.7}, from=1-5, to=4-5]
				      \arrow["p"{description}, two heads, from=1-5, to=2-1]
				      \arrow["{p'}"{description}, two heads, from=4-5, to=6-1]
				      \arrow[""{name=2, anchor=center, inner sep=0}, "{\action'}"{description}, from=4-5, to=6-7]
				      \arrow[""{name=3, anchor=center, inner sep=0}, "\action"{description}, from=1-5, to=2-7]
				      \arrow["\lineator", Rightarrow, from=4-5, to=2-7]
				      \arrow["{\mapmultiplicator}"', shorten <=7pt, shorten >=7pt, Rightarrow, dashed, from=0p, to=1p]
				      \arrow["\coassociator"{description}, shorten <=11pt, shorten >=11pt, Leftarrow, from=2-4, to=3]
				      \arrow["{\coassociator'}"{description}, shorten <=14pt, shorten >=7pt, Leftarrow, from=6-4, to=2]
			      \end{tikzcd}
			      \qquad\quad
			      \begin{tikzcd}[ampersand replacement=\&, column sep=small]
				      {FA \action' F^\flat_A (P \combine Q)} \&[-5ex]\&[-5.5ex] {FA \action' (F^\flat_A P \combine' \lineator^*F^\flat_{A \action P}Q)} \\
				      {F(A \action (P \combine Q))} \&\& {(FA \action' F^\flat_A P) \action' \lineator^*F^\flat_{A \action P}Q} \\
				      \& {F((A \action P) \action Q)}
				      \arrow["{\lineator \spancomp_\lineator \lineator}", from=2-3, to=3-2]
				      \arrow["F\coassociator"', from=2-1, to=3-2]
				      \arrow["{\lineator}"', from=1-1, to=2-1]
				      \arrow["{\coassociator'}", from=1-3, to=2-3]
				      \arrow["{FA \action' \mapmultiplicator}", from=1-1, to=1-3]
			      \end{tikzcd}
		      \end{equation}
	\end{enumerate}
	Additionally, the unitor and associator satisfy triangular and pentagonal coherence laws relative to the unitors and associators of $\action$ and $\action'$, which can be found in \cite[Remark~7.2.1]{miranda_topics_2024}.
\end{defn}

\begin{rmk}
	Note the fact $(F, F^\flat)$ is a map of fibrations means $F^\flat$ comes with cartesianator, thus the property-like structure of a natural family of isomorphisms indexed by maps $h:A \to A'$ in $\acted$:
	\begin{equation}
		\cartesianator^{F^\flat} : (Fh)^*(F^\flat_A P) \iso F^\flat_{A'}(h^*P)
	\end{equation}
\end{rmk}

\begin{rmk}
	This kind of morphism might be considered a categorified `1-cocycle' or `crossed homomorphism', from group cohomology.
	Specifically, $F^\flat$ is like a 1-cocycle of $\actor$ with coefficients in $\actor'$, where the cocycle equation is reified by the multiplicator $\mapmultiplicator$.
\end{rmk}

\begin{notation}
	We call \textbf{strong} (resp. \textbf{strict}) a morphism of contextads whose lineator is invertible (resp. an identity).
	Note there is also the unitor-multiplicator comparisons which could be another dimension to strictify or laxify.
\end{notation}

Next, we go to 2-morphisms.
Transformations of contextads are basically contextful natural transformations.

\begin{defn}
\label{defn:contextad.2-cell}
	Let $\underline F, \underline G : \action \to \action'$ be morphisms of contextad.
	A \textbf{transformation of contextads} $\underline \tau : \underline F \twoto \underline G$ consists of
	\begin{enumerate}
		\item a pair of 2-cells $(e, \tau)$, called the \textbf{extension} and the \textbf{transformation}, dashed in the diagram below left:
		      \begin{equation}
			      \begin{tikzcd}[ampersand replacement=\&]
				      \& \acted \\
				      {\acted'} \& {\actor'} \& {\acted'}
				      \arrow["F"', curve={height=12pt}, from=1-2, to=2-1]
				      \arrow["e"', dashed, from=1-2, to=2-2]
				      \arrow[""{name=0, anchor=center, inner sep=0}, "G", curve={height=-12pt}, from=1-2, to=2-3]
				      \arrow["{p'}", two heads, from=2-2, to=2-1]
				      \arrow["{\action'}"', from=2-2, to=2-3]
				      \arrow["{\tau}", shorten <=2pt, shorten >=4pt, Rightarrow, dashed, from=2-2, to=0]
			      \end{tikzcd}
			      \qquad\qquad
			      e_A \in \actor'_{FA}, \quad
			      \tau_A : FA \action' e_A \longto GA
		      \end{equation}
		\item a $p'$-vertical invertible 3-cell, the \textbf{naturator}:
		\begin{equation}
			\begin{tikzcd}[ampersand replacement=\&]
				\actor \&\& {\actor' \spancomp \actor'} \\
				{\actor' \spancomp \actor'} \&\& {\actor'}
				\arrow["{(F^\flat\!,\ \pull \lineator^F)}", from=1-1, to=1-3]
				\arrow["{e \spancomp_{\tau} G^\flat}"{description}, from=1-1, to=2-1]
				\arrow["{\naturator}", shorten <=10pt, shorten >=14pt, Rightarrow, dashed, to=1-3, from=2-1]
				\arrow["{\combine'}"', from=1-3, to=2-3]
				\arrow["{\combine'}"{description}, from=2-1, to=2-3]
			\end{tikzcd}
			\qquad
			\naturator : e_A \combine' \tau^*G^\flat_A P \isoto F^\flat_AP\combine'{\lineator^F}^*e_{G(A \action P)}
		  \end{equation}
			where $\lineator^F$ denotes the lineator of $F$, which makes the following diagrams commute for each $\lens{P}{A} \in \actor$:
			\begin{equation}
			\label{eqn:coherence.of.naturator}
				\begin{tikzcd}[ampersand replacement=\&]
					{FA \action' (e_A \combine' \tau^* G^\flat_A P)} \&\& {FA \action' (F^\flat_A P \combine' {\lineator^F}^*e_{A \action P})} \\
					{(FA \action' e_A) \action' ({\tau}^* G^\flat_A P)} \&\& {(FA \action' F^\flat_A P) \action' {\lineator^F}^* e_{A \action P}} \\
					{GA \action' G^\flat_A P} \&\& {F(A \action P) \action' e_{A \action P}} \\
					\& {G(A \action P)}
					\arrow["{FA \action' \naturator}", dashed, from=1-1, to=1-3]
					\arrow["{\coassociator'}"', from=1-1, to=2-1]
					\arrow["{\coassociator'}", from=1-3, to=2-3]
					\arrow["{e_A \action' G^\flat_A P}"', from=2-1, to=3-1]
					\arrow["{\lift \lineator^F}", from=2-3, to=3-3]
					\arrow["{\lineator^G_{{P \choose A}}}"', from=3-1, to=4-2]
					\arrow["{\tau_{A \action P}}", from=3-3, to=4-2]
				\end{tikzcd}
			\end{equation}
	\end{enumerate}
	Moreover, $\underline \tau$ satisfies compatibility with unitor and associator of $F$ and $G$, in the form of the two equations appearing in \cite[Remark~7.2.5]{miranda_topics_2024}.
\end{defn}


Finally, 3-cells maps between the context extensions of the boundary transformations:

\begin{defn}
\label{defn:contextad.3-cell}
	Let $\underline \tau =(e,\tau)$, $\underline \sigma = (f, \sigma)$ be transformations of contextads $\underline F \twoto \underline G : \action \to \action'$.
	A \textbf{modification of contextads} $\omega:\underline \tau \threeto \underline \sigma$ is a $p'$-vertical 2-cell in $\Cosmos$ that fits in the commutative diagram below:
	\begin{equation}
		\begin{tikzcd}[ampersand replacement=\&]
			\&\& \acted \\
			\\
			{\acted'} \&\& {\actor'} \&\& {\acted'}
			\arrow["F"'{pos=0.6}, curve={height=24pt}, from=1-3, to=3-1]
			\arrow[""{name=0, anchor=center, inner sep=0}, "e"'{pos=0.2}, curve={height=18pt}, from=1-3, to=3-3]
			\arrow[""{name=0p, anchor=center, inner sep=0, pos=0.4}, phantom, from=1-3, to=3-3, start anchor=center, end anchor=center, curve={height=18pt}]
			\arrow[""{name=1, anchor=center, inner sep=0}, "f"{pos=0.9}, curve={height=-18pt}, from=1-3, to=3-3]
			\arrow[""{name=1p, anchor=center, inner sep=0, pos=0.7}, phantom, from=1-3, to=3-3, start anchor=center, end anchor=center, curve={height=-18pt}]
			\arrow[""{name=2, anchor=center, inner sep=0}, "G"{pos=0.6}, curve={height=-24pt}, from=1-3, to=3-5]
			\arrow[""{name=2p, anchor=center, inner sep=0,pos=0.7}, phantom, from=1-3, to=3-5, start anchor=center, end anchor=center, curve={height=-24pt}]
			\arrow["{{p'}}", two heads, from=3-3, to=3-1]
			\arrow["{{\action'}}"', from=3-3, to=3-5]
			\arrow["\omega"', curve={height=2pt}, shorten <=4pt, Rightarrow, dashed, from=0p, to=1p]
			\arrow["\tau", curve={height=-10pt}, shorten <=6pt, shorten >=7pt, Rightarrow, from=0p, to=2p]
			\arrow["\sigma"'{pos=0.4}, curve={height=6pt}, shorten <=6pt, shorten >=9pt, Rightarrow, from=1p, to=2p]
		\end{tikzcd}
		\qquad\qquad
		\omega_A : e_A \to f_A \in \actor'_{FA}
	\end{equation}
	This means, specifically, that the triangles
	\begin{equation}
		\begin{tikzcd}[ampersand replacement=\&]
			{FA \action' e_A} \&\& {FA \action' f_A} \\
			\& GA
			\arrow["{FA \action' \omega_A}", from=1-1, to=1-3]
			\arrow["\tau"', from=1-1, to=2-2]
			\arrow["\sigma", from=1-3, to=2-2]
		\end{tikzcd}
	\end{equation}
	commute.
	Additionally, it must commute with the naturators of $\underline \tau$ and $\underline \sigma$ (corresponding to the equation appearing in \cite[Remark~7.2.6]{miranda_topics_2024}):
	\begin{equation}
		\begin{tikzcd}[ampersand replacement=\&]
			{e_A \combine' \tau^* G^\flat P} \&\& {F^\flat P \combine' {\lineator^F}^*(e_{A \action P})} \\
			{f_A \combine' \sigma^* G^\flat P} \&\& {F^\flat P \combine' {\lineator^F}^*(f_{A \action P})}
			\arrow["{\omega_A \combine' \omega_A^* G^\flat P}"', from=1-1, to=2-1]
			\arrow["\naturator^\sigma"', from=2-1, to=2-3]
			\arrow["{G^\flat P \combine' {\lineator^G}^*\omega_{A \action P}}", from=1-3, to=2-3]
			\arrow["\naturator^\tau", from=1-1, to=1-3]
		\end{tikzcd}
	\end{equation}
	where $\lineator^F$ denotes the lineator of $F$ and $\naturator^\tau$/$\naturator^\sigma$ denote the respective naturators.
\end{defn}

\begin{defn}
\label{defn:tricat.of.colax.fibred.actions}
	We define the \textbf{tricategory of contextads on categories} in the paradise $\Paradise$:
	\begin{equation}
		\Cxd\Paradise := \KL(\FibSpan^\twoto\Paradise,\,\fun)\co.
	\end{equation}
	Concretely, its objects are contextads on categories (\cref{defn:colax.fibred.action}), its 1-cells are morphisms of contextads (\cref{defn:contextad.1-cell}), its 2-cells are transformations of contextads (\cref{defn:contextad.2-cell}), and its 3-cells are modifications of contextads (\cref{defn:contextad.3-cell}).
\end{defn}

\begin{rmk}
\label{rmk:pesky.co}
	The reason we take the local opposite (i.e.~we formally reverse 2-cells, the operation denoted as $(-)\co$) is to match the usual direction of natural transformations.
	We thus correct a gnarly mismatch in direction between \cite{lack_formal_2002}, \cite{miranda_topics_2024} and this work.
	Specifically, we chose to define arrow monads with left leg corresponding to domain and right leg to codomain.
	This choice leads, in turn, to consider pseudomonads in $\DispSpan\Paradise$ as internal categories oriented in the same way: their left leg displays the source, the right leg projects the target.
	However, employing this choice, the 2-cells in $\KL(\DispSpan\Paradise)$ between functorial morphisms of pseudomonads correspond to natural transformations in the opposite direction than what we expect (i.e.~a 2-cell $f \twoto g$ corresponds to a natural transformation $g \twoto f$).%
	\footnote{The reader should note that on \cite[254]{lack_formal_2002} the authors have missed this reversal. Instead, \cite{miranda_topics_2024} uses the different convention on the orientation of arrow monads which avoids this issue, though a probable misprint in Proposition~7.3.6 lead us to believe otherwise.}
\end{rmk}

\subsubsection{Examples of morphisms of contextads}
\begin{ex}
\label{ex:maps.comonads}
	Let $\Comonad$ be a comonad on $\acted$, and $\Comonad'$ a comonad on $\acted'$.
	A map of comonads $\underline F: \Comonad \to \Comonad'$ is a functor $F:\acted \to \acted'$ and a natural transformation $\lineator : \Comonad'F \twoto F\Comonad$ that commutes with the counits and comultiplication of the comonads:
	\begin{equation}
	\label{eqn:coh-comonad-mors}
		\begin{tikzcd}[ampersand replacement=\&,sep=scriptsize]
			{\Comonad' F} \&\& {F\Comonad} \\
			\& F
			\arrow["\lineator", from=1-1, to=1-3]
			\arrow["{\counit'}", from=2-2, to=1-1]
			\arrow["{F\counit}"', from=2-2, to=1-3]
		\end{tikzcd}
		\qquad\qquad
		\begin{tikzcd}[ampersand replacement=\&,sep=scriptsize]
			{\Comonad'\Comonad'F} \& {\Comonad'F\Comonad} \& {F\Comonad\Comonad} \\
			{\Comonad'F} \&\& {F\Comonad}
			\arrow["{\Comonad'\lineator}", from=1-1, to=1-2]
			\arrow["{\lineator\Comonad}", from=1-2, to=1-3]
			\arrow["{\coassociator'}", from=2-1, to=1-1]
			\arrow["\lineator"', from=2-1, to=2-3]
			\arrow["{F\coassociator}"', from=2-3, to=1-3]
		\end{tikzcd}
	\end{equation}
	We can see how \cref{defn:contextad.1-cell} instantiated for the contextads corresponding to $\Comonad$ and $\Comonad'$ (\cref{ex:comonad}) recovers this definition.
	First notice that the pair $(F, \lineator)$ is what remains of the data of such a morphism:
	\begin{equation}
		\begin{tikzcd}[ampersand replacement=\&]
			\acted \& \acted \& \acted \\
			{\acted'} \& {\acted'} \& {\acted'}
			\arrow["F"', dashed, from=1-1, to=2-1]
			\arrow[Rightarrow, no head, from=1-2, to=1-1]
			\arrow["\Comonad", from=1-2, to=1-3]
			\arrow["F"', dashed, from=1-2, to=2-2]
			\arrow["F", dashed, from=1-3, to=2-3]
			\arrow["\lineator", shorten <=4pt, shorten >=6pt, Rightarrow, dashed, from=2-2, to=1-3]
			\arrow[Rightarrow, no head, from=2-2, to=2-1]
			\arrow["{\Comonad'}"', from=2-2, to=2-3]
		\end{tikzcd}
	\end{equation}
	Second, the unitor and multiplicator must be trivial since they are $\id$-vertical.
	This, in turn, makes \cref{eqn:unit.of.lineator,eqn:ass.of.lineator} collapse to the ones in \eqref{eqn:coh-comonad-mors}, while the rest trivialize completely.
\end{ex}

\begin{ex}
\label{ex:maps.actegories}
	Let $\acted \nfrom{\pi_{\acted}} \acted \times \actor \nto{\action} \acted$ and $\acted' \nfrom{\pi_{\acted'}} \acted' \times \actor' \nto{\action'} \acted'$ be actions of monoidal categories, as in \cref{ex:actegory}.
	A \emph{lax linear functor} \cite[3.5.9]{capucci2022actegories} between them is a triple $(R,F,\lineator)$ of a monoidal functor $R : \acted \to \acted'$, a functor $F:\acted \to \acted'$ and a natural transformation $\lineator : \action'(F,R) \twoto F\action$, the latter of which satisfies compatibility axioms with the unitor and multiplicator of the actions.

	This data can be used to instantiate \cref{defn:contextad.1-cell}:
	\begin{equation}
		\begin{tikzcd}[ampersand replacement=\&]
			\acted \& {\acted \times \actor} \& \acted \\
			{\acted'} \& {\acted' \times \actor'} \& {\acted'}
			\arrow["F"', dashed, from=1-1, to=2-1]
			\arrow["{\pi_{\acted}}"', two heads, from=1-2, to=1-1]
			\arrow["\action", from=1-2, to=1-3]
			\arrow["{F \times R}"', dashed, from=1-2, to=2-2]
			\arrow["F", dashed, from=1-3, to=2-3]
			\arrow["\lineator", shorten <=6pt, shorten >=6pt, Rightarrow, dashed, from=2-2, to=1-3]
			\arrow["{\pi_{\acted'}}", two heads, from=2-2, to=2-1]
			\arrow["{\action'}"', from=2-2, to=2-3]
		\end{tikzcd}
	\end{equation}
	The unitor $\mapunitor$ and multiplicator $\mapmultiplicator$, being $\pi_{\acted'}$-vertical, are maps in $\actor'$ which can be seen to correspond to the strong monoidal structure of $R$.
	The two sets of coherence laws satisfied by these correspond to the coherence laws of a monoidal structure and those of the lineator (\cref{eqn:unit.of.lineator,eqn:ass.of.lineator}).

	Vice versa, a morphism of contextads between $\acted \nfrom{\pi_{\acted}} \acted \times \actor \nto{\action} \acted$ and $\acted' \nfrom{\pi_{\acted'}} \acted' \times \actor' \nto{\action'} \acted'$ is more general than a morphism of actions of monoidal categories.
	The extra generality comes from the fact $F^\flat$, corresponding to $R$ above, can potentially depend on an extra argument from $\acted$, i.e.~it has type $F^\flat : \acted \times \actor \to \actor'$.
	However, since it also must be cartesian with respect to the product projections, $F^{{\flat}}$ is in many cases forced to be constant in the $\acted$ variable; for example, if $\acted$ has a terminal object.
\end{ex}

\begin{ex}
\label{ex:maps.display.maps.cats}
	We know from \cref{ex:display.map.cat} that a category with display maps $(\acted, \Display)$ has an associated contextad, whose $\Ctx$ construction is the double category of left-displayed spans in $\acted$.
	Accordingly, a functor $F: (\acted, \Display) \to (\acted', \Display')$ preserving pullbacks of display maps induces a strict map of contextads:
	\begin{equation}
		\begin{tikzcd}[ampersand replacement=\&]
			\acted \& {\acted^{\downarrow_{\Display}}} \& \acted \\
			{\acted'} \& {{\acted'}^{\downarrow_{\Display}}} \& {\acted'}
			\arrow["F"', dashed, from=1-1, to=2-1]
			\arrow["\cod"', two heads, from=1-2, to=1-1]
			\arrow["\dom", from=1-2, to=1-3]
			\arrow["{F^\downarrow}"', dashed, from=1-2, to=2-2]
			\arrow["F", dashed, from=1-3, to=2-3]
			\arrow["\cod", two heads, from=2-2, to=2-1]
			\arrow["\dom"', from=2-2, to=2-3]
		\end{tikzcd}
	\end{equation}
	The strictness comes from the fact $F$ is a functor so that the right square commutes on the nose.
\end{ex}

Since contextads generalize various kinds of structures, their morphisms are uniquely suited to express relations between those.
We will see an example in the dual setting, \cref{ex:law}.

\subsubsection[The Ctx trifunctor]{The $\Ctx$ trifunctor}
When we restrict $\KL(\DispSpan\Paradise)$ to its functorial 1-cells and reverse the 2-cells, the tricategory we get is that of pseudocategories internal to $\Paradise$ whose source map is display.
The 1-cells are functors, the 2-cells are \emph{loose} pseudonatural transformations and the 3-cells are \emph{modifications}, i.e.~modifications of loose pseudonatural transformations (see \cite{grandis_limits_1999}).
This is expounded in \cite[§7.3]{miranda_topics_2024} for $\Paradise = (\Cat, \{\mathsf{all functors}\})$, but even there it is remarked how that story is exemplar for any 2-category with pullbacks, and it is not hard to see it is actually enough to have left legs form a pullback ideal as the display maps of a paradise.

We can now give the main definition of the paper:

\begin{defn}
\label{defn:ctx.construction}
	The \textbf{$\Ctx$ construction} is the dashed trifunctor below, obtained by co/restricting the functor defined in \cref{thm:ctx.const.on.objects} to functorial spans, and dualizing 2-cells:
	\begin{equation}
		\begin{tikzcd}[ampersand replacement=\&, row sep=scriptsize]
			{\Cxd\Paradise} \&[6ex]\&[-3ex] {\PsCat\Paradise} \\[-1ex]
			{\KL(\FibSpan^\twoto\Paradise,\, \fun)\co} \& {\KL(\KL(\DispSpan\Paradise),\, \fun)\co} \& {\KL(\DispSpan\Paradise,\, \fun)\co}
			\arrow["\Ctx\Paradise", dashed, from=1-1, to=1-3]
			\arrow["{=}"{marking, allow upside down}, draw=none, from=1-1, to=2-1]
			\arrow["{=}"{marking, allow upside down}, draw=none, from=1-3, to=2-3]
			\arrow["{\KL\InclusionTrifun_\fun\co}", hook, from=2-1, to=2-2]
			\arrow["{\wreath_\fun\co}", from=2-2, to=2-3]
		\end{tikzcd}
		\hspace*{-5ex}
	\end{equation}
\end{defn}

\begin{notation}
	We basically always write $\Ctx$ in lieu of $\Ctx\Paradise$.
\end{notation}

\begin{rmk}
	Observe that $\KL(\InclusionTrifun,\, \fun)\co$ is still fully faithful, so that morphisms of contextads are as general as arbitrary morphisms of wreaths.
\end{rmk}

\begin{rmk}
	The pseudocategory $\Ctx(\action)$ is not just a left-displayed pseudocategory, it is left-fibrant again.
\end{rmk}

The functor $\Ctx$ sends a morphism of contextads $\underline F = (F, \lineator) : \action \to \action'$ to the pseudofunctor of double categories $\Ctx(\underline F) : \Ctx(\action) \to \Ctx(\action')$ given as
\begin{equation}
	\begin{tikzcd}[ampersand replacement=\&]
		A \& B \\
		{A'} \& {B'}
		\arrow[""{name=0, anchor=center, inner sep=0}, "{(P,p)}", "\shortmid"{marking}, from=1-1, to=1-2]
		\arrow[""{name=0p, anchor=center, inner sep=0}, phantom, from=1-1, to=1-2, start anchor=center, end anchor=center]
		\arrow["h"', from=1-1, to=2-1]
		\arrow["k", from=1-2, to=2-2]
		\arrow[""{name=1, anchor=center, inner sep=0}, "{(P',p')}"', "\shortmid"{marking}, from=2-1, to=2-2]
		\arrow[""{name=1p, anchor=center, inner sep=0}, phantom, from=2-1, to=2-2, start anchor=center, end anchor=center]
		\arrow["\varphi"', shorten <=4pt, shorten >=4pt, Rightarrow, from=0p, to=1p]
	\end{tikzcd}
	\ \mapsto\
	\begin{tikzcd}[ampersand replacement=\&]
		FA \& FB \\
		{FA'} \& {FB'}
		\arrow[""{name=0, anchor=center, inner sep=0}, "{(F^\flat P, \lineator \lcomp p)}", "\shortmid"{marking}, from=1-1, to=1-2]
		\arrow[""{name=0p, anchor=center, inner sep=0}, phantom, from=1-1, to=1-2, start anchor=center, end anchor=center]
		\arrow["Fh"', from=1-1, to=2-1]
		\arrow["Fk", from=1-2, to=2-2]
		\arrow[""{name=1, anchor=center, inner sep=0}, "{(F^\flat P', \lineator \lcomp p')}"', "\shortmid"{marking}, from=2-1, to=2-2]
		\arrow[""{name=1p, anchor=center, inner sep=0}, phantom, from=2-1, to=2-2, start anchor=center, end anchor=center]
		\arrow["{F^\flat\varphi}"', shorten <=4pt, shorten >=4pt, Rightarrow, from=0p, to=1p]
	\end{tikzcd}
\end{equation}
where the latter diagram corresponds to the pasting below:
\begin{equation}
	\begin{tikzcd}[ampersand replacement=\&,row sep=scriptsize]
		{FA \action' F^\flat P} \&[2ex] {FA' \action' F^\flat P'} \\[1ex]
		{F(A \action P)} \& {F(A' \action P')} \\
		FB \& {FB'}
		\arrow["{Fh \,\action'\, F^\flat \varphi}", from=1-1, to=1-2]
		\arrow["\lineator"', from=1-1, to=2-1]
		\arrow["\lineator", from=1-2, to=2-2]
		\arrow["{F(h \action \varphi)}", from=2-1, to=2-2]
		\arrow["Fp"', from=2-1, to=3-1]
		\arrow["{Fp'}", from=2-2, to=3-2]
		\arrow["Fk"', from=3-1, to=3-2]
	\end{tikzcd}
\end{equation}
The identity and composition comparisons are built out of the unitor and multiplicator of $\underline F$.

A transformation $\underline \tau = (e, \tau) : \underline F \twoto \underline G$ would instead correspond to a loose pseudonatural transformation with components on objects given by the contextful arrows $(e_A, \tau_A) : FA \looseto GA$, and components on maps $h:A \to A'$ given by the squares
\begin{equation}
	\begin{tikzcd}[ampersand replacement=\&]
		FA \& GA \\
		{FA'} \& {GA'}
		\arrow[""{name=0, anchor=center, inner sep=0}, "{(e_A, \tau_A)}", "\shortmid"{marking}, from=1-1, to=1-2]
		\arrow[""{name=0p, anchor=center, inner sep=0}, phantom, from=1-1, to=1-2, start anchor=center, end anchor=center]
		\arrow["Fh"', from=1-1, to=2-1]
		\arrow["Gh", from=1-2, to=2-2]
		\arrow[""{name=1, anchor=center, inner sep=0}, "{(e_{A'}, \tau_{A'})}"', "\shortmid"{marking}, from=2-1, to=2-2]
		\arrow[""{name=1p, anchor=center, inner sep=0}, phantom, from=2-1, to=2-2, start anchor=center, end anchor=center]
		\arrow["{e_h}", shorten <=4pt, shorten >=4pt, Rightarrow, from=0p, to=1p]
	\end{tikzcd}
\end{equation}
where $e_h:e_A \to e_{A'}$ is the action on maps of $e:\acted \to \actor'$ (recall \cref{defn:contextad.2-cell}).
The naturator of $\underline \tau$ provides the isocells witnessing pseudonaturality of such a transformation, with \cref{eqn:coherence.of.naturator} witnessing their coherence.

Finally, a modification $\omega : (e,\tau) \threeto  (f, \sigma)$ of contextads corresponds pretty directly to a modification of pseudofunctors, with the maps $\omega_A:e_A \to f_A$ filling the squares associated to the object $A$:
\begin{equation}
	\begin{tikzcd}[ampersand replacement=\&]
		FA \& GA \\
		FA \& GA
		\arrow[""{name=0, anchor=center, inner sep=0}, "{(e_A,\tau_A)}", "\shortmid"{marking}, from=1-1, to=1-2]
		\arrow[""{name=0p, anchor=center, inner sep=0}, phantom, from=1-1, to=1-2, start anchor=center, end anchor=center]
		\arrow[Rightarrow, no head, from=1-1, to=2-1]
		\arrow[Rightarrow, no head, from=1-2, to=2-2]
		\arrow[""{name=1, anchor=center, inner sep=0}, "{(f_A,\sigma_A)}"', "\shortmid"{marking}, from=2-1, to=2-2]
		\arrow[""{name=1p, anchor=center, inner sep=0}, phantom, from=2-1, to=2-2, start anchor=center, end anchor=center]
		\arrow["{\omega_A}", shorten <=4pt, shorten >=4pt, Rightarrow, from=0p, to=1p]
	\end{tikzcd}
\end{equation}

\section[Structuring Ctx]{Structuring $\Ctx$}
\label{sec:structured.para}

In this section, we will see how $\Ctx(\action)$ can inherit algebraic structure from the contextad it comes from.
We will do this by constructing paradises of (strict) algebras for a 2-monad with colax morphisms between them. Taking the $\Ctx$ construction in such 2-categories will result in colaxly-structured double categories.

\subsection{Motivation and background}
Before we see this construction, let's revisit the original Para construction of an actegory $\action : \acted \times \actor \to \acted$ and see how it can inherit a monoidal structure from $\acted$ and $\actor$.
Suppose $(\actor, \combineunit, \combine)$ and $(\acted, \monoidalunit, \monoidal)$ are monoidal categories.
We would like to extend the monoidal product of $\acted$ to $\para(\action)$.
Given two parameterized maps $f : A \action P \to B$ and $g : C \action Q \to D$, the most obvious choice for their tensor product would be $f \monoidal g$;
however, it is of signature $(A \action P) \monoidal (C \action Q) \to B \monoidal D$ which is not in the form of a parameterized map.
If we had an interchanger $\colaxity{\action} : (A \monoidal C) \action (P \combine Q) \to (A \action P) \monoidal (C \action Q)$, we could define the tensor product of $f$ and $g$ to be the composite
\begin{equation}
\label{eqn:para.map.monoidal}
	(A \monoidal C) \action (P \combine Q) \xto{\colaxity{\action}} (A \action P) \monoidal (C \action Q) \xto{f \monoidal g} B \monoidal D.
\end{equation}
In practice, we often do have such an interchanger $\colaxity{\action}$: if $\action = \monoidal = \combine : \actor \times \actor \to \actor$ is the self-action of a braided monoidal category $\actor$, then the braiding which swaps $P$ and $C$ gives an interchanger $\colaxity{\action}$.
Then the braiding of $\actor$ comes in handy again when defining bifunctoriality of \eqref{eqn:para.map.monoidal}.

Indeed, it's folkloric that $\para(\action)$ is symmetric or braided monoidal (as a bicategory) as soon as $\action$ is, though, as far as the authors know, a proper proof has never appeared in the literature, except for actions of \emph{commutative} monoidal categories in \cite{capucci_towards_2022} and the result of Hermida and Tennent's \cite[Proposition~2.2]{hermida_monoidal_2012} regarding the 1-categorical truncation of $\para(\action)$.

To get a full proof, one would need to manually show \eqref{eqn:para.map.monoidal} satisfies weak interchange using the braiding on $\combine$.
The only obstacle to writing a proof along these lines are the famously daunting coherences of monoidal bicategories.

One of the aims of the present paper is to provide a high-brow route for completing such proofs, not just for monoidal structure but for any kind of structure encoded by a 2-monad $T$ on the ambient paradise.
We note a great deal of simplification comes from working with double categories instead of bicategories.

\subsection{Paradises of colax morphisms}

The definition below is subsumed by \cref{defn:psalg}, but we spell it again to have it fresh and instantiated in the case we care about:

\begin{defn}
	Let $\Cosmos$ be a 2-category and $T : \Cosmos \to \Cosmos$ a 2-monad---that is, a 2-functor equipped with 2-natural transformations $\eta$ and $\mu$ which satisfy the laws of a pseudomonad (\cref{defn:psmnd}) strictly.
	By an \emph{algebra} for $T$, we mean an algebra for this monad in the usual sense: a map $\alpha : T\Aa \to \Aa$ satisfying the following two laws
	\begin{equation}
	\label{eqn:algbera.laws}
		\begin{tikzcd}[sep=scriptsize]
			\Aa & T\Aa && {T^2\Aa} & T\Aa \\
			& \Aa && T\Aa & \Aa
			\arrow["\eta", from=1-1, to=1-2]
			\arrow[equals, from=1-1, to=2-2]
			\arrow["\alpha", from=1-2, to=2-2]
			\arrow["T\alpha", from=1-4, to=1-5]
			\arrow["\mu"', from=1-4, to=2-4]
			\arrow["\alpha", from=1-5, to=2-5]
			\arrow["\alpha"', from=2-4, to=2-5]
		\end{tikzcd}
	\end{equation}
	A \emph{colax $T$-morphism} from $\alpha : T\Aa \to \Aa$ to $\beta : T\Ba \to \Ba$ consists of a map $f : \Aa \to \Ba$ and a 2-cell $\colaxity{f} : f\alpha \Rightarrow \beta Tf$ (which we refer to as the \emph{colaxity} of $f$) satisfying the following laws:
	\begin{equation}
	\label{eqn.colax.map.unit}
		\begin{tikzcd}
			\Aa & \Ba &&& \Aa & \Ba \\
			& T\Aa & T\Ba & {=} &&& T\Ba \\
			& \Aa & \Ba &&& \Aa & \Ba
			\arrow["f", from=1-1, to=1-2]
			\arrow["\unit", from=1-1, to=2-2]
			\arrow[equals, curve={height=12pt}, from=1-1, to=3-2]
			\arrow["\eta", from=1-2, to=2-3]
			\arrow["f", from=1-5, to=1-6]
			\arrow[equals, curve={height=12pt}, from=1-5, to=3-6]
			\arrow["\eta", from=1-6, to=2-7]
			\arrow[equals, curve={height=12pt}, from=1-6, to=3-7]
			\arrow[""{name=0, anchor=center, inner sep=0}, "Tf", from=2-2, to=2-3]
			\arrow["\alpha"', from=2-2, to=3-2]
			\arrow["\beta", from=2-3, to=3-3]
			\arrow["\beta", from=2-7, to=3-7]
			\arrow[""{name=1, anchor=center, inner sep=0}, "f"', from=3-2, to=3-3]
			\arrow["f"', from=3-6, to=3-7]
			\arrow["{\colaxity{f}}"', shorten <=4pt, shorten >=4pt, Rightarrow, from=1, to=0]
		\end{tikzcd}
	\end{equation}
	\begin{equation}
	\label{eqn:colax.morphism.mult}
		\begin{tikzcd}
			{T^2\Aa} & {T^2\Ba} &&& {T^2\Aa} & {T^2\Ba} \\
			T\Aa & T\Aa & T\Ba & {=} & T\Aa & T\Ba & T\Ba \\
			& \Aa & \Ba &&& \Aa & \Ba
			\arrow[""{name=0, anchor=center, inner sep=0}, "{T^2f}", from=1-1, to=1-2]
			\arrow["\mu"', from=1-1, to=2-1]
			\arrow["T\alpha"', from=1-1, to=2-2]
			\arrow["T\beta", from=1-2, to=2-3]
			\arrow["{T^2f}", from=1-5, to=1-6]
			\arrow["\mu"', from=1-5, to=2-5]
			\arrow["\mu"', from=1-6, to=2-6]
			\arrow["T\beta", from=1-6, to=2-7]
			\arrow["\alpha"', from=2-1, to=3-2]
			\arrow[""{name=1, anchor=center, inner sep=0}, from=2-2, to=2-3]
			\arrow["\alpha"', from=2-2, to=3-2]
			\arrow["\beta", from=2-3, to=3-3]
			\arrow[""{name=2, anchor=center, inner sep=0}, from=2-5, to=2-6]
			\arrow[from=2-5, to=3-6]
			\arrow["\beta", from=2-6, to=3-7]
			\arrow["\beta", from=2-7, to=3-7]
			\arrow[""{name=3, anchor=center, inner sep=0}, "f"', from=3-2, to=3-3]
			\arrow[""{name=4, anchor=center, inner sep=0}, "f"', from=3-6, to=3-7]
			\arrow["{\colaxity{f}}"', shorten <=7pt, shorten >=7pt, Rightarrow, from=1, to=0]
			\arrow["{\colaxity{f}}"', shorten <=4pt, shorten >=4pt, Rightarrow, from=3, to=1]
			\arrow["{\colaxity{f}}", shorten <=7pt, shorten >=7pt, Rightarrow, from=4, to=2]
		\end{tikzcd}
	\end{equation}
	A 2-cell $\rho : (f, \colaxity{f}) \Rightarrow (g, \colaxity{g})$ is a 2-cell $\rho : f \Rightarrow g$ satisfying the following law:
	\begin{equation}
	\label{eqn:colax.morphism.2cell}
		\begin{tikzcd}[row sep = small]
			T\Aa & T\Ba && T\Aa & T\Ba \\
			&& {=} \\
			\Aa & \Ba && \Aa & \Ba
			\arrow[""{name=0, anchor=center, inner sep=0}, "Tg", curve={height=-12pt}, from=1-1, to=1-2]
			\arrow["\alpha"', from=1-1, to=3-1]
			\arrow["\beta", from=1-2, to=3-2]
			\arrow[""{name=1, anchor=center, inner sep=0}, "Tg", curve={height=-12pt}, from=1-4, to=1-5]
			\arrow[""{name=2, anchor=center, inner sep=0}, "Tf"', curve={height=12pt}, from=1-4, to=1-5]
			\arrow["\alpha"', from=1-4, to=3-4]
			\arrow["\beta", from=1-5, to=3-5]
			\arrow[""{name=3, anchor=center, inner sep=0}, "g", curve={height=-12pt}, from=3-1, to=3-2]
			\arrow[""{name=4, anchor=center, inner sep=0}, "f"', curve={height=12pt}, from=3-1, to=3-2]
			\arrow[""{name=5, anchor=center, inner sep=0}, "f"', curve={height=12pt}, from=3-4, to=3-5]
			\arrow["T\rho", shorten <=3pt, shorten >=3pt, Rightarrow, from=2, to=1]
			\arrow["{\colaxity{g}}", shorten <=9pt, shorten >=9pt, Rightarrow, from=3, to=0]
			\arrow["\rho", shorten <=3pt, shorten >=3pt, Rightarrow, from=4, to=3]
			\arrow["{\colaxity{f}}", shorten <=9pt, shorten >=9pt, Rightarrow, from=5, to=2]
		\end{tikzcd}
	\end{equation}

	A colax morphism $(f, \colaxity{f})$ is \emph{pseudo} if $\colaxity{f}$ is an isomorphism and \emph{strict} if $\colaxity{f}$ is an identity.
	We denote by $\Alg(T)_\colax$ the 2-category of (strict) algebras and colax morphisms, and by $\Alg(T)_\strict$ the 2-category of algebras and strict morphisms.
\end{defn}

\begin{defn}
	Let $T : \Cosmos \to \Cosmos$ be a 2-monad on a 2-category.
	A colax $T$-morphism $(e, \colaxity{e}) : \Aa \to \Aa$ is a \emph{special idempotent} if
	\begin{enumerate}
		\item $e$ is idempotent in $\Cosmos$, and
		\item $e\colaxity{e}=\id$.
	\end{enumerate}
\end{defn}

We record a few useful and straightforward facts about special idempotents.

\begin{lem}
	If $(e, \colaxity{e}) : \Aa \to \Aa$ is a special idempotent, then it is an idempotent in $\Alg(T)_{\colax}$.
\end{lem}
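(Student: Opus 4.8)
The plan is to verify directly that the self-composite of $(e,\colaxity{e})$ with itself in $\Alg(T)_{\colax}$ is again $(e,\colaxity{e})$. There are two things to check: that the underlying $1$-cells agree, and that the composite colaxity agrees with $\colaxity{e}$. The first is immediate from condition (1) of special idempotency: the underlying $1$-cell of the composite is $e\,e=e$ since $e$ is idempotent in $\Cosmos$.

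For the colaxity, I would first record the composition rule for colax $T$-morphisms (the specialization of \cref{defn:psmors} to $\Alg(T)_{\colax}$): the composite of $(f,\colaxity{f})$ and $(g,\colaxity{g})$ carries the pasted colaxity $(\colaxity{g}\,Tf)\cdot(g\,\colaxity{f})$. Applied with $f=g=e$ this gives
\[
\colaxity{ee}\;=\;(\colaxity{e}\,Te)\cdot(e\,\colaxity{e}).
\]
Now condition (2), $e\,\colaxity{e}=\id$, does two things at once: it trivialises the right-hand factor, and---since a whiskered $2$-cell can be an identity only when its source and target $1$-cells coincide---it forces the $1$-cell identity $e\alpha = e\alpha\,Te$. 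Substituting, the composite collapses to $\colaxity{ee}=\colaxity{e}\,Te$, and it remains only to identify $\colaxity{e}\,Te$ with $\colaxity{e}$.

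This last identification is the crux, and it is where I expect the real work to sit. My plan is to whisker condition (2) on the right by $Te$, obtaining $e\,(\colaxity{e}\,Te)=\id$; together with $e\,e=e$ (so that $Te$ is again idempotent and $\colaxity{e}\,Te$ is parallel to $\colaxity{e}$), this exhibits $\colaxity{e}\,Te$ and $\colaxity{e}$ as two colaxities with the same boundary that become identities after post-whiskering by $e$. I would then feed this into the unit and multiplication coherences \eqref{eqn.colax.map.unit} and \eqref{eqn:colax.morphism.mult} of the colax morphism $(e,\colaxity{e})$---precomposing the multiplication law by $T\eta$ and using the algebra unit law together with $2$-naturality of $\eta$ and $\mu$---to pin down $\colaxity{e}\,Te$ as $\colaxity{e}$. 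Once $\colaxity{ee}=\colaxity{e}$ is in hand, the two checks combine to give that $(e,\colaxity{e})$ composed with itself is $(e,\colaxity{e})$, i.e.\ that it is idempotent in $\Alg(T)_{\colax}$. The main obstacle, and the only non-formal step, is precisely this verification that precomposition by the idempotent $Te$ leaves $\colaxity{e}$ unchanged; everything else is bookkeeping with the composition formula and condition (2).
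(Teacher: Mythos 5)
Your reduction is correct, and it is exactly the part the paper would regard as routine (the paper records this lemma as a ``straightforward fact'' and gives no proof at all, so the reduction is the only thing there is to compare against): the underlying $1$-cell of the self-composite is $ee=e$, and since $e\colaxity{e}=\id$ the pasted colaxity $(\colaxity{e}\,Te)\cdot(e\colaxity{e})$ collapses to $\colaxity{e}\,Te$, so everything rests on the single identity $\colaxity{e}\,Te=\colaxity{e}$. The gap is that you never prove this identity, and the plan you sketch cannot. Pre-whiskering the multiplication coherence \eqref{eqn:colax.morphism.mult} by $T\eta_{\Aa}$ (or by $\eta_{T\Aa}$, or by either followed by $Te$) and simplifying with the algebra unit law, the colax unit law \eqref{eqn.colax.map.unit}, and $2$-naturality of $\eta$ and $\mu$ yields only tautologies: both sides reduce to $\colaxity{e}$ (respectively to $\colaxity{e}\,Te$), because the unit laws strike the two sides of the coherence symmetrically. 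Likewise, whiskering condition (2) by $Te$ only tells you that $\colaxity{e}\,Te$ and $\colaxity{e}$ are parallel $2$-cells that both become identities after post-whiskering by $e$; the axioms give you no way to cancel that $e$.

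In fact the crux identity is not a consequence of the special-idempotent axioms at all, so no completion of your argument exists. Take $\Cosmos=\Cat$ and $T$ the free strict monoidal category $2$-monad, so that colax $T$-morphisms are colax monoidal functors. Let $\Aa$ be the strict monoidal category whose objects are the words on two letters $a,a'$, with $\Hom(w,v)=\emptyset$ for $w\neq v$ and $\End(w)=\mathbb{Z}$, composition and tensor both given by addition. Let $e$ act on objects by the substitution $a,a'\mapsto a'$ and send every morphism to $0$; fix $h$ on objects with $h(a'a')=1$ and $h=0$ elsewhere, and define the colaxity by $\colaxity{e}_{(w_1,\dots,w_n)}:=\sum_i h(w_i)-h(w_1\cdots w_n)$. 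This coboundary formula makes all coherences hold, $ee=e$, and $e\colaxity{e}=\id$ because $e$ kills every morphism, so $(e,\colaxity{e})$ is a special idempotent; yet $(\colaxity{e}\,Te)_{(a,a)}=\colaxity{e}_{(a',a')}=-1\neq 0=\colaxity{e}_{(a,a)}$, so the self-composite is $(e,\colaxity{e}\,Te)\neq(e,\colaxity{e})$. So the lemma needs more than conditions (1) and (2) as stated --- and it is worth noting that in its actual application (\cref{thm:colax.paradise}) the paper does not obtain idempotency in $\Alg(T)_{\colax}$ from this lemma, but proves it separately from the universal property of the comma object, which is precisely what supplies the missing identity $\colaxity{e}\,Te=\colaxity{e}$ in that setting. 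Your instinct that this step is ``the only non-formal step'' was right; the honest conclusion is that it is not merely non-formal but false in general.
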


\begin{lem}
\label{lem:special.idempotent.whiskering}
	Let $(e, \colaxity{e}) : \Aa \to \Aa$ be a colax morphism.
	If $e = ir$ splits in $\Cosmos$, then $(e, \colaxity{e})$ is a special idempotent if and only if $r\colaxity{e}$ is an identity.
\end{lem}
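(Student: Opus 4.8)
The claim is a characterization of special idempotents among colax morphisms $(e,\colaxity{e}) : \Aa \to \Aa$ whose underlying endomorphism $e$ admits a splitting $e = ir$ in $\Cosmos$ (so $ri = \id$). Recall that by definition $(e,\colaxity{e})$ is a special idempotent when $e$ is idempotent in $\Cosmos$ and $e\colaxity{e} = \id$. Since a splitting $e = ir$ automatically makes $e$ idempotent ($ee = irir = i(ri)r = ir = e$), the first condition is free, so the content is the equivalence
\begin{equation}
e\colaxity{e} = \id \quad\Longleftrightarrow\quad r\colaxity{e} = \id.
\end{equation}

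**The plan.** The whole argument is a short manipulation using $ri = \id$. The plan is to prove the two implications separately by whiskering the candidate identity $2$-cell with $i$ and $r$.

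First I would prove the forward direction. Assume $e\colaxity{e} = \id$. Substituting $e = ir$ gives $ir\colaxity{e} = \id$. To extract $r\colaxity{e}$ I would whisker on the left by $r$, obtaining $r(ir\colaxity{e}) = r\,\id = \id_r$ (whiskering an identity $2$-cell gives an identity). Using $ri = \id$ and associativity of whiskering, $r(ir\colaxity{e}) = (ri)r\colaxity{e} = r\colaxity{e}$. Hence $r\colaxity{e} = \id$, as desired. One must be a little careful about the domain and codomain types here, checking that $r\colaxity{e}$ is indeed a $2$-cell between the expected parallel $1$-cells so that the equation typechecks; but this is automatic since whiskering $\colaxity{e} : e\alpha \Rightarrow \beta Te$ (here $\alpha = \beta$ as $e$ is an endomorphism of the same algebra) by $r$ on the appropriate side produces a well-formed $2$-cell.

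Conversely, assume $r\colaxity{e} = \id$. To recover $e\colaxity{e}$ I would whisker on the left by $i$: $i(r\colaxity{e}) = i\,\id = \id$, and $i(r\colaxity{e}) = (ir)\colaxity{e} = e\colaxity{e}$. Therefore $e\colaxity{e} = \id$. Combined with the observation that $e = ir$ is idempotent, this gives that $(e,\colaxity{e})$ is a special idempotent.

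**Main obstacle.** There is essentially no hard step: both directions are a single whiskering against one leg of the splitting, using $ri = \id$ to cancel. The only point requiring the slightest care is bookkeeping the variances and source/target $1$-cells so that each whiskered expression is a legitimate $2$-cell in the relevant hom-category $\Cosmos(\Aa, \Aa)$ (or rather in the hom-category of the colax-morphism data), and confirming that whiskering a $2$-cell by a $1$-cell on either side preserves identities. I would state these type-checks inline rather than belabor them. Thus the proof is genuinely two lines per implication, and the substantive content is simply that a splitting lets one pass freely between $e\colaxity{e}$ and $r\colaxity{e}$ by whiskering with $r$ and $i$ respectively.
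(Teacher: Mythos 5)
Your proof is correct and is exactly the straightforward whiskering argument the paper intends: the paper states this lemma with no proof at all (it is one of the facts it ``records'' as straightforward), and your argument---idempotency of $e$ comes for free from the splitting, while the conditions $e\colaxity{e} = \id$ and $r\colaxity{e} = \id$ are interchanged by whiskering with $r$ (using $ri = \id$) and with $i$ (using $ir = e$) respectively---fills that gap precisely as intended. No issues.
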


We will now show that if a special idempotent splits in $\Cosmos$, this splitting can be lifted to $\Alg(T)_{\colax}$

\begin{lem}
\label{lem:strong.idempotents.split}
	Let $\alpha : T\Aa \to \Aa$ be a $T$-algebra and $(e, \colaxity{e}) : \Aa \to \Aa$ be a special idempotent colax morphism.
	Suppose that $e = \Aa \xto{r} \Sa \xto{i} \Aa$ splits in $\Cosmos$.
	Then:
	\begin{enumerate}
		\item The composite
		      \begin{equation}
			      \sigma := T\Sa \xto{Ti} T\Aa \xto{\alpha} \Aa \xto{r} \Sa
		      \end{equation}
		      endows $\Sa$ with the structure of a strict $T$-algebra,
		\item $r:(\Aa,\alpha) \to (\Sa,\sigma)$ is a strict $T$-morphism,
		\item $(i, \colaxity{i}):(\Sa,\sigma) \to (\Aa,\alpha)$ is a colax $T$-morphism, and
		\item $r$ and $(i, \colaxity{i})$ split $(e, \colaxity{e})$ in $\Alg(T)_{\colax}$.
	\end{enumerate}
\end{lem}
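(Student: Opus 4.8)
The plan is to work out everything at the level of the underlying $2$-category $\Cosmos$, translating the splitting data $ri = \id_\Sa$, $ir = e$ (together with the derived bookkeeping identities $re = r$ and $ei = i$) into statements about the colaxities, and then reading off the algebra and morphism laws. The two facts I would record at the outset, as the engine of the whole argument, are: first, by \cref{lem:special.idempotent.whiskering}, being a special idempotent with $e = ir$ amounts to $r\colaxity e = \id_{r\alpha}$, which in particular forces the \emph{equality of $1$-cells} $r\alpha\, Te = r\alpha$; and second, since a special idempotent is an idempotent in $\Alg(T)_\colax$ (the preceding lemma) and $e\colaxity e = \id$, the self-composite colaxity $(\colaxity e\, Te)\cdot(e\,\colaxity e) = \colaxity e\, Te$ must equal $\colaxity e$, so $\colaxity e\, Te = \colaxity e$.

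For (1) and (2) I would compute directly. The unit law for $\sigma = r\alpha\, Ti$ reads $\sigma\eta_\Sa = r\alpha\, Ti\,\eta_\Sa = r\alpha\,\eta_\Aa\, i = r i = \id_\Sa$, using $2$-naturality of $\eta$ and $\alpha\eta_\Aa = \id_\Aa$. For the associativity law I would chase $\sigma\, T\sigma = r\alpha\, T(ir)\, T\alpha\, T^2 i = r\alpha\, Te\, T\alpha\, T^2 i = r\alpha\, T\alpha\, T^2 i = r\alpha\,\mu_\Aa\, T^2 i = r\alpha\, Ti\,\mu_\Sa = \sigma\mu_\Sa$, where the second equality collapses $r\alpha\, Te = r\alpha$, the middle uses the algebra law $\alpha T\alpha = \alpha\mu_\Aa$, and the last uses $2$-naturality of $\mu$ in the form $\mu_\Aa\, T^2 i = Ti\,\mu_\Sa$. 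Part (2) is immediate from the same collapse: $\sigma\, Tr = r\alpha\, T(ir) = r\alpha\, Te = r\alpha$, so the colaxity of $r$ is an identity and $r$ is strict.

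For (3) I would set $\colaxity i := \colaxity e\, Ti$. A type check confirms this is legal: its source is $i\sigma = (ir)\alpha\, Ti = e\alpha\, Ti$ and its target is $\alpha\, Te\, Ti = \alpha\, T(ei) = \alpha\, Ti$, exactly the shape $i\sigma \Rightarrow \alpha\, Ti$ demanded of a colax morphism $(i,\colaxity i)\colon(\Sa,\sigma)\to(\Aa,\alpha)$. The colax-morphism unit axiom \eqref{eqn.colax.map.unit} follows by whiskering the unit axiom for $(e,\colaxity e)$ with $i$: $\colaxity i\,\eta_\Sa = \colaxity e\,(Ti\,\eta_\Sa) = \colaxity e\,(\eta_\Aa\, i) = (\colaxity e\,\eta_\Aa)\, i = \id$. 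The multiplicativity axiom \eqref{eqn:colax.morphism.mult} is obtained in the same spirit, by whiskering the multiplicativity axiom for $(e,\colaxity e)$ by $Ti$ and invoking functoriality of $T$, $2$-naturality of $\mu$, and $ei = i$. This is the one genuinely bookkeeping-heavy step, and I expect it to be the main obstacle: not conceptually hard, but requiring care to track the several whiskerings and the $T$-images of the coherence cells so that the two pasting composites of \eqref{eqn:colax.morphism.mult} agree on the nose.

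Finally, for (4) I would compute the two composite colaxities in $\Alg(T)_\colax$, using the composition rule for colax morphisms. The composite $(i,\colaxity i)\circ(r,\id)$ has underlying $1$-cell $ir = e$ and colaxity $(\colaxity i\, Tr)\cdot(i\,\colaxity r) = \colaxity i\, Tr = \colaxity e\, T(ir) = \colaxity e\, Te = \colaxity e$, so it equals $(e,\colaxity e)$. The composite $(r,\id)\circ(i,\colaxity i)$ has underlying $1$-cell $ri = \id_\Sa$ and colaxity $(\colaxity r\, Ti)\cdot(r\,\colaxity i) = r\,\colaxity i = (r\,\colaxity e)\, Ti = \id_{r\alpha}\, Ti = \id_\sigma$, so it is the strict identity on $(\Sa,\sigma)$. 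Together these exhibit $r$ and $(i,\colaxity i)$ as a splitting of $(e,\colaxity e)$ in $\Alg(T)_\colax$, which completes the proof.
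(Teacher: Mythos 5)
Your proof is correct and follows essentially the same route as the paper's: the same algebra structure $\sigma = r\alpha\, Ti$, the same key inputs ($ri = \id_{\Sa}$, the identity $r\colaxity{e} = \id$ from \cref{lem:special.idempotent.whiskering}, and $\colaxity{e}\,Te = \colaxity{e}$ from idempotency in $\Alg(T)_{\colax}$), and the same colaxity $\colaxity{i} := \colaxity{e}\,Ti$ for the inclusion. If anything, your equational treatment makes explicit what the paper's diagrammatic sketch leaves implicit, namely the verification of the colax-morphism axioms for $(i,\colaxity{i})$ and the check that both splitting composites (not just $ir = e$ but also $ri = \id_{(\Sa,\sigma)}$) come out right.
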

\begin{proof}
	First, let's check that $\sigma$ is a strict $T$-algebra by appealing to the following diagrams:
	\begin{equation}
		\begin{tikzcd}[ampersand replacement=\&,column sep=scriptsize,row sep=small]
			\Sa \&\& T\Sa \&\& {T^2\Sa} \& {T^2\Aa} \& T\Aa \& T\Sa \\
			\& \Aa \& T\Aa \&\&\&\&\& T\Aa \\
			\&\& \Aa \&\&\&\&\& \Aa \\
			\&\& \Sa \&\& T\Sa \& T\Aa \& \Aa \& \Sa
			\arrow["\eta", from=1-1, to=1-3]
			\arrow["i", from=1-1, to=2-2]
			\arrow[curve={height=6pt}, Rightarrow, no head, from=1-1, to=4-3]
			\arrow["{\,Ti}", from=1-3, to=2-3]
			\arrow["{T^2i}", from=1-5, to=1-6]
			\arrow["\mu"', from=1-5, to=4-5]
			\arrow["T\alpha", from=1-6, to=1-7]
			\arrow["\mu"', from=1-6, to=4-6]
			\arrow["Tr", from=1-7, to=1-8]
			\arrow[""{name=0, anchor=center, inner sep=0}, "Te"', from=1-7, to=2-8]
			\arrow["\alpha"', from=1-7, to=4-7]
			\arrow["{\,Ti}", from=1-8, to=2-8]
			\arrow["\eta", from=2-2, to=2-3]
			\arrow[Rightarrow, no head, from=2-2, to=3-3]
			\arrow["{\,\alpha}", from=2-3, to=3-3]
			\arrow["{\,\alpha}", from=2-8, to=3-8]
			\arrow["{\,r}", from=3-3, to=4-3]
			\arrow["{\,r}", from=3-8, to=4-8]
			\arrow["Ti"', from=4-5, to=4-6]
			\arrow["\alpha"', from=4-6, to=4-7]
			\arrow[""{name=1, anchor=center, inner sep=0}, "e"', from=4-7, to=3-8]
			\arrow["r"', from=4-7, to=4-8]
			\arrow["{\colaxity{e}}", shorten <=9pt, shorten >=9pt, Rightarrow, from=1, to=0]
		\end{tikzcd}
	\end{equation}
	On the left we have the unit law since $ri = \id_{\Sa}$.
	On the right, we have the multiplication law.
	This is filled by an identity, since $r \colaxity{e}$ is an identity by \cref{lem:special.idempotent.whiskering}.

	Next, we turn to showing that $r$ is a strict $T$-morphism and that $i$ is a colax $T$-morphism:
	\begin{equation}
		\begin{tikzcd}[row sep = small]
			T\Aa & T\Sa && T\Sa & T\Aa \\
			& T\Aa && T\Aa \\
			& \Aa && \Aa \\
			\Aa & \Sa && \Sa & \Aa
			\arrow["Tr", from=1-1, to=1-2]
			\arrow[""{name=0, anchor=center, inner sep=0}, "Te", from=1-1, to=2-2]
			\arrow["\alpha"', from=1-1, to=4-1]
			\arrow["Ti", from=1-2, to=2-2]
			\arrow["Ti", from=1-4, to=1-5]
			\arrow["Ti"', from=1-4, to=2-4]
			\arrow["\alpha", from=1-5, to=4-5]
			\arrow["\alpha", from=2-2, to=3-2]
			\arrow[""{name=1, anchor=center, inner sep=0}, "Te", from=2-4, to=1-5]
			\arrow["\alpha"', from=2-4, to=3-4]
			\arrow["r", from=3-2, to=4-2]
			\arrow["r"', from=3-4, to=4-4]
			\arrow[""{name=2, anchor=center, inner sep=0}, "e"', from=3-4, to=4-5]
			\arrow[""{name=3, anchor=center, inner sep=0}, "e"', from=4-1, to=3-2]
			\arrow["r"', from=4-1, to=4-2]
			\arrow["i"', from=4-4, to=4-5]
			\arrow["{\colaxity{e}}"', shorten <=9pt, shorten >=9pt, Rightarrow, from=2, to=1]
			\arrow["{\colaxity{e}}", shorten <=9pt, shorten >=9pt, Rightarrow, from=3, to=0]
		\end{tikzcd}
	\end{equation}
	On the left, we again find the whisker $r\colaxity{e}$, so it is actually filled by an identity.
	On the right, we find $\colaxity{i} := \colaxity{e}Ti$ endows $i$ with a colaxity, whose laws follows from those of $(e, \colaxity{e})$.
	Composing these two clearly gives $\colaxity{e}$, showing that $(e, \colaxity{e})$ splits in $\Alg(T)_\colax$.
\end{proof}

Our goal now is to show $\Alg(T)_\colax$ can be equipped with a class of display maps making it into a paradise.
First, we need to introduce terminology.

\begin{defn}
	A fibration in a 2-category $\Cosmos$ (see \cref{defn:fib}) is \emph{normal} if the lift of an identity is an identity.
	Equivalently, $p : \Ea \fibto \Ba$ is normal if the whisker $\counitpull \internal{\id_p} : \internal{\id_p} \pull \internal{\id_p} \twoto \internal{\id_p}$ is an identity.
\end{defn}

It is straightforward to check that:

\begin{lem}
\label{lem:normal.fibrations.closed.under.pullback}
	Normal fibrations are closed under composition and pullback.
\end{lem}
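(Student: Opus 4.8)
The claim to prove is \cref{lem:normal.fibrations.closed.under.pullback}: normal fibrations are closed under composition and pullback. The word ``normal'' here strengthens the closure results of \cref{lem:fib.compose}/\cref{lem:pb.fibs}, which already establish that ordinary fibrations are closed under composition and pullback; so the plan is to recall those constructions of the cleavages and verify that the extra normality condition---lifts of identities are identities, equivalently the whisker $\counitpull \internal{\id_p}$ is an identity---is preserved.

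\textbf{Pullback.} Suppose $p : \Ea \fibto \Ba$ is a normal fibration and $f : \Aa \to \Ba$ is any $1$-cell, giving the pullback $f^*p : f^*\Ea \fibto \Aa$. By \cref{lem:pb.fibs} the cleavage of $f^*p$ is obtained by lifting a map $\alpha : a \to f^*p(x)$ in $\Aa$ to the $p$-cartesian lift of its image $f\alpha$ and then using the universal property of the pullback $f^*\Ea$. The key point is that lifting is computed componentwise in the $\Ea$-factor: the pullback comparison map induced by an identity lift together with an identity in the $\Aa$-component is again an identity. I would spell this out by chasing the universal property of the strict pullback square, using that $f^*\Ea$ is a strict (2-)limit so that a pair of identities induces a unique identity. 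Since $\lift_p$ of an identity $\id_{f(a)}$ is $\id$ by normality of $p$, the induced lift $\lift_{f^*p}(\id_a)$ is forced to be $\id$ as well.

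\textbf{Composition.} Suppose $p : \Ea \fibto \Ba$ and $q : \Ba \fibto \Ca$ are normal. By \cref{lem:fib.compose} the cleavage of $qp$ sends a map $\gamma : c \to qp(e)$ to the composite cartesian lift: first take $\lift_q(\gamma)$ landing at $p(e)$ in $\Ba$, then take $\lift_p$ of this lift over $\Ea$. Explicitly $\lift_{qp}(\gamma) = \lift_p\big(\lift_q(\gamma)\big)$ (up to the vertical comparison from \cref{rmk:fib.struct}). For $\gamma = \id_{qp(e)}$, normality of $q$ gives $\lift_q(\id) = \id_{p(e)}$, and then normality of $p$ gives $\lift_p(\id_{p(e)}) = \id_e$; hence $\lift_{qp}(\id) = \id$ and $qp$ is normal. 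I would phrase this directly in terms of the whisker characterization: $\counitpull^{qp}\internal{\id_{qp}}$ factors through $\counitpull^{q}\internal{\id_q}$ and $\counitpull^{p}\internal{\id_p}$, each of which is an identity by hypothesis.

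The arguments are essentially bookkeeping on top of \cref{lem:fib.compose} and \cref{lem:pb.fibs}, so no genuinely hard step arises. \textbf{The main obstacle} is purely notational: one must be careful that the composite and pullback cleavages are the canonical ones from those lemmas (rather than merely \emph{some} cleavage), since normality is a property of a chosen cleavage and not cleavage-invariant. I would therefore make explicit that the cleavages on $qp$ and $f^*p$ are the induced ones, and then verify normality for precisely those, which reduces to the two short identity-lift computations above.
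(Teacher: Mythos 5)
Your proof is correct; the paper in fact gives no proof of this lemma (it is prefaced only by ``It is straightforward to check that''), and your argument --- equipping $qp$ and $f^*p$ with the canonical cleavages from \cref{lem:fib.compose} and \cref{lem:pb.fibs} and checking that identity lifts stay identities by normality of the inputs --- is exactly the routine verification being alluded to. Your closing caveat, that normality is a property of a \emph{chosen} cleavage so the check must be made for the induced cleavages specifically, is the one genuinely substantive point and is handled correctly.
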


We may furthermore give two crucial examples of normal fibrations: the domain projections $\dom : \Ca^{\downarrow} \to \Ca$ and all product projections.
The first will be necessary for giving a paradise structure on $\Alg(T)_{\colax}$; the second will allow us to consider actegories $\action : \acted \times \actor \to \acted$ in $\Alg(T)_\colax$ as spans whose left leg, the product projection $\acted \leftarrow \acted \times \actor$, is a display map.

\begin{lem}
\label{lem:normal.fibration.examples}
	Let $\Cosmos$ be a 2-category.
	\begin{enumerate}
		\item If $\Ca^{\downarrow}$ is an arrow object and $\dom : \Ca^{\downarrow} \to \Ca$ is sliceable, then it is a normal fibration.
		\item If a product projection $\pi_{\Aa} : \Aa \times \Ba \to \Aa$ is sliceable, then it is a normal fibration.
	\end{enumerate}
\end{lem}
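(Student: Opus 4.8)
The plan is to prove both parts by exhibiting explicit cartesian lifts and checking that the lift of an identity 2-cell is an identity. Since both maps are already assumed sliceable, the comma objects $\Ca \comma \dom$ and $\Aa \comma \pi_{\Aa}$ exist, so by \cref{defn:fib} it suffices to construct the pullback/lift functor and verify normality via the criterion $\counitpull \internal{\id_p} = \id$ recalled in the statement of normality.

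For part (1), I would use the universal property of the arrow object together with whiskering by the generic arrow $\generic_{\Ca}$. Given a 2-cell $\beta$ lying over an object of $\Ca \comma \dom$, its cartesian lift through $\dom$ should be obtained by precomposing the generic arrow with $\beta$: concretely, an object of the comma $\Ca \comma \dom$ is a map $b : X \to \dom(e)$ where $e : X \to \Ca^{\downarrow}$ classifies an arrow $\generic_{\Ca} e$ of $\Ca$, and the lift $\pull(\beta)$ is the arrow $(\generic_{\Ca} e)\circ b$ obtained by sticking $\beta$ onto the source. The cartesian 2-cell $\lift(\beta)$ is then induced by the universal property of $\Ca^{\downarrow}$ applied to the whiskered composite. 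The key point for normality is that when $\beta$ is an identity, the reindexed arrow is literally the same arrow, so the induced lift $\lift(\id)$ is an identity by the uniqueness clause of the universal property of $\Ca^{\downarrow}$; hence $\counitpull \internal{\id_\dom}$ is an identity.

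For part (2), the pullback functor for the product projection $\pi_{\Aa}$ is essentially the substitution functor: given $\beta : a \to a'$ over $\Aa$ and an object $(a', m) : X \to \Aa \times \Ba$, the cartesian lift simply transports the $\Ba$-component $m$ along $\beta$ by leaving it fixed, so $\pull(\beta) = (a, m)$ with $\lift(\beta) = (\beta, \id_m)$. This is manifestly cartesian (any competing factorization is determined uniquely in the $\Aa$-component by the universal property of the product and trivially in the $\Ba$-component), and when $\beta = \id$ the lift is $(\id, \id)$, an identity, giving normality.

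The main obstacle I expect is purely bookkeeping in part (1): phrasing the cartesian lift cleanly in terms of the generic arrow and the isomorphism $\Cosmos(\Xa, \Ca^{\downarrow}) \iso \Cosmos(\Xa, \Ca)^{\downarrow}$ without descending to an elementwise argument, since we are working in an arbitrary 2-category where we cannot chase actual arrows. The cleanest route is probably to appeal directly to \cref{defn:fib}(2), constructing the right adjoint right inverse $\lift : \Ca \comma \dom \to (\Ca^{\downarrow})^{\downarrow}$ via the universal property, rather than building $\pull$ by hand; normality then reads off immediately from the fact that this $\lift$ sends identity-arrows in $\Ca\comma\dom$ to identity squares. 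Everything else is a routine verification.
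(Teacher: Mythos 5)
Your construction is correct in outline, but it takes a genuinely different route from the paper, and one step of your packaging has a hypothesis problem. You build the fibration structure by hand: for part (1) you define $\pull$ by pasting the comma 2-cell onto the generic arrow of $\Ca^{\downarrow}$, for part (2) you take the lift $(\beta,\id_m)$, and you read off normality from uniqueness. The paper instead transports ready-made adjunctions through 2-functors: it identifies $\Ca \comma \dom$ with the power $\Ca^{\Delta[2]}$ and $\internal{\id_{\dom}}$ with $s_0^*$, then applies the power 2-functor $\Ca^{(-)}$ to the adjunction $d_1 \dashv s_0 : \Delta[2] \to \Delta[1]$ in $\Cat$; since the whisker $s_0\counit$ is already an identity at the level of $\Delta$, normality comes for free. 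For part (2) it likewise identifies $\Aa \comma \pi_{\Aa} \cong \Aa^{\downarrow} \times \Ba$ and applies $- \times \Ba$ to the adjunction $\internal{\id_{\Aa}} \dashv \dom$. What the paper's route buys is that there is literally nothing left to verify: 2-functors preserve adjunctions and whiskered identities, so fibredness, the triangle identities, and normality are all inherited. What your route defers as ``routine verification'' is exactly that bookkeeping: in an arbitrary $\Cosmos$ you cannot chase elements, so your formulas must be interpreted representably --- pointwise in $\Cosmos(\Xa,-)$, where they recover the usual domain-fibration and product-projection adjunctions in $\Cat$, strictly 2-naturally in $\Xa$ --- and then reflected back along the 2-categorical Yoneda embedding. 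That argument works, but it is the longer way around, and your write-up should say it explicitly rather than gesture at it.

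One concrete gap: your suggested ``cleanest route'' via \cref{defn:fib}(2) is not available under the stated hypotheses. Formulation (2) presupposes that the \emph{total} object admits an arrow object, i.e.\ that $(\Ca^{\downarrow})^{\downarrow}$ (resp.\ $(\Aa \times \Ba)^{\downarrow}$) exists, whereas the lemma only assumes that $\Ca^{\downarrow}$ exists and that $\dom$ (resp.\ $\pi_{\Aa}$) is sliceable. You should therefore carry out the argument with formulation (1) --- the fibred right adjoint $\pull$ to $\internal{\id_p}$ with the normality condition $\counitpull\internal{\id_p} = \id$ --- which needs only the comma object you are actually given; this is also the formulation the paper's proof uses.
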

\begin{proof}
	First, suppose that $\dom : \Ca^{\downarrow} \to \Ca$ is sliceable.
	Note that the slice $\Ca \downarrow \dom$ has the universal property of the power $\Ca^{\Delta[2]}$ by the category $\Delta[2] = \{0 \xto{\overline{01}} 1 \xto{\overline{12}} 2\}$ generated by two composable arrows.
	The map $\internal{\id_{\dom}} : \Ca^{\downarrow} \to \Ca \comma \dom$ may be identified with $s_0^* : \Ca^{\Delta[1]} \to \Ca^{\Delta[2]}$ where $s_0 : \Delta[2] \to \Delta[1]$ is the degeneracy sending $\overline{01}$ to $\id_0 \in \Delta[1]$.
	Now, $s_0$ has a left adjoint $d_1 : \Delta[1] \to \Delta[2]$ defined by $d_1(\overline{01}) = \overline{12}\circ \overline{01}$, and the whisker $s_0\counit$ is an identity; this adjunction moreover takes place in the slice $\Delta[0]$ via initial objects.
	Therefore $d_1^*$ is right adjoint to $s_0^*$ fibred over the projection to $\Ca$, and $\counit s_0^*$ is an identity.

	Now, if $\pi_{\Aa} : \Aa \times \Ba \to \Aa$ is a product projection, then we have an isomorphism $\Aa \comma \pi_{\Aa} \cong \Aa^{\downarrow} \times \Ba$, and we may identify $\internal{\id_{\pi_{\Aa}}} : \Aa \times \Ba \to \Aa \comma \pi_{\Aa}$ with $\internal{\id_{\Aa}} \times \id_{\Ba} : \Aa \times \Ba \to \Aa^{\downarrow} \times \Ba$.
	Since taking the product with $\Ba$ will be 2-functorial wherever it is defined, $\internal{\id_{\Aa}} \times \id_{\Ba}$ will have a right adjoint $\dom \times \Ba : \Aa^{\downarrow} \times \Ba \to \Aa \times \Ba$, and it is straightforward to see that the whisker $\counit\internal{\id_{\Aa}}$ will be an identity for this adjunction.
\end{proof}

The nicely behaved class of maps we are eyeing are the \emph{strict displayed normal fibrations}---that is, normal fibrations in $\Alg(T)_\colax$ which are strict as $T$-morphisms (though the fibration structure itself could be colax) and whose underlying 1-cell in the paradise $\Cosmos$ is display.

To even state this definition, we need to be able to take comma objects in $\Alg(T)_\colax$ so that we can define fibrations therein.
It is proven by \cite[Proposition~4.6]{lack:limits.lax.morphisms} that $\Alg(T)_\colax$ admits comma objects $f \comma g$ when $g$ is strict, and that the forgetful functor $\Alg(T)_\colax \to \Cosmos$ preserves these.
In other words, the comma object in $\Alg(T)_\colax$ is carried by the comma object in $\Cosmos$, endowed with the algebra structure obtained by invoking the universal property of $f \comma g$ in $\Cosmos$:
\begin{equation}
	\begin{tikzcd}[ampersand replacement=\&,sep=scriptsize]
		{T(f \comma g)} \& T\Ba \& \Ba \&\& {T(f \comma g)} \& T\Ba \& \Ba \\
		T\Aa \& T\Ca \&\& {=} \& T\Aa \& {f\comma g} \\
		\Aa \&\& \Ca \&\& \Aa \&\& \Ca
		\arrow["{T\vec{\pi}_{\Ba}}", from=1-1, to=1-2]
		\arrow["{T\vec{\pi}_{\Aa}}"', from=1-1, to=2-1]
		\arrow["\beta", from=1-2, to=1-3]
		\arrow["Tp", from=1-2, to=2-2]
		\arrow["g", from=1-3, to=3-3]
		\arrow["{T\vec{\pi}_{\Ba}}", from=1-5, to=1-6]
		\arrow["{T\vec{\pi}_{\Aa}}"', from=1-5, to=2-5]
		\arrow["{\exists!\,\alpha \comma \beta}", dashed, from=1-5, to=2-6]
		\arrow["\beta", from=1-6, to=1-7]
		\arrow[""{name=0, anchor=center, inner sep=0}, "g", from=1-7, to=3-7]
		\arrow["T\generic", shorten <=6pt, shorten >=6pt, Rightarrow, from=2-1, to=1-2]
		\arrow["Tf", from=2-1, to=2-2]
		\arrow["\alpha"', from=2-1, to=3-1]
		\arrow["\gamma", from=2-2, to=3-3]
		\arrow["\alpha"', from=2-5, to=3-5]
		\arrow["{\vec{\pi}_{\Ba}}"', from=2-6, to=1-7]
		\arrow["{\vec{\pi}_{\Aa}}"', from=2-6, to=3-5]
		\arrow["{\colaxity f}", shift right, shorten <=6pt, shorten >=6pt, Rightarrow, from=3-1, to=2-2]
		\arrow["f"', from=3-1, to=3-3]
		\arrow[""{name=1, anchor=center, inner sep=0}, "f"', from=3-5, to=3-7]
		\arrow["\generic", shorten <=10pt, shorten >=10pt, Rightarrow, from=1, to=0]
	\end{tikzcd}
\end{equation}
Moreover, it is shown in \emph{ibid.} that the projections $\vec{\pi}_{\Aa}$ and $\vec{\pi}_{\Ba}$ \emph{jointly detect strictness} meaning that a colax morphism of $T$-algebras $u:\Xa \to f \comma g$ is strict if and only if both $\vec{\pi}_{\Aa}u$ and $\vec{\pi}_{\Ba}u$ are.
This implies, in particular, that the two projections are themselves strict.

\begin{ex}
\label{ex:monoidal.comma}
	Let $T$ be the free monoidal category 2-monad on $\Cat$.
	Then one can verify $f \comma g$ is equipped with `componentwise' monoidal structure: given two maps $\varphi:f(A) \to g(B)$ and $\varphi':f(A') \to g(B')$, their product is given by the dashed map below
	\begin{equation}
		\begin{tikzcd}[row sep=scriptsize]
			{f(A \monoidal A')} & {f(A) \monoidal f(A')} \\
			{g(B \monoidal B')} & {g(B) \monoidal g(B')}
			\arrow["{\colaxity{f}_{A,A'}}", from=1-1, to=1-2]
			\arrow[dashed, from=1-1, to=2-1]
			\arrow["{\varphi \monoidal \varphi'}", from=1-2, to=2-2]
			\arrow[Rightarrow, no head, from=2-1, to=2-2]
		\end{tikzcd}
	\end{equation}
	The monoidal unit is defined analogously, and in fact coincides with $\colaxity{f}_\monoidalunit$.
\end{ex}

\begin{thm}
\label{thm:colax.paradise}
	Let $\Paradise$ be a paradise, and let $T : \Cosmos \to \Cosmos$ be a 2-monad.
	Then equipping $\Alg(T)_\colax$ with the class $\mathsf{sdnf}$ of strict displayed normal fibrations endows it with the structure of a paradise.
\end{thm}
\begin{proof}
	By \cref{lem:normal.fibration.examples}, the (strict) $T$-algebra morphism $\dom : \Aa^{\downarrow} \to \Aa$ is a strict displayed normal fibration in $\Alg(T)_\colax$.
	Evidently, the class of strict displayed normal fibrations is closed under composition and contains all identities.
	We therefore only need to show it is closed under pullback.

	To that end, let $p : \Ba \fibto \Ca$ be a strict normal display fibration and let $(f, \colaxity{f}) : \Aa \to \Ca$ be any colax morphism.
	Since $p$ is a display map in $\Cosmos$ by assumption, we may form the pullback $f \spancomp p$ in $\Cosmos$; we aim to endow this with the structure of a $T$-algebra so that it becomes the pullback in $\Alg(T)_\colax$.
	We will follow the strategy which Bourke and Lack use in \cite[Proposition~4.1]{bourke_cosmoi_2023} to rectify an isocomma into an isofibration into a pullback; we will rectify a comma into a fibration into a pullback by the same method.

	First, form the comma object $f \comma p$ in $\Alg(T)_\colax$---which we can form because $p$ is assumed strict.
	Then lift the universal arrow to give a commuting square into $p$ as follows (with this diagram taking place in $\Alg(T)_\colax$):

	\begin{equation}
	\label{eqn:lift.of.comma}
		\begin{tikzcd}[row sep = small]
			{f \comma p} & \Ba && {f\comma p} & \Ba \\
			&& {=} \\[-2ex]
			\Aa & \Ca && \Aa & \Ca
			\arrow["{\vec{\pi}_{\Ba}}", from=1-1, to=1-2]
			\arrow["{\vec{\pi}_{\Aa}}"', from=1-1, to=3-1]
			\arrow["p", two heads, from=1-2, to=3-2]
			\arrow[""{name=0, anchor=center, inner sep=0}, "{\vec{\pi}_{\Ba}}", curve={height=-12pt}, from=1-4, to=1-5]
			\arrow[""{name=1, anchor=center, inner sep=0}, "{\pull \generic}"', curve={height=12pt}, dashed, from=1-4, to=1-5]
			\arrow["{\vec{\pi}_{\Aa}}"', from=1-4, to=3-4]
			\arrow["p", two heads, from=1-5, to=3-5]
			\arrow["\generic", shorten <=9pt, shorten >=9pt, Rightarrow, from=3-1, to=1-2]
			\arrow["f"', from=3-1, to=3-2]
			\arrow["f"', from=3-4, to=3-5]
			\arrow["\lift\generic"', shorten <=3pt, shorten >=3pt, Rightarrow, from=1, to=0]
		\end{tikzcd}
	\end{equation}

	By the universal property of $f \comma p$, we have a unique map $(e, \colaxity{e}) : f \comma p \to f \comma p$ which makes the following equation hold:

	\begin{equation}
		\begin{tikzcd}[ampersand replacement=\&,sep=scriptsize]
			{f\comma p} \&\&\&\& {f\comma p} \\
			\&\& \Ba \& {=} \&\& {f\comma p} \& \Ba \\
			\& \Aa \& \Ca \&\&\& \Aa \& \Ca
			\arrow["\pull\generic", curve={height=-18pt}, from=1-1, to=2-3]
			\arrow["{\vec{\pi}_{\Aa}}"', curve={height=12pt}, from=1-1, to=3-2]
			\arrow["{\exists!\,e}", dashed, from=1-5, to=2-6]
			\arrow["\pull\generic", curve={height=-18pt}, from=1-5, to=2-7]
			\arrow["{\vec{\pi}_{\Aa}}"', curve={height=12pt}, from=1-5, to=3-6]
			\arrow["p", two heads, from=2-3, to=3-3]
			\arrow["{\vec{\pi}_{\Ba}}", from=2-6, to=2-7]
			\arrow["{\vec{\pi}_{\Aa}}"', from=2-6, to=3-6]
			\arrow["p", two heads, from=2-7, to=3-7]
			\arrow["f"', from=3-2, to=3-3]
			\arrow["\generic", shorten <=5pt, shorten >=5pt, Rightarrow, from=3-6, to=2-7]
			\arrow["f"', from=3-6, to=3-7]
		\end{tikzcd}
	\end{equation}
	We will show $(e, \colaxity{e})$ is a special idempotent that splits as $f \comma p \xto{r} f \spancomp p \xto{i} f \comma p$, thus through the pullback of $p$ along $f$ in $\Cosmos$.
	\cref{lem:strong.idempotents.split}, will then endow $f \spancomp p$ with the desired algebra structure and lift this splitting to $\Alg(T)_{\colax}$.

	We start showing $(e, \colaxity{e})$ is a special idempotent.
	Take $i : f \spancomp p \to f \comma p$ to be the map induced by the pullback square defining $f \spancomp p$, depicted below left, and take $r : f \comma p \to f \spancomp p$ to be the map $( \vec{\pi}_{\Aa}, \pull\generic )$ induced by the commuting square in the right hand side of Diagram \ref{eqn:lift.of.comma}.

	\begin{equation}
	\label{eqn:splitting}
		\begin{tikzcd}[ampersand replacement=\&,sep=scriptsize]
			{f \spancomp p} \\
			\& {f \comma p} \& \Ba \\
			\& \Aa \& \Ca
			\arrow["{\exists!\,i}", dashed, from=1-1, to=2-2]
			\arrow["{\pi_{\Ba}}", curve={height=-12pt}, from=1-1, to=2-3]
			\arrow["{\pi_{\Aa}}"', curve={height=12pt}, from=1-1, to=3-2]
			\arrow["{\vec{\pi}_{\Ba}}", from=2-2, to=2-3]
			\arrow["{\vec{\pi}_{\Aa}}"', from=2-2, to=3-2]
			\arrow["p", two heads, from=2-3, to=3-3]
			\arrow["\generic", shorten <=5pt, shorten >=5pt, Rightarrow, from=3-2, to=2-3]
			\arrow["f"', from=3-2, to=3-3]
		\end{tikzcd}
		\qquad\qquad
		\begin{tikzcd}[ampersand replacement=\&,sep=scriptsize]
			{f\comma p} \\
			\& {f \spancomp p} \& \Ba \\
			\& \Aa \& \Ca
			\arrow["{\exists!\,r}", dashed, from=1-1, to=2-2]
			\arrow["\pull\generic", curve={height=-12pt}, from=1-1, to=2-3]
			\arrow["{\vec{\pi}_{\Aa}}"', curve={height=12pt}, from=1-1, to=3-2]
			\arrow["{\pi_{\Ba}}", from=2-2, to=2-3]
			\arrow["{\pi_{\Aa}}"', from=2-2, to=3-2]
			\arrow["\lrcorner"{anchor=center, pos=0.125}, draw=none, from=2-2, to=3-3]
			\arrow["p", two heads, from=2-3, to=3-3]
			\arrow["f"', from=3-2, to=3-3]
		\end{tikzcd}
	\end{equation}

	We see that $e = ir$ by universal property of the comma.
	To show that $ri = \id_{f \spancomp p}$, it suffices to show that $\pi_{\Aa} ri = \pi_{\Aa}$ and $\pi_{\Ba} ri = \pi_{\Ba}$, this time because of the universal property of the pullback.
	The first equation follows immediately from the definition of $r$ and $i$.
	For the second, we start by noticing that $\pi_{\Ba} ri = (\pull \generic)i$ from \eqref{eqn:splitting} right.
	Then consider the whisker $(\lift \generic)i : (\pull \generic)i \Rightarrow \vec{\pi}_{\Ba} i$: since $\vec{\pi}_{\Ba} i = \pi_{\Ba}$ by definition, it will suffice to show that $(\lift \generic)i$ is an identity, and because $p$ is assumed to be normal, it is in turn enough to show $p(\lift\generic)i$ is an identity.
	But $p(\lift \generic) i = \generic i$, and the latter is an identity by definition of $i$, so we conclude that indeed $ri = \id_{f \spancomp p}$.

	We now show that the whisker $r\colaxity{e}$ is an identity, making $(e, \colaxity{e})$ strong.
	\begin{equation}
		\begin{tikzcd}[ampersand replacement=\&,sep=scriptsize]
			{T(f \comma p)} \& {T(f \comma p)} \\[1ex]
			{f \comma p} \& {f \comma p} \\[-1ex]
			\&\&[-2ex] {f \spancomp p} \& \Ba \\[-1ex]
			\&\& \Aa
			\arrow[""{name=0, anchor=center, inner sep=0}, "Te", from=1-1, to=1-2]
			\arrow["{\alpha \comma \beta}"', from=1-1, to=2-1]
			\arrow["{\alpha \comma \beta}", from=1-2, to=2-2]
			\arrow[""{name=1, anchor=center, inner sep=0}, "e"', from=2-1, to=2-2]
			\arrow["r", from=2-2, to=3-3]
			\arrow["\pull\generic", curve={height=-12pt}, from=2-2, to=3-4]
			\arrow["{\vec{\pi}_{\Aa}}"', curve={height=12pt}, from=2-2, to=4-3]
			\arrow["{\pi_{\Ba}}", from=3-3, to=3-4]
			\arrow["{\pi_{\Aa}}", from=3-3, to=4-3]
			\arrow["{\colaxity e}", shorten <=4pt, shorten >=4pt, Rightarrow, from=1, to=0]
		\end{tikzcd}
	\end{equation}
	Again, since this is a 2-cell into a pullback (in $\Cosmos$), it will suffice to show that $\pi_{\Aa} r \colaxity{e}$ and $\pi_{\Ba} r \colaxity{e}$ are identities.
	For the first, since $\pi_{\Aa} r \colaxity{e} = \vec{\pi}_{\Aa} \colaxity{e}$; it suffices to show the latter is an identity.
	To this end, note that in $\Alg(T)_\colax$ we have that $\vec{\pi}_{\Aa} (e, \colaxity{e}) = \vec{\pi}_{\Aa}$; but $\vec{\pi}_{\Aa}$ is strict, so this equation of colax morphisms says exactly that $\vec{\pi}_{\Aa}\colaxity{e}$ is an idenity in $\Cosmos$.

	Next we turn to showing that $\pi_{\Ba} r \colaxity{e} = (\pull \generic)\colaxity{e}$ is an identity.
	To prove this, note if $(\lift\generic)\colaxity{e}$ is $p$-cartesian then $(\pull\generic)\colaxity{e}$ must also be $p$-cartesian;
	but $p(\pull\generic)\colaxity{e} = \generic\colaxity{e}$ which is an identity (since $\generic e$ is an identity in $\Alg(T)_\colax$).
	So, by normality we must conclude that $(\pull\generic)\colaxity{e}$ is an identity, as desired.

	It therefore remains to show that $(\lift\generic)\colaxity{e}$ is $p$-cartesian.
	We will first argue that $(e, \colaxity{e})$ is an idempotent in $\Alg(T)_\colax$ by the universal property of the comma $f \comma p$ and the normality of $p$.
	It suffices to show that $\vec{\pi_{\Aa}}e = e$ and $(\pull\generic)e = e$ in $\Alg(T)_\colax$; the first is immediate, and the second follows by normality of $p$ and the fact that $p(\lift \generic)e = \generic e$ is an identity.

	Since $e$ is an idempotent in $\Alg(T)_\colax$, we have the following equation again in $\Alg(T)_\colax$: $(\pull \generic)e = \vec{\pi}_{\Ba} ee = \vec{\pi}_{\Ba} e = \pull \generic$.
	Rendered in $\Cosmos$, this gives us the following equation:
	\begin{equation}
		\begin{tikzcd}[ampersand replacement=\&, row sep=small]
			{T(f \comma p)} \& {T(f \comma p)} \& TB \&\& {T(f \comma p)} \& TB \\
			\&\&\& {=} \\
			{f \comma p} \& {f \comma p} \& \Ba \&\& {f \comma p} \& \Ba
			\arrow[""{name=0, anchor=center, inner sep=0}, "Te", from=1-1, to=1-2]
			\arrow["{\alpha \comma \beta}"', from=1-1, to=3-1]
			\arrow[""{name=1, anchor=center, inner sep=0}, "{T\vec{\pi}_{\Ba}}", from=1-2, to=1-3]
			\arrow[""{name=1p, anchor=center, inner sep=0}, phantom, from=1-2, to=1-3, start anchor=center, end anchor=center]
			\arrow[from=1-2, to=3-2]
			\arrow["\beta", from=1-3, to=3-3]
			\arrow[""{name=2, anchor=center, inner sep=0}, "{T\vec{\pi}_{\Ba}}", from=1-5, to=1-6]
			\arrow[""{name=2p, anchor=center, inner sep=0}, phantom, from=1-5, to=1-6, start anchor=center, end anchor=center]
			\arrow["{\alpha \comma \beta}"', from=1-5, to=3-5]
			\arrow["\beta", from=1-6, to=3-6]
			\arrow[""{name=3, anchor=center, inner sep=0}, "e"', from=3-1, to=3-2]
			\arrow[""{name=4, anchor=center, inner sep=0}, "{{\pull \generic}}"', from=3-2, to=3-3]
			\arrow[""{name=4p, anchor=center, inner sep=0}, phantom, from=3-2, to=3-3, start anchor=center, end anchor=center]
			\arrow[""{name=5, anchor=center, inner sep=0}, "\pull\generic"', from=3-5, to=3-6]
			\arrow[""{name=5p, anchor=center, inner sep=0}, phantom, from=3-5, to=3-6, start anchor=center, end anchor=center]
			\arrow["{{\colaxity{e}}}", shorten <=9pt, shorten >=9pt, Rightarrow, from=3, to=0]
			\arrow["{{\colaxity{\pull\generic}}}", shorten <=9pt, shorten >=9pt, Rightarrow, from=4p, to=1p]
			\arrow["{{\colaxity{\pull\generic}}}", shorten <=9pt, shorten >=9pt, Rightarrow, from=5p, to=2p]
		\end{tikzcd}
	\end{equation}
	We may therefore compute the horizontal composite $(\lift\generic)\colaxity{e}$ as follows:
	\begin{eqalign}
		\begin{tikzcd}[ampersand replacement=\&,row sep=small, column sep=scriptsize]
			{T(f\comma p)} \& {T(f\comma p)} \&\& T\Ba \&\& {T(f \comma p)} \& {T(f \comma p)} \&[2ex]\&[2ex] T\Ba \\
			\&\&\&\& {=} \\
			{f \comma p} \& {f \comma p} \&\& \Ba \&\& {f \comma p} \& {f \comma p} \&\& \Ba
			\arrow[""{name=0, anchor=center, inner sep=0}, "Te", from=1-1, to=1-2]
			\arrow[""{name=0p, anchor=center, inner sep=0}, phantom, from=1-1, to=1-2, start anchor=center, end anchor=center]
			\arrow["{\alpha \comma \beta}"', from=1-1, to=3-1]
			\arrow["{{T\vec{\pi}_{\Ba}}}", curve={height=-12pt}, from=1-2, to=1-4]
			\arrow["{\alpha \comma \beta}"', from=1-2, to=3-2]
			\arrow["\beta", from=1-4, to=3-4]
			\arrow[""{name=1, anchor=center, inner sep=0}, "Te", from=1-6, to=1-7]
			\arrow[""{name=1p, anchor=center, inner sep=0}, phantom, from=1-6, to=1-7, start anchor=center, end anchor=center]
			\arrow["{\alpha \comma \beta}"', from=1-6, to=3-6]
			\arrow[""{name=2, anchor=center, inner sep=0}, "{{T\vec{\pi}_{\Ba}}}", curve={height=-12pt}, from=1-7, to=1-9]
			\arrow[""{name=2p, anchor=center, inner sep=0}, phantom, from=1-7, to=1-9, start anchor=center, end anchor=center, curve={height=-12pt}]
			\arrow[""{name=3, anchor=center, inner sep=0}, "{{T(\pull\generic)}}"', curve={height=12pt}, from=1-7, to=1-9]
			\arrow[""{name=3p, anchor=center, inner sep=0}, phantom, from=1-7, to=1-9, start anchor=center, end anchor=center, curve={height=12pt}]
			\arrow[""{name=3p, anchor=center, inner sep=0}, phantom, from=1-7, to=1-9, start anchor=center, end anchor=center, curve={height=12pt}]
			\arrow["{\alpha \comma \beta}"', from=1-7, to=3-7]
			\arrow["\beta", from=1-9, to=3-9]
			\arrow[""{name=4, anchor=center, inner sep=0}, "e"', from=3-1, to=3-2]
			\arrow[""{name=4p, anchor=center, inner sep=0}, phantom, from=3-1, to=3-2, start anchor=center, end anchor=center]
			\arrow[""{name=5, anchor=center, inner sep=0}, "{{\vec{\pi}_{\Ba}}}", curve={height=-18pt}, from=3-2, to=3-4]
			\arrow[""{name=5p, anchor=center, inner sep=0}, phantom, from=3-2, to=3-4, start anchor=center, end anchor=center, curve={height=-18pt}]
			\arrow[""{name=6, anchor=center, inner sep=0}, "\pull\generic"', curve={height=18pt}, from=3-2, to=3-4]
			\arrow[""{name=6p, anchor=center, inner sep=0}, phantom, from=3-2, to=3-4, start anchor=center, end anchor=center, curve={height=18pt}]
			\arrow[""{name=7, anchor=center, inner sep=0}, "e"', from=3-6, to=3-7]
			\arrow[""{name=7p, anchor=center, inner sep=0}, phantom, from=3-6, to=3-7, start anchor=center, end anchor=center]
			\arrow[""{name=8, anchor=center, inner sep=0}, "{{\pull \generic}}"', curve={height=18pt}, from=3-7, to=3-9]
			\arrow[""{name=8p, anchor=center, inner sep=0}, phantom, from=3-7, to=3-9, start anchor=center, end anchor=center, curve={height=18pt}]
			\arrow["{{T(\lift\generic)}}"', shorten <=3pt, shorten >=3pt, Rightarrow, from=3p, to=2p]
			\arrow["{{\colaxity{e}}}", shorten <=9pt, shorten >=9pt, Rightarrow, from=4p, to=0p]
			\arrow["{{\lift \generic}}"', shorten <=5pt, shorten >=5pt, Rightarrow, from=6p, to=5p]
			\arrow["{{\colaxity{e}}}", shorten <=9pt, shorten >=9pt, Rightarrow, from=7p, to=1p]
			\arrow["{{\colaxity{\pull\generic}}}", shorten <=5pt, shorten >=14pt, Rightarrow, from=8p, to=3p]
		\end{tikzcd}
		\\
		\begin{tikzcd}[ampersand replacement=\&,column sep=scriptsize,row sep=small]
			\& {T(f \comma p)} \&[2ex]\&[2ex] T\Ba \&\& {T(f \comma p)} \&\& T\Ba \\
			{=} \&\&\&\& {=} \\
			\& {f \comma p} \&\& \Ba \&\& {f \comma p} \&\& \Ba
			\arrow[""{name=0, anchor=center, inner sep=0}, "{{T\vec{\pi}_{\Ba}}}", curve={height=-12pt}, from=1-2, to=1-4]
			\arrow[""{name=0p, anchor=center, inner sep=0}, phantom, from=1-2, to=1-4, start anchor=center, end anchor=center, curve={height=-12pt}]
			\arrow[""{name=1, anchor=center, inner sep=0}, "{{T(\pull\generic)}}"', curve={height=12pt}, from=1-2, to=1-4]
			\arrow[""{name=1p, anchor=center, inner sep=0}, phantom, from=1-2, to=1-4, start anchor=center, end anchor=center, curve={height=12pt}]
			\arrow[""{name=1p, anchor=center, inner sep=0}, phantom, from=1-2, to=1-4, start anchor=center, end anchor=center, curve={height=12pt}]
			\arrow["{\alpha \comma \beta}"', from=1-2, to=3-2]
			\arrow["\beta", from=1-4, to=3-4]
			\arrow["{{T\vec{\pi}_{\Ba}}}", curve={height=-12pt}, from=1-6, to=1-8]
			\arrow["{\alpha \comma \beta}"', from=1-6, to=3-6]
			\arrow["\beta", from=1-8, to=3-8]
			\arrow[""{name=2, anchor=center, inner sep=0}, "\pull\generic"', curve={height=12pt}, from=3-2, to=3-4]
			\arrow[""{name=2p, anchor=center, inner sep=0}, phantom, from=3-2, to=3-4, start anchor=center, end anchor=center, curve={height=12pt}]
			\arrow[""{name=3, anchor=center, inner sep=0}, "\pull\generic"', curve={height=12pt}, from=3-6, to=3-8]
			\arrow[""{name=4, anchor=center, inner sep=0}, "{{\vec{\pi}_{\Ba}}}", curve={height=-12pt}, from=3-6, to=3-8]
			\arrow["{{T(\lift\generic)}}"', shorten <=3pt, shorten >=3pt, Rightarrow, from=1p, to=0p]
			\arrow["{{\colaxity{\pull\generic}}}"', shorten <=4pt, shorten >=13pt, Rightarrow, from=2p, to=1p]
			\arrow["\lift\generic"', shorten <=3pt, shorten >=3pt, Rightarrow, from=3, to=4]
		\end{tikzcd}
	\end{eqalign}
	We find that $(\lift\generic)\colaxity{e} = \lift\generic$, so that in particular it is $p$-cartesian as we needed to show.

	Finally, we need to show that the induced algebra structure
	\begin{equation}
	\label{eqn:pullback.alg}
		T(f \spancomp p) \xto{Ti} T(f \comma p) \xto{\alpha \comma \beta} f \comma p \xto{r} f \spancomp p
	\end{equation}
	is indeed the pullback of $p$ along $f$ in $\Alg(T)_\colax$.
	We begin by observing that the projections $\pi_{\Aa} = \vec{\pi}_{\Aa} i$ and $\pi_{\Ba} = \vec{\pi}_{\Ba} i$ are the composite of colax $T$-morphisms and so may be endowed with the structure of colax morphisms themselves; we may therefore work entirely in $\Alg(T)_\colax$.

	First, we show the 1-categorical universal property.
	Suppose we are given $a : \Xa \to \Aa$ and $b : \Xa \to \Ba$ so that $fa = gb$ in $\Alg(T)_\colax$.
	Then we have a unique map $\internal{fa=gb} : \Xa \to f \comma p$ so that $\generic\internal{fa=gb} = \id_{fa} = \id_{gb}$.
	The map $r\internal{fa=gb} : \Xa \to f \spancomp p$ satisfies $\pi_{\Aa} r\internal{fa=gb} = \vec{\pi}_{\Aa}\internal{fa=gb} = a$ and $\pi_{\Ba}r\internal{fa=gb} = (\pull\generic)\internal{fa=gb}$, but $p(\lift\generic)\internal{fa=gb} = \generic\internal{fa=gb}$ is an identity, so by normality $(\lift\generic)\internal{fa=gb}$ is an identity.
	We conclude that $(\pull\generic)\internal{fa=gb} = \vec{\pi}_{\Ba}\internal{fa=gb} = b$.
	Now to show uniqueness of $r\internal{fa=gb}$, suppose that $z : \Xa \to f \spancomp p$ also has $\pi_{\Aa} z = a$ and $\pi_{\Ba} z = b$; then $\vec{\pi}_{\Aa} iz = \pi_{\Aa} z = a$ and $\vec{\pi}_{\Ba}i z = \pi_{\Ba} z = b$ and $\generic i z$ is an identity, so we must have $iz = \internal{fa=gb}$.
	But then $z = riz = r\internal{fa=gb}$, proving that $(a,b) := r\internal{fa=gb}$ is the unique such map into the pullback.

	Next, the 2-categorical universal property.
	Let $\varphi : a \twoto a':\Xa \to \Aa$ and $\psi : b \twoto b':\Xa \to \Ba$ so that $f\varphi = p\psi$.
	Again, we use the the universal property of $f \comma p$ in $\Alg(T)_\colax$ first to get a colax morphism $\internal{f\varphi = p\psi} : \internal{fa = pb} \twoto \internal{fa'=pb'} : \Xa \to f \comma p$:
	\begin{equation}
	\label{eqn:chi.gamma}
		\internal{f\varphi = p\psi} =
		\begin{tikzcd}[sep=small]
			fa & {fa'} \\
			pb & {pb'}
			\arrow["f\varphi", from=1-1, to=1-2]
			\arrow[Rightarrow, no head, from=1-1, to=2-1]
			\arrow[Rightarrow, no head, from=1-2, to=2-2]
			\arrow["p\psi"', from=2-1, to=2-2]
		\end{tikzcd}
	\end{equation}
	We aim to show $(\varphi,\psi) := r\internal{f\varphi = p\psi} : \Xa \to f \spancomp p$ is the desired comparison map for the pullback.
	To do so, we need first to show that $\pi_{\Aa} r \internal{f\varphi = p\psi} = \varphi$, but that follows from the fact $\pi_{\Aa}r \internal{f\varphi = p\psi} = \vec{\pi}_{\Aa}\internal{f\varphi = p\psi} = \varphi$.
	It remains to show $\pi_{\Ba}r\internal{f\varphi = p\psi} = \psi$.
	Between the two, we have a map $(\lift \generic)\internal{f\varphi = p\psi} : (\pull \generic)\internal{f\varphi = p\psi} \twoto \vec{\pi}_{\Ba}\internal{f\varphi = p\psi}$, and thus we aim to show such map is the identity.
	Now, $(\lift \generic)\internal{f\varphi = p\psi}$ is the action of $\lift$ on the square depicted in \eqref{eqn:chi.gamma}, whose sides are identities.
	It follows by naturality of $\lift$ and the fact $p$ is normal that to such square corresponds
	\begin{equation}
		\begin{tikzcd}[row sep=scriptsize]
			{\pull(\internal{fa = pb})} &[5ex] {\pull(\internal{fa' = pb'})} \\
			b & {b'}
			\arrow["{\pull(\internal{f\varphi = p\psi})}", from=1-1, to=1-2]
			\arrow["{\lift(\internal{fa = pb})}"', Rightarrow, no head, from=1-1, to=2-1]
			\arrow["{\lift(\internal{fa' = pb'})}", Rightarrow, no head, from=1-2, to=2-2]
			\arrow["\psi"', from=2-1, to=2-2]
		\end{tikzcd}
	\end{equation}
	which witnesses the required identity.

	To see $(\varphi,\psi)$ is unique, suppose $\rho:(a,b) \twoto (a',b')$ was another 2-cell satisfying $\pi_{\Aa}\rho = \varphi$ and $\pi_{\Ba}\rho = \psi$.
	Then consider $i\rho : i(a,b) \twoto i(a'b') : \Xa \to f \comma p$.
	Since $i(a,b) = \internal{fa = pb}$ and likewise for $(a',b')$, we get $i\rho = \internal{f\varphi = p\psi}$ by universal property of the comma, and thus $\rho = ri\rho = r\internal{f\varphi = p\psi} = (\varphi,\psi)$.

	Finally, knowing that $f \spancomp p$ is indeed the pullback in $\Alg(T)_\colax$ lets us conclude that $\pi_{\Aa} : f \spancomp p \to A$ is a normal fibration by \cref{lem:normal.fibrations.closed.under.pullback}.
	We have equipped $\pi_{A}$ with the colaxity of $\vec{\pi}_{\Aa} i$, which is explicitly the whisker $\vec{\pi_{\Aa}}\colaxity{e}$.
	But $\vec{\pi}_{\Aa} = \pi_{\Aa} r$, so $\vec{\pi}_{\Aa} \colaxity{e} = \pi_{\Aa} r \colaxity{e}$ is an identity; therefore, $\pi_{\Aa}$ is strict.
	%
\end{proof}

\begin{ex}
\label{ex:monoidal.pullback}
	Instantiating \eqref{eqn:pullback.alg} to the case of the free monoidal category 2-monad on the paradise $(\Cat, \{\mathsf{all}\})$ of categories, in light of \cref{ex:monoidal.comma}, we get that $f \spancomp p$ is endowed with the monoidal structure so defined:
	\begin{equation}
	\label{eqn:monoidal.pullback}
		\monoidalunit := (\monoidalunit, \colaxity{f}_\monoidalunit^{\,*} \monoidalunit), \qquad
		(A,B) \monoidal (A',B') := (A \monoidal A', \colaxity{f}_{A,A'}^{\,*}(B \monoidal B')).
	\end{equation}

	Also, the projection $\pi_{\Aa}$ is a colax monoidal morphism with colaxity $\lift \colaxity{f}$.

	Moreover, suppose that $\action : \acted \times \actor \to \acted$ is a colax monoidal functor with respect to the product monoidal structure $(A, P) \monoidal (B, Q) := (A \monoidal B, P \monoidal Q)$.
	By \cref{lem:normal.fibration.examples}, the domain map $\dom : \acted^{\downarrow} \to \acted$ is a normal fibration with cartesian lift given by precomposition; it is also evidently strict and displayed (since we consider all functors to be display maps).
	Therefore, the monoidal structure \eqref{eqn:monoidal.pullback} on the pullback $\action \spancomp \dom$ (whose objects we write as displayed over $\acted \times \actor$) specializes in this case to
	\begin{eqalign}
	\label{eqn:preview.of.para.monoidal.structure}
		\monoidalunit &:= \lens{\monoidalunit \action \monoidalunit \xto{\colaxity{\action}} \monoidalunit}{\monoidalunit, \monoidalunit},\\
		\lens{A \action P \xto{f} A'}{A, P} \monoidal \lens{B \action Q \xto{g} B'}{B, Q} &:= \lens{(A \monoidal B) \action (P \monoidal Q) \xto{(f \monoidal g) \colaxity{\action}} A' \monoidal B'}{A \monoidal B, P \monoidal Q}
	\end{eqalign}
	This is exactly the monoidal structure we desired for parameterized maps in \eqref{eqn:para.map.monoidal}, and the one used for the self-action of a commutative monoidal category in \cite{capucci_towards_2022}.
\end{ex}

We note that a nearly identical argument to that of \cref{thm:colax.paradise} applies for rectifying an isocomma into a strict morphism in $\Alg(T)_\pseudo$, the 2-category of strict algebras and pseudo-morphisms.
We record this as a theorem as well.

\begin{thm}
	Let $\Paradise$ be a paradise and $T : \Cosmos \to \Cosmos$ be a 2-monad.
	Then the class of strict displayed normal isofibrations $\{\mathsf{sdni}\}$ equips $\Alg(T)_\pseudo$ with the structure of a paradise.
\end{thm}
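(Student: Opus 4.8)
The plan is to run the argument of \cref{thm:colax.paradise} essentially verbatim, replacing comma objects by isocomma objects and (normal) fibrations by (normal) isofibrations throughout. This in fact returns us to the original setting of Bourke and Lack's rectification \cite[Proposition~4.1]{bourke_cosmoi_2023}, which rectifies an isocomma sitting over an isofibration into a genuine pullback. First I would record the analogues of the structural inputs. By \cref{lem:normal.fibration.examples} the domain projection $\dom : \Aa^{\downarrow} \to \Aa$ is a normal fibration, hence in particular a normal isofibration; it is strict as a $T$-morphism and displayed in $\Cosmos$, so it lies in $\{\mathsf{sdni}\}$. Moreover the class of strict displayed normal isofibrations contains all isomorphisms and is closed under composition, using the isofibration axioms of \cref{defn:fin.2-cosmos} together with the evident isofibration analogue of \cref{lem:normal.fibrations.closed.under.pullback}. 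As in the colax case, the only nontrivial point is closure of $\{\mathsf{sdni}\}$ under pullback.

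So fix a strict displayed normal isofibration $p : \Ba \isofibto \Ca$ and a pseudo-morphism $(f, \colaxity{f}) : \Aa \to \Ca$, and form the pullback $f \spancomp p$ in $\Cosmos$, which exists since $p$ is display. Because $p$ is strict, $\Alg(T)_\pseudo$ admits the isocomma object $f \isocomma p$, carried by the isocomma in $\Cosmos$, by the pseudo-morphism form of \cite[Proposition~4.6]{lack:limits.lax.morphisms}; the forgetful functor preserves it. I would then lift the invertible generic 2-cell $\isogeneric$ through the isofibration structure of $p$ to produce a commuting square into $p$, and invoke the universal property of $f \isocomma p$ to induce an endomorphism $(e, \colaxity{e}) : f \isocomma p \to f \isocomma p$, exactly mirroring \eqref{eqn:lift.of.comma} and the diagram following it. The remaining steps then proceed line by line: one shows that $(e, \colaxity{e})$ is a special idempotent splitting through the pullback as $f\isocomma p \xto{r} f \spancomp p \xto{i} f \isocomma p$, appealing to normality of $p$ at each point where a lifted identity must be an identity, and applies \cref{lem:strong.idempotents.split} to transport the $T$-algebra structure onto $f \spancomp p$ and lift the splitting. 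Crucially, since $\isogeneric$ and $\colaxity{f}$ are invertible, the colaxity $\colaxity{e}$ is invertible, so the splitting lemma produces $i$ as a \emph{pseudo}-morphism (its colaxity $\colaxity{e}\,Ti$ being invertible), keeping us inside $\Alg(T)_\pseudo$. Verifying the 1- and 2-dimensional universal properties of the pullback then goes through as in the closing paragraphs of the proof of \cref{thm:colax.paradise}, now with all mediating 2-cells invertible, and one concludes that $\pi_{\Aa} : f \spancomp p \to \Aa$ is again a strict displayed normal isofibration.

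I expect the main obstacle to be bookkeeping the invertibility hypotheses rather than any genuinely new idea: one must confirm that every place where the colax proof lifted an arbitrary 2-cell is here only ever asked to lift the invertible generic cell $\isogeneric$ (or an identity), so that the weaker isofibration lifting property of $p$ genuinely suffices. In fact the invertibility of $\isogeneric$ \emph{streamlines} the cartesianness arguments, since a lift of an invertible 2-cell through an isofibration is an isomorphism and hence automatically cartesian; the delicate step establishing that $(\lift\generic)\colaxity{e}$ is cartesian therefore collapses. What remains essential is normality, used exactly as before to force the various lifts of identities to be identities in proving $ri = \id_{f \spancomp p}$, the special-idempotent condition $e\colaxity{e} = \id$ (via \cref{lem:special.idempotent.whiskering}), and the strictness of $\pi_{\Aa}$.
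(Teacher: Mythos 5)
Your proposal is correct and matches the paper's intended argument exactly: the paper gives no separate proof for this theorem, stating only that ``a nearly identical argument to that of \cref{thm:colax.paradise} applies,'' which is precisely the substitution of isocommas for commas and (normal) isofibrations for (normal) fibrations that you carry out. Your additional bookkeeping---that invertibility of $\isogeneric$ and $\colaxity{f}$ keeps the split idempotent's colaxity invertible so the splitting stays in $\Alg(T)_\pseudo$, and that normality is still the load-bearing hypothesis---is exactly the verification the paper leaves implicit.
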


\subsection[Colaxly T-structured double categories]{Colaxly $T$-structured double categories}
Fix a paradise $\Paradise$ and a 2-monad $T : \Cosmos \to \Cosmos$ on it.
The whole reason for introducing the paradises $(\Alg(T)_\colax, \{\mathsf{sdnf}\})$ is to give structure to double categories of contextful morphisms.
To this end, we will introduce the notion of a \emph{colaxly $T$-structured} contextads and pseudocategory. 
We will then compare colaxly $T$-structured double categories to structured double categories for various choices of $T$ in the coming sections.

\begin{defn}
\label{defn:colaxly.T.structured.colax.fibred.action}
	A \textbf{colaxly $T$-structured contextad} is a contextad (\cref{defn:colax.fibred.action}) $\fibcolaxaction$ in $(\Alg(T)_\colax,\{\mathsf{sdnf}\})$, thus
	\begin{enumerate}
		\item $\acted$ and $\actor$ are equipped with $T$-algebra structures;
		\item $p$ is a normal display fibration and a strict $T$-morphism, $\action$ a colax $T$-morphism;
		\item the unit and multiplication have the structure of colax $T$-morphisms, and the unitor and associator isomorphisms are 2-cells between colax $T$-morphisms.
	\end{enumerate}
	A colaxly $T$-structured contextad is \textbf{strongly $T$-structured} when $\action$, the unit and multiplication maps are both pseudo $T$-morphisms, and \textbf{strictly $T$-structured} if they are strict $T$-morphisms.
\end{defn}

Clearly the above definition extends to morphisms, transformations and modifications of contextad, which are obtained by instantiating \cref{defn:contextad.1-cell,defn:contextad.2-cell,defn:contextad.3-cell} in $(\Alg(T)_\colax,\{\mathsf{sdnf}\})$.
Concretely, this amounts to ask to the data of such morphisms to colaxly preserve the $T$-structures around.

\begin{defn}
\label{defn:colaxly.T.structured.double.category}
	A \textbf{colaxly $T$-structured pseudocategory} $\Ca_0 \xleftarrow{s} \Ca_1 \xto{t} \Ca_0$ is a pseudomonad in $\DispSpan\Paradise$ with the following data and properties:
	\begin{enumerate}
		\item both $\Ca_0$ and $\Ca_1$ are equipped with $T$-algebra structures;
		\item with respect to these structures, $s$ and $t$ are strict $T$-morphisms;
		\item the unit and multiplication have the structure of colax $T$-morphisms, and the unitor and associator isomorphisms are 2-cells between colax $T$-morphisms.
	\end{enumerate}
	A colaxly $T$-structured pseudomonad is \textbf{strongly $T$-structured} when the unit and multiplication maps are both pseudo $T$-morphisms, and \textbf{strictly $T$-structured} if they are strict $T$-morphisms.
\end{defn}

Likewise, functors, transformations and modifications of colaxly $T$-structured pseudocategories coincide with those of pseudocategories with the added requirement and data of colaxly preserving the salient $T$-structure.

\begin{rmk}
	Note that a colax $T$-structure on a pseudocategory is a different structure than what one would get from a pseudomonad in $\DispSpan(\Alg(T)_\colax, \{\mathsf{sdni}\})$: for starters, both its leg are strict, and then the left leg doesn't need to be a normal fibration.
\end{rmk}

\begin{thm}
\label{thm:para.is.colaxly.structured}
	If $\fibcolaxaction$ is a colaxly (resp. strongly) $T$-structured contextad in $\Paradise$, then $\Ctx(\action)$ is a colaxly (resp. strongly) $T$-structured pseudocategory.
\end{thm}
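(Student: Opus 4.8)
The plan is to observe that a colaxly $T$-structured contextad is, by \cref{defn:colaxly.T.structured.colax.fibred.action}, precisely a contextad in the paradise $(\Alg(T)_\colax, \{\mathsf{sdnf}\})$, which is a genuine paradise by \cref{thm:colax.paradise}. Since the $\Ctx$ construction of \cref{defn:ctx.construction} is defined in an arbitrary paradise, applying it here immediately yields a pseudomonad in $\DispSpan(\Alg(T)_\colax, \{\mathsf{sdnf}\})$, i.e. an internal pseudocategory in $\Alg(T)_\colax$ whose underlying span is $\acted \from \actor \spancomp \acted^\downarrow \to \acted$ and whose source leg is a strict displayed normal fibration. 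The whole content of the theorem is then to check that this datum, once pushed along the forgetful $2$-functor $U : \Alg(T)_\colax \to \Cosmos$, assembles into a colaxly $T$-structured pseudocategory in the sense of \cref{defn:colaxly.T.structured.double.category}. Note that $U$ is paradisiacal: it carries an $\mathsf{sdnf}$ to its underlying display map and preserves both the pullbacks of \cref{thm:colax.paradise} (which are carried by the pullbacks in $\Cosmos$) and the arrow objects (which are the comma objects $\id_\acted \comma \id_\acted$, carried by those in $\Cosmos$), so it sends our pseudomonad to the required pseudomonad in $\DispSpan\Paradise$, while the $T$-algebra data are remembered as the extra structure of \cref{defn:colaxly.T.structured.double.category}.

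First I would dispatch the routine conditions. The objects $\acted$ and $\Ctx(\action)_1 = \actor \spancomp \acted^\downarrow$ are objects of $\Alg(T)_\colax$, hence strict $T$-algebras, giving condition $(1)$. The unit $(\combineunit, \counitor)$ and the multiplication—which by the wreath-product formula \eqref{eqn:para.comp} is assembled from $\combine$, $\coassociator$, the intertwiner $\intertwiner$ and the (strict) arrow-monad multiplication $\multarrow$—are by construction $1$-cells of $\Alg(T)_\colax$, i.e. colax $T$-morphisms, and the unitors and associator are $2$-cells there; this is condition $(3)$. Finally the source leg $s$ is the display map of the span, hence an $\mathsf{sdnf}$ and in particular a strict $T$-morphism, giving half of condition $(2)$.

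The crux is the strictness of the target leg $t = \cod \circ \pi_{\acted^\downarrow}$, which is the one place where the specific geometry of the arrow pseudomonad, rather than abstract paradise bookkeeping, enters; I expect this to be the main obstacle. Here $\cod$ is strict because $\acted^\downarrow$ arises as the comma object $\id_\acted \comma \id_\acted$, whose two projections $\dom$ and $\cod$ jointly detect strictness and are therefore themselves strict (the comma-object discussion preceding \cref{ex:monoidal.comma}). The projection $\pi_{\acted^\downarrow}$, however, is only colax: by the pullback rectification of \cref{thm:colax.paradise} (see \cref{ex:monoidal.pullback}) its colaxity is the cartesian lift $\lift \colaxity{\action}$ of the colaxity of $\action$ with respect to the fibration $\dom$. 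The saving grace is that $\cod$ \emph{kills cartesian lifts} of $\dom$: the cartesian lift of a map at an arrow is given by precomposition (\cref{rmk:fib.struct}), so its image under $\cod$ is an identity—this is exactly the verticality condition $\cod\counitpull = \id$ of \eqref{eqn:vert.of.cart.lift}. Hence the whisker $\cod \cdot \lift\colaxity{\action}$ is an identity, $t$ is strict, and condition $(2)$ is complete.

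The strongly $T$-structured case requires no new idea. If $\action$, $\combineunit$ and $\combine$ are \emph{pseudo} $T$-morphisms, then the intertwiner $\intertwiner$ is pseudo, and the unit and multiplication of $\Ctx(\action)$—built from these together with the strict arrow-monad data exactly as above—are pseudo as well, so $\Ctx(\action)$ is a strongly $T$-structured pseudocategory.
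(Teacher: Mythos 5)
Your proposal is correct and follows essentially the same route as the paper's proof: perform $\Ctx$ in the paradise $(\Alg(T)_\colax,\{\mathsf{sdnf}\})$, observe that the unit, multiplication, unitors and associator are colax $T$-data by construction, that the source leg is strict because both $p$ and $\pi_{\actor}$ are, and that the crux is strictness of the target leg $t = \cod\,\pi_{\acted^{\downarrow}}$, which both you and the paper obtain from the fact that $\cod$ kills cartesian lifts of $\dom$ (lifts being given by precomposition). The only cosmetic difference is that you identify the colaxity of $\pi_{\acted^{\downarrow}}$ as $\lift\colaxity{\action}$ by citing \cref{ex:monoidal.pullback}, which the paper states only for the free monoidal 2-monad on $\Cat$; for a general 2-monad on a general paradise the paper instead traces this colaxity through the rectification data of \cref{thm:colax.paradise} (namely $\vec{\pi}_{\acted^{\downarrow}}\colaxity{e}Ti$, reduced to $\colaxity{\pull\generic}$) and shows its $\cod$-whisker vanishes by a pasting computation using $\cod(\lift\generic)=\id$ --- the same idea, executed without the shortcut.
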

\begin{proof}
	We note that by virtue of being morphisms in $\Alg(T)_\colax$ (or $\Alg(T)_\pseudo$), the unit and multiplication of $\action$ are colax (or pseudo) $T$-morphisms, and the unitor and associator isomorphisms are $2$-cells between colax $T$-morphisms.
	All that remains to check is that the source and target maps are strict.
	\begin{equation}
	\label{eqn:para.colaxly.structured}
		\begin{tikzcd}[sep = small]
			&& {\Ctx(\action)_1} \\
			& \actor && {\acted^{\downarrow}} \\
			\acted && \acted && \acted
			\arrow["{\pi_{\actor}}"', from=1-3, to=2-2]
			\arrow["{\pi_{\acted^{\downarrow}}}", from=1-3, to=2-4]
			\arrow["\lrcorner"{anchor=center, pos=0.125, rotate=-45}, draw=none, from=1-3, to=3-3]
			\arrow["p"', from=2-2, to=3-1]
			\arrow["\action", from=2-2, to=3-3]
			\arrow["\dom"', from=2-4, to=3-3]
			\arrow["\cod", from=2-4, to=3-5]
		\end{tikzcd}
	\end{equation}
	We note that by hypothesis, $p$ is strict.
	Both $\dom$ and $\cod$ are strict.
	The projection $\pi_{\actor}$ is strict as proved in \cref{thm:colax.paradise};
	therefore, the source map $p\pi_{\actor}$ of $\Ctx(\action)$ is strict.
	Now, the projection $\pi_{\acted^{\downarrow}}$ is not strict, but we can prove that
	the composite $\cod \pi_{\acted^{\downarrow}}$ is strict.
	Since the colaxity of $\pi_{\acted^{\downarrow}}$ is defined to be the colaxity of $\vec{\pi}_{\acted^{\downarrow}}i$ (using the notation of \cref{thm:colax.paradise} from here on), which itself is $\vec{\pi}_{\acted^{\downarrow}}\colaxity{e}Ti$, it will suffice to show that the composite $\cod\vec{\pi}_{\acted^{\downarrow}}e$ in $\Alg(T)_\colax$ is strict.
	But $\cod\vec{\pi}_{\acted^{\downarrow}}e = \cod(\pull\generic)$, so it will suffice to show that $\cod(\pull\generic)$ is strict.
	Consider the following equality in $\Cosmos$:
	\begin{equation}
		\begin{tikzcd}[row sep=small, column sep=scriptsize]
			{T(\action \comma \dom)} && {\acted^{\downarrow}} & T\acted && {T(\action \comma \dom)} && {T\acted^{\downarrow}} & T\acted \\
			&&&& {=} \\
			{\action \comma \dom} && {\acted^{\downarrow}} & \acted && {\action \comma \dom} && {\acted^{\downarrow}} & \acted
			\arrow["{{T\vec{\pi}_{\acted^{\downarrow}}}}", curve={height=-12pt}, from=1-1, to=1-3]
			\arrow[from=1-1, to=3-1]
			\arrow["T\cod", from=1-3, to=1-4]
			\arrow[from=1-3, to=3-3]
			\arrow[from=1-4, to=3-4]
			\arrow[""{name=0, anchor=center, inner sep=0}, "{{T\vec{\pi}_{\acted^{\downarrow}}}}", curve={height=-12pt}, from=1-6, to=1-8]
			\arrow[""{name=0p, anchor=center, inner sep=0}, phantom, from=1-6, to=1-8, start anchor=center, end anchor=center, curve={height=-12pt}]
			\arrow[""{name=1, anchor=center, inner sep=0}, "{{T(\pull\generic)}}"', curve={height=12pt}, from=1-6, to=1-8]
			\arrow[""{name=1p, anchor=center, inner sep=0}, phantom, from=1-6, to=1-8, start anchor=center, end anchor=center, curve={height=12pt}]
			\arrow[""{name=1p, anchor=center, inner sep=0}, phantom, from=1-6, to=1-8, start anchor=center, end anchor=center, curve={height=12pt}]
			\arrow[from=1-6, to=3-6]
			\arrow["T\cod", from=1-8, to=1-9]
			\arrow[from=1-8, to=3-8]
			\arrow[from=1-9, to=3-9]
			\arrow[""{name=2, anchor=center, inner sep=0}, "{{\vec{\pi}_{\acted^{\downarrow}}}}", curve={height=-18pt}, from=3-1, to=3-3]
			\arrow[""{name=2p, anchor=center, inner sep=0}, phantom, from=3-1, to=3-3, start anchor=center, end anchor=center, curve={height=-18pt}]
			\arrow[""{name=3, anchor=center, inner sep=0}, "\pull\generic"', curve={height=18pt}, from=3-1, to=3-3]
			\arrow[""{name=3p, anchor=center, inner sep=0}, phantom, from=3-1, to=3-3, start anchor=center, end anchor=center, curve={height=18pt}]
			\arrow["\cod"', from=3-3, to=3-4]
			\arrow[""{name=4, anchor=center, inner sep=0}, "{{\pull \generic}}"', curve={height=18pt}, from=3-6, to=3-8]
			\arrow[""{name=4p, anchor=center, inner sep=0}, phantom, from=3-6, to=3-8, start anchor=center, end anchor=center, curve={height=18pt}]
			\arrow["\cod"', from=3-8, to=3-9]
			\arrow["{{T(\lift\generic)}}"', shorten <=3pt, shorten >=3pt, Rightarrow, from=1p, to=0p]
			\arrow["{{\lift \generic}}"', shorten <=5pt, shorten >=5pt, Rightarrow, from=3p, to=2p]
			\arrow["{{\colaxity{\pull\generic}}}", shorten <=5pt, shorten >=14pt, Rightarrow, from=4p, to=1p]
		\end{tikzcd}
	\end{equation}
	Now, since $\lift\generic$ is given by precomposition (see \cref{lem:normal.fibration.examples}), $\cod(\lift\generic)$ is an identity; this means that the diagram on the left is filled by an identity.
	But then $T\cod T(\lift\generic)$ is also an identity, which means that the whisker $\cod \colaxity{\pull\generic}$ must also be an identity, as desired.
\end{proof}

The latter theorem allows us to conclude there is a trifunctor $\Ctx^T_\colax$ sending colaxly $T$-structured contextads to colaxly $T$-structured pseudocategories:
\begin{equation}
	\begin{tikzcd}[ampersand replacement=\&]
		{\Cxd^T_\colax\Paradise} \&\& {\PsCat^T_\colax(\DispSpan\Paradise).}
		\arrow["{\Ctx^T_\colax\Paradise}", from=1-1, to=1-3]
	\end{tikzcd}
\end{equation}
Here $\Cxd^T_\colax\Paradise := \KL(\FibSpan^\twoto(\Alg(T)_\colax, \{\mathsf{sdni}\}),\, \fun)\co$, while $\PsCat^T_\colax(\DispSpan\Paradise)$ is the tricategory of colaxly $T$-structured pseudocategories.
Evidently, the above trifunctor restricts to $\Ctx^T_\pseudo$, going between strongly $T$-structured contextads and pseudocategories.

Notice that, since the composite left leg in \eqref{eqn:para.colaxly.structured} is a strict display normal fibration (being a composite of one and the pullback of one), then $\Ctx^T_\colax$ actually produces slightly stronger structures than colaxly $T$-structured pseudocategories, namely left-fibrant such pseudocategories.

\subsubsection{Monoidal structure}
\label{sec:monoidal.para}
Instantiating the above for $T$ the free monoidal category 2-monad on a sufficiently cocomplete cosmos $\Cosmos$ we get a monoidal $\Ctx$, and thus $\para$, construction.
In fact, in \cref{ex:higher.mons} we noticed that if $\Cosmos$ is a cosmos, then so is $\Mon(\Cosmos)$, the 2-category of pseudomonoids in $\Cosmos$.
Thus working with $\Mon(\Cosmos)$ for a generic $\Cosmos$ allows us to also cover the case of braided and symmetric pseudomonoids too (obtained as $\Mon(\Mon(\Cosmos))$ and $\Mon(\Mon(\Mon(\Cosmos)))$, respectively).

\begin{defn}
\label{defn:colax.mon.colax.fibred.action}
	A \textbf{colax monoidal contextad} is a left-displayed span $\fibcolaxaction$ with extra structure:
	\begin{enumerate}
		\item both $(\acted, \monoidalunit_{\acted}, \monoidal)$ and $(\actor, \monoidalunit_{\actor}, \monoidal)$ are monoidal categories,
		\item $\acted \nepifrom{p} \actor$ is a strict monoidal normal fibration,
		      i.e.~one whose cartesian lifts of monoidal products are monoidal products
		      of cartesian lifts, which lifts identities to identities, and that
		      preserves monoidal products up to coherent isomorphisms induced by the
		      universal property of cartesian lifts:
		      \begin{equation}
			      f^*\lens{P}{A} \monoidal g^* \lens{Q}{B} \xto{\sim} (f \monoidal g)^* \lens{P \monoidal Q}{A \monoidal B}
		      \end{equation}
		      Notice that, as a consequence of $p$ being strict monoidal, we can denote the monoidal product in $\monoidal$ `componentwise', as we just did on the right-hand side of the above equation:
		      \begin{equation}
			      \monoidalunit_{\actor} \equiv \lens{\monoidalunit}{\monoidalunit_{\acted}},
			      \qquad
			      \lens{P}{A} \monoidal \lens{Q}{B} \equiv \lens{P \monoidal Q}{A \monoidal B}.
		      \end{equation}
		\item $\actor \nto{\action} \acted$ is a colax monoidal functor, thus equipped with coherent 2-cells:
		      \begin{equation}
			      \colaxity{\action} : \monoidalunit_{\acted} \action \monoidalunit_{\actor} \longto \monoidalunit_{\acted},
			      \qquad
			      \colaxity{\action} : (A \monoidal B) \action (P \monoidal R) \longto (A \action P) \monoidal (B \action R).
		      \end{equation}
		\item $\combineunit : \acted \to \actor$ is colax monoidal, thus equipped with $p$-vertical\footnotemark~coherent 2-cells:
		      \footnotetext{This is forced by the fact $p$ is strict, thus writing down the commutativity conditions for $(\combineunit, \colaxity{\combineunit})$ as a multiplication we get $p\colaxity{\combineunit}=\colaxity{p}=\id$.}
		      \begin{equation}
			      \colaxity{\combineunit} : \lens{\combineunit}{\monoidalunit_{\acted}} \to \lens{\monoidalunit}{\monoidalunit_{\acted}},
			      \qquad
			      \colaxity{\combineunit} : \lens{\combineunit}{A \monoidal B} \to \lens{\combineunit \monoidal \combineunit}{A \monoidal B}
		      \end{equation}
		\item $\combine : \actor \spancomp \actor \to \actor$ is colax monoidal.
		      Recall from \cref{ex:monoidal.pullback} the monoidal structure on $\actor \spancomp \actor$, given by
		      \begin{equation}
			      \monoidalunit := \left( \lens{\monoidalunit}{\monoidalunit_{\acted}}, \lens{\colaxity{\action}^{\,*} \monoidalunit}{\monoidalunit_{\acted} \action \combineunit} \right),
			      \qquad
			      \left(\lens{P}{A}, \lens{Q}{A \action P}\right) \monoidal \left(\lens{R}{B}, \lens{S}{B \action R}\right)
			      :=
			      \left(
			      \lens{P \monoidal Q}{A \monoidal B},\ %
			      \lens{
					      \colaxity{\action}^{\,*}(Q \monoidal S)
				      }{
					      (A \monoidal B) \action (P \monoidal R)
				      }
			      \right)
		      \end{equation}
		      Then the colaxity of $\combine$ is given by coherent $p$-vertical 2-cells:
		      \begin{equation}
			      \colaxity{\combine} : \lens{\monoidalunit \,\combine\; \colaxity{\action}^{\,*}\monoidalunit}{\monoidalunit_{\acted}} \to \lens{\monoidalunit}{\monoidalunit_{\acted}},
			      \qquad
			      \colaxity{\combine} : \lens{(P \monoidal R) \,\combine\; \colaxity{\action}^{\,*} (Q \monoidal S)}{A \monoidal B} \longto \lens{(P \combine Q) \monoidal (R \combine S)}{A \monoidal B}.
		      \end{equation}
		\item The counitor $\counitor$ is a monoidal natural transformation, and therefore satisfies:
		      \begin{equation}
			      \begin{tikzcd}[ampersand replacement=\&,sep=scriptsize]
				      {\monoidalunit_{\acted} \action \combineunit} \\[-1ex]
				      \& {\monoidalunit_{\acted}} \\[-1ex]
				      {\monoidalunit_{\acted} \action \monoidalunit_{\actor}}
				      \arrow["{\action\colaxity{\combineunit}}"', from=1-1, to=3-1]
				      \arrow["\counitor", from=1-1, to=2-2]
				      \arrow["{\colaxity{\action}}"', from=3-1, to=2-2]
			      \end{tikzcd}
			      \qquad\qquad
			      \begin{tikzcd}[ampersand replacement=\&,sep=scriptsize]
				      {(A \monoidal B) \action \combineunit} \\[-1ex]
				      \& {A \monoidal B} \\[-1ex]
				      {(A \action \combineunit) \monoidal (B \action \combineunit)}
				      \arrow["\counitor", from=1-1, to=2-2]
				      \arrow["{\action \colaxity{\combineunit}}"', from=1-1, to=3-1]
				      \arrow["{\counitor \monoidal \counitor}"', from=3-1, to=2-2]
			      \end{tikzcd}
		      \end{equation}
		\item The coassociator $\coassociator$ is a monoidal transformation, and therefore satisfies:
		      \begin{equation}
			      \begin{tikzcd}[ampersand replacement=\&,sep=scriptsize]
				      {\monoidalunit_{\acted} \action (\monoidalunit_{\actor} \combine \colaxity{\action}^{\,*}\monoidalunit_{\actor})} \&\& {(\monoidalunit_{\acted} \action \monoidalunit_{\actor}) \action (\colaxity{\action}^{\,*}\monoidalunit_{\actor})} \\
				      {\monoidalunit_{\acted} \action \monoidalunit_{\actor}} \&\& {\monoidalunit_{\acted} \action \monoidalunit_{\actor}} \\
				      \& \monoidalunit_{\acted}
				      \arrow["{\action \colaxity{\combine}}"', from=1-1, to=2-1]
				      \arrow["{\colaxity{\action}}"', from=2-1, to=3-2]
				      \arrow["\delta", from=1-1, to=1-3]
				      \arrow["{\action\colaxity{\pi}_2}", from=1-3, to=2-3]
				      \arrow["{\colaxity{\action}}", from=2-3, to=3-2]
			      \end{tikzcd}
		      \end{equation}
		      \begin{equation}
			      \begin{tikzcd}[ampersand replacement=\&, sep=scriptsize]
				      {(A \monoidal B) \action ((P \monoidal R) \combine (\colaxity{\action}^{\,*}(Q \monoidal S)))} \&\& {((A \monoidal B) \action (P \monoidal R)) \action (\colaxity{\action}^{\,*}(Q \monoidal S))} \\
				      {(A \monoidal B) \action ((P \combine Q) \monoidal (R \combine S))} \&\& {((A \action P) \monoidal (B \action R)) \action (Q \monoidal S)} \\
				      {(A \action (P \combine Q)) \monoidal (B \action (R \combine S))} \&\& {((A \action P) \action Q) \monoidal ((B \action R) \action S)}
				      \arrow["\coassociator", from=1-1, to=1-3]
				      \arrow["{\action\colaxity{\combine}}"', from=1-1, to=2-1]
				      \arrow["{\colaxity{\action}}"', from=2-1, to=3-1]
				      \arrow["{\action\colaxity{\pi}_2}", from=1-3, to=2-3]
				      \arrow["{\coassociator \monoidal \coassociator}"', from=3-1, to=3-3]
				      \arrow["{\colaxity{\action}}", from=2-3, to=3-3]
			      \end{tikzcd}
		      \end{equation}
	\end{enumerate}
\end{defn}

By \cref{thm:para.is.colaxly.structured} we may conclude that the $\Ctx$ construction of any colax monoidal contextad is a colaxly $T$-structured double category for $T$ the free monoidal category 2-monad.
This is precisely a \emph{lax monoidal double category}, to take the terminology of \cite[Definition~4.1]{gambino2024monoidal}.%
\footnote{
	Note that in \emph{ibid.} they define a \emph{colax} monoidal double category, which is a pseudomonoid in double categories and colax double functors (whence the ``colax'').
	As they remark, this is the same thing as a pseudomonad in spans in the 2-category of monoidal categories and \emph{lax} monoidal functors with strict monoidal source and target.
	Since we are working with monoidal categories and \emph{colax} monoidal functors, we get, in their terminology, a \emph{lax} monoidal double category.
}

\begin{cor}
	Let $\fibcolaxaction$ be a colax monoidal contextad.
	Then $\Ctx(\action)$ is a lax monoidal double category.
\end{cor}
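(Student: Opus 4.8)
The plan is to recognize a colax monoidal contextad as exactly a colaxly $T$-structured contextad for $T$ the free monoidal category $2$-monad on the paradise $(\Cat, \{\mathsf{all}\})$, and then to invoke \cref{thm:para.is.colaxly.structured} together with the identification of colaxly $T$-structured pseudocategories with lax monoidal double categories. Thus the entire argument reduces to an unpacking: matching each clause of \cref{defn:colax.mon.colax.fibred.action} against the corresponding clause of \cref{defn:colaxly.T.structured.colax.fibred.action} specialised to this $T$.

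First I would fix $T$ to be the free monoidal category $2$-monad. Its strict algebras are monoidal categories, its strict (resp. colax) morphisms are strict (resp. colax) monoidal functors, and its $2$-cells between colax morphisms are monoidal natural transformations. Since in $(\Cat, \{\mathsf{all}\})$ every functor is a display map, a strict displayed normal fibration in $\Alg(T)_\colax$ is simply a strict monoidal functor whose underlying functor is a normal fibration. With these dictionaries in hand, the data of a colaxly $T$-structured contextad $\fibcolaxaction$ translate term-by-term into the data of \cref{defn:colax.mon.colax.fibred.action}: clause (1) gives monoidal structures on $\acted$ and $\actor$; clause (2) makes $p$ a strict monoidal normal fibration and $\action$ a colax monoidal functor; the colax $T$-morphism structures on the unit $\combineunit$ and multiplication $\combine$ become the colaxities $\colaxity{\combineunit}$ and $\colaxity{\combine}$; and the requirement that $\counitor$ and $\coassociator$ be $2$-cells between colax $T$-morphisms becomes precisely the statement that they are monoidal natural transformations, i.e. the coherence squares of clauses (6) and (7).

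The one point that requires genuine care is the monoidal structure carried by $\actor \spancomp \actor$, against which the colaxity of $\combine$ is measured. Abstractly this is the $T$-algebra on the pullback $\actor \spancomp \actor$ produced in the course of \cref{thm:colax.paradise}, whose explicit description for the free monoidal category $2$-monad is computed in \cref{ex:monoidal.comma} and \cref{ex:monoidal.pullback}. I would check that this computed monoidal structure is exactly the one written out in clause (5) of \cref{defn:colax.mon.colax.fibred.action}---in particular that the reindexing $\colaxity{\action}^{\,*}$ appearing there is the cartesian lift supplied by the normal fibration $p$, so that $\combine$ being a colax monoidal functor out of $\actor \spancomp \actor$ unfolds to the stated $2$-cells $\colaxity{\combine}$. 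This is the main obstacle, since it is where the pullback-rectification machinery of \cref{thm:colax.paradise} must be reconciled with the hand-written formulas; everything else is a direct translation.

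Finally, having exhibited $\fibcolaxaction$ as a colaxly $T$-structured contextad, \cref{thm:para.is.colaxly.structured} yields that $\Ctx(\action)$ is a colaxly $T$-structured pseudocategory. For $T$ the free monoidal category $2$-monad this is, by the discussion following that theorem (and matching the convention of \cite[Definition~4.1]{gambino2024monoidal}), exactly a lax monoidal double category---the change of variance from ``colax $T$-structure on composition'' to ``lax monoidal double category'' being the general $2$-categorical phenomenon already flagged in \cref{sec:structures.intro}. This completes the argument.
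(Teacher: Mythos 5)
Your proposal is correct and follows essentially the same route as the paper: the paper's \cref{defn:colax.mon.colax.fibred.action} is already written as the term-by-term unpacking of a colaxly $T$-structured contextad for $T$ the free monoidal category $2$-monad (with clause (5) referencing the pullback monoidal structure from \cref{ex:monoidal.pullback}, exactly the reconciliation point you flag), and the corollary is then obtained by applying \cref{thm:para.is.colaxly.structured} and matching the terminology of \cite[Definition~4.1]{gambino2024monoidal}. Your write-up simply makes explicit the dictionary that the paper leaves implicit in its definition.
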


Let's unpack what this means.
First, $\Ctx(\action)$ is equipped with a monoidal product, coinciding on tight morphisms with that of $\acted$ and on loose 1-cells (contexful arrows) defined as in \cref{eqn:preview.of.para.monoidal.structure}, which we repeat here for convenience:
\begin{eqalign}
\label{eqn:para.monoidal.structure}
	\monoidalunit_{\Ctx(\action)} &:= (\monoidalunit,\ \monoidalunit \action \monoidalunit \xto{\colaxity{\action}} \monoidalunit) : \monoidalunit \looseto \monoidalunit,\\
	(P,\ A \action P \xto{f} B) \monoidal_{\Ctx(\action)} (P',\ A' \action P' \xto{f'} B') &:= (P\monoidal P',\ (A \monoidal A') \action (P \monoidal P') \xto{(f \monoidal f') \colaxity{\action}} B \monoidal B') : A \monoidal A' \looseto B \monoidal B'.
\end{eqalign}
These assignments define lax double functors $\monoidalunit : 1 \to \Ctx(\action)$ and $\monoidal : \Ctx(\action) \times \Ctx(\action) \to \Ctx(\action)$---the fact these are lax is what the `lax' in `lax monoidal double category' refers to.

Concretely, this means there are coherent squares, obtained from the colaxities of $\combineunit$ and $\combine$, witnessing lax interchange of monoidal and looseward composition and units:
\begin{eqalign}
	\begin{tikzcd}[ampersand replacement=\&, row sep=scriptsize]
		{A \monoidal B} \&[1ex] {A \monoidal B} \\
		{A \monoidal B} \& {A \monoidal B}
		\arrow[""{name=0, anchor=center, inner sep=0}, "{(\combineunit_{A \monoidal B}, \counitor)}", "\shortmid"{marking}, from=1-1, to=1-2]
		\arrow[Rightarrow, no head, from=1-1, to=2-1]
		\arrow[Rightarrow, no head, from=1-2, to=2-2]
		\arrow[""{name=1, anchor=center, inner sep=0}, "{(\combineunit_A, \counitor) \monoidal (\combineunit_B, \counitor)}"', "\shortmid"{marking}, from=2-1, to=2-2]
		\arrow["{\colaxity{\combineunit}}", shorten <=4pt, shorten >=4pt, Rightarrow, from=0, to=1]
	\end{tikzcd}
	\quad&\quad
	\begin{tikzcd}[ampersand replacement=\&,row sep=scriptsize]
		{A \monoidal A'} \&[3.5ex]\&[3.5ex] {C \monoidal C'} \\
		{A \monoidal A'} \&\& {A \monoidal A'}
		\arrow[""{name=0, anchor=center, inner sep=0}, "{((P,f) \monoidal (P',f')) \lcomp ((Q,g) \monoidal (Q',g'))}", "\shortmid"{marking}, from=1-1, to=1-3]
		\arrow[Rightarrow, no head, from=1-1, to=2-1]
		\arrow[Rightarrow, no head, from=1-3, to=2-3]
		\arrow[""{name=1, anchor=center, inner sep=0}, "{((P,f) \lcomp (Q,g)) \monoidal ((P',f') \lcomp (Q',g'))}"', "\shortmid"{marking}, from=2-1, to=2-3]
		\arrow["{\colaxity{\combine}}", shorten <=4pt, shorten >=4pt, Rightarrow, from=0, to=1]
	\end{tikzcd}
	\\
	\begin{tikzcd}[ampersand replacement=\&, row sep=scriptsize]
		\monoidalunit \& \monoidalunit \\
		\monoidalunit \& \monoidalunit
		\arrow[""{name=0, anchor=center, inner sep=0}, "{(\combineunit, \counitor)}", "\shortmid"{marking}, from=1-1, to=1-2]
		\arrow[Rightarrow, no head, from=1-1, to=2-1]
		\arrow[Rightarrow, no head, from=1-2, to=2-2]
		\arrow[""{name=1, anchor=center, inner sep=0}, "{\colaxity{\action}}"', "\shortmid"{marking}, from=2-1, to=2-2]
		\arrow["{\colaxity{I}}"', shorten <=4pt, shorten >=4pt, Rightarrow, from=0, to=1]
	\end{tikzcd}
	\quad&\quad
	\begin{tikzcd}[ampersand replacement=\&, row sep=scriptsize]
		\monoidalunit \& \monoidalunit \& \monoidalunit \\
		\monoidalunit \&\& \monoidalunit
		\arrow["{\colaxity{\action}}", "\shortmid"{marking}, from=1-1, to=1-2]
		\arrow["{\colaxity{\action}}", "\shortmid"{marking}, from=1-2, to=1-3]
		\arrow[Rightarrow, no head, from=2-1, to=1-1]
		\arrow[""{name=0, anchor=center, inner sep=0}, "{\colaxity{\action}}"', "\shortmid"{marking}, from=2-1, to=2-3]
		\arrow[""{name=0p, anchor=center, inner sep=0}, phantom, from=2-1, to=2-3, start anchor=center, end anchor=center]
		\arrow[Rightarrow, no head, from=2-3, to=1-3]
		\arrow["\colaxity{\combine}"{pos=0.4}, shorten >=3pt, Rightarrow, from=1-2, to=0p]
	\end{tikzcd}
\end{eqalign}

Notice how such structure does not depend on the colaxity of $\action$ itself, only on that of $\combineunit$ and $\combine$.
Therefore:

\begin{thm}
\label{thm:monoidal.ctx}
	Let $\fibcolaxaction$ be a strong monoidal contextad, meaning $p$ is a strict monoidal normal fibration and $\combineunit$ and $\combine$ are strong monoidal functors.
	Then $\Ctx(\action)$ is a monoidal double category.
\end{thm}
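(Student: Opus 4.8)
The plan is to deduce this from the corollary just proved together with the explicit description of the interchange cells. The key point, already flagged in the remark preceding the statement, is that upgrading a \emph{lax} monoidal double category to a genuine \emph{monoidal} one only requires the interchangers to be invertible, and these interchangers are assembled from $\colaxity{\combineunit}$ and $\colaxity{\combine}$ alone — not from $\colaxity{\action}$.

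First I would observe that a strong monoidal contextad is in particular a colax monoidal contextad in the sense of \cref{defn:colax.mon.colax.fibred.action}: a strong monoidal functor is exactly a colax monoidal functor whose colaxity is invertible, so the hypotheses of \cref{defn:colax.mon.colax.fibred.action} are met (with $\action$ still merely colax monoidal, which is harmless). Hence the preceding corollary applies and $\Ctx(\action)$ is a lax monoidal double category; concretely, the unit $\monoidalunit : 1 \to \Ctx(\action)$ and product $\monoidal : \Ctx(\action) \times \Ctx(\action) \to \Ctx(\action)$ are \emph{lax} double functors whose action on loose $1$-cells is given by \eqref{eqn:para.monoidal.structure}, with laxators the four interchange squares displayed just above the statement.

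Next I would recall that a monoidal double category is precisely a pseudomonoid in the $2$-category $\DblCat$ of double categories and \emph{pseudo} (strong) double functors. Thus, to promote the lax monoidal structure to a monoidal one, it suffices to show that the laxators of $\monoidalunit$ and $\monoidal$ — their comparison cells for loose identities and loose composition — are invertible. These laxators are exactly the four squares built from $\colaxity{\combineunit}$ and $\colaxity{\combine}$; by the strong-monoidal hypothesis on $\combineunit$ and $\combine$ these colaxities are isomorphisms, so all four interchange squares are invertible. Therefore $\monoidalunit$ and $\monoidal$ are pseudo double functors, and $\Ctx(\action)$ becomes a pseudomonoid in $\DblCat$, i.e. a monoidal double category. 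The coherence axioms for this pseudomonoid are inherited unchanged from the colaxly $T$-structured pseudocategory structure already produced by \cref{thm:para.is.colaxly.structured} for $T$ the free monoidal category $2$-monad.

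The one step requiring genuine care — and the main potential obstacle — is confirming that the interchangers really do not involve $\colaxity{\action}$. Since the monoidal product of loose arrows in \eqref{eqn:para.monoidal.structure} is itself defined using $\colaxity{\action}$, one must trace through the construction of $\Ctx^T_\colax$ inside \cref{thm:para.is.colaxly.structured} to check that $\colaxity{\action}$ only reparametrizes the underlying loose arrows, whereas the comparison cells expressing compatibility of $\monoidal$ with loose composition and loose units arise solely from the colaxities of the contextad's structure morphisms $\combineunit$ and $\combine$. This is precisely what the four displayed squares record, so the verification is a bookkeeping matter rather than a new computation, and the invertibility conclusion follows once $\colaxity{\combineunit}$ and $\colaxity{\combine}$ are assumed invertible.
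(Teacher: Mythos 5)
Your proposal is correct and follows essentially the same route as the paper: the paper's argument is exactly the observation (made in the paragraph preceding the theorem) that the laxity squares of $\monoidalunit$ and $\monoidal$ are built solely from $\colaxity{\combineunit}$ and $\colaxity{\combine}$, so the strong-monoidal hypothesis makes them invertible, upgrading the lax monoidal structure from the preceding corollary to a genuine monoidal double category. Your extra care in tracing that $\colaxity{\action}$ only enters the definition of the loose arrows' monoidal product, not the interchange cells, is precisely the paper's "Notice how such structure does not depend on the colaxity of $\action$ itself" remark made explicit.
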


Let's see how the notion of \emph{colax monoidal contextad} generalizes two well-known structures, colax monoidal actegories and colax monoidal monads.

\begin{ex}
	Colax monoidal actegories appear as \cite[Definition~5.1.1]{capucci2022actegories}.
	These are actions of a braided monoidal category onto a monoidal category by means of a colax monoidal action $\action:\acted \times \actor \to \acted$.
	The colaxity of $\action$ is called \emph{mixed interchanger} and corresponds to what we described in \cref{eqn:para.map.monoidal}.

	As a contextad $\acted \nepifrom{\pi_{\acted}} \acted \times \actor \nto{\action} \acted$, a colax monoidal actegory is strong monoidal since both $\acted$ and $\acted \times \actor$ are monoidal, $\pi_{\acted}$ strictly preserves monoidal products, $\action$ is colax monoidal, and the pseudomonad structure is (strong) monoidal itself.
	The latter point is especially interesting since that's where the braided structure of $\actor$ comes into play: $\combineunit$ and $\combine$ are a second monoidal structure on top of the `background one' $(\monoidalunit, \monoidal)$ on $\actor$, and the first being monoidal with respect to the latter implies, by Eckmann--Hilton, that they actually coincide.
\end{ex}

Thus we can use \cref{thm:monoidal.ctx} to finally endow $\para(\action)$ with a monoidal structure:

\begin{cor}
\label{cor:para.monoidal}
	Let $(\acted, \action)$ be a colax monoidal $\actor$-actegory, for $\actor$ braided.
	Then $\para(\action)$ is a monoidal double category.
\end{cor}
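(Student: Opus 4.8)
The plan is to recognise the colax monoidal actegory $(\acted, \action)$ as a \emph{strong monoidal contextad} in the sense of \cref{thm:monoidal.ctx} and then invoke that theorem directly. Following \cref{ex:actegory}, the action $\action : \acted \times \actor \to \acted$ determines a contextad $\acted \nepifrom{\pi_{\acted}} \acted \times \actor \nto{\action} \acted$ whose source fibration is the product projection and whose $\combineunit$ and $\combine$ are inherited from the monoidal structure of $\actor$, acting only in the second component. What remains is to check that this contextad carries the structure of \cref{defn:colax.mon.colax.fibred.action} and, moreover, that $\pi_{\acted}$ is a strict monoidal normal fibration while $\combineunit$ and $\combine$ are \emph{strong} monoidal functors, as required for the strong (rather than merely colax) hypothesis of \cref{thm:monoidal.ctx}.

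First I would equip the feet and apex with their $T$-algebra structures for $T$ the free monoidal category 2-monad on $(\Cat, \{\mathsf{all}\})$: $\acted$ is monoidal by hypothesis, $\actor$ is (braided) monoidal, and $\acted \times \actor$ carries the product monoidal structure, giving clause (1). The left leg $\pi_{\acted}$ is a product projection, hence strictly monoidal, a normal fibration by \cref{lem:normal.fibration.examples}, and a display map since all functors are display in $(\Cat, \{\mathsf{all}\})$; this gives clause (2). The action $\action$ is colax monoidal with mixed interchanger $\colaxity{\action}$ by hypothesis, giving clause (3), and the counitor $\counitor$ and coassociator $\coassociator$ are the structure 2-cells of the actegory, whose compatibility with $\colaxity{\action}$ is exactly the coherence axioms of a colax monoidal actegory, giving clauses (6) and (7).

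The crux is clauses (4) and (5) in their strong form. The unit $\combineunit : \acted \to \acted \times \actor$ sends $A$ to $\monoidalunit_{\actor}$, and is strong monoidal via the isomorphism $\monoidalunit_{\actor} \monoidal \monoidalunit_{\actor} \iso \monoidalunit_{\actor}$ supplied by the unitors of $\actor$. The combination $\combine$ is, in the $\actor$-component, the monoidal product of $\actor$, so the content of the claim is the standard fact that the tensor product of a braided monoidal category is a strong monoidal functor for its own background monoidal structure: the required interchanger $(P \monoidal P') \combine (Q \monoidal Q') \iso (P \combine Q) \monoidal (P' \combine Q')$ is assembled from the braiding of $\actor$ swapping the middle two factors, and is invertible precisely because the braiding is. The main obstacle will be verifying the associativity and unit coherences for this monoidal-functor structure on $\combine$; these reduce, by the Eckmann--Hilton argument already flagged in the example preceding this corollary, to the hexagon axioms for the braiding together with the pentagon of $\actor$. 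This is the one place the braided (rather than merely monoidal) hypothesis on $\actor$ is essential.

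With $\pi_{\acted}$ a strict monoidal normal fibration and $\combineunit$, $\combine$ strong monoidal, the contextad is a strong monoidal contextad, so \cref{thm:monoidal.ctx} applies and yields that $\Ctx(\action) = \para(\action)$ is a monoidal double category. I would close by emphasising, as in the remark before \cref{thm:monoidal.ctx}, that the colaxity of $\action$ plays no role in producing this monoidal structure---only the strength of $\combineunit$ and $\combine$ matters---which is exactly why a merely colax monoidal action still yields an honestly, not just laxly, monoidal $\para(\action)$.
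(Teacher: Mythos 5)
Your proposal is correct and follows essentially the same route as the paper: the paper establishes the corollary by observing (in the example immediately preceding it) that a colax monoidal actegory with braided actor is a strong monoidal contextad---with the braiding supplying the invertible interchanger making $\combine$ strong monoidal, as you identify---and then invoking \cref{thm:monoidal.ctx}. Your write-up merely makes explicit the verification that the paper delegates to that example, including the closing observation that the colaxity of $\action$ is irrelevant to the monoidal structure.
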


Note that since $\para(\action)$ has all companions and all conjoints of invertible maps (\cref{sec:dbl.cat.ctx}), by an obvious corollary to the theorem of Shulman saying the loose bicategory of a monoidal equipment is monoidal (\cite[Theorem~5.1]{shulman_constructing_2010}), the above implies that the bicategory $\para(\action)$ is monoidal too.

This result extends to braided and symmetric monoidal actegories \cite[Definition~5.4.1]{capucci2022actegories}, in fact generalizing the unproven result of \cite{capucci_towards_2022} to the colax case:

\begin{cor}
\label{cor:para.symmetric.monoidal}
	Let $(\acted, \action)$ be a colax braided (resp. symmetric) monoidal $\actor$-actegory, for $\actor$ symmetric monoidal.
	Then $\para(\action)$ is a braided (resp. symmetric) monoidal double category.
\end{cor}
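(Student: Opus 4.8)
The plan is to deduce the braided and symmetric cases from the monoidal case already settled in \cref{cor:para.monoidal}, by running the very same construction inside a higher cosmos of monoidal categories and invoking the categorical Eckmann--Hilton principle. The entire machinery --- the tricategory $\FibSpan^\twoto$, the notion of colax monoidal contextad (\cref{defn:colax.mon.colax.fibred.action}), \cref{thm:monoidal.ctx}, and \cref{cor:para.monoidal} --- is stated over an arbitrary paradise, and by \cref{ex:higher.mons} the 2-categories $\MonCat = \Mon(\Cat)$ and $\BrMonCat = \Mon(\MonCat)$ are again 2-cosmoi, hence paradises once equipped with their normal isofibrations. Since the free monoidal category 2-monad is presented without equations between 1-cells, it is flexible (\cref{rmk:cosmoi.flexibly.complete}) and so behaves well on each of these sufficiently cocomplete cosmoi. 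Thus \cref{cor:para.monoidal} is available verbatim with $\Cat$ replaced by $\Mon(\Cat)$ or $\Mon(\Mon(\Cat))$.

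For the braided case, I would first set up the dictionary identifying a colax braided monoidal $\actor$-actegory with $\actor$ symmetric as \emph{exactly} a colax monoidal $\actor$-actegory internal to $\MonCat$ with $\actor$ braided-in-$\MonCat$: a braided monoidal $\acted$ is a pseudomonoid in $\MonCat$; a symmetric monoidal $\actor$ is, being a pseudomonoid in $\Mon(\MonCat) = \BrMonCat$, precisely a braided object of $\MonCat$; and a colax braided monoidal action is a colax monoidal action internal to $\MonCat$. The coherence axioms of \cite[Definition~5.4.1]{capucci2022actegories} should match term for term those of \cref{defn:colax.mon.colax.fibred.action} read in $\MonCat$, with the symmetry on $\actor$ supplying the strongness of $\combineunit$ and $\combine$ needed for the pseudo (rather than merely lax) conclusion. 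Applying \cref{cor:para.monoidal} in $\MonCat$ then yields that $\para(\action)$ is a monoidal double category internal to $\MonCat$; and a monoidal double category internal to $\MonCat$ is, by one more Eckmann--Hilton identification, exactly a braided monoidal double category in $\Cat$. The symmetric case is the identical argument one level higher: a colax symmetric monoidal actegory with $\actor$ symmetric is a colax monoidal actegory internal to $\BrMonCat = \Mon(\Mon(\Cat))$ with $\actor$ braided-in-$\BrMonCat$ (which, being a pseudomonoid in $\Mon(\BrMonCat) = \SymMonCat$, is just a symmetric monoidal category at the fixed point of the tower), so that \cref{cor:para.monoidal} applied in $\BrMonCat$ delivers a monoidal double category internal to $\BrMonCat$, i.e.\ a symmetric monoidal double category.

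I expect the main obstacle to be the bookkeeping in the two dictionary identifications: (i) that the explicit coherence data of a colax braided (resp.\ symmetric) monoidal actegory coincides with that of a colax monoidal contextad of \cref{defn:colax.mon.colax.fibred.action} interpreted internally in $\Mon^{(n)}(\Cat)$, and (ii) that a monoidal double category internal to $\Mon^{(n)}(\Cat)$ is literally a braided (resp.\ symmetric) monoidal double category. Both are instances of categorical Eckmann--Hilton and require checking that the two monoidal structures interchange strictly enough that the outer one is forced to be a braiding, respectively a symmetry, on the inner one; the flexibility and self-duality of the cosmoi involved should make this routine but notationally heavy. Finally, exactly as in the remark following \cref{cor:para.monoidal}, one may add that since $\para(\action)$ has companions and conjoints of invertible maps (\cref{thm:ctx.has.companions} and \cref{sec:dbl.cat.ctx}), Shulman's theorem \cite[Theorem~5.1]{shulman_constructing_2010} transports the braided (resp.\ symmetric) monoidal structure to the loose bicategory of $\para(\action)$ as well.
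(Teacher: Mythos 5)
Your proposal is correct and follows essentially the same route as the paper: the paper's own justification for this corollary is precisely the remark in \cref{sec:monoidal.para} that the colaxly $T$-structured machinery (\cref{thm:para.is.colaxly.structured}, \cref{thm:monoidal.ctx}, \cref{cor:para.monoidal}) is stated over an arbitrary paradise, so one runs it in $\Mon(\Cosmos)$ (resp.\ $\Mon(\Mon(\Cosmos))$), using \cref{ex:higher.mons} and the Eckmann--Hilton identification of braided (resp.\ symmetric) monoidal categories with pseudomonoids one level up. Your two ``dictionary'' checks are exactly the bookkeeping the paper leaves implicit, and your closing remark about transporting the structure to the loose bicategory via companions and Shulman's theorem mirrors the paper's comment after \cref{cor:para.monoidal}.
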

For the record, a braided monoidal double category is a monoidal double category where additionally its categories of objects and arrows are braided too, with looseward unit and composition being braided.
This means in $\para(\action)$ there are squares:
\begin{equation}
	\begin{tikzcd}[ampersand replacement=\&,row sep=scriptsize]
		{A \monoidal B} \& {C \monoidal D} \\
		{B \monoidal A} \& {D \monoidal C}
		\arrow[""{name=0, anchor=center, inner sep=0}, "{(P,f) \monoidal (Q,g)}", "\shortmid"{marking}, from=1-1, to=1-2]
		\arrow[""{name=0p, anchor=center, inner sep=0}, phantom, from=1-1, to=1-2, start anchor=center, end anchor=center]
		\arrow["{\beta_{\acted}}"', from=1-1, to=2-1]
		\arrow["{\beta_{\acted}}", from=1-2, to=2-2]
		\arrow[""{name=1, anchor=center, inner sep=0}, "{(Q,g) \monoidal (P,f)}"', "\shortmid"{marking}, from=2-1, to=2-2]
		\arrow[""{name=1p, anchor=center, inner sep=0}, phantom, from=2-1, to=2-2, start anchor=center, end anchor=center]
		\arrow["{\beta_{\actor}}", shorten <=4pt, shorten >=4pt, Rightarrow, from=0p, to=1p]
	\end{tikzcd}
\end{equation}
where $\beta_{\acted}$ is braiding in $\acted$ and $\beta_{\actor}$ is braiding in $\actor$.
It's then easy to see one needs both $\acted$ and $\actor$ to be symmetric for $\para(\action)$ to be symmetric as well.

\begin{ex}
	When considering the $\Ctx$ construction for comonads $\Comonad : \acted \to \acted$, the colaxity of $\action$ corresponds to a coherent transformation $\Comonad(A \monoidal C) \xto{\colaxity{\Comonad}} \Comonad A \monoidal \Comonad B$, thus making $\Comonad$ a \emph{colax monoidal comonad}.
	Applying \cref{thm:para.is.colaxly.structured} then recovers a familiar theorem: the Kleisli category of a colax monoidal comonad inherits the monoidal structure of $\acted$.This theorem is most commonly known in its dual form, where a lax monoidal monad (also known as a commutative monad) has a monoidal structure on its Kleisli category, see \cite[Corollary~7]{guitart_tenseurs_1980}.
\end{ex}

\subsubsection{Limits in Para}\label{sec:limits}

There is a nearly immediate corollary we can draw from \cref{thm:colax.paradise}: if $T$ is a free completion monad, then $\Ctx$ will have $T$-limits in the strict direction as a double category.
Free completion monads are \emph{colax idempotent} (recall \cref{defn:colax.idempotent.monad}), which are characterized by the fact that any map $f : A \to B$ between $T$-algebras extends \emph{uniquely} to a colax $T$-morphism $(f, \colaxity{f})$.

\begin{thm}
\label{thm:free.completion.structured.ctx}
	Let $T$ be a colax idempotent monad on a paradise $\Paradise$, and let $\fibcolaxaction$ be a fibred action in $\DispSpan\Paradise$ where $p$ is normal as a fibration.
	If $\acted$ and $\actor$ are both $T$-algebras and $p$ is a strict $T$-morphism, then $\Ctx(\action)$ is colaxly $T$-structured.
\end{thm}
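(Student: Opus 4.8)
The plan is to show that the hypotheses force $\fibcolaxaction$ to be a \emph{colaxly $T$-structured contextad} in the sense of \cref{defn:colaxly.T.structured.colax.fibred.action}, and then to invoke \cref{thm:para.is.colaxly.structured} to conclude that $\Ctx(\action)$ is a colaxly $T$-structured pseudocategory. Recall that such a contextad is a pseudomonad in $\FibSpan^\twoto(\Alg(T)_\colax, \{\mathsf{sdnf}\})$, so I must exhibit the span $\fibcolaxaction$, together with all of its pseudomonad data and coherences, as living in $\Alg(T)_\colax$. We already have $\acted$ and $\actor$ as $T$-algebras and $p$ as a strict $T$-morphism; what remains is to equip $\action$, $\combineunit$ and $\combine$ with colaxities and to check that $\counitor$, $\coassociator$, $\leftunitlaw$, $\rightunitlaw$, $\associativitylaw$, $\combineunitcart$ and $\combinecart$ are $2$-cells of colax morphisms, subject to the pseudomonad coherences of \cref{defn:colax.fibred.action}.

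The crucial input is the property-like character of colax morphisms for a colax idempotent monad (\cref{defn:colax.idempotent.monad}, \cite{kock_monads_1995}): as recalled just before the statement, every $1$-cell $f : \Aa \to \Ba$ between the carriers of $T$-algebras extends \emph{uniquely} to a colax $T$-morphism $(f, \colaxity{f})$, and dually every $2$-cell $\rho : f \Rightarrow g$ in $\Cosmos$ between such morphisms automatically satisfies the colax $2$-cell law \cref{eqn:colax.morphism.2cell}. Equivalently, the forgetful $2$-functor $\Alg(T)_\colax \to \Cosmos$ restricts, over any pair of $T$-algebras, to an \emph{isomorphism} of hom-categories $\Alg(T)_\colax(\Aa, \Ba) \cong \Cosmos(\Aa, \Ba)$. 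Consequently, once every object occurring in the contextad diagram is known to be a $T$-algebra, all of its structure maps and structure $2$-cells lift to $\Alg(T)_\colax$ uniquely, and every coherence equation among them lifts from the corresponding equation in $\Cosmos$ — which holds because $\fibcolaxaction$ is already a contextad in $\DispSpan\Paradise$.

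It therefore suffices to verify that the objects involved are $T$-algebras and that $p$ is a legitimate display map of the paradise $(\Alg(T)_\colax, \{\mathsf{sdnf}\})$ of \cref{thm:colax.paradise}. First, $p$ is a strict displayed normal fibration: it is a strict $T$-morphism and (being the left leg of a contextad) a display map by hypothesis, it is normal as a fibration by hypothesis, and its fibration structure lifts to $\Alg(T)_\colax$ because the comma $\acted \comma p$ exists there (as $p$ is strict), the unit $\internal{\id_p}$ and its right adjoint $\pull$ acquire forced colaxities, and the adjunction of \cref{defn:fib} together with its verticality lifts by property-likeness of the relevant $2$-cells. Hence $p \in \mathsf{sdnf}$, so by \cref{thm:colax.paradise} every pullback of $p$ (and of its pullbacks) along an arbitrary colax morphism exists in $\Alg(T)_\colax$ carried by the corresponding pullback in $\Cosmos$. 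Since the iterated span composites $\actor \spancomp \actor$, $\actor \spancomp \actor \spancomp \actor$, $\acted^{\downarrow} \spancomp \actor$ and the like appearing in \cref{defn:colax.fibred.action} are exactly such pullbacks, they are $T$-algebras. The remaining structure maps $\action$, $\combineunit$ and $\combine$ are then maps between $T$-algebras, hence uniquely colax, while $\counitor$ and all the remaining structure $2$-cells are automatically colax; the pseudomonad coherences hold in $\Alg(T)_\colax$ because they hold in $\Cosmos$ and the hom-categories are isomorphic. This exhibits $\fibcolaxaction$ as a colaxly $T$-structured contextad, and \cref{thm:para.is.colaxly.structured} completes the proof.

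I expect the main obstacle to be the verification that $p$ remains a fibration \emph{internally to} $\Alg(T)_\colax$ — that is, that the cleavage, which a priori lives only in $\Cosmos$, genuinely lifts to the $2$-category of colax morphisms so that $p \in \mathsf{sdnf}$. This is where normality (so that lifts of identities are identities, compatibly with the strictness of $p$) and colax idempotency (so that $\pull$ carries a forced colaxity and the adjunction $2$-cells are automatically colax) must be combined with care; everything else is bookkeeping handed to us by the property-like behaviour of colax morphisms over $T$-algebras.
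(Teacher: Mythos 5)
Your proposal is correct and takes essentially the same approach as the paper's proof: both reduce the statement to exhibiting $\fibcolaxaction$ as a contextad in $(\Alg(T)_\colax, \{\mathsf{sdnf}\})$, using the property-like nature of colax morphisms for a colax idempotent $2$-monad (every $1$-cell is uniquely colax, every $2$-cell is automatically a $T$-$2$-cell, and in particular $\pull$ acquires a colaxity so that $p$'s cleavage lifts, making $p$ a strict displayed normal fibration), and then conclude by \cref{thm:para.is.colaxly.structured}. The paper's proof is simply a terser rendering of your argument, leaving implicit the bookkeeping about span-composite pullbacks and coherences that you spell out.
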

\begin{proof}
	We will show that these assumptions are sufficient to show that $\fibcolaxaction$ is a fibred action in $(\Alg(T)_\colax, \{\mathsf{sdnf}\})$; the result will then follow by \cref{thm:para.is.colaxly.structured}.

	Since $T$ is colax idempotent, all morphisms of $\Cosmos$ are uniquely equipped with colax $T$-morphism structures and all 2-cells are $T$-2-cells.
	By hypothesis, $p$ is a strict $T$-morphism and is a normal fibration in $\Cosmos$; but this means that it is a normal fibration in $\Alg(T)_\colax$, because the right adjoint $\pull$ will be a colax $T$-morphism.
	Therefore, it is a strict displayed normal fibration.
\end{proof}

\begin{ex}
	If $\acted$ is a category with finite limits, then the fibred action $\acted \xleftarrow{\cod} \acted^{\downarrow} \xto{\dom} \acted$ is a strongly $T$-structured fibred action for $T$ the finite limits completion 2-monad.
	Therefore, by \cref{thm:free.completion.structured.ctx}, $\Ctx(\dom) = \Span(\acted)$ is a tightly complete double category---a well known fact \cite[§6.4]{grandis_limits_1999}.
\end{ex}

Recall from e.g. \cite[Definition~4.1]{lambert.patterson:cartesian.double.theories} that a \textbf{lax cartesian} (aka \textbf{precartesian}) double category is a cartesian object in $\DblCat_\lax$, and a \textbf{(strong) cartesian} one is a cartesian object in $\DblCat_\pseudo$.
We may now appeal to the characterization of lax cartesian and cartesian double categories given in \cite[Proposition~4.2]{lambert.patterson:cartesian.double.theories} to conclude that the $\Ctx$ construction on categories with finite products is lax cartesian.

\begin{cor}
	Suppose that $\fibcolaxaction$ is a contextad in $\Cat$ and that $p$ is normal as a fibration.
	If $\acted$ and $\actor$ have finite products and $p$ strictly preserves them, then $\Ctx(\action)$ is a lax cartesian double category.
\end{cor}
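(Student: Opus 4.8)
The plan is to recognize this corollary as a direct instance of \cref{thm:free.completion.structured.ctx} for a judicious choice of $T$, followed by an unwinding of what ``colaxly $T$-structured'' means in that case. Concretely, I would take $T$ to be the free finite-product-completion $2$-monad on $\Cat$, whose strict algebras are categories equipped with a chosen terminal object and chosen binary products (and whose strict morphisms strictly preserve these). Being a free completion under a class of (weighted) limits, this $T$ is colax idempotent---exactly as is tacitly used for the finite-limit completion monad in the $\Span(\acted)$ example immediately preceding the corollary---so the hypothesis of \cref{thm:free.completion.structured.ctx} that $T$ be colax idempotent is met.

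Next I would check the remaining hypotheses of \cref{thm:free.completion.structured.ctx} against the assumptions of the corollary. The contextad $\fibcolaxaction$ (which is in particular a fibred action) lives in $\Cat = (\Cat, \{\mathsf{all}\})$ and $p$ is assumed normal as a fibration. Since $\acted$ and $\actor$ have finite products, they carry---after choosing products, and if necessary replacing them by equivalent strictly structured categories---the structure of strict $T$-algebras; and since $p$ strictly preserves finite products, it is a strict $T$-morphism. These are precisely the inputs required, so \cref{thm:free.completion.structured.ctx} applies and yields that $\Ctx(\action)$ is a colaxly $T$-structured pseudocategory in the sense of \cref{defn:colaxly.T.structured.double.category}: its categories of objects $\acted$ and of loose arrows $\Ctx(\action)_1$ are strict $T$-algebras, its source and target maps are strict $T$-morphisms, and its identity and composition maps are colax $T$-morphisms with the unitors and associator being $2$-cells of colax morphisms.

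It then remains to translate this structure into the language of \cite{lambert.patterson:cartesian.double.theories}. Just as the free monoidal $2$-monad turns a colaxly $T$-structured double category into a \emph{lax} monoidal one---a pseudomonoid in double categories and lax functors, the variance flipping as explained in \cref{sec:structures.intro}---the free finite-product $2$-monad turns a colaxly $T$-structured pseudocategory into a cartesian object in $\DblCat_\lax$, i.e.\ a lax cartesian double category. I would make this precise by matching the data of \cref{defn:colaxly.T.structured.double.category} for this cartesian $T$ against the characterization of lax cartesian double categories in \cite[Proposition~4.2]{lambert.patterson:cartesian.double.theories}: both describe a span of categories-with-products whose legs strictly preserve products and whose unit and horizontal composition preserve products only colaxly (equivalently, whose external product and unit are lax double functors).

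The main obstacle I anticipate is exactly this last translation step, and in particular getting the variance right. One must confirm that a cartesian object in $\DblCat_\lax$ unwinds to a pseudocategory whose identity and composition maps are \emph{colax} (not lax) product-preserving, so that it coincides with the output of \cref{thm:para.is.colaxly.structured}; this is the same colax/lax interchange flagged in \cref{sec:structures.intro}, and checking it against the specific normal form of \cite[Proposition~4.2]{lambert.patterson:cartesian.double.theories} is where the real content lies. A secondary, more bookkeeping point is ensuring that ``has finite products'' can legitimately be upgraded to a \emph{strict} $T$-algebra structure compatible with the strictness of $p$; this is routine for product-type doctrines but should be acknowledged.
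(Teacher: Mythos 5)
Your proposal is correct and follows essentially the same route as the paper: the paper's proof simply observes that by \cite[Proposition~4.2]{lambert.patterson:cartesian.double.theories} a lax cartesian double category is exactly a colaxly $T$-structured pseudocategory for $T$ the free product completion (a colax idempotent 2-monad), and then the conclusion is immediate from \cref{thm:free.completion.structured.ctx}. The translation and strictness points you flag as potential obstacles are indeed the content absorbed into that cited characterization, and the paper treats them just as tersely as you suggest they can be.
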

\begin{proof}
	By \cite[Proposition~4.2]{lambert.patterson:cartesian.double.theories}, a double category is lax cartesian if and only if its categories of objects and morphisms are cartesian and its source and target preserve products: in other words, if it is colaxly $T$-structured for $T$ the free product completion, which is a colax idempotent monad.
\end{proof}

\begin{ex}
\label{ex:lax.cartesian.ctx}
	If $\action : \acted \times \actor \to \acted$ is any colax monoidal action and $\acted$ and $\actor$ have finite products, then $\Ctx(\action)$ is lax cartesian.
	In particular, if $\action : \acted \times \actor \to \acted$ is any actegory with $\acted$ and $\actor$ both having finite products, then $\para(\action)$ is lax cartesian.

	As a consequence, if $\acted$ is a category with finite products, then for any
	comonad $\Comonad : \acted \to \acted$ (or even dependently graded comonad
	with a discrete category of grades, i.e. where $p : \acted \xleftarrow \actor$
	is a discrete functor), the Kleisli double category is lax cartesian; in fact, since the Kleisli double category is thin so that any square with source and target arrows identities is invertible, the Kleisli double category is in fact cartesian.
\end{ex}

For $\Ctx$ to be cartesian, and not just lax cartesian, it would suffice for the unit, and multiplication of the fibred action to preserve products.
This occurs, for example, when we are forming the Para construction of a cartesian category acting on itself.

\begin{ex}
	If $\acted$ is a category with finite products, then we may consider $\times : \acted \times \acted \to \acted$ as an actegory.
	It follows that $\para(\times)$ is a cartesian double category.
\end{ex}

\begin{ex}
\label{ex:simply.fibred.action}
	A useful double category is obtained from the simple fibration $\simple_{\acted} : \Simple{\acted} \to \acted$, whose total category has objects pairs $\lens{A}{B}$ and maps $\lens{A}{B} \rightrightarrows \lens{A'}{B'}$ pairs $f:B \to B'$ and $f^\flat : B \times A \to A'$ \cite[Definition~1.3.1]{jacobs_categorical_1999}.
	The projection $\simple_{\acted}$ projects the bottom component, and since $\Simple{\acted}$ has finite products given componentwise, $\simple_{\acted}$ preserves them strictly.
	Moreover, it is usually considered with the normal cleavage given by lifting a map $f: B \to \simple_{\acted}\lens{A'}{B'}$ to $\lens{\pi_{A'}}{f} : \lens{A'}{B} \rightrightarrows \lens{A'}{B'}$.
	Now $\times$ is still a well-defined action $\Simple{\acted} \to \acted$, making $\acted \nepifrom{\simple_{\acted}} \Simple{\acted} \nto{\times} \acted$ a cartesian fibred strong action, which we dub \textbf{simply fibred action}, from which a cartesian double category of contexful arrows $\Ctx(\times)$ is obtained.
\end{ex}

We conclude with an archetypal example of bimonoidal double category, arising from considering pseudomonoids in $\CartCat_\colax$.

\begin{ex}
	Consider the case of a bicartesian category $\Xa$, i.e.~a category having both finite products and coproducts.
	In this situation, there are canonical maps:
	\begin{equation}
		\varsigma : 0 \times 1 \longto 1,
		\qquad
		\varsigma : (A + B) \times (P \times Q) \longto A \times P + B \times Q
	\end{equation}
	Let now $(\acted, \monoidalunit, \monoidal) = (\Xa, 0, +)$ and $(\actor, \monoidalunit, \monoidal) = (\Xa,1,\times)$.
	Then the fibred action $\acted \nepifrom{\pi_{\acted}} \acted \times \actor \nto{\times} \acted$, with $(\combineunit,\combine) = (1,\times)$ and $\colaxity{\times} = \varsigma$, is a strong monoidal fibred action.
	The resulting double category of contexful arrows $\Ctx(\times)$ is already cartesian monoidal by \cref{ex:lax.cartesian.ctx}, but then it is also monoidal with respect to the \emph{external choice product} $(\extch)$ \cite{capucci_translating_2022}:
	\begin{equation}
		(P, A \times P \xto{f} B) \extch (P', A' \times P' \xto{f'} B') = (P \times P', (A + A') \times (P \times P') \xto{\varsigma} A \times P + A' \times P' \xto{f+f'} B+B')
	\end{equation}
	The same applies to the simply fibred action described in \cref{ex:simply.fibred.action}.
\end{ex}

\section[Duality: contentads and the Cnt constructions]{Duality: contentads and the $\Cnt$ construction}
\label{sec:duality}
\subsection[The Cnt construction]{The $\Cnt$ construction}
The story told so far dualizes with little effort. 
The duals to contextads, that we call \emph{contentads}, are contentads on categories and `dependently' graded monads.
If contextads are all about providing extra \emph{context} to morphisms, their dual are about providing extra \emph{content}, in the form of an extra output or an `effect' (like an exception, or non-determinism).
The dual to the $\Ctx$ construction, which we call $\Cnt$, generalizes the Kleisli (for monads) and Copara constructions, yielding a double category whose loose arrows $A \to E \action B$ are maps from $A$ to $B$ which generate extra content $E$, the latter composing by pushforward and accumulation.

Let's see how this arises by repeating the construction we performed in the previous pages.
First, we dualize paradises to coparadises, which are completely analogous except its the codomain projections which are required to be display:

\begin{defn}
\label{defn:coparadise}
	A \textbf{coparadise} is a display map 2-category $\Paradise$ where:
	\begin{enumerate}
		\item $\Display$ is closed under composition and contains all isomorphisms,
		\item $\Cosmos$ has all arrow objects $\Ca^{\downarrow}$, and the codomain map $\cod : \Ca^{\downarrow} \to \Ca$ is in $\Display$.
	\end{enumerate}
\end{defn}

The fact the arrow monad $\Ca \nfrom{\dom} \Ca^\downarrow \nto{\cod} \Ca$ is now right-displayed brings our attention to the tricategory $\DispSpan_r\Paradise$ of right-displayed spans.
Similarly as before, we can prove right-displayed spans with left-colax maps between two fixed objects are the Kleisli 2-category of an arrow monad, and then algebras therein give us 2-categories of right-opfibrant spans with left-colax maps:
\begin{equation}
\label{eqn:dual.main.iso}
	\OpfibSpan^\twofrom_r\Paradise(\Ba,\Ca) \iso \Alg(- \spancomp \Ca^\downarrow, \DispSpan_r^\twofrom\Paradise(\Ba,\Ca)) \iso \Alg(- \spancomp \Ca^\downarrow, \Kl(\Ba^\downarrow \spancomp -, \DispSpan_r\Paradise(\Ba,\Ca))).
	\hspace*{-6ex}
\end{equation}
Compared to \cref{thm:main.iso}, the arrow monads now act on the opposite sides, and thus the above isomorphism witnesses that $\OpfibSpan^\twofrom_r\Paradise$ is a full subtricategory of the \emph{free Eilenberg--Moore completion} of $\DispSpan_r\Paradise$.
The free Eilenberg--Moore completion is dual to the free Kleisli cocompletion, which is related to by $\EM(-)\op = \KL(-\op)$.
It can be characterized explicitly as
\begin{equation}
\label{eqn:free.em.completion.homs}
	\EM(\Kc)((\Ab, s), (\Bb, t)) := \Alg(t-, \Kl(-s, \Kc(\Ab,\Bb))).
\end{equation}

Like $\KL$, $\EM$ is defined by a universal property which induces another \emph{wreath product}\footnote{In fact, originally, the wreath product was defined for the Eilenberg--Moore completion \cite{lack_formal_2002}.} $\wreath : \EM(\EM(\Kc)) \to \EM(\Kc)$.
Therefore, we can define a trifunctor:
\begin{equation}
\label{eqn:cnt.on.objects}
	\begin{tikzcd}[ampersand replacement=\&]
		{\EM(\OpfibSpan^\twofrom_r\Paradise)} \&[7ex] {\EM(\EM(\DispSpan_r\Paradise))} \& {\EM(\DispSpan_r\Paradise).}
		\arrow["{\EM\InclusionTrifun}", hook, from=1-1, to=1-2]
		\arrow["\wreath", from=1-2, to=1-3]
	\end{tikzcd}
	\hspace*{-4ex}
\end{equation}
The final step is restricting to functorial spans.
Notice that we are working with a dual category of spans compared to our previous treatment, hence we restrict to spans whose \emph{right leg} is an identity, aka \emph{opfunctorial spans}, which define a locally full subtricategory $\opfun \subseteq \OpfibSpan^\twofrom_r\Paradise$.
Indeed, since the right leg of a span points in the opposite way, restricting to opfunctorial spans leaves us working with the opposites of both the tricategories of contentads on categories and that of pseudocategories.

\begin{defn}
\label{defn:opfibred.lax.action}
	A \textbf{contentad} in a coparadise $\Paradise$ is a pseudomonad $(\opfiblaxaction,\ \combineunit,\ \combine)$ in $\OpfibSpan^\twofrom_r\Paradise$, the tricategory of right-opfibrant spans and left-colax maps.
	These gather in the tricategory $\Cnd\Paradise$, defined as $\EM(\OpfibSpan^\twofrom_r\Paradise,\,\opfun)\op$ (recall \cref{not:tight.maps}).
\end{defn}

Concretely, a contentad $\acted \nfrom{\action} \actor \nepito{q} \acted$ is a kind of left action on $\acted$, i.e.~the opfibration structure on $q$ allows, for any object $P \in \actor_A$ and morphism $g: A \to B$, to construct the morphism $P \action g : P \action A \to g_*P \action B$.

Given objects $P \in \actor_C$ and $Q \in \actor_{P \action C}$, one can form their \textbf{accumulation} $P \combine Q \in \actor_C$.
There is still a unit $\combineunit_A \in \actor_A$ for each $A \in \acted$, but now the structure 2-cells go in the opposite direction:
\begin{equation}
	\unitor : A \to \combineunit_A \action A, \qquad \associator : P \action (Q \action A) \to (P \combine Q) \action A.
\end{equation}
Thus contentads can be considered `lax opfibred actions', or `dependently graded monads'.

Morphisms, transformations and modifications of contentads on categories are formally dual to those of contextads, hence we don't spell out their (rather lengthy) definitions anew.
We just point out that, just like a morphism of contextads is by default \emph{lax} (with laxity given by the lineator \eqref{eqn:morph.of.fib.colax.actions}), those between contentads on categories are \emph{colax} by default, thus equipped with a \textbf{colineator}:
\begin{equation}
	\begin{tikzcd}[ampersand replacement=\&]
		\acted \& \actor \& \acted \\
		{\acted'} \& {\actor'} \& {\acted'}
		\arrow["F"', dashed, from=1-1, to=2-1]
		\arrow["\action"', from=1-2, to=1-1]
		\arrow["q", two heads, from=1-2, to=1-3]
		\arrow["\colineator"', shorten <=4pt, shorten >=6pt, Rightarrow, dashed, from=1-2, to=2-1]
		\arrow["{F^\flat}", dashed, from=1-2, to=2-2]
		\arrow["F", dashed, from=1-3, to=2-3]
		\arrow["{\action'}", from=2-2, to=2-1]
		\arrow["{q'}"', two heads, from=2-2, to=2-3]
	\end{tikzcd}
	\qquad\qquad
	\colineator_{\lens{E}{A}} : F(P \action A) \to F^\flat_A(P) \action FA
\end{equation}
Likewise, a transformation of contextads is a \textbf{contentful natural transformation}:
\begin{equation}
	\begin{tikzcd}[ampersand replacement=\&]
		\& \acted \\
		{\acted'} \& {\actor'} \& {\acted'}
		\arrow["F"', curve={height=12pt}, from=1-2, to=2-1]
		\arrow["\tau"', shift left=2, shorten <=2pt, shorten >=4pt, Rightarrow, dashed, from=1-2, to=2-1]
		\arrow["c", dashed, from=1-2, to=2-2]
		\arrow["G", curve={height=-12pt}, from=1-2, to=2-3]
		\arrow["{\action'}", from=2-2, to=2-1]
		\arrow["{q'}"', two heads, from=2-2, to=2-3]
	\end{tikzcd}
	\qquad\qquad
	c_A \in \actor'_{FA}, \quad
	\tau_A : FA \longto c_A \action' GA
\end{equation}
Notice that, unlike transformations of contextads, these point already in the right direction and that's why we didn't dualize 2-cells in \cref{defn:opfibred.lax.action}, unlike \cref{defn:tricat.of.colax.fibred.actions}.

\begin{defn}
\label{defn:cnt.construction}
	The \textbf{$\Cnt$ construction} is the dashed trifunctor below, obtained by co/restricting the functor defined in \cref{eqn:cnt.on.objects} to functorial spans, and dualizing 2-cells:
	\begin{equation}
		\begin{tikzcd}[ampersand replacement=\&, row sep=scriptsize]
			{\Cnd\Paradise} \&[6ex]\&[-2ex] {\PsCat\Paradise} \\[-1.5ex]
			{\EM(\OpfibSpan^\twofrom_r\Paradise,\, \opfun)\op} \& {\EM(\EM(\DispSpan_r\Paradise),\, \opfun)\op} \& {\EM(\DispSpan_r\Paradise,\, \opfun)\op}
			\arrow["\Cnt\Paradise", dashed, from=1-1, to=1-3]
			\arrow["{=}"{marking, allow upside down}, draw=none, from=1-1, to=2-1]
			\arrow["{=}"{marking, allow upside down}, draw=none, from=1-3, to=2-3]
			\arrow["{\EM\InclusionTrifun\op_\opfun}", hook, from=2-1, to=2-2]
			\arrow["{\wreath\op_\opfun}", from=2-2, to=2-3]
		\end{tikzcd}
		\hspace*{-7.5ex}
	\end{equation}
\end{defn}

A loose 1-cell in $\Cnt(\action)$ is a \textbf{contentful arrow}, thus an arrow $f:A \to P \action B$, where $P \in \actor_B$, and given $g:B \to Q \action C$ their composition is obtained by first pushing forward the content $P$ along $g$, then compose the resulting arrows, and finally combining the resulting contents using $\associator$:
\begin{equation}
\label{eqn:contentful.maps.comp}
	f \lcomp g := A \nlongto{f} P \action B \nlongto{P \action g} g_*P \action (Q \action C) \nlongto{\associator} (g_*P \combine Q) \action C.
\end{equation}
Identities are defined by the components of $\unitor$.

\begin{ex}
\label{ex:left.actegories}
	Left actions of monoidal categories are trivially opfibred strong actions ${\acted \nfrom{\action} \actor \times \acted \nepito{\pi_{\acted}} \acted}$, dualizing \cref{ex:actegory}.
	The associated $\Cnt$ construction is a doubly-categorical version of $\copara$ \cite[Remark~4]{capucci_towards_2022}: its tight cells are morphisms from $\acted$, its loose 1-cells are coparametrized maps:
	\begin{equation}
		(P,f) : A \looseto B = (P \in \acted, f:A \to P \action B).
	\end{equation}
	The 2-cells $(P,f) \twoto (P',f')$ are morphisms $\varphi:P \to P'$ commuting with $f$ and $f'$.
\end{ex}

\begin{ex}
\label{ex:monads}
	Monads $\Monad:\acted \to \acted$ are contentads on categories whose right leg is an identity, dualizing \cref{ex:comonad}.
	$\Cnt(\Monad)$ is a doubly-categorical version of the Kleisli category of $\Monad$: its loose 1-cells are Kleisli maps $f:A \to \Monad B$, with \eqref{eqn:contentful.maps.comp} specializing to the lift-compose composition typical of Kleisli categories.
	Likewise, graded monads are trivially contentads on categories $\acted \nfrom{\Monad_{(-)}} \actor \times \acted \nepito{\pi_{\acted}} \acted$, and $\Cnt(\Monad_{(-)})$ is a sort of graded Kleisli double category.
	Like noted in \cref{rmk:fujii.kleisli}, this differs from Fujii's definition of graded Kleisli category.
\end{ex}

\begin{ex}
\label{ex:codisplay.map.cat}
	Dualizing \cref{ex:display.map.cat}, consider categories $\acted$ equipped with a distinguished replete subcategory of `codisplay maps', these being maps in $\acted$ that admit and are stable under pushout by arbitrary maps.
	This makes $\dom: \acted^{\downarrow_{\Display}} \to \acted$ an opfibration, and indeed one can show (reasoning by duality) $\acted \nfrom{\cod} \acted^{\downarrow_{\Display}} \nepito{\dom} \acted$ is a contentad.
	Performing the $\Cnt$ construction we recover $\Cospan(\acted,\Display)$, the double category of cospans in $\acted$ whose right leg is codisplay.
	In particular, \eqref{eqn:contentful.maps.comp} recovers the composition rule of cospans.
\end{ex}

\begin{ex}
\label{ex:law}
	Here's an example of a morphism between contentads of different nature.
	Let $\Delta : \Set \to \Set$ be the monad of finitely supported probability distributions \cite{fritzConvexSpacesDefinition2015}.
	Consider also $\FinProb$, the category of finite (i.e.~carried by finite sets) probability spaces and measure-preserving measurable maps, along with the left action it induces on $\Set$ by powering with the underlying set:
	\begin{equation}
		(\Omega, p) \pitchfork A = A^\Omega.
	\end{equation}
	Now there are maps $\law_{A,\Omega}:A^\Omega \to \Delta A$ which send a map $f : \Omega \to A$ to its law $f_*p(a) = p(f^{-1}a)$.
	These form a natural transformation which is the colineator of a map of contentads ${\pitchfork} \to {\Delta}$:
	\begin{equation}
		\begin{tikzcd}[ampersand replacement=\&]
			\Set \& {\FinProb\op \times \Set} \& \Set \\
			\Set \& \Set \& \Set
			\arrow[Rightarrow, no head, from=1-1, to=2-1]
			\arrow["\pitchfork"', from=1-2, to=1-1]
			\arrow["{\pi_2}", from=1-2, to=1-3]
			\arrow["\law"', shorten <=7pt, shorten >=7pt, Rightarrow, from=1-2, to=2-1]
			\arrow["{\pi_2}", from=1-2, to=2-2]
			\arrow[Rightarrow, no head, from=1-3, to=2-3]
			\arrow["\Delta", from=2-2, to=2-1]
			\arrow[Rightarrow, no head, from=2-2, to=2-3]
		\end{tikzcd}
	\end{equation}
	The unitor and multiplicator of this map are strict (because the fibers of $\Set \equalto \Set$ are trivial) and thus automatically coherent.
\end{ex}

\subsubsection{Duality}
To understand the duality relating $\Ctx$ and $\Cnt$, we start by observing that if $\Paradise$ is a coparadise, then $\DualParadise$ is a paradise.
Moreover, dualizing the 2-cells in $\Cosmos$ has the effect of `reversing' the arrow objects, so that the arrow monad associated to an object $\acted$ is now $\acted \nfrom{\cod} \acted^\downarrow \nto{\dom} \acted$ (here $\cod$ and $\dom$ are, respectively, the domain and codomain 1-cells in $\Cosmos\co$ but we kept the name from $\Cosmos$).
We call this formal opposite $\acted^\uparrow$.
Observe that composing with $\acted^\uparrow$ on the left of a span is analogous to compose the reversed span with $\acted^\downarrow$ on the right:
\begin{equation}
\label{eqn:arrow.monads.duality}
	\begin{tikzcd}[ampersand replacement=\&,sep=scriptsize]
		\&\& {\Ca \comma \action} \\
		\& \Ma \&\& {\Ca^\uparrow} \\
		\Ca \&\& \Ca \&\& \Ca
		\arrow[from=1-3, to=2-2]
		\arrow[from=1-3, to=2-4]
		\arrow["\lrcorner"{anchor=center, pos=0.125, rotate=-45}, draw=none, from=1-3, to=3-3]
		\arrow["q"', two heads, from=2-2, to=3-1]
		\arrow["\action", from=2-2, to=3-3]
		\arrow["\cod"', from=2-4, to=3-3]
		\arrow["\dom", from=2-4, to=3-5]
	\end{tikzcd}
	\leftrightsquigarrow
	\begin{tikzcd}[ampersand replacement=\&,sep=scriptsize]
		\&\& {\Ca \comma \action} \\
		\& {\Ca^\downarrow} \&\& \Ma \\
		\Ca \&\& \Ca \&\& \Ca
		\arrow[from=1-3, to=2-2]
		\arrow[from=1-3, to=2-4]
		\arrow["\lrcorner"{anchor=center, pos=0.125, rotate=-45}, draw=none, from=1-3, to=3-3]
		\arrow["\dom"', from=2-2, to=3-1]
		\arrow["\cod", from=2-2, to=3-3]
		\arrow["\action"', from=2-4, to=3-3]
		\arrow["q", two heads, from=2-4, to=3-5]
	\end{tikzcd}
\end{equation}
As a consequence of this reversal, the definition of cartesian fibration (\cref{defn:fib}) also dualizes: an algebra of $\acted^\uparrow$ is a $q:\Ma \to \acted$ with a $\push : q \comma \acted \to \Ma$ left adjoint to $\internal{\id_q}$, which amounts to endowing $q$ with a cocartesian cleavage.
This explains \eqref{eqn:dual.main.iso}.

Therefore, writing $(-)\re$ for the duality reversing 3-cells, $\OpfibSpan^\twofrom_r\Paradise = \FibSpan^\twoto\DualParadise\opre$ and similarly $\DispSpan_r\Paradise = \DispSpan\DualParadise\opre$.
Applying these identities and the duality $\EM(-\opre) \iso \KL(-)\opre$, we get the duality below (note $\PsCat\DualParadise\re = \PsCat\Paradise$):
\begin{equation}
\label{eq:first.duality}
	\begin{tikzcd}[ampersand replacement=\&]
		{\Cnd\Paradise} \&\& {\PsCat\Paradise} \\
		{\Cxd\DualParadise\core} \&\& {\PsCat\Paradise}
		\arrow["{\Cnt\Paradise}", from=1-1, to=1-3]
		\arrow["{(-)\rev}"', "\wr", from=1-1, to=2-1]
		\arrow["{(-)\lop}", "\wr"', from=1-3, to=2-3]
		\arrow["{\Ctx\DualParadise\core}"', from=2-1, to=2-3]
	\end{tikzcd}
	\qquad\qquad
	\Cnt(\action)\lop = \Ctx(\action\rev),
\end{equation}

The duality $(-)\lop$ on pseudocategories reverses loose arrows; whereas $(-)\rev$ formally reverses a contentad to a contextad:
\begin{equation}
	\begin{pmatrix}
		\begin{tikzcd}[ampersand replacement=\&, cramped, sep=small]
			\& \actor \\
			\acted \&\& \acted
			\arrow["\action"', from=1-2, to=2-1]
			\arrow["{q\, \ \text{opfib. in $\Cosmos$}}", two heads, from=1-2, to=2-3]
		\end{tikzcd}
		\\[4ex]
		\combine:\actor \spancomp \actor \to \actor
	\end{pmatrix}
	\quad\longmapsto\quad
	\begin{pmatrix}
		\begin{tikzcd}[ampersand replacement=\&, cramped, sep=small]
			\& \actor \\
			\acted \&\& \acted
			\arrow["{\text{fib. in $\Cosmos\co$}\ \, q}"', two heads, from=1-2, to=2-1]
			\arrow["\action", from=1-2, to=2-3]
		\end{tikzcd}
		\\[4ex]
		\combine\rev := \actor \spancomp \actor \xto{\combine} \actor
	\end{pmatrix}
\end{equation}
This duality is thus a generalization of the one identifying left $\actor$-actegories and right $\actor\rev$-actegories \cite[Remark~3.1.3]{capucci2022actegories}, where $\actor\rev$ is the monoidal category obtained by reversing the order of its monoidal product.

We also have a different, if related, kind of duality, induced by a duality on the underlying paradise:

\begin{defn}
\label{defn:self.dual.paradise}
	A \textbf{self-dual paradise} is a paradise $\Paradise$ such that $\DualParadise$ is a coparadise, and equipped with a display-map-preserving equivalence $(-)\op : \Cosmos \to \Cosmos\co$.
\end{defn}

Concretely, a self-dual paradise has a class of display maps containing both projections out of arrow objects, plus a duality exchanging them.
The chief example of self-dual paradises is $\Cat$, but also $\MonCat_\pseudo$, $\PsCat(\Cat)$, etc.

For a self-dual paradise, the above duality specializes to a new relation between $\Ctx$ and $\Cnt$:
\begin{equation}
	\begin{tikzcd}[ampersand replacement=\&]
		{\Cnd\Paradise} \&\& {\PsCat\Paradise} \\
		{\Cxd\Paradise\co} \&\& {\PsCat\Paradise\co}
		\arrow["{\Cnt\Paradise}", from=1-1, to=1-3]
		\arrow["{(-)\op}"', "\wr", from=1-1, to=2-1]
		\arrow["{(-)\ltop}", "\wr"', from=1-3, to=2-3]
		\arrow["{\Ctx\Paradise\co}"', from=2-1, to=2-3]
	\end{tikzcd}
	\qquad\qquad
	\Cnt(\action)\ltop = \Ctx(\action\op),
\end{equation}
where $(-)\ltop$ reverses both loose and tight 1-cells of a pseudocategory.
Indeed, unlike the duality of \eqref{eq:first.duality} above, this one also reverses the tight maps of the resulting pseudocategories by reversing also $\acted$:
\begin{equation}
\label{eqn:real.duality}
	\begin{tikzcd}[ampersand replacement=\&, cramped, sep=small]
		\& \actor \\
		\acted \&\& \acted
		\arrow["\action"', from=1-2, to=2-1]
		\arrow["{q\, \ \text{opfib.}}", two heads, from=1-2, to=2-3]
	\end{tikzcd}
	\quad\longmapsto\quad
	\begin{tikzcd}[ampersand replacement=\&, cramped, sep=small]
		\& {\actor\op} \\
		{\acted\op} \&\& {\acted\op}
		\arrow["{q\op\, \ \text{fib.}}"', two heads, from=1-2, to=2-1]
		\arrow["{\action\op}", from=1-2, to=2-3]
	\end{tikzcd}
\end{equation}
This recovers many known dualities.
In the case where $\action$ is the action of a monoidal category (\cref{ex:left.actegories}), this duality recovers the known one between $\para$ and $\copara$ for the self-action of a monoidal category \cite[Remark~4]{capucci_towards_2022}, i.e.~$\copara(\actor)\coop = \para(\actor\op)$.%
\footnote{Notice that the duality $(-)\ltop$ induces $(-)\coop$ on the loose bicategory.}
If $\action$ is a monad, the duality reduces to the trivial statement that the Kleisli category of a monad $\Monad$ (\cref{ex:monads}) is dual to that of the induced comonad $\Monad\op$ on the opposite category: $\Kl(\Monad)\op = \Kl(\Monad\op)$.
If $\action$ is the lax opfibred action corresponding to a system of codisplay maps (\cref{ex:codisplay.map.cat}), then \eqref{eqn:real.duality} says $\Cospan(\acted,\Display)\ltop = \Span(\acted\op,\Display\op)$.

\section[Doctrines of wreaths and Ctx in general]{Doctrines of wreaths and $\Ctx$ in general}
\label{sec:docs.of.wreaths}

In this section, we will look at the wreath product of pseudomonads in $\DispSpan\Paradise$ more closely but also more generally.
This is because there are cases when we want to take wreaths not only around the pseudocategory of commuting squares but also around other pseudocategories in $\Cosmos$.

Wreaths and their product are the beating heart of the $\Ctx$ construction.
The functor $\FibSpan^\twoto \to \KL(\DispSpan^=)$ picks out wreaths of a familiar shape, but ultimately that's just restricting $\wreath$, a general operation which picks \emph{any} wreath in $\DispSpan\Paradise$ and produces a category in $\Cosmos$.
Indeed, restricting the wreath product to different classes of wreaths yields other familiar constructions.

\begin{ex}
\label{ex:spans}
	Many span constructions are an example of wreath products.
	We follow \cite[Definition~2.1]{haugseng_two-variable_2023} in defining the data for the $\Span$ construction to be an \textbf{adequate triple}, meaning a category $\Ca$ together with two wide subcategories, of \emph{forward} (denoted $\forwto$) and \emph{backward} (denoted $\backto$) morphisms, with the property that backward arrows pullback against forward arrows and the resulting projections are again forward and backward, as illustrated by the diagram below:
	\begin{equation}
		\begin{tikzcd}[ampersand replacement=\&]
			\cdot \& \cdot \\
			\cdot \& \cdot
			\arrow[two heads, from=1-1, to=2-1]
			\arrow[two heads, from=1-2, to=2-2]
			\arrow[tail, from=2-1, to=2-2]
			\arrow[tail, from=1-1, to=1-2]
			\arrow["\lrcorner"{anchor=center, pos=0.125}, draw=none, from=1-1, to=2-2]
		\end{tikzcd}
	\end{equation}
	We can then try to frame $\Span(\Ca, \forwto,\backto)$ as a $\Ctx$ construction.
	For every adequate triple $(\Ca, \forwto, \backto)$ we can form a fibration $\cod : \Ca^{\backdown}\vert_{\forwto} \to \Ca\vert_\forwto$, where $\Ca\vert_\forwto$ is the subcategory of $\Ca$ comprising only of the forward maps.
	The objects in the total category $\Ca^{\backdown}\vert_{\forwto}$ are backward maps, but the morphisms are commutative squares whose top and bottom are forward maps.
	This fibration is indeed a strict fibred action on $\Ca\vert_\forwto$, with action of an arrow $p:E \backto B$ on $B$ given by projecting out $E$:
	\begin{equation}
		\begin{tikzcd}[ampersand replacement=\&,sep=scriptsize]
			\& {\Ca^\backdown}\vert_{\forwto} \\
			{\Ca\vert_\forwto} \&\& {\Ca\vert_\forwto}
			\arrow["\cod"', two heads, from=1-2, to=2-1]
			\arrow["\dom", from=1-2, to=2-3]
		\end{tikzcd}
	\end{equation}
	The multiplication of this fibred action is composition of backward arrows, (which the pasting property of pullbacks guarantees to be a cartesian functor), the unit is inclusion of identities (which are both forward and backward by assumption).
	The resulting $\Ctx$ construction is a double category whose tight cells are forward maps, whose loose cells are spans of backward and forward arrows, and whose squares are arrows in $\Ca$ that makes the obvious squares commute:
	\begin{equation}
		\begin{tikzcd}[ampersand replacement=\&]
			A \& E \& B \\
			{A'} \& {E'} \& {B'}
			\arrow["p"', two heads, from=1-2, to=1-1]
			\arrow["f", tail, from=1-2, to=1-3]
			\arrow[tail, from=1-1, to=2-1]
			\arrow[tail, from=1-3, to=2-3]
			\arrow[tail, from=2-2, to=2-3]
			\arrow[two heads, from=2-2, to=2-1]
			\arrow[from=1-2, to=2-2]
		\end{tikzcd}
	\end{equation}
	Thus $\Ctx(\dom)$ is not what $\Span(\Ca, \forwto, \backto)$ should be, where tight cells are unconstrained, like the ones appearing inside the squares.

	To correct this, we can notice that a different wreath product would work, specifically one between the following monads in $\DispSpan(\Cat)$:
	\begin{equation}
	\label{eqn:span-monads}
		\begin{tikzcd}[ampersand replacement=\&,sep=scriptsize]
			\& {\Ca^\forwdown} \&\&\& {\Ca^\backdown} \\
			{\Ca} \&\& {\Ca} \& {\Ca} \&\& {\Ca}
			\arrow["\cod", from=1-2, to=2-3]
			\arrow["\dom"', from=1-2, to=2-1]
			\arrow["\cod"', from=1-5, to=2-4]
			\arrow["\dom", from=1-5, to=2-6]
		\end{tikzcd}
	\end{equation}
	Here, $\Ca^{\forwdown}$ and $\Ca^{\backdown}$ are the full subcategories of the arrow category $\Ca^{\downarrow}$ spanned by the forward and backward maps respectively.
	For the right one to be an algebra for the left one is sufficient that $\cod : \Ca^\backto \to \Ca$ admits cartesian lifts of forward arrows, which is true by assumption.
	Then, it's easy to see that taking the wreath $\Ca^\forwdown \wreath \Ca^\backdown$ gives a double category whose category of objects and tight maps is the whole $\Ca$, as desired.
\end{ex}

Therefore $\Span$ is a wreath product applied to a different choice of wreaths, namely those of the form \eqref{eqn:span-monads}.
We could retrace the same path we did for constructing $\Cxd$, hence considering a tricategory with objects pairs $(\Ca, \forwto)$ (submonads of the arrow monad on $\Ca$), 1-cells left-displayed spans whose left leg admits cartesian lift of forward maps and 2-cells cartesian and right-lax maps of spans.
However, pseudomonads in this tricategory are too general, since they would consist of any span $\Ca \nfrom{p} \Ma \nto{f} \Ca$, whereas we are interested in those which are again carried by a submonad of the arrow monad.

Thus we end up choosing directly from $\FunWreaths(\DispSpan\Paradise) := \KL(\KL(\DispSpan\Paradise),\,\fun)\co$: the subtricategory of wreaths whose product are $\Span$ construction is the full subtricategory spanned by objects of the form $(\Ca, \Ca^\forwdown, \Ca^\backdown)$, which we can call $\AdTrpl$.

\begin{defn}
	A \textbf{doctrine of wreaths} on the paradise $\Paradise$ is a subtricategory
	\begin{equation}
		\Doc \into \FunWreaths(\DispSpan\Paradise).
	\end{equation}
	A doctrine is \textbf{full} if its inclusion is.
	A doctrine is \textbf{based} if its inclusion is of the form (of a restriction to functorial maps of) $\KL(\into)$.
\end{defn}

Thus a based doctrine of wreaths is a choice of wreaths made only on the grounds of which base pseudocategories (pseudomonads in $\KL(\DispSpan\Paradise)$) to wreath around.
For instance, the doctrine of contextads on categories is full and based since it includes all wreaths based on arrow monads, as well as all maps thereof.
Instead, as discussed above, the doctrine of adequate triples described just above is full but not based since it doesn't admit all wreaths around a given monad of forward arrows.

A doctrine of wreaths induces a restriction of the wreath product, which thus yields a specialized version of it for the wreaths picked out by the doctrine:
\begin{equation}
	\wreath_\Doc := \begin{tikzcd}[ampersand replacement=\&]
		{\Doc} \&[-2ex] {\FunWreaths(\DispSpan\Paradise)} \& {\PsCat\Paradise}
		\arrow["\wreath", from=1-2, to=1-3]
		\arrow[hook, from=1-1, to=1-2]
	\end{tikzcd}
\end{equation}

For contextads, this is the $\Ctx$ construction of \cref{defn:ctx.construction}.
For adequate triples, as shown in \cref{ex:spans}, it is the $\Span$ construction:
\begin{equation}
	\Span := \begin{tikzcd}[ampersand replacement=\&]
		{\AdTrpl} \&[-2ex] {\FunWreaths(\DispSpan\Paradise)} \& {\PsCat\Paradise}
		\arrow["\wreath", from=1-2, to=1-3]
		\arrow[hook, from=1-1, to=1-2]
	\end{tikzcd}
\end{equation}

\begin{ex}
	The \textbf{doctrine of comonads} $\Comnd(\Cosmos) \into \FunWreaths(\DispSpan\Paradise)$ is full but not based, since it picks out specific wreaths over the arrow monads.
	To see it's well-defined, it suffices to remember $\Comnd(\Cosmos)$ is a subtricategory of $\Cxd\Paradise$.
	Its associated wreath product is the Kleisli construction.
\end{ex}

\begin{ex}
	There are other subdoctrines of $\Cxd\Paradise$ given by considering special kinds of contextads on categories.
	Among these: the \textbf{doctrines of strong/strict contextads}, the \textbf{doctrine of actegories} (\cref{ex:actegory}, giving $\Para$ as its wreath product), the \textbf{doctrine of display map categories} (\cref{ex:display.map.cat}, giving the left-displayed $\Span$ construction), etc.
\end{ex}

\begin{ex}
	The doctrine of partial maps is a subdoctrine of the doctrine of display maps categories whose display maps are monic.
	These are called \emph{dominions} in \cite{rosoliniContinuityEffectivenessTopoi1986}, where they are define as minimal settings to talk about partial maps.
	Indeed, the wreath product of (the contextad associated to) a dominion constructs the double category of partial maps and total maps:
	\begin{eqalign}
		\Par : \Cat &\longto \DblCat\\
		\Ca &\mapsto \left\{
		\begin{tikzcd}[ampersand replacement=\&,cramped, sep=small]
			\cdot \& \cdot \\
			\cdot \& \cdot
			\arrow[harpoon, from=1-1, to=1-2]
			\arrow[from=1-1, to=2-1]
			\arrow[from=1-2, to=2-2]
			\arrow[harpoon, from=2-1, to=2-2]
		\end{tikzcd}
		\right\}
	\end{eqalign}
	where $\rightharpoonup$ stands for a span $\cdot \monofrom \cdot \to \cdot$.
\end{ex}

\subsection{Contextads around a double category}
Ultimately, we can contemplate the largest doctrine of wreaths there is, that given by\linebreak~$\FunWreaths(\DispSpan\Paradise)$ itself.
It corresponds to considering wreaths around arbitrary pseudocategories in $\Paradise$.
We call such wreaths \textbf{contextads on a double category} (that is, on a pseudocategory in $\Cosmos$) (as opposed to our earlier contextads on a category---that is, an object of $\Cosmos$), since they provide a system for contexualizing arrows in a general pseudocategory in $\Cosmos$.

Turns out the intuition we have built for contextads on categories and their double categories of contexful arrows is quite helpful to understand contextads on double categories.
We have some applications in mind for these more general contextads, so we will unpack them here.

Fix a pseudocategory $(\acted \nfrom{s} \Xa \nto{t} \acted,\, \lid,\, \lcomp)$, whose loose arrows we denote as $\looseto$.
A wreath around it is, first of all, another span $\acted \nfrom{p} \actor \nto{\action} \acted$ which is a pseudoalgebra of $\Xa \spancomp -$ in the Kleisli category of $-\spancomp \Xa$.
This means there is an intertwiner $\intertwiner : \Xa \spancomp \actor \longto \actor \spancomp \Xa$, and we can think of it much in the same way as we did for contextads on categories \eqref{eqn:arrow.monads.intertwiner}, where $\intertwiner$ encoded the lifting operation afforded by the fibration $p$:
\begin{equation}
	\intertwiner : (f:A \looseto B, Q \in \actor_B) \mapsto (\tow f Q \in \actor_A, f \action Q: A \action (\tow f Q) \looseto B \action Q)
\end{equation}
Hence $\tow - - = p_1\intertwiner$ is a `synthetic' version of pulling back along a fibration, i.e.~cartesian lifting.
Unlike cartesian lifting, it does not enjoy a universal property (which came from $\acted^\downarrow \spancomp -$ being colax idempotent, recall \cref{defn:psalg}), but we still get a loose arrow $\tow f Q : A \action (\tow f Q) \looseto B \action Q$ from the second component of $\intertwiner$.
It doesn't take much to realize this is all we use of cartesian lifting when defining a contextad on a category or its double category of contexful arrows.
Hence contextads and their wreath products are not much different than what we saw in \cref{sec:wreath.product.intro}.

The reason we still call these wreaths contextads is that they generally embody a system for dealing with contexts.
One can interpret the objects of $\actor$ as \emph{context extensions}.
Being displayed over $\acted$ is very natural: each object $A$, which can be thought of as a context, has its own category $\actor_A$ of extensions.
The monad structure on $\acted \nfrom{p} \actor \nto{\action} \acted$ witnesses the fact each context has a trivial extension ($\combineunit$) and iterated extensions can be combined into a single extension ($\combine$).
The map $\action$ then perform a context extension $(A,P) \mapsto A \action P$.
The counitor and coassociator are arrows that represent the act of \emph{ignoring} a trivial context or \emph{unpacking} a combined context extension.
Finally, the intertwiner $(\tow - -, - \action -)$ represent the act of \emph{contextualizing} an arrow, not unlike a Kleisli lift for a comonad.

\begin{rmk}
	An important difference between cartesian lifting and $\tow - -$ is that the former is a structure on the right leg $p$ of $\acted \nfrom{p} \actor \nto{\action} \acted$, independent of the rest of the span, while $\tow - -$ cannot be defined from $p$ alone.
	Indeed, $f \action P$ is defined directly for the latter, unlike in the former case where it equals $f \action \lift f$.
\end{rmk}

\begin{rmk}
	As already noted above, the structure encoded by $\intertwiner = (\tow - -, - \action -)$ is that of a pseudoalgebra of $\Xa \spancomp -$ in $\Kl(- \spancomp \Xa)$.
	As opposed to $\acted^\downarrow \spancomp -$, in general $\Xa \spancomp -$ is not colax idempotent, meaning $\tow - -$ is a \emph{bona fide} structure (as opposed to property-like) on $\acted \nfrom{p} \actor \nto{\action} \acted$, and that maps thereof (the 2-cells in $\FunWreaths(\DispSpan\Paradise)$) are \emph{bona fide} pseudomorphisms of pseudoalgebras, i.e.~their structure morphisms are not unique as for maps of cartesian fibrations (recall \cref{defn:psmors}).
	Thus, when defining contextads below, remember the structure morphisms of combination ($\combineunitcart$ and $\combinecart$) are proper data now and are not automatically coherent like for contextads on categories~(\cref{defn:colax.fibred.action}).
\end{rmk}

\begin{rmk}
	Notice that, to develop the definitions of contextad in $\Cosmos$, there is no need for the extra condition paradises (\cref{defn:paradise}) have compared to display map 2-categories (\cref{defn:display.map.2.cat}), namely that arrow objects have display domain projection.
	Indeed, that last condition is only to guarantee $\acted^\downarrow$ is a well-defined pseudomonad in $\DispSpan\Paradise$.
\end{rmk}

\begin{defn}
\label{defn:contextad}
	Let $\dblcat X = (\acted \nfrom{s} \Xa \nto{t} \acted,\, \lid,\, \lcomp)$ be a pseudocategory in a display map 2-category $\Paradise$.
	A \textbf{contextad} on $\dblcat X$ is a wreath in $\DispSpan\Paradise$ around $\dblcat X$, thus consisting of the following data:
	\begin{enumerate}
		\item a \textbf{carrier} left-diplayed span
		      \begin{equation}
			      \begin{tikzcd}[ampersand replacement=\&,sep=scriptsize]
				      \& \actor \\
				      \acted \&\& \acted
				      \arrow["p"', from=1-2, to=2-1]
				      \arrow["\action", from=1-2, to=2-3]
			      \end{tikzcd}
		      \end{equation}
		\item an \textbf{intertwiner} 2-cell $\intertwiner =: (\tow - -, - \action -) : \Xa \spancomp \actor \longto \actor \spancomp \Xa$
		      \begin{equation}
			      \begin{tikzcd}[ampersand replacement=\&, cramped]
				      A \& B \\
				      {A'} \& {B'}
				      \arrow[""{name=0, anchor=center, inner sep=0}, "f", "\shortmid"{marking}, from=1-1, to=1-2]
				      \arrow[""{name=0p, anchor=center, inner sep=0}, phantom, from=1-1, to=1-2, start anchor=center, end anchor=center]
				      \arrow["h"', from=1-1, to=2-1]
				      \arrow["k", from=1-2, to=2-2]
				      \arrow[""{name=1, anchor=center, inner sep=0}, "{f'}"', "\shortmid"{marking}, from=2-1, to=2-2]
				      \arrow[""{name=1p, anchor=center, inner sep=0}, phantom, from=2-1, to=2-2, start anchor=center, end anchor=center]
				      \arrow["\beta", shorten <=4pt, shorten >=4pt, Rightarrow, from=0p, to=1p]
			      \end{tikzcd}
			      ,\quad%
			      \begin{tikzcd}[ampersand replacement=\&, cramped]
				      {\lens{P}{B}} \\
				      {\lens{P'}{B'}}
				      \arrow["{\lens{\varphi}{k}}", from=1-1, to=2-1]
			      \end{tikzcd}
			      \quad \longmapsto \quad
			      \begin{tikzcd}[ampersand replacement=\&, cramped]
				      {A \action (\tow f P)} \& {B \action P} \\
				      {A' \action (\tow{f'} P')} \& {B' \action P'}
				      \arrow[""{name=0, anchor=center, inner sep=0}, "{f \action P}", "\shortmid"{marking}, from=1-1, to=1-2]
				      \arrow[""{name=0p, anchor=center, inner sep=0}, phantom, from=1-1, to=1-2, start anchor=center, end anchor=center]
				      \arrow["{h \action (\tow \beta \varphi)}"', from=1-1, to=2-1]
				      \arrow["{k \action \varphi}", from=1-2, to=2-2]
				      \arrow[""{name=1, anchor=center, inner sep=0}, "{f' \action P'}"', "\shortmid"{marking}, from=2-1, to=2-2]
				      \arrow[""{name=1p, anchor=center, inner sep=0}, phantom, from=2-1, to=2-2, start anchor=center, end anchor=center]
				      \arrow["{\beta \action \varphi}"', shorten <=4pt, shorten >=4pt, Rightarrow, from=0p, to=1p]
			      \end{tikzcd}
		      \end{equation}
		      including coherent invertible 3-cells witnessing pseudofunctoriality of $\tow - -$:
		      \begin{equation}
			      \begin{matrix}
				      \mapunitor : \tow{\lid_A} P \iso P \quad \text{$p$-vertical}, \\[2ex]
				      \begin{tikzcd}[ampersand replacement=\&,cramped]
					      {A \action (\tow{\lid_A} P)} \& {A \action P} \\
					      {A \action P} \& {A \action P}
					      \arrow[""{name=0, anchor=center, inner sep=0}, "{\lid_A \action P}", "\shortmid"{marking}, from=1-1, to=1-2]
					      \arrow[""{name=0p, anchor=center, inner sep=0}, phantom, from=1-1, to=1-2, start anchor=center, end anchor=center]
					      \arrow["{A \action \mapunitor}"', "\wr", from=1-1, to=2-1]
					      \arrow[Rightarrow, no head, from=1-2, to=2-2]
					      \arrow[""{name=1, anchor=center, inner sep=0}, "\shortmid"{marking}, Rightarrow, no head, from=2-1, to=2-2]
					      \arrow[""{name=1p, anchor=center, inner sep=0}, phantom, from=2-1, to=2-2, start anchor=center, end anchor=center]
					      \arrow["{\bar{\mapunitor}}"', "\wr", shorten <=4pt, shorten >=4pt, Rightarrow, from=0p, to=1p]
				      \end{tikzcd}
			      \end{matrix}
			      \qquad\qquad
			      \begin{matrix}
				      \mapmultiplicator : \tow f (\tow g P) \iso \tow{(f \lcomp g)} P \quad \text{$p$-vertical}, \\[2ex]
				      \begin{tikzcd}[ampersand replacement=\&]
					      {A \action (\tow f (\tow g P))} \& {A \action (\tow g P)} \& {A \action P} \\
					      {A \action \tow{(f \lcomp g)} P} \& {} \& {A \action P}
					      \arrow["{f \action (\tow g P)}", "\shortmid"{marking}, from=1-1, to=1-2]
					      \arrow["{A \action \mapmultiplicator}"', "\wr", from=1-1, to=2-1]
					      \arrow["{g \action P}", "\shortmid"{marking}, from=1-2, to=1-3]
					      \arrow["{\bar{\mapmultiplicator}}"'{pos=0.42}, "\wr"{pos=0.4}, shift right=1.2, shorten >=3pt, Rightarrow, from=1-2, to=2-2]
					      \arrow[Rightarrow, no head, from=1-3, to=2-3]
					      \arrow["{(f \lcomp g) \action P}"', "\shortmid"{marking}, from=2-1, to=2-3]
				      \end{tikzcd}
			      \end{matrix}
		      \end{equation}
		\item a \textbf{unit} 2-cell $(\combineunit,\, \counitor) : \acted \to \actor \spancomp \Xa$,
		      \begin{equation}
			      \label{eqn:contextad.unit.squares}
			      \begin{tikzcd}[ampersand replacement=\&, cramped]
				      A \\
				      B
				      \arrow["h"', from=1-1, to=2-1]
			      \end{tikzcd}
			      \quad \longmapsto \quad
			      \begin{tikzcd}[ampersand replacement=\&, cramped]
				      {\combineunit_A} \\
				      {\combineunit_{A'}}
				      \arrow["{\combineunit_h}"', from=1-1, to=2-1]
			      \end{tikzcd}
			      ,\quad
			      \begin{tikzcd}[ampersand replacement=\&, cramped]
				      {A \action \combineunit_A} \& A \\
				      {A' \action \combineunit_{A'}} \& {A'}
				      \arrow[""{name=0, anchor=center, inner sep=0}, "{\counitor}", "\shortmid"{marking}, from=1-1, to=1-2]
				      \arrow[""{name=0p, anchor=center, inner sep=0}, phantom, from=1-1, to=1-2, start anchor=center, end anchor=center]
				      \arrow["{h \action \combineunit_h}"', from=1-1, to=2-1]
				      \arrow["h", from=1-2, to=2-2]
				      \arrow[""{name=1, anchor=center, inner sep=0}, "{\counitor}"', "\shortmid"{marking}, from=2-1, to=2-2]
				      \arrow[""{name=1p, anchor=center, inner sep=0}, phantom, from=2-1, to=2-2, start anchor=center, end anchor=center]
				      \arrow["{\bar{\counitor}}"', shift left=2, shorten <=4pt, shorten >=4pt, Rightarrow, from=0p, to=1p]
			      \end{tikzcd}
		      \end{equation}
		      equipped with a coherent invertible 3-cell:
		      \begin{equation}
			      \begin{matrix}
				      \combineunitcart : \combineunit_A \iso \tow f \combineunit_B \quad \text{$p$-vertical}, \\[2ex]
				      \begin{tikzcd}[ampersand replacement=\&, cramped]
					      {A \action \combineunit_A} \& A \& B \\
					      {A\action (\tow f \combineunit_B)} \& {B \action \combineunit_B} \& B
					      \arrow["{\counitor}", "\shortmid"{marking}, from=1-1, to=1-2]
					      \arrow["{A \action \combineunitcart}"', "\wr", from=1-1, to=2-1]
					      \arrow["f", "\shortmid"{marking}, from=1-2, to=1-3]
					      \arrow["\barcombineunitcart"', "\wr", Rightarrow, from=1-2, to=2-2]
					      \arrow[Rightarrow, no head, from=1-3, to=2-3]
					      \arrow["{f \action \combineunit_B}"', "\shortmid"{marking}, from=2-1, to=2-2]
					      \arrow["{\counitor}"', "\shortmid"{marking}, from=2-2, to=2-3]
				      \end{tikzcd}
			      \end{matrix}
		      \end{equation}
		\item a \textbf{multiplication} 2-cell $(\combine,\, \coassociator) : \actor \spancomp \actor \to \actor \spancomp \Xa$
		      \begin{equation}
			      \label{eqn:contextad.mult.squares}
			      \begin{tikzcd}[ampersand replacement=\&, cramped]
				      {\lens{P}{A}} \\
				      {\lens{P'}{A'}}
				      \arrow["{\lens{\varphi}{h}}"', from=1-1, to=2-1]
			      \end{tikzcd}
			      ,\quad
			      \begin{tikzcd}[ampersand replacement=\&, cramped]
				      {\lens{Q}{A \action P}} \\
				      {\lens{Q'}{A' \action P'}}
				      \arrow["{\lens{\psi}{h \action \varphi}}"', from=1-1, to=2-1]
			      \end{tikzcd}
			      \quad \longmapsto \quad
			      \begin{tikzcd}[ampersand replacement=\&,cramped]
				      {A \action (P \combine Q)} \& {(A \action P) \action Q} \\
				      {A' \action (P' \combine Q')} \& {(A' \action P') \action Q'}
				      \arrow[""{name=0, anchor=center, inner sep=0}, "\coassociator", "\shortmid"{marking}, from=1-1, to=1-2]
				      \arrow[""{name=0p, anchor=center, inner sep=0}, phantom, from=1-1, to=1-2, start anchor=center, end anchor=center]
				      \arrow["{h \action (\varphi \combine \psi)}"', from=1-1, to=2-1]
				      \arrow["{(h \action \varphi) \action \psi}", from=1-2, to=2-2]
				      \arrow[""{name=1, anchor=center, inner sep=0}, "\coassociator"', "\shortmid"{marking}, from=2-1, to=2-2]
				      \arrow[""{name=1p, anchor=center, inner sep=0}, phantom, from=2-1, to=2-2, start anchor=center, end anchor=center]
				      \arrow["{\bar{\coassociator}}"', shorten <=4pt, shorten >=4pt, Rightarrow, from=0p, to=1p]
			      \end{tikzcd}
		      \end{equation}
		      equipped with a coherent invertible 3-cell:
		      \begin{equation}
			      \begin{matrix}
				      \combinecart : (\tow f P) \combine (\tow{f \action P} Q) \iso \tow f (P \combine Q) \quad \text{$p$-vertical} \\[2ex]
				      \begin{tikzcd}[ampersand replacement=\&, cramped]
					      {A \action ((\tow f P) \combine (\tow{f \action P} Q))} \&[-2ex] {(A \action (\tow f P)) \action (\tow{f \action P} Q)} \&[2ex] {(B \action P) \action Q} \\
					      {A \action (\tow f (P \combine Q))} \& {B \action (P \combine Q)} \& {(B \action P) \action Q}
					      \arrow["\shortmid"{marking}, "\coassociator", from=1-1, to=1-2]
					      \arrow["{A \action \combinecart}"', from=1-1, to=2-1]
					      \arrow["\shortmid"{marking}, "{(f \action P) \action Q}", from=1-2, to=1-3]
					      \arrow["{\barcombinecart}"', "\wr", Rightarrow, from=1-2, to=2-2]
					      \arrow[Rightarrow, no head, from=1-3, to=2-3]
					      \arrow["\shortmid"{marking}, "{f \action (P \combine Q)}"', from=2-1, to=2-2]
					      \arrow["\shortmid"{marking}, "\coassociator"', from=2-2, to=2-3]
				      \end{tikzcd}
			      \end{matrix}
		      \end{equation}
		\item as well as coherent invertible 3-cells $(\leftunitlaw,\bar\leftunitlaw)$, $(\rightunitlaw,\bar\rightunitlaw)$ and $(\associativitylaw,\bar\associativitylaw)$ witnessing the unitality and associativity of $\actor$ as a pseudomonad:
		      \begin{equation}
			      \begin{matrix}
				      \leftunitlaw : \combineunit_A \combine (\tow \counitor P) \iso P\quad \text{$p$-vertical}, \\[2ex]
				      \begin{tikzcd}[ampersand replacement=\&,cramped,column sep=scriptsize]
					      {A \action (\combineunit_A \combine \tow{\counitor} P)} \& {(A \action \combineunit_A) \action (\tow{\counitor} P)} \& {A \action P} \\
					      {A \action P} \& {} \& {A \action P}
					      \arrow["\coassociator", "\shortmid"{marking}, from=1-1, to=1-2]
					      \arrow["{A \action \leftunitlaw}"', "\wr", from=1-1, to=2-1]
					      \arrow["{\counitor \action P}", "\shortmid"{marking}, from=1-2, to=1-3]
					      \arrow["{\bar{\leftunitlaw}}"'{pos=0.4}, "\wr"{pos=0.4}, shorten <=1pt, shorten >=4pt, shift right=4.5, Rightarrow, from=1-2, to=2-2]
					      \arrow[Rightarrow, no head, from=1-3, to=2-3]
					      \arrow["\shortmid"{marking}, Rightarrow, no head, from=2-1, to=2-3]
				      \end{tikzcd}
			      \end{matrix}
			      \qquad\qquad
			      \begin{matrix}
				      \rightunitlaw : P \combine \combineunit_{A \action P} \iso P \quad \text{$p$-vertical}, \\[2ex]
				      \begin{tikzcd}[ampersand replacement=\&,cramped,column sep=scriptsize]
					      {A \action (P \combine \combineunit_{A \action P})} \& {(A \action P) \action \combineunit_{A \action P}} \& {A \action P} \\
					      {A \action P} \& {} \& {A \action P}
					      \arrow["\coassociator", "\shortmid"{marking}, from=1-1, to=1-2]
					      \arrow["{A \action \rightunitlaw}"', "\wr", from=1-1, to=2-1]
					      \arrow["{\counitor}", "\shortmid"{marking}, from=1-2, to=1-3]
					      \arrow["{\bar{\rightunitlaw}}"'{pos=0.4},"\wr"{pos=0.4}, shorten <=1pt, shorten >=4pt, shift right=3.5, Rightarrow, from=1-2, to=2-2]
					      \arrow[Rightarrow, no head, from=1-3, to=2-3]
					      \arrow["\shortmid"{marking}, Rightarrow, no head, from=2-1, to=2-3]
				      \end{tikzcd}
			      \end{matrix}
		      \end{equation}
		      \vspace*{2ex}
		      \begin{equation}
			      \begin{matrix}
				      \associativitylaw : (P \combine Q) \combine (\tow \coassociator R) \iso P \combine (Q \combine R)\quad \text{$p$-vertical}, \\[2ex]
				      \begin{tikzcd}[ampersand replacement=\&]
					      {A \action ((P \combine Q) \combine (\tow \coassociator R))} \& {(A \action (P \combine Q)) \action (\tow \coassociator R)} \& {((A \action P) \combine Q) \action R} \\
					      {A \action (P \combine (Q \combine R))} \& {(A \action P) \action (Q \combine R)} \& {((A \action P) \action Q) \action R}
					      \arrow["\coassociator", from=1-1, to=1-2]
					      \arrow["{A \action \associativitylaw}"', "\wr", from=1-1, to=2-1]
					      \arrow["{\coassociator \action R}", from=1-2, to=1-3]
					      \arrow["{\bar\associativitylaw}"', "\wr", Rightarrow, from=1-2, to=2-2]
					      \arrow[Rightarrow, no head, from=1-3, to=2-3]
					      \arrow["\coassociator"', from=2-1, to=2-2]
					      \arrow["\coassociator"', from=2-2, to=2-3]
				      \end{tikzcd}
			      \end{matrix}
		      \end{equation}
	\end{enumerate}
\end{defn}

\begin{defn}
\label{defn:general.ctx.dbl.cat}
	Let $(\actor, \combineunit, \combine, \action)$ be a contextad on the pseudocategory $\dblcat X$.
	The associated \textbf{pseudocategory of contextful arrows} $\Ctx(\action)$ is the wreath product of $\actor$ and $\dblcat X$, and is given as follows.
	\begin{enumerate}
		\item Its objects and tight 1-cells are those of $\acted$,
		\item Its loose 1-cells $A \looseto B$ are \textbf{contextful arrows}, thus pairs of a context $P \in \actor_A$ and a loose 1-cell $f:A \action P \looseto B$ in $\dblcat X$.
		      Identities in this direction are given by the pairs $(\combineunit, \counitor)$, and composition of contexful arrows is given by:
		      \begin{eqalign}
			      \left(\lens{P}{A},\ A \action P \nlooseto{f} B\right) \ \lcomp\ \left(\lens{Q}{B},\ B \action Q \nlooseto{g} C\right)
			      = \left(\lens{P \combine \tow f Q}{A},\ A \action (P \combine \tow f Q) \nlooseto{\delta} (A \action P) \action \tow f Q \nlooseto{\ f \action Q\ } B \action Q \nlooseto{g} C\right).
		      \end{eqalign}
		      This operation is associative and unital up to the invertible 2-cells defined below.
		\item Its squares are arrangements
		      \begin{equation}
			      \begin{tikzcd}[ampersand replacement=\&]
				      A \& B \\
				      {A'} \& {B'}
				      \arrow[""{name=0, anchor=center, inner sep=0}, "{{(P,f)}}", "\shortmid"{marking}, from=1-1, to=1-2]
				      \arrow[""{name=0p, anchor=center, inner sep=0}, phantom, from=1-1, to=1-2, start anchor=center, end anchor=center]
				      \arrow["h"', from=1-1, to=2-1]
				      \arrow["k", from=1-2, to=2-2]
				      \arrow[""{name=1, anchor=center, inner sep=0}, "{(P',f')}"', "\shortmid"{marking}, from=2-1, to=2-2]
				      \arrow[""{name=1p, anchor=center, inner sep=0}, phantom, from=2-1, to=2-2, start anchor=center, end anchor=center]
				      \arrow["{(\varphi,\beta)}"', shorten <=4pt, shorten >=4pt, Rightarrow, from=0p, to=1p]
			      \end{tikzcd}
		      \end{equation}
		      where $\varphi : P \to P'$ is a map in $\actor$ over $h$ and $\beta$ is a square in $\dblcat X$ as below:
		      \begin{equation}
			      \begin{tikzcd}[ampersand replacement=\&]
				      {A \action P} \& B \\
				      {A' \action P'} \& {B'}
				      \arrow[""{name=0, anchor=center, inner sep=0}, "f", "\shortmid"{marking}, from=1-1, to=1-2]
				      \arrow[""{name=0p, anchor=center, inner sep=0}, phantom, from=1-1, to=1-2, start anchor=center, end anchor=center]
				      \arrow["{h \action \varphi}"', from=1-1, to=2-1]
				      \arrow["k", from=1-2, to=2-2]
				      \arrow[""{name=1, anchor=center, inner sep=0}, "{f'}"', "\shortmid"{marking}, from=2-1, to=2-2]
				      \arrow[""{name=1p, anchor=center, inner sep=0}, phantom, from=2-1, to=2-2, start anchor=center, end anchor=center]
				      \arrow["\beta"', shift left=2, shorten <=4pt, shorten >=4pt, Rightarrow, from=0p, to=1p]
			      \end{tikzcd}
		      \end{equation}
		\item Tight composition of squares is simply given by stacking:
		      \begin{equation}
			      \begin{tikzcd}[ampersand replacement=\&]
				      A \& B \\
				      {A'} \& {B'} \\
				      {A''} \& {B''}
				      \arrow[""{name=0, anchor=center, inner sep=0}, "{(P,f)}", "\shortmid"{marking}, from=1-1, to=1-2]
				      \arrow[""{name=0p, anchor=center, inner sep=0}, phantom, from=1-1, to=1-2, start anchor=center, end anchor=center]
				      \arrow["h"', from=1-1, to=2-1]
				      \arrow["k", from=1-2, to=2-2]
				      \arrow[""{name=1, anchor=center, inner sep=0}, "\shortmid"{marking}, from=2-1, to=2-2]
				      \arrow[""{name=1p, anchor=center, inner sep=0}, phantom, from=2-1, to=2-2, start anchor=center, end anchor=center]
				      \arrow[""{name=1p, anchor=center, inner sep=0}, phantom, from=2-1, to=2-2, start anchor=center, end anchor=center]
				      \arrow["{h'}"', from=2-1, to=3-1]
				      \arrow["{k'}", from=2-2, to=3-2]
				      \arrow[""{name=2, anchor=center, inner sep=0}, "{(P'',f'')}"', "\shortmid"{marking}, from=3-1, to=3-2]
				      \arrow[""{name=2p, anchor=center, inner sep=0}, phantom, from=3-1, to=3-2, start anchor=center, end anchor=center]
				      \arrow["{(\varphi,\beta)}"', shorten <=4pt, shorten >=4pt, Rightarrow, from=0p, to=1p]
				      \arrow["{(\chi,\gamma)}"', shorten <=4pt, shorten >=4pt, Rightarrow, from=1p, to=2p]
			      \end{tikzcd}
			      \quad = \quad
			      \begin{tikzcd}[ampersand replacement=\&]
				      A \&\& B \\
				      {A''} \&\& {B''}
				      \arrow[""{name=0, anchor=center, inner sep=0}, "{(P,f)}", "\shortmid"{marking}, from=1-1, to=1-3]
				      \arrow[""{name=0p, anchor=center, inner sep=0}, phantom, from=1-1, to=1-3, start anchor=center, end anchor=center]
				      \arrow["{h'h}"', from=1-1, to=2-1]
				      \arrow["{k'k}", from=1-3, to=2-3]
				      \arrow[""{name=1, anchor=center, inner sep=0}, "{(P'',f'')}"', "\shortmid"{marking}, from=2-1, to=2-3]
				      \arrow[""{name=1p, anchor=center, inner sep=0}, phantom, from=2-1, to=2-3, start anchor=center, end anchor=center]
				      \arrow["{(\chi\varphi, \gamma\beta)}"', shorten <=4pt, shorten >=4pt, Rightarrow, from=0p, to=1p]
			      \end{tikzcd}
		      \end{equation}
		      and tight identity squares are inherited from $\dblcat X$.
		\item Loose composition of squares is defined as follows:
		      \begin{equation}
			      \begin{tikzcd}[ampersand replacement=\&]
				      A \& B \& C \& A \&\& C \\
				      {A'} \& {B'} \& {C'} \& {A'} \&\& {C'}
				      \arrow[""{name=0, anchor=center, inner sep=0}, "{(P,f)}", "\shortmid"{marking}, from=1-1, to=1-2]
				      \arrow[""{name=0p, anchor=center, inner sep=0}, phantom, from=1-1, to=1-2, start anchor=center, end anchor=center]
				      \arrow["h"', from=1-1, to=2-1]
				      \arrow[""{name=1, anchor=center, inner sep=0}, "{(Q,g)}", "\shortmid"{marking}, from=1-2, to=1-3]
				      \arrow[""{name=1p, anchor=center, inner sep=0}, phantom, from=1-2, to=1-3, start anchor=center, end anchor=center]
				      \arrow["k"{description}, from=1-2, to=2-2]
				      \arrow[""{name=2, anchor=center, inner sep=0}, "\ell", from=1-3, to=2-3]
				      \arrow[""{name=3, anchor=center, inner sep=0}, "{(P,f) \lcomp (Q,g)}", "\shortmid"{marking}, from=1-4, to=1-6]
				      \arrow[""{name=3p, anchor=center, inner sep=0}, phantom, from=1-4, to=1-6, start anchor=center, end anchor=center]
				      \arrow[""{name=4, anchor=center, inner sep=0}, "h"', from=1-4, to=2-4]
				      \arrow["\ell", from=1-6, to=2-6]
				      \arrow[""{name=5, anchor=center, inner sep=0}, "{(P',f')}"', "\shortmid"{marking}, from=2-1, to=2-2]
				      \arrow[""{name=5p, anchor=center, inner sep=0}, phantom, from=2-1, to=2-2, start anchor=center, end anchor=center]
				      \arrow[""{name=6, anchor=center, inner sep=0}, "{(Q',g')}"', "\shortmid"{marking}, from=2-2, to=2-3]
				      \arrow[""{name=6p, anchor=center, inner sep=0}, phantom, from=2-2, to=2-3, start anchor=center, end anchor=center]
				      \arrow[""{name=7, anchor=center, inner sep=0}, "{(P',f') \lcomp (Q',g')}"', "\shortmid"{marking}, from=2-4, to=2-6]
				      \arrow[""{name=7p, anchor=center, inner sep=0}, phantom, from=2-4, to=2-6, start anchor=center, end anchor=center]
				      \arrow["{(\varphi, \beta)}"', shorten <=4pt, shorten >=4pt, Rightarrow, from=0p, to=5p]
				      \arrow["{(\psi, \gamma)}"', shorten <=4pt, shorten >=4pt, Rightarrow, from=1p, to=6p]
				      \arrow["{=}"{marking, allow upside down}, draw=none, from=2, to=4]
				      \arrow["{(\varphi,\beta) \lcomp (\psi,\gamma)}"', shorten <=4pt, shorten >=4pt, Rightarrow, from=3p, to=7p]
			      \end{tikzcd}
		      \end{equation}
		      where the right hand side square denotes the following composite in $\dblcat X$:
		      \begin{equation}
			      \begin{tikzcd}[ampersand replacement=\&]
				      {A \action (P \combine \tow f Q)} \& {(A \action P) \action \tow f Q} \& {B \action Q} \& C \\
				      {A' \action (P' \combine \tow{f'} Q')} \& {(A' \action P') \action \tow{f'} Q'} \& {B' \action Q'} \& {C'}
				      \arrow[""{name=0, anchor=center, inner sep=0}, "\coassociator", "\shortmid"{marking}, from=1-1, to=1-2]
				      \arrow["{h \action (\varphi \combine \tow \beta \psi)}"', from=1-1, to=2-1]
				      \arrow[""{name=1, anchor=center, inner sep=0}, "{f \action Q}", "\shortmid"{marking}, from=1-2, to=1-3]
				      \arrow[""{name=1p, anchor=center, inner sep=0}, phantom, from=1-2, to=1-3, start anchor=center, end anchor=center]
				      \arrow["{(h \action \varphi) \action \tow \beta \psi}"{description}, from=1-2, to=2-2]
				      \arrow[""{name=2, anchor=center, inner sep=0}, "g", "\shortmid"{marking}, from=1-3, to=1-4]
				      \arrow[""{name=2p, anchor=center, inner sep=0}, phantom, from=1-3, to=1-4, start anchor=center, end anchor=center]
				      \arrow["{k \action \psi}"{description}, from=1-3, to=2-3]
				      \arrow["\ell", from=1-4, to=2-4]
				      \arrow[""{name=3, anchor=center, inner sep=0}, "\coassociator"', "\shortmid"{marking}, from=2-1, to=2-2]
				      \arrow[""{name=4, anchor=center, inner sep=0}, "{f' \action Q'}"', "\shortmid"{marking}, from=2-2, to=2-3]
				      \arrow[""{name=4p, anchor=center, inner sep=0}, phantom, from=2-2, to=2-3, start anchor=center, end anchor=center]
				      \arrow[""{name=5, anchor=center, inner sep=0}, "{g'}"', "\shortmid"{marking}, from=2-3, to=2-4]
				      \arrow[""{name=5p, anchor=center, inner sep=0}, phantom, from=2-3, to=2-4, start anchor=center, end anchor=center]
				      \arrow["{\bar{\coassociator}}"', shorten <=4pt, shorten >=4pt, Rightarrow, from=0, to=3]
				      \arrow["{\beta \action \psi}"', shift left=5, shorten <=4pt, shorten >=4pt, Rightarrow, from=1p, to=4p]
				      \arrow["\gamma"', shift left=3, shorten <=4pt, shorten >=4pt, Rightarrow, from=2p, to=5p]
			      \end{tikzcd}
		      \end{equation}
		      Identity squares are mutuated from \eqref{eqn:contextad.unit.squares}, the unit of $\actor$:
		      \begin{equation}
			      \begin{tikzcd}[ampersand replacement=\&]
				      A \& A \\
				      {A'} \& {A'}
				      \arrow[""{name=0, anchor=center, inner sep=0}, "{(\combineunit, \counitor)}", "\shortmid"{marking}, Rightarrow, no head, from=1-1, to=1-2]
				      \arrow[""{name=0p, anchor=center, inner sep=0}, phantom, from=1-1, to=1-2, start anchor=center, end anchor=center]
				      \arrow["h"', from=1-1, to=2-1]
				      \arrow["h", from=1-2, to=2-2]
				      \arrow[""{name=1, anchor=center, inner sep=0}, "{(\combineunit, \counitor)}"', "\shortmid"{marking}, Rightarrow, no head, from=2-1, to=2-2]
				      \arrow[""{name=1p, anchor=center, inner sep=0}, phantom, from=2-1, to=2-2, start anchor=center, end anchor=center]
				      \arrow["{(\combineunit,\bar{\counitor})}"', shorten <=4pt, shorten >=4pt, Rightarrow, from=0p, to=1p]
			      \end{tikzcd}
		      \end{equation}
		\item Associators and unitors for the composition of contexful arrows are built out of the pseudomonad structure on $\actor$ and the analogous 2-cells of $\dblcat X$ (denoted as $\ell$,$r$,$a$):
		      \begin{align}
			      \begin{tikzcd}[ampersand replacement=\&]
				      A \& A \& B \\
				      A \&\& B
				      \arrow["{(\combineunit, \counitor)}", "\shortmid"{marking}, Rightarrow, no head, from=1-1, to=1-2]
				      \arrow[Rightarrow, no head, from=1-1, to=2-1]
				      \arrow["{(P,f)}", "\shortmid"{marking}, from=1-2, to=1-3]
				      \arrow[Rightarrow, no head, from=1-3, to=2-3]
				      \arrow[""{name=0, anchor=center, inner sep=0}, "{(P,f)}"', "\shortmid"{marking}, from=2-1, to=2-3]
				      \arrow[""{name=0p, anchor=center, inner sep=0}, phantom, from=2-1, to=2-3, start anchor=center, end anchor=center]
				      \arrow["{\boldsymbol{\leftunitlaw}}"', "\wr", shorten >=3pt, Rightarrow, from=1-2, to=0p]
			      \end{tikzcd}
			      \quad & := \quad
			      \begin{tikzcd}[ampersand replacement=\&,sep=scriptsize]
				      {A \action (\combineunit \combine \tow f P)} \& {(A \action \combineunit) \action P} \& {A \action P} \& B \\
				      {A \action P} \& {} \& {A \action P} \& B \\
				      {A \action P} \& {} \&\& B
				      \arrow["\coassociator", "\shortmid"{marking}, from=1-1, to=1-2]
				      \arrow["{{A \action \leftunitlaw}}"', from=1-1, to=2-1]
				      \arrow["{{\counitor \action P}}", "\shortmid"{marking}, from=1-2, to=1-3]
				      \arrow["{\bar{\leftunitlaw}}"'{pos=0.4}, "\wr"{pos=0.4}, shift right=4.1, shorten >=2pt, Rightarrow, from=1-2, to=2-2]
				      \arrow[""{name=0, anchor=center, inner sep=0}, "f", "\shortmid"{marking}, from=1-3, to=1-4]
				      \arrow[Rightarrow, no head, from=1-3, to=2-3]
				      \arrow[Rightarrow, no head, from=1-4, to=2-4]
				      \arrow[Rightarrow, no head, from=2-1, to=2-3,"\shortmid"{marking}]
				      \arrow[Rightarrow, no head, from=2-1, to=3-1]
				      \arrow["\ell"'{pos=0.4}, "\wr"{pos=0.4}, shorten >=3pt, shift left=5.65, Rightarrow, from=2-2, to=3-2]
				      \arrow[""{name=1, anchor=center, inner sep=0}, "f", "\shortmid"{marking}, from=2-3, to=2-4]
				      \arrow[Rightarrow, no head, from=2-4, to=3-4]
				      \arrow["f"', "\shortmid"{marking}, from=3-1, to=3-4]
				      \arrow["{{=}}"{marking, allow upside down, pos=0.4}, draw=none, from=0, to=1]
			      \end{tikzcd}
			      \\
			      \begin{tikzcd}[ampersand replacement=\&]
				      A \& B \& B \\
				      A \&\& B
				      \arrow["{(P,f)}", "\shortmid"{marking}, from=1-1, to=1-2]
				      \arrow[Rightarrow, no head, from=1-1, to=2-1]
				      \arrow["{(\combineunit, \counitor)}", "\shortmid"{marking}, Rightarrow, no head, from=1-2, to=1-3]
				      \arrow[Rightarrow, no head, from=1-3, to=2-3]
				      \arrow[""{name=0, anchor=center, inner sep=0}, "{(P,f)}"', "\shortmid"{marking}, from=2-1, to=2-3]
				      \arrow[""{name=0p, anchor=center, inner sep=0}, phantom, from=2-1, to=2-3, start anchor=center, end anchor=center]
				      \arrow["{\boldsymbol{\rightunitlaw}}"', "\wr", shorten >=3pt, Rightarrow, from=1-2, to=0p]
			      \end{tikzcd}
			      \quad & := \quad
				  \begin{sideways}
					\begin{tikzcd}[ampersand replacement=\&]
						{A \action (P \combine \tow f \combineunit_B)} \& {(A \action P) \action \tow f \combineunit_B} \& {B \action \combineunit_B} \& B \\
						{A \action (P \combine \combineunit_{A \action P})} \& {(A \action P) \action \combineunit_{A \action P}} \& {A \action P} \& B \\
						{A \action P} \& {} \& {A \action P} \& B \\
						{A \action P} \& {} \&\& B
						\arrow[""{name=0, anchor=center, inner sep=0}, "\coassociator", "\shortmid"{marking}, from=1-1, to=1-2]
						\arrow[""{name=0p, anchor=center, inner sep=0}, phantom, from=1-1, to=1-2, start anchor=center, end anchor=center]
						\arrow["{A \action (P \combine {\combineunitcart}^{-1})}"', from=1-1, to=2-1]
						\arrow["{f \action \combineunit_B}", "\shortmid"{marking}, from=1-2, to=1-3]
						\arrow["{(A \action P) \action {\combineunitcart}^{-1}}"{description}, from=1-2, to=2-2]
						\arrow["\counitor", "\shortmid"{marking}, from=1-3, to=1-4]
						\arrow["{{{}\barcombineunitcart}^{-1}}"', shorten >=1pt, Rightarrow, from=1-3, to=2-3]
						\arrow[Rightarrow, no head, from=1-4, to=2-4]
						\arrow[""{name=1, anchor=center, inner sep=0}, "\coassociator"', "\shortmid"{marking}, from=2-1, to=2-2]
						\arrow[""{name=1p, anchor=center, inner sep=0}, phantom, from=2-1, to=2-2, start anchor=center, end anchor=center]
						\arrow["{A \action (P \combine \rightunitlaw)}"', from=2-1, to=3-1]
						\arrow["\counitor", "\shortmid"{marking}, from=2-2, to=2-3]
						\arrow["{\bar{\rightunitlaw}}"', shorten >=1pt, Rightarrow, from=2-2, to=3-2]
						\arrow[""{name=2, anchor=center, inner sep=0}, "f", "\shortmid"{marking}, from=2-3, to=2-4]
						\arrow[Rightarrow, no head, from=2-3, to=3-3]
						\arrow[Rightarrow, no head, from=2-4, to=3-4]
						\arrow["\shortmid"{marking}, Rightarrow, no head, from=3-1, to=3-3]
						\arrow[Rightarrow, no head, from=3-1, to=4-1]
						\arrow["\ell"', shorten >=1pt, Rightarrow, from=3-2, to=4-2]
						\arrow[""{name=3, anchor=center, inner sep=0}, "f"', "\shortmid"{marking}, from=3-3, to=3-4]
						\arrow[Rightarrow, no head, from=3-4, to=4-4]
						\arrow["f"', "\shortmid"{marking}, from=4-1, to=4-4]
						\arrow["{\bar{\coassociator}}"', shorten <=4pt, shorten >=4pt, Rightarrow, from=0p, to=1p]
						\arrow["{=}"{marking, allow upside down}, draw=none, from=2, to=3]
					\end{tikzcd}
				  \end{sideways}
		      \end{align}
		      The associator 2-cells
		      \begin{equation}
			      \begin{tikzcd}[ampersand replacement=\&]
				      A \&\& C \& D \\
				      A \& B \& {} \& D
				      \arrow["{(P,f) \then (Q,g)}", "\shortmid"{marking}, from=1-1, to=1-3]
				      \arrow[Rightarrow, no head, from=1-1, to=2-1]
				      \arrow["{(R,h)}", "\shortmid"{marking}, from=1-3, to=1-4]
				      \arrow["{\boldsymbol{\associativitylaw}}"', "\wr", shorten <=1pt, shorten >=2pt, shift right=5, Rightarrow, from=1-3, to=2-3]
				      \arrow[Rightarrow, no head, from=1-4, to=2-4]
				      \arrow["{(P,f)}"', "\shortmid"{marking}, from=2-1, to=2-2]
				      \arrow["{(Q,g) \then (R,h)}"', "\shortmid"{marking}, from=2-2, to=2-4]
			      \end{tikzcd}
		      \end{equation}
		      are defined as below
		      \begin{equation}
				\adjustbox{scale=.8,center}{
				\begin{sideways}
							\begin{tikzcd}[ampersand replacement=\&,row sep=10ex, column sep=scriptsize]
								{A \action ((P \combine (\tow f Q)) \combine (\tow{\coassociator \lcomp (f \action Q) \lcomp g} R))} \&[-2ex] {(A \action (P \combine (\tow f Q))) \action (\tow{\coassociator \lcomp (f \action Q) \lcomp g} R)} \&[2ex]\&[-10ex] {} \&[-1ex]\& {C \action R} \&[-1ex] D \\
								{A \action ((P \combine (\tow f Q)) \combine (\tow{\coassociator} \tow{(f \action Q)} \tow{g} R))} \& {(A \action (P \combine (\tow f Q))) \action (\tow{\coassociator} \tow{(f \action Q)} \tow{g} R)} \& {((A \action P) \action (\tow f Q)) \action \tow{(f \action Q)} \tow{g} R} \& {} \& {(B \action Q) \action \tow g R} \& {C \action R} \& D \\
								{A \action (P \combine ((\tow f Q) \combine \tow{(f \action Q)} \tow{g} R))} \& {(A \action P) \action ((\tow f Q) \combine \tow{(f \action Q)} \tow{g} R)} \& {((A \action P) \action (\tow f Q)) \action \tow{(f \action Q)} \tow{g} R} \& {} \& {(B \action Q) \action \tow g R} \\
								{A \action (P \combine \tow f(Q \combine \tow g R))} \& {(A \action P) \action \tow f(Q \combine \tow g R)} \& {} \& {B \action (Q \combine \tow g R)} \& {(B \action Q) \action \tow g R} \& {C \action R} \& D
								\arrow["{A \action (P \combine \combinecart)}"', from=3-1, to=4-1]
								\arrow[""{name=0, anchor=center, inner sep=0}, "\coassociator", "\shortmid"{marking}, from=3-1, to=3-2]
								\arrow[""{name=0p, anchor=center, inner sep=0}, phantom, from=3-1, to=3-2, start anchor=center, end anchor=center]
								\arrow["{g \action R}", "\shortmid"{marking}, from=2-5, to=2-6]
								\arrow[""{name=1, anchor=center, inner sep=0}, "h", "\shortmid"{marking}, from=2-6, to=2-7]
								\arrow["{A \action \associativitylaw}"', from=2-1, to=3-1]
								\arrow["{\coassociator \action \tow{(f \action Q)} \tow g R}", "\shortmid"{marking}, from=2-2, to=2-3]
								\arrow[""{name=2, anchor=center, inner sep=0}, "\coassociator"', "\shortmid"{marking}, from=2-1, to=2-2]
								\arrow[""{name=2p, anchor=center, inner sep=0}, phantom, from=2-1, to=2-2, start anchor=center, end anchor=center]
								\arrow["\coassociator", "\shortmid"{marking}, from=3-2, to=3-3]
								\arrow[Rightarrow, no head, from=2-3, to=3-3]
								\arrow["{(f \action Q) \action R}", "\shortmid"{marking}, from=2-3, to=2-5]
								\arrow["\coassociator"', "\shortmid"{marking}, from=4-4, to=4-5]
								\arrow["{(f \action Q) \action R}", "\shortmid"{marking}, from=3-3, to=3-5]
								\arrow[Rightarrow, no head, from=3-5, to=4-5]
								\arrow[Rightarrow, no head, from=2-5, to=3-5]
								\arrow["{g \action R}"', "\shortmid"{marking}, from=4-5, to=4-6]
								\arrow["h"', "\shortmid"{marking}, from=4-6, to=4-7]
								\arrow[Rightarrow, no head, from=2-7, to=4-7]
								\arrow["{(A \action P) \action \combinecart}", from=3-2, to=4-2]
								\arrow["{f \action (Q \combine \tow g R)}"', "\shortmid"{marking}, from=4-2, to=4-4]
								\arrow[""{name=3, anchor=center, inner sep=0}, "\coassociator"', "\shortmid"{marking}, from=4-1, to=4-2]
								\arrow[""{name=3p, anchor=center, inner sep=0}, phantom, from=4-1, to=4-2, start anchor=center, end anchor=center]
								\arrow["{\barcombinecart}"', shorten >=1pt, Rightarrow, from=3-3, to=4-3]
								\arrow["{\bar{\associativitylaw}}"', shorten >=1pt, Rightarrow, from=2-2, to=3-2]
								\arrow["{=}"{marking, allow upside down}, draw=none, from=2-6, to=4-6]
								\arrow[""{name=4, anchor=center, inner sep=0}, "\coassociator", "\shortmid"{marking}, from=1-1, to=1-2]
								\arrow[""{name=4p, anchor=center, inner sep=0}, phantom, from=1-1, to=1-2, start anchor=center, end anchor=center]
								\arrow["{A \action ((P \combine \tow f Q) \combine \mapmultiplicator)}"', from=1-1, to=2-1]
								\arrow["{(A \action (P \combine \tow f Q)) \action \mapmultiplicator}"{description}, from=1-2, to=2-2]
								\arrow["{\coassociator \lcomp (f \action Q) \lcomp g}", "\shortmid"{marking}, from=1-2, to=1-6]
								\arrow[""{name=5, anchor=center, inner sep=0}, "h", "\shortmid"{marking}, from=1-6, to=1-7]
								\arrow[Rightarrow, no head, from=1-6, to=2-6]
								\arrow[Rightarrow, no head, from=1-7, to=2-7]
								\arrow["\bar\mapmultiplicator"'{pos=0.35}, shorten <=1pt, shorten >=15pt, Rightarrow, from=1-4, to=2-4]
								\arrow["{=}"{marking, allow upside down}, draw=none, from=2-4, to=3-4]
								\arrow["{\bar{\coassociator}}"', shorten <=4pt, shorten >=4pt, Rightarrow, from=0p, to=3p]
								\arrow["\bar\coassociator"', shorten <=4pt, shorten >=4pt, Rightarrow, from=4p, to=2p]
								\arrow["{=}"{marking, allow upside down}, draw=none, from=5, to=1]
							\end{tikzcd}
						\end{sideways}
					}
		      \end{equation}
	\end{enumerate}
\end{defn}

\begin{rmk}
	Just as $\Ctx$ generalized the Kleisli construction of comonads (\cref{sec:ctx.as.dep.graded.comonad}), so $\Ctx$ generalizes the Kleisli construction of \emph{loose double comonads} (dual to the \emph{horizontal double monads} of \cite[Theorem~9.1]{gambino2024monoidal}).
	In \cite{cruttwellUnifiedFrameworkGeneralized2010}, the authors show such a Kleisli construction can be performed for \emph{tight double (co)monads} too, though the result is only a \emph{virtual} double category, i.e.~a multicategory object in $\Cat$.

	Such an observation hints at the resolution of a potential critique to our proposed notion of contextad on a double category, namely that is more `bicategorical' than `double categorical', since its structure morphisms ($\counitor$ and $\coassociator$) are not tight but loose.
	Cruttwell--Shulman's construction of the tight Kleisli virtual double category suggests how to mantain the structural morphisms of a contextad tight while still be able to get a $\Ctx$ construction out of it.
	The trick would be to target \emph{covirtual double categories} (i.e.~where squares have arity one but arbitrary coarity), defining squares as follows (showing coarity two for clarity):
	\begin{equation}
		\begin{tikzcd}[ampersand replacement=\&,sep=scriptsize]
			A \& {} \& C \\[3ex]
			{A'} \& {B'} \& {C'}
			\arrow["f", "\shortmid"{marking}, from=1-1, to=1-3]
			\arrow["h"', from=1-1, to=2-1]
			\arrow["{(\varphi, \beta)}"{pos=0.6}, shorten <=1pt, shorten >=1pt, Rightarrow, from=1-2, to=2-2]
			\arrow["k", from=1-3, to=2-3]
			\arrow["{(P_1, f_1')}"', "\shortmid"{marking}, from=2-1, to=2-2]
			\arrow["{(P_2',f_2')}"', "\shortmid"{marking}, from=2-2, to=2-3]
		\end{tikzcd}
		\quad := \quad
		\begin{tikzcd}[ampersand replacement=\&,sep=scriptsize]
			{A \action P} \& {} \& C \\
			{A' \action (P_1' \combine (\tow {f_1'} P_2'))} \\
			{(A' \action P_1') \action (\tow {f_1'} P_2')} \& {B' \action P_2'} \& {C'}
			\arrow["f", "\shortmid"{marking}, from=1-1, to=1-3]
			\arrow["{h \action \varphi}"', from=1-1, to=2-1]
			\arrow["\beta", shorten <=3pt, shorten >=5pt, shift right=4.4, Rightarrow, from=1-2, to=3-2]
			\arrow["k", from=1-3, to=3-3]
			\arrow["\coassociator"', from=2-1, to=3-1]
			\arrow["{f_1' \action P_2'}"', "\shortmid"{marking}, from=3-1, to=3-2]
			\arrow["{f_2}"', "\shortmid"{marking}, from=3-2, to=3-3]
		\end{tikzcd}
	\end{equation}
	We leave investigating such a direction for future works.
\end{rmk}

\paragraph{Functoriality.} Like for any doctrine of wreaths, this construction is trifunctorial:
\begin{equation}
	\Ctx : \Ctxad\Paradise \longto \PsCat\Paradise
\end{equation}
where $\Ctxad\Paradise$ is a new name for $\FunWreaths(\DispSpan\Paradise)$.
Indeed, one can unpack the definition of 1-, 2- and 3-cells of contextads by unfolding those of the latter tricategory.
We do it only for 1-cells, leaving the interested reader to unfold the others:

\begin{defn}
\label{defn:morphism-of-contextads}
	Let $\ctxtad$ and $\ctxtad[']$ be contextads over $\dblcat X = (\acted \nfrom{s} \Xa \nto{t} \acted,\, \lid,\, \lcomp)$ and $\dblcat X' = (\acted' \nfrom{s'} \Xa' \nto{t'} \acted',\, \lid',\, \lcomp')$, respectively.
	A \textbf{morphism of contextads} $\underline F: \action \to \action'$ is given by the following data:
	\begin{enumerate}
		\item A \textbf{pseudofunctor on contexts} $F:\dblcat X \to \dblcat X'$ between the pseudocategories of contexts,
		\item A \textbf{map on extensions} $(F^\flat, \lineator):\actor \to \actor' \spancomp \Xa$, where $\lineator$ is called \textbf{lineator}:
		      \begin{equation}
			      \begin{tikzcd}[ampersand replacement=\&]
				      {\lens{P}{A} } \\
				      {\lens{P'}{A'}}
				      \arrow["{\lens{\varphi}{h}}"', from=1-1, to=2-1]
			      \end{tikzcd}
			      \quad\longmapsto\quad
			      \begin{matrix}
				      \lens{F^\flat P}{FA},
				      \\[2ex]
				      \begin{tikzcd}[ampersand replacement=\&]
					      {FA \action F^\flat P} \& {F(A \action P)} \\
					      {FA' \action F^\flat P'} \& {F(A' \action P')}
					      \arrow[""{name=0, anchor=center, inner sep=0}, "\lineator", "\shortmid"{marking}, from=1-1, to=1-2]
					      \arrow[""{name=0p, anchor=center, inner sep=0}, phantom, from=1-1, to=1-2, start anchor=center, end anchor=center]
					      \arrow["{Fh \action F^\flat \varphi}"', from=1-1, to=2-1]
					      \arrow["{F(h \action \varphi)}", from=1-2, to=2-2]
					      \arrow[""{name=1, anchor=center, inner sep=0}, "\lineator"'{yshift=-.3ex}, "\shortmid"{marking}, from=2-1, to=2-2]
					      \arrow[""{name=1p, anchor=center, inner sep=0}, phantom, from=2-1, to=2-2, start anchor=center, end anchor=center]
					      \arrow["{\bar\lineator}"', shift left=.3, shorten <=4pt, shorten >=4pt, Rightarrow, from=0p, to=1p]
				      \end{tikzcd}
			      \end{matrix}
		      \end{equation}
		      together with a coherent isomorphism (for $p:A \looseto B$, $Q \in \actor_B$)
		      \begin{equation}
			      \begin{matrix}
				      \varpi : \tow {Fp} F^\flat Q \iso F^\flat (\tow p Q)\quad \text{vertical}
				      \\[2ex]
				      \begin{tikzcd}[ampersand replacement=\&]
					      {FA \action' (\tow Fp F^\flat Q)} \& {FB \action' F^\flat Q} \& {F(B \action Q)} \\
					      {FA \action' F^\flat (\tow p Q)} \& {F(A \action (\tow p Q))} \& {F(B \action Q)}
					      \arrow["{Fp \action' F^\flat Q}", "\shortmid"{marking}, from=1-1, to=1-2]
					      \arrow["{FA \action' \varpi}"', from=1-1, to=2-1]
					      \arrow["\lineator", "\shortmid"{marking}, from=1-2, to=1-3]
					      \arrow["{\bar\varpi}"', shorten <=1pt, shorten >=1pt, Rightarrow, from=1-2, to=2-2]
					      \arrow[Rightarrow, no head, from=1-3, to=2-3]
					      \arrow["\lineator"', "\shortmid"{marking}, from=2-1, to=2-2]
					      \arrow["{p \action Q}"', "\shortmid"{marking}, from=2-2, to=2-3]
				      \end{tikzcd}
			      \end{matrix}
		      \end{equation}
		\item An invertible \textbf{unitor} (for $A \in \dblcat C$):
		      \begin{equation}
			      \begin{matrix}
				      \mapunitor : F^\flat \combineunit_A \iso \combineunit'_{FA}\quad \text{vertical}
				      \\[2ex]
				      \begin{tikzcd}[ampersand replacement=\&]
					      {FA \action' F^\flat \combineunit_A} \& {FA\action' \combineunit'_{FA}} \\
					      {F(A \action \combineunit_A)} \\
					      FA \& FA
					      \arrow["{{FA \action' \mapunitor}}", from=1-1, to=1-2]
					      \arrow["\lineator"', "\shortmid"{marking}, from=1-1, to=2-1]
					      \arrow[""{name=0, anchor=center, inner sep=0}, "{{\counitor'}}", "\shortmid"{marking}, from=1-2, to=3-2]
					      \arrow["{{F\counitor}}"', "\shortmid"{marking}, from=2-1, to=3-1]
					      \arrow[Rightarrow, no head, from=3-1, to=3-2]
					      \arrow["{\bar{\mapunitor}}", shorten <=6pt, shorten >=6pt, Rightarrow, from=2-1, to=0]
				      \end{tikzcd}
			      \end{matrix}
		      \end{equation}
		\item An invertible \textbf{multiplicator} (for each pair $\lens{P}{A}, \lens{Q}{A \action P} \in \actor \spancomp \actor$):
		      \begin{equation}
			      \begin{matrix}
				      \mapmultiplicator : F^\flat (P \combine Q) \iso F^\flat P \combine' (\tow \lineator F^\flat Q) \quad \text{vertical}
				      \\[2ex]
					  \hspace*{-9ex}
				      \begin{tikzcd}[ampersand replacement=\&, column sep=scriptsize]
					      {FA \action' F^\flat (P \combine Q)} \& {F(A \action (P \combine Q))} \& {} \& {F((A \action P) \action Q)} \\
					      {FA \action' (F^\flat P \combine' (\tow \lineator F^\flat Q))} \& {(FA \action' F^\flat P) \action' (\tow \lineator F^\flat Q)} \& {F(A \action P) \action ' F^\flat Q} \& {F((A \action P) \action Q)}
					      \arrow["{{{\lineator}}}", "\shortmid"{marking}, from=1-1, to=1-2]
					      \arrow["{{{FA \action' \mapmultiplicator}}}"', from=1-1, to=2-1]
					      \arrow["{{F\coassociator}}", "\shortmid"{marking}, from=1-2, to=1-4]
					      \arrow[""{name=0, anchor=center, inner sep=0}, draw=none, from=1-3, to=1-2]
					      \arrow[""{name=0p, anchor=center, inner sep=0}, phantom, from=1-3, to=1-2, start anchor=center, end anchor=center]
					      \arrow[Rightarrow, no head, from=1-4, to=2-4]
					      \arrow["{{{\coassociator'}}}"', "\shortmid"{marking}, from=2-1, to=2-2]
					      \arrow[""{name=1, anchor=center, inner sep=0}, "{\lineator \action' F^\flat Q}"', "\shortmid"{marking}, from=2-2, to=2-3]
					      \arrow[""{name=1p, anchor=center, inner sep=0}, phantom, from=2-2, to=2-3, start anchor=center, end anchor=center]
					      \arrow["\lineator"', "\shortmid"{marking}, from=2-3, to=2-4]
					      \arrow["{{\bar{\mapmultiplicator}}}"', shorten <=4pt, shorten >=4pt, Rightarrow, from=0p, to=1p]
				      \end{tikzcd}
			      \end{matrix}
		      \end{equation}
	\end{enumerate}
	Additionally, unitor and multiplicator satisfy triangular and pentagonal coherence laws relative to the unitors and associators of $\ctxtad$ and $\ctxtad[']$.
\end{defn}

\section{Conclusions and future work}

In this work we presented a new formalism for handling `context', broadly speaking, generalizing comonads, comprehension categories and actegories, and to construct (double) categories of contextful arrows generalizing the Kleisli, Para, and Span constructions.
There are several aspects of the theory of contextads we did not investigate further here but which we would like to highlight.

\paragraph{Transposing effects: duality and distributive laws.}%
The first is the newfound distinction between contexts and effects we covered in \cref{sec:ctx.as.dep.graded.comonad}, recovering the original insights of Moggi and Kieburtz.
This reconceptualizes the yoga of monadic programming as a more concrete act of context manipulation, which here we presented only in half: truly, contextful arrows which \emph{observe} their context should be accompanied with contentful maps which \emph{change} their context, and these two acts cannot always be resolved to a one-sided operation.

This makes intuitive sense, though it is intriguingly hard to find examples of such situations which cannot be fixed otherwise.
For instance, in every cartesian closed category the graded monad $- \times A$ which can be used to model computations returning additional content of type $A$ can be curried to a graded comonad $(-)^A$, and thus computations reading from and writing to a context can be modelled as contextful computations for a `bigraded' state comonad $(- \times B)^A$.
Even traditional non-determinisitc effects can be reified, e.g.~probabilistic effects usually expressed via a distribution monad $\Delta(-)$ can be more realistically defined as contextful effects \emph{parameterised} by an explicit sample space $(\Omega, p)$, like we did in \cref{ex:law} but transposing the powering $\pitchfork$ back to a tensoring.
Indeed, there seems to be a general theory of duality which often allows to transpose `monadic' structures into `contextadic' ones, a first instance of which is \cref{thm:pra.monads.transpose} which we state in \cref{sec:poly.monad.as.dep.graded.comonad}, where we investigate such a duality for polynomial monads.

In any case, just as monads and comonads can be combined by suitable distributive laws (whose classification and definition is now a well-established industry at the intersection of the theory of programming languages, automata theory and category theory), we expect there to be an interesting theory of distributive laws between contextads and contentads, giving in turn rise to double categories of contextful-contentful maps.
Chiefly, we conjecture that polyominals (in the sense of \cite{gambino.kock:polynomial.monads}) might arise as a context-$\Cnt$ construction, just as their linear counterpart, spans, arises as a $\Ctx$ construction.

We expect such mixed distributive laws to arise from a similar structure one level higher, putting in relation the $\KL$ construction underlying contextads and their $\Ctx$ construction with the $\EM$ construction underlying contentads.

\paragraph{Contextads and factorization systems.}%
As observed already towards the end of \cite{lack_formal_2002}, wreaths can be used to describe orthogonal factorization systems.
More interestingly, the converse is also true, with every double category of the form $\Ctx(\action)$ having a natural factorization system of sorts on its loose arrows: indeed, every contextful arrow $(P,f):A \looseto B$ can be written as the composite of a `contextual' part $(P, \id) : A \looseto A \action P$ and a `pure' part $(\combineunit, \counitor \lcomp f) : A \action P \looseto B$.
In $\Para$, this factorization system is called \emph{expansion/raw} in \cite{hermida_monoidal_2012}; for $\Span$ it is also obvious to observe that each span can be written as the composite of a functional and a cofunctional one; for a comonad $\Comonad$, $\Kl(\Comonad)$ also admits a factorization in iterates of the comultiplication $\coassociator$ and maps which factor through $\counitor$.
Even more interestingly, it seems reasonable to expect that from the data of a double category with such a factorization system one should be able to recover a contextad.
To even state such a reconstruction theorem would require one to, first of all, develop an adequate theory of factorization systems on double categories, of which we are not aware.

\paragraph{Microcosmic aspects.}%
Finally, there is an evident `microcosmic' aspect underlying the whole story of this work.
\cref{ex:spans} shows that spans are a particular instance of wreath products and left-displayed spans are the backdrop on which our construction took place.
Therefore, we can expect that a higher-dimensional Kleisli completion can be used to define a higher-dimensional wreath product that, in turn, can produce $\DispSpan$ by a categorified version of \cref{ex:spans}---indeed, display maps and arbitrary maps clearly form an adequate triple of sorts.

By considering different wreaths at this higher level we might then obtain even more general definitions of contextad.
We have in mind a specific goal that this generalization might achieve, which is to capture (pseudo)actions in general monoidal 2-categories.
In fact contextads only generalize actions in \emph{cartesian} monoidal 2-categories---these are the ones for which the product of the actor and the actee is indeed fibred (trivially so) over the actee.
But in a general monoidal 2-category, the domain of an action $\acted \otimes \actor \to \acted$ might not admit any functor back to $\acted$, let alone a cartesian fibration.

We expect these actions to give wreaths \emph{in a different higher wreath product} than $\DispSpan(\Cosmos)$.
Specifically, one can expect every monoidal 2-category $(\Mb, I, \otimes)$ to give rise to a higher wreath $\Mb \nfrom{\pi_1} \Mb \times \Mb \nto{\otimes} \Mb$ whose product `$\Ctx(\otimes)$' can replace $\DispSpan$ as a setting in which to consider actions.
Wreaths \emph{in} $\Ctx(\otimes)$ would then correspond to (pseudo)actions in $\Mb$.

Even when it comes to contextads, one might want to encode the dependency of scalars on the actee in ways different than a fibration, e.g.~for $\Cosmos=\Cat$ indexed categories are a popular equivalent choice.
This would amount to replace $\DispSpan(\Cat)$ with a tricategory whose 1-cells $\Ba \to \Ca$ are indexed categories $F:\Ba\op \to \Cat$ together with a 1-cell $\action : \int F \to \Ca$.
Then, in this tricategory, wreaths around $\Ca^\downarrow$ would be ``\emph{indexed colax actions}''.

\printbibliography[heading=bay, title={References}]

\appendix
\renewcommand{\thesection}{\Alph{section}}

\section{Appendix}
\label{appendix}

\subsection{Omitted proofs}

Here is why \cref{defn:state.dep.graded.comonad} is well-posed, i.e.~here's the check of its structure equations:
\begin{enumerate}
	\item $\leftunitlaw$ is the identity $\snd \combine \fst^*g = g$ which we check as follows:
		  \begin{align*}
			  (\snd \combine \fst^*g)(s, x) & = \fst^*g((x, s), \snd(x, s)) \\
											& = g(\fst(x, s), \snd(x,s))         \\
											& = g(x, s).
		  \end{align*}
		  $\rightunitlaw$ is the identity $f \combine \snd = f$ which we check as follows:
		  \begin{align*}
			  (f \combine \snd)(s, x) & = \snd((x, s), f(x, s)) \\
									  & = f(x, s).
		  \end{align*}
	\item $\associativitylaw$ is the identity $f \combine (g \combine h) = (f \combine g) \combine \coassociator_{f, g}^* h$ which we check as follows:
		  \begin{align*}
			  f \combine (g \combine h)(x, s) & = (g \combine h)((x, s), f(x, s))\\
											  & = h(((x, s), f(x, s)), g((x, s), f(x, s)))                   \\
											  & = h(\coassociator_{f, g}(x, s), g((x, s), f(x, s)))          \\
											  & = \coassociator_{f, g}^*h((x, s), g((x, s), f(x, s)))   \\
											  & = \coassociator_{f, g}^*h((x, s), (f \combine g)(x, s)) \\
											  & = (f \combine g) \combine \coassociator_{f,g}^*h(x, s).
		  \end{align*}
	\item For each set $X$ and grade $f : X \times S \to S$, the map $(\counitor_{X} \action f) \coassociator_{\combineunit_X,\counitor_X^*f}$ is given by
		  \begin{equation}
			  (x, s) \mapsto ((x, s), f(x, s)) \mapsto \fst((x, s), f(x, s)) = (x, s).
		  \end{equation}
		  This proves \cref{eqn:dep.graded.left.counit.law}. On the other hand, the map $\counitor_{X \action f} \circ \coassociator_{f,\combineunit_{X \action f}}$ is also given by
		  \begin{equation}
			  (x, s) \mapsto ((x, s), f(x, s)) \mapsto \fst((x, s), f(x, s)) = (x, s).
		  \end{equation}
		  This proves \cref{eqn:dep.graded.right.counit.law}.
	\item For each set $X$ and grades $f : X \times S \to S$, $g : (X \times S) \times S \to S$, and $h :((X \times S) \times S) \times S \to S$, we have the following coassociativity of $\coassociator$:
		  \begin{align*}
			  (\coassociator_{f, g} \action h)(\coassociator_{(f \combine g), \coassociator^*h}(x, s)) & =
			  (\coassociator_{f, g} \action h)((x, s), (f \combine g)(x, s))                                                                                                 \\
																											& = (((x, s), f(x, s)), (f \combine g)(x, s))                    \\
																											& = (((x, s), f(x, s)), g((x, s), f(x, s)))                      \\
																											& = \coassociator_{g,h}((x, s), f(x, s))                         \\
																											& = \coassociator_{g,h}(\coassociator_{f,(g \combine h)}(x, s)).
		  \end{align*}
		  This proves \cref{eqn:dep.graded.coassociativity.law}.
\end{enumerate}

\subsection{Polynomial monads}
\label{sec:poly.monad.as.dep.graded.comonad}
The previous two examples were dependently graded comonads in that the type of grades $\acted_X$ depended on the input type $X$. However, neither used \emph{dependent types} in any essential way.
In this section, we will sketch the contextad arising as the transpose of a monad whose underlying functor is \emph{polynomial}---of the form $X \mapsto \dsum{s : S} X^{P(s)}$ for a type family $P : S \to \Type$.%
\footnote{Where $\dsum{s : S} X^{P(s)}$ denotes dependent sum over $s \in S$.}

In general, suppose that $T$ is a monad whose underlying functor is polynomial: $X \mapsto \dsum{s : S} X^{P(s)}$ for some type family $P : S \to \Type$.
Note that $T \circ T$ is also polynomial, being given by $X \mapsto \dsum{s : S} \dsum{f : S^{P(s)}} X^{\dsum{p : P(s)} P(f(p))}$.
Because natural transformations between polynomial functors are given by \emph{dependent lenses} \cite[Definition~3.1]{AAG:containers} between their associated type families, the monad structure $\eta : \id \to T$ and $\mu : T \circ T \to T$ on $T$ is determined by two dependent lenses from the type families $\mathrm{const}(1) : 1 \to \Type$ and $(s, f) \mapsto \dsum{p : P(s)} P(f(p)) : \dsum{s : S} S^{P(s)} \to \Type$ to $P : S \to \Type$.
Explicitly, this means that $\eta$ is given by a pair of functions $\eta^+ : 1 \to S$ and $\eta^- : (x : 1) \to P(\eta^+(x)) \to 1$---or, equivalently, by the single element $\mathrm{ok} : S$ picked out by $\eta^+$---and $\mu$ by a pair of functions $\mathrm{split}: \dsum{s : S} (f : S^{P(s)}) \to P(\mu^+(s, f)) \to \dsum{p : P(s)} P(f(s))$ and $\mathrm{seq} : \dsum{s : S} S^{P(s)} \to S$.
We record this observation in a theorem.

\begin{thm}
\label{thm:polynomial.monad}
	Let $T$ be the polynomial functor associated to a type family (container) $P : S \to \Type$. A monad structure $(\eta, \mu)$ on $T$ is equivalently given by
	\begin{enumerate}
		\item An element $\mathrm{ok} : S$.
		\item A function $\mathrm{seq} : \dsum{s : S} S^{P(s)} \to S$.
		\item A function $\mathrm{split} : \dsum{s : S} (f : S^{P(s)}) \to P(\mathrm{seq}(s, f)) \to \dsum{p : P(s)} P(f(s)) $.
	\end{enumerate}
	Subject to the following equations:
	\begin{enumerate}
		\item \begin{enumerate}
			      \item $\mathrm{seq}(s, \_ \mapsto \mathrm{ok}) = s$
			      \item $\fst \mathrm{split}(s, \_ \mapsto \mathrm{ok}, p) = p$
		      \end{enumerate}
		\item \begin{enumerate}
			      \item $\mathrm{seq}(\mathrm{ok}, \_ \mapsto s) = s$
			      \item $\snd \mathrm{split}(\mathrm{ok}, \_ \mapsto s, p) = p$
		      \end{enumerate}
		\item \begin{enumerate}
			      \item $\mathrm{seq}(\mathrm{seq}(s, f_1), f_2 \circ \mathrm{split}(s, f_1)) = \mathrm{seq}(s, p \mapsto \mathrm{seq}(f_1(p), f_2(p)))$.
			      \item $\mathrm{split}(\mathrm{split}(s, f_1, p_1), p_2) = \mathrm{split}(s, p_1, \mathrm{split}(f_1(p_1), f_2(p_1), p_2))$
		      \end{enumerate}
	\end{enumerate}
	Specifically, $\eta : X \to \dsum{s : S} X^{P(s)}$ is given by $x \mapsto (\mathrm{ok}, \_ \mapsto x)$ and given $\varphi : X \to \dsum{s : S} Y^{P(s)}$ we can extend it to
	\begin{equation}
		(s, f) \mapsto (\mathrm{seq}(s, p \mapsto \fst \varphi(f(p))), p \mapsto \letin{(p_1, p_2)}{\mathrm{split}(s, f, p)}{\snd\varphi(f(p_1))(p_2)})
	\end{equation}
\end{thm}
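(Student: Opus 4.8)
The plan is to reduce the statement to a mechanical unfolding by invoking two structural facts about polynomial functors. The first is that a natural transformation between the polynomial functors presented by containers $P : S \to \Type$ and $Q : R \to \Type$ is exactly a \emph{dependent lens} \cite[Definition~3.1]{AAG:containers}: a pair $(\varphi^+, \varphi^-)$ consisting of a forward map on positions $\varphi^+ : S \to R$ and a backward map on directions $\varphi^- : (s : S) \to Q(\varphi^+ s) \to P(s)$, with composition contravariant in the backward component. The second is that polynomial functors are closed under composition, with $T \circ T$ presented by the container $(s, f) \mapsto \dsum{p : P(s)} P(f(p))$ over the base $\dsum{s : S} S^{P(s)}$, and with the identity functor presented by the constant container $\mathrm{const}(1) : 1 \to \Type$. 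Because the lens presentation is a fully faithful correspondence, a monad structure on $T$ transports without loss to lens data on these containers and the monad equations transport to equations of lenses; conversely the data and equations rebuild $(\eta, \mu)$. It therefore suffices to read off this dictionary in both directions.

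First I would translate the unit and multiplication. Presenting $\eta : \id \Rightarrow T$ as a lens from $\mathrm{const}(1)$ to $P$, its forward part is a map $1 \to S$, which is the single element $\mathrm{ok} : S$, and its backward part $P(\mathrm{ok}) \to 1$ is forced, hence carries no information; so $\eta$ is exactly the datum $\mathrm{ok}$. Presenting $\mu : T \circ T \Rightarrow T$ as a lens from the composite container to $P$, its forward part is the position map $\mathrm{seq} : \dsum{s : S} S^{P(s)} \to S$ and its backward part is the direction map $\mathrm{split} : (s, f) \to P(\mathrm{seq}(s,f)) \to \dsum{p : P(s)} P(f(p))$. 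This recovers the three pieces of data in the statement, and unwinding the induced formula for $\eta$ and for the Kleisli extension of a lens $\varphi$ yields precisely the displayed expressions $x \mapsto (\mathrm{ok}, \_ \mapsto x)$ and the $\mathrm{seq}/\mathrm{split}$ formula.

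Next I would translate the three monad laws, each of which becomes an equality of dependent lenses and so splits into a forward (position) equation and a backward (direction) equation. Whiskering $\eta$ into the \emph{inner} factor, the unit law $\mu \circ (T\eta) = \id$ computes on positions to $\mathrm{seq}(s, \_ \mapsto \mathrm{ok}) = s$ and on directions to $\fst\,\mathrm{split}(s, \_ \mapsto \mathrm{ok}, p) = p$, giving (1a)--(1b); whiskering $\eta$ into the \emph{outer} factor, the law $\mu \circ (\eta T) = \id$ computes to $\mathrm{seq}(\mathrm{ok}, \_ \mapsto s) = s$ and $\snd\,\mathrm{split}(\mathrm{ok}, \_ \mapsto s, p) = p$, giving (2a)--(2b). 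Finally the associativity law $\mu \circ (\mu T) = \mu \circ (T\mu)$ compares the two ways of contracting a triply-iterated position, yielding on positions equation (3a) for $\mathrm{seq}$ and on directions equation (3b) for $\mathrm{split}$. Since each monad law is equivalent to the conjunction of its forward and backward components and the lens correspondence is faithful, the monad axioms hold if and only if all of (1)--(3) hold.

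I expect the main obstacle to be the direction-level bookkeeping, especially in the associativity law. Composition of lenses is contravariant on directions, and the whiskered transformations $\mu T$, $T\mu$, $\eta T$, $T\eta$ are horizontal composites whose position and direction components must be computed from the explicit composite-container formula; keeping track of which slot each direction factor $P(-)$ lives in, and of the order in which the two halves of $\mathrm{split}$ are applied in the iterated composite, is where sign-free but easily miscounted errors arise. Everything else is routine once the composite container and the whiskering formulas are written down carefully; I would fix a single convention for the backward direction and verify the two associativity components side by side to avoid transposing the inner and outer directions.
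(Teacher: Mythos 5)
Your proposal is correct and takes essentially the same route as the paper: the paper likewise derives the theorem from the two facts you invoke---that natural transformations between polynomial functors are dependent lenses, and that $T \circ T$ is polynomial with the composite container over $\dsum{s : S} S^{P(s)}$---reading off $(\mathrm{ok}, \mathrm{seq}, \mathrm{split})$ as the forward/backward components of $\eta$ and $\mu$ and transporting the monad laws to position/direction equations. Your law-by-law bookkeeping (inner whiskering $\mu \circ T\eta$ giving (1a)--(1b), outer whiskering $\mu \circ \eta T$ giving (2a)--(2b), associativity giving (3a)--(3b)) is consistent with the paper's statement and is in fact more explicit than the paper, which records the correspondence as an observation without spelling out the verification.
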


\begin{rmk}
	In \cref{sec:maybe.monad.dep.graded.comonad}, we noted that the Maybe monad $X \mapsto X + 1$ may be expressed as the polynomial functor associated to $\BoolIf : \Bool \to \Type$, since $X + 1 \simeq \dsum{b : \Bool} X^{\BoolIf(b)}$.
	We then have $\mathrm{ok} = \True$, $\mathrm{seq} = \mathrm{and}$, and $\mathrm{split} = \mathrm{split}$.
\end{rmk}

Using this expression of a polynomial monad, we can define its transposed contextad.

\begin{defn}
	Let $(P : S \to \Type, \mathrm{ok}, \mathrm{seq}, \mathrm{split})$ be a monad structure on a polynomial functor as in \cref{thm:polynomial.monad}.
	Its \textbf{transposed contextad} is the dependently graded comonad so defined:
	\begin{enumerate}
		\item For a type $X$, its type of grades is the type of functions $X \to S$ with its evident contravariance.
		\item For a function $s : X \to S$, the action $X \action s$ is given by $\dsum{x : X} P(s(x))$, which is evidently covariant in $X$.
		\item We define $\combineunit_C$ to be the constant function $\_ \mapsto \mathrm{ok}$.
		\item Given $s : X \to S$ and $t : \dsum{x : X} P(s(x)) \to S$, we define $s \combine t : X \to S$ by
		      \begin{equation}
				(s \combine t)(x) := \mathrm{seq}(s, t(x)).
			  \end{equation}
		      Where by $t(x)$ we mean the partial application $p \mapsto t(x, p)$.
		\item We define $\counitor_X : X \action \combineunit \to X$ to be $\fst$.
		\item We define $\coassociator_{s, t} : X \action (s \combine t) \to (X \action s) \action t$ by
		      \begin{equation}
				\coassociator_{s, t}(x, p) := \letin{(p_1, p_2)}{\mathrm{split}(s, t(x), p)}{((x, p_1), p_2)}.
			  \end{equation}
	\end{enumerate}
	Because the grades form a set, the laws are equations which may be checked directly.
\end{defn}

\begin{thm}
\label{thm:pra.monads.transpose}
	If $T$ is a monad with $T$ a polynomial functor and $\action$ its contextad, then there is an identity-on-objects isomorphism $\Kl(T) \equi \Ctx(\action)$ given on morphisms by the canonical isomorphism:
	\begin{equation}
		X \xto{(s,f)} \dsum{s : S} Y^{P(s)}
		\quad
		\longmapsto
		\quad \begin{cases}
			s : X \to S \\
			\hat f : \dsum{x : X} P(s(x)) \to Y
		\end{cases}
	\end{equation}
\end{thm}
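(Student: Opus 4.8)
The plan is to establish the stated identity-on-objects isomorphism $\Kl(T) \equi \Ctx(\action)$ by verifying that the canonical transposition bijection on morphisms—which we already have pointwise from \cref{thm:polynomial.monad} and the definition of the transposed contextad—respects identities and composition. Since the correspondence is a bijection on hom-sets by construction (it is just the transposition of a dependent lens into its two components), the real content is functoriality.

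First I would unpack the two sides explicitly. On the Kleisli side, by the final formula of \cref{thm:polynomial.monad}, the identity at $X$ is $x \mapsto (\mathrm{ok}, \_ \mapsto x)$ and the composite of $(s, f) : X \to TY$ with $(t, g) : Y \to TZ$ uses $\mathrm{seq}$ and $\mathrm{split}$ as spelled out there. On the $\Ctx$ side, the identity loose arrow is $(\combineunit, \counitor)$ and composition of contexful arrows is given by the formula in \cref{defn:ctx.dbl.cat}, namely $(P, f) \lcomp (Q, g) = (P \combine f^*Q, g \circ (f \action Q) \circ \coassociator)$. The key step is then to match these term-by-term: the grade $\combineunit_X = \_ \mapsto \mathrm{ok}$ transposes to the identity's first component; the contextualized map $\fst = \counitor_X$ transposes to its second; and for composition, the combined grade $s \combine (s^* t) = x \mapsto \mathrm{seq}(s(x), \dots)$ should match the first component of the Kleisli composite, while the contextualized map built from $\coassociator_{s,t}$ (which unpacks via $\mathrm{split}$) should match the second.

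The bulk of the verification is a direct computation showing that the definitions of $\combine$ and $\coassociator$ in the transposed contextad are \emph{precisely} the transposes of $\mathrm{seq}$ and $\mathrm{split}$. Concretely, I would show that under the transposition, feeding forward the input through $f$ (as in Kleisli substitution) corresponds exactly to the reindexing $f^*Q$ and the map $f \action Q$ appearing in the $\Ctx$ composite, and that the rebracketing performed by $\mathrm{split}$ is exactly what $\coassociator$ encodes. Because the grades form a \emph{set} (the type family $P : S \to \Type$ gives a discrete, indeed gaunt, category of grades), all coherence isomorphisms degenerate to equalities, so by \cref{thm:gaunt.ctx.strict.dbl} the loose structure of $\Ctx(\action)$ is a genuine category and there are no higher cells to track. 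This is what reduces the claim to an equality of functions rather than a comparison up to coherent isomorphism.

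The main obstacle I anticipate is purely bookkeeping: carefully aligning the variable-binding conventions of the polynomial-monad presentation (where $\mathrm{split}$ returns a pair $(p_1, p_2)$ that must be routed into the second components of $f$ and $g$) with the $\action/\combine/\coassociator$ notation, so that the two composite formulas are visibly the same function. The essential mathematical input—associativity and unitality—is already guaranteed on one side by the monad laws of \cref{thm:polynomial.monad}(1)--(3) and on the other by the verified contextad laws, so no new coherence argument is needed; the work is to exhibit the dictionary and confirm the substitutions agree. I expect this to be a routine but slightly fiddly computation of the same flavor as the writer- and state-monad verifications already carried out, and so I would present it by simply transcribing both composites and observing they coincide.
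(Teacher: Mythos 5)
Your proposal is correct, but you should know that the paper itself contains \emph{no} proof of this theorem: the remark immediately following the statement says it is ``left unproven'' and defers to future work, where the intended argument is abstract --- one would show that any parametric right adjoint monad admits a left adjoint in $\FibSpan^{\Rightarrow}$ (with discrete left leg) and deduce an equivalence of Kleisli double categories from that. Your direct computational route therefore supplies exactly what the paper deliberately omits, and it does go through: the hom-bijection is the transposition displayed in the statement; the Kleisli unit $x \mapsto (\mathrm{ok},\, \_ \mapsto x)$ transposes to $(\combineunit_X, \counitor_X)$; and unwinding the Kleisli extension formula of \cref{thm:polynomial.monad} against the $\Ctx$ composite $(P,f) \lcomp (Q,g) = (P \combine f^*Q,\ g \circ (f \action Q) \circ \coassociator)$ of \cref{defn:ctx.dbl.cat}, the first components agree because $(s \combine \hat f^{\,*}t)(x) = \mathrm{seq}(s(x),\, p \mapsto t(\hat f(x,p)))$, and the second components agree because $\coassociator$ is by definition the routing performed by $\mathrm{split}$. (One harmless slip: you write the combined grade as $s \combine (s^* t)$; the reindexing is along the contextualized map $\hat f$, i.e.\ it should read $s \combine \hat f^{\,*}t$, consistent with the formula you quote from \cref{defn:ctx.dbl.cat}.) Your appeal to discreteness of the grades --- the fibration of grades is discrete, hence gaunt, so by \cref{thm:gaunt.ctx.strict.dbl} the loose arrows form an honest category and functoriality reduces to equalities of functions --- is precisely the right way to dispose of coherence, and it mirrors the writer- and state-monad verifications of \cref{sec:ctx.as.dep.graded.comonad}. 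Comparing the two routes: yours is complete, elementary, and checkable now, but is tied to the polynomial presentation via $(\mathrm{ok}, \mathrm{seq}, \mathrm{split})$; the paper's intended proof would be uniform over all parametric right adjoint monads and would explain the transposition structurally as an adjunction in $\FibSpan^{\Rightarrow}$, but it is only gestured at, not carried out.
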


\begin{rmk}
	We leave this theorem unproven because we want to give it an abstract proof in future work.
	Such a proof would apply, more generally, to any parametric right adjoint monad, by showing that such a parametric right adjoint monad has a left adjoint in $\FibSpan^{{\Rightarrow}}$ (indeed, with left leg a discrete fibration), and concluding that their Kleisli double categories are equivalent.
	We do not include this argument here for reasons of space and scope.
\end{rmk}

\end{document}